\DeclareFontFamily{U}{rsfs}{%
\skewchar\font127}
\DeclareFontShape{U}{rsfs}{m}{n}{%
<-6>rsfs5<6-8.5>rsfs7<8.5->rsfs10}{}
\DeclareSymbolFont{rsfs}{U}{rsfs}{m}{n}
\DeclareRobustCommand*\rsfs{%
\@fontswitch\relax\mathrsfs}
\theoremstyle{plain}
\newtheorem{thm}{Theorem}[section]
\newtheorem{prop}[thm]{Proposition}
\newtheorem{lem}[thm]{Lemma}
\newtheorem{defi}[thm]{Definition}
\newtheorem{rmk}[thm]{Remark}
\newtheorem{aside}[thm]{Aside}
\newtheorem{note}[thm]{Note}
\newtheorem{nota}[thm]{Notation}
\newtheorem{cor}[thm]{Corollary}
\newtheorem{prop-defi}[thm]{Proposition-Definition}
\newtheorem{thm-defi}[thm]{Theorem-Definition}
\newtheorem{lem-defi}[thm]{Lemma-Definition}
\newtheorem{conj}[thm]{Conjecture}
\newtheorem{exam}[thm]{Example}
\newdimen\argwidth
\def\db[#1\db]{
 \setbox0=\hbox{$#1$}\argwidth=\wd0
 \setbox0=\hbox{$\left[\box0\right]$}
  \advance\argwidth by -\wd0
 \left[\
 n.3\argwidth\box0 \kern.3\argwidth\right]}
\newcommand{\DelH}{\overline{\Delta}_H} 
\newcommand{\Del}{\overline{\Delta}}
\newcommand{\tal}{\widetilde{\alpha}}
\newcommand{\tbe}{\widetilde{\beta}}
\newcommand{\hbe}{\widehat{\beta}}
\newcommand{\hal}{\widehat{\alpha}}
\newcommand{\oA}{\overline{A}}
\newcommand{\SX}{\scriptscriptstyle  X}
\newcommand{\calExt}{\operatorname{\mathcal{E}\textit{xt}}}
\newcommand{\calHom}{\operatorname{\mathcal{H}\textit{om}}}
\newcommand{\obe}{\overline{\beta}}
\newcommand{\vep}{\varepsilon}
\newcommand{\vt}{\vartheta}
\newcommand{\aA}{\mathcal{A}}
\newcommand{\bB}{\mathcal{B}}
\newcommand{\CC}{\mathbb{C}}
\newcommand{\dD}{\mathcal{D}}
\newcommand{\fF}{\mathcal{F}}
\newcommand{\hH}{\mathcal{H}}
\newcommand{\iI}{\mathcal{I}}
\newcommand{\oO}{\mathcal{O}}
\newcommand{\QQ}{\mathbb{Q}}
\newcommand{\RR}{\mathbb{R}}
\newcommand{\PP}{\mathbb{P}}
\newcommand{\sS}{\mathcal{S}}
\newcommand{\tT}{\mathcal{T}}
\newcommand{\ZZ}{\mathbb{Z}}
\newcommand{\Supp}{\mathop{\rm Supp}\nolimits}
\newcommand{\Hom}{\mathop{\rm Hom}\nolimits}
\newcommand{\dR}{\mathbf{R}}
\newcommand{\NS}{\mathop{\rm NS}\nolimits}
\newcommand{\ch}{\mathop{\rm ch}\nolimits}
\newcommand{\rk}{\mathop{\rm rk}\nolimits}
\newcommand{\td}{\mathop{\rm td}\nolimits}
\newcommand{\Coh}{\mathop{\rm Coh}\nolimits}
\newcommand{\cneq}{\mathrel{\raise.095ex\hbox{:}\mkern-4.2mu=}}
\newcommand{\eqcn}{\mathrel{=\mkern-4.5mu\raise.095ex\hbox{:}}}
\newcommand{\Ree}{\operatorname{Re}}
\newcommand{\HN}{\operatorname{HN}}
\begin{document}

\title[Stability conditions, Bogomolov-Gieseker type inequalities and Fano 3-folds]{Stability conditions, Bogomolov-Gieseker type inequalities and Fano 3-folds}

\date{\today}

\author{Dulip Piyaratne}

\address{Kavli Institute for the Physics and Mathematics of the Universe (WPI)\\ The University of Tokyo Institutes for Advanced Study \\ The University of Tokyo \\ Kashiwa \\ Chiba 277-8583 \\ Japan.}

\email{dulip.piyaratne@ipmu.jp}

\subjclass[2010]{Primary 14F05; Secondary 14J30, 14J45, 14J60, 14K99, 	18E10, 18E30,  18E40}

\keywords{Derived category, Bridgeland stability conditions, Bogomolov-Gieseker inequality, Fano 3-folds}

\begin{abstract}
We develop a framework to   modify  the  Bogomolov-Gieseker type inequality conjecture introduced by Bayer-Macr\`i-Toda, in order to  construct a family of geometric Bridgeland stability conditions on any smooth projective 3-fold.
 We show that it is enough to check these modified inequalities on a small class of tilt stable objects.
 We extend some of the techniques in the works by Li and Bernardara-Macr\`i-Schmidt-Zhao to formulate a strong form of 
 Bogomolov-Gieseker  inequality for tilt stable objects on Fano 3-folds.
 Consequently, we  establish our modified Bogomolov-Gieseker type inequality conjecture for general Fano 3-folds, including an optimal inequality for the blow-up of $\PP^3$ at a point.
\end{abstract}

\maketitle

\section{Introduction}
\subsection{Motivation and background}
The notion of stability conditions on triangulated categories 
was introduced by Bridgeland (see \cite{Bristab}). 
Such a stability condition on a triangulated category is defined by giving a bounded t-structure together with a stability function on its heart satisfying the Harder-Narasimhan  property. 
This can be interpreted essentially as an abstraction of the usual slope stability for sheaves. 
Construction of Bridgeland stability conditions on the bounded derived category of a given projective threefold is an important problem. However, unlike for a projective surface, there is no known construction which gives stability conditions for all projective threefolds. See \cite{Huystab, MS} for further details.   

In \cite{BMT}, Bayer, Macr\`i and Toda introduced a   conjectural construction of Bridgeland stability conditions for any projective threefold.
Here the problem was reduced to proving so-called Bogomolov-Gieseker type inequality holds for certain tilt stable objects. 
It has been shown to hold for 
 all Fano 3-folds with Picard rank one (see \cite{BMT, Mac, Sch1, Li}), abelian 3-folds (see \cite{MP1, MP2, Piy1, Piy2, BMS}), \'etale quotients of abelian 3-folds (see \cite{BMS}), some toric 3-folds (see \cite{BMSZ}) and 3-folds which are  products of projective spaces and abelian varieties (see \cite{Kos}).  
Recently, Schmidt found a counterexample to the original Bogomolov-Gieseker type inequality conjecture when $X$ is the blowup at a point of $\mathbb{P}^3$ (see \cite{Sch2}). 
Therefore, this inequality needs some modifications in general setting and this is one of the main goals of this paper.

\subsection{Modification of Bogomolov-Gieseker type inequality conjecture} 
Let $X$ be a smooth projective 3-fold, and let  $H \in \NS(X)$ and  $B \in \NS_{\mathbb{R}}(X)$ are some fixed classes such that $H$ is ample.
Let  $\beta \in \RR$ and $\alpha \in \RR_{>0}$.  
In  \cite{BMT}, the authors tilted the abelian category of coherent sheaves on $X$ with respect to a torsion pair coming from 
usual slope stability, 
 to get the abelian category $\bB_{H, B+ \beta H}$. 
 Moreover, they introduced the notion of tilt stability by defining the 
 $\nu_{H,B + \beta H, \alpha}$ tilt slope on  $\bB_{H, B+\beta H}$   by
$$
\nu_{H,B + \beta H, \alpha} : \bB_{H, B+\beta H} \ni E \mapsto \frac{H \ch_2^{B+\beta H}(E) - (\alpha^2/2)H^3 \ch_0(E)}{H^2\ch_1^{B +\beta H}(E)} \in \RR \cup \{+\infty\}.
$$
We modify the expression in the Bogomolov-Gieseker type  inequality conjecture by introducing 
an extra term 
 $\xi \in \mathbb{R}_{\ge 0}$ together with a class 
$\Lambda \in H^4(X, \QQ)$ satisfying
$\Lambda \cdot H =0$:
\begin{equation*}
D^{B, \xi}_{\alpha, \beta}(E) = \ch_3^{B + \beta H}(E) + \left(  \Lambda - \left( \xi+  \frac{1}{6}\alpha^2 \right) H^2 \right) \ch_1^{B + \beta H}(E).
\end{equation*}
More precisely,  we   conjecture the following for a family of stability parameters. 
Let $A : B + \RR \langle H \rangle  \to \RR_{\ge 0}$ be a continuous function.
\begin{conj}[=\,\ref{conj:BG-ineq}]
\label{conj:intro-BG-ineq}
There exist
$\Lambda \in H^4(X, \QQ)$ satisfying $\Lambda \cdot H =0$, and
a constant $\xi({A})  \in \RR_{\ge 0}$ such that 
for any   $(\beta, \alpha) \in \RR \times \RR_{>0}$,  
with $\alpha \ge A(B +\beta H)$,
all $\nu_{H, B+ \beta H, \alpha}$ tilt slope stable objects $E \in \bB_{H, B+ \beta H}$
with $\nu_{H, B+ \beta H, \alpha}(E) =0$ satisfy the inequality:
\begin{equation*}
D^{B,\xi(A)}_{\alpha, \beta}(E) \le 0.
\end{equation*}
Hence, for any $\xi \ge \xi (A)$, we have $D^{B,\xi}_{\alpha, \beta}(E) \le 0$.
\end{conj}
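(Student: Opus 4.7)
The plan is to split the proof into two largely independent pieces: (i) a \emph{reduction} step showing that the modified Bogomolov-Gieseker inequality for the family $\{(\beta,\alpha): \alpha \ge A(B+\beta H)\}$ can be checked on a substantially smaller collection of tilt stable objects, and (ii) a \emph{verification} step, specific to Fano 3-folds, where this smaller collection is controlled using a strong form of Bogomolov-Gieseker inequality for tilt slope stable objects, in the spirit of Li and Bernardara-Macr\`i-Schmidt-Zhao.

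For the reduction, I would first analyse how $D^{B,\xi}_{\alpha, \beta}(E)$ varies as $(\alpha,\beta)$ moves inside the tilt stability chamber structure. The crucial observation is that $D^{B,\xi}_{\alpha, \beta}$ is polynomial in $\alpha$ and $\beta$, so its sign along the locus $\nu_{H, B+\beta H, \alpha}(E)=0$ is controlled by its behaviour at the boundary of the stability range (large $\alpha$, where tilt stability reduces to slope stability on surface sections and the inequality is forced by the classical Bogomolov-Gieseker theorem) and at the \emph{numerical walls} where a tilt stable object can become strictly semistable. Combined with a wall-crossing argument and the quadratic constraint on $(\ch_1^{B+\beta H}(E), \ch_2^{B+\beta H}(E))$ imposed by the classical Bogomolov inequality together with $\nu = 0$, this should reduce Conjecture \ref{conj:intro-BG-ineq} to verifying the inequality for a uniformly bounded family of ``minimal'' tilt stable objects sitting on walls.

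For the verification step on a smooth Fano 3-fold $X$, I would exploit the abundance of vanishing and Serre duality. The strategy of Li, refined by Bernardara-Macr\`i-Schmidt-Zhao, is to bound $\ch_3^{B+\beta H}(E)$ from above for tilt stable $E$ by running Hirzebruch-Riemann-Roch on $\chi(E,E)$ and $\chi(E, E\otimes\oO_X(-K_X))$ and using tilt stability plus Serre duality to control the relevant $\Ext$ groups. For a Fano 3-fold this yields a \emph{strong} Bogomolov-Gieseker inequality whose right hand side contains an explicit term linear in $\ch_1^{B+\beta H}(E)$ coming from $K_X$. I would then reorganize this inequality into the form required by $D^{B,\xi}_{\alpha, \beta}(E) \le 0$, defining $\Lambda$ essentially as the component of $-\tfrac{1}{2}K_X\cdot(\cdot)$ orthogonal to $H$ (so that $\Lambda\cdot H=0$), and choosing $\xi(A)$ large enough, in terms of the bound $A$, to dominate the remaining error terms uniformly over the family.

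The main obstacle I anticipate is the case of the blow-up $X=\Bl_p\PP^3$, which is precisely where Schmidt's counterexample to the original ($\xi=0$, $\Lambda=0$) BMT conjecture lives. Here the exceptional divisor produces tilt stable objects whose $\ch_3$ violates the classical inequality, so the task is to pin down the \emph{optimal} $\Lambda\in H^4(X,\QQ)$ supported along the exceptional $\PP^2$ (automatically satisfying $\Lambda\cdot H=0$ for a suitable $H$) that exactly cancels the contribution of the bad destabilizing objects while leaving the inequality intact for all other tilt stable objects. Establishing optimality will likely require an explicit wall-crossing analysis of rank-one ideal and line bundle-type complexes supported on the exceptional divisor, tracking how their Chern characters transform under the tilt, and matching the resulting bound against $D^{B,\xi}_{\alpha,\beta}$ with equality.
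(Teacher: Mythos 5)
The statement you are proving is a conjecture; what the paper actually establishes is (a) an equivalence (Theorem~\ref{thm:conj-equivalence}) reducing it to the boundary class of $\obe_A$-stable objects (Conjecture~\ref{conj:limitBG}), and (b) its verification for Fano 3-folds (Theorems~\ref{thm:fano3-A=0} and~\ref{thm:fano3-d-7}). Your two-step plan mirrors this architecture, but your reduction step would fail as described. First, the monotonicity runs the other way: along the curve $C(E)$ where $\nu_{H,B+\beta H,\alpha}(E)=0$, the paper computes (Lemma~\ref{prop:D-derivative-wrt-alpha})
$$
\frac{d}{d\alpha}\left( D^{B,\xi}_{\alpha,\beta}(E)\right) = \frac{-\alpha\,\Del_{H,B}(E) - 3\xi\, (H^3\ch_0(E))^2}{3H^2\ch_1^{B+\beta H}(E)} \;\le\; 0,
$$
so $D^{B,\xi}_{\alpha,\beta}(E)$ \emph{increases} as $\alpha$ decreases; the binding case is the smallest admissible $\alpha$, i.e.\ the graph of $A$, not large $\alpha$. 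Second, your claim that at large $\alpha$ the inequality "is forced by the classical Bogomolov-Gieseker theorem" is false: classical BG bounds $\ch_2$ against $\ch_0,\ch_1$ and says nothing about $\ch_3$, which is precisely what $D^{B,\xi}_{\alpha,\beta}\le 0$ constrains — were such a statement true, the original BMT conjecture could not admit Schmidt's counterexample. Third, your wall-crossing sketch has no termination mechanism; the paper's proof passes at each wall to a Jordan-H\"older factor with $D>0$ and strictly smaller $\Del_{H,B}$ (Proposition~\ref{prop:discrimi-ses-decrease}), and uses rationality of $B$ to make the discriminant values discrete, so the induction stops at an object that is tilt stable in a \emph{neighbourhood} of a boundary point — an $\obe_A$-stable object, which is the opposite of an object "sitting on a wall" as in your plan.

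In the verification step your identification of $\Lambda$ is wrong. In the paper's Fano theorems $B$ and $H$ are proportional to $-K_X$, so "the component of $-\tfrac{1}{2}K_X$ orthogonal to $H$" vanishes identically, and your proposal collapses back to the original BMT inequality, which fails on the blow-up of $\PP^3$ at a point. The correct class, forced by Hirzebruch-Riemann-Roch applied to $\chi(E(-H))=\chi(\oO_X(H),E)$ (Proposition~\ref{prop:Euler-char-Fano3}), is the $H$-traceless part of the \emph{second Chern class of} $X$, namely $\Lambda = \frac{c_2(X)}{12} - \frac{c_2(X)\cdot H}{12H^3}H^2$: it is the Todd-class coefficient of $\ch_1$, not anything built from $K_X$. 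The Euler characteristic is then bounded above by $0$ using the $\Hom$-vanishings of Proposition~\ref{prop:betabar-hom-vanishing} and Serre duality, and the leftover terms are controlled by the strong BG inequality (Theorem~\ref{thm:strong-BG-ineq}) — which bounds $\Del_H/(H^3\ch_0)^2$, not $\ch_3$, so your conflation of the $\chi(E,E)$ and $\chi(\oO_X(H),E)$ computations also needs untangling. Finally, for the blow-up of $\PP^3$ the optimality is not obtained by tuning a $\Lambda$ supported on the exceptional divisor: $\Lambda$ is the same $c_2$-class, and the gain comes from shrinking the parameter family via the explicit nonzero periodic function $A$ of Definition~\ref{def:blowup-P3-A-for-x=0}, for which $\xi(A)=0$ (Theorem~\ref{thm:fano3-d-7}); this mechanism — trading a larger $A$ for a smaller $\xi$ — is the actual content of the optimal statement and is absent from your plan.
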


As similar to \cite{BMT, BMS}, the above   modification conjecturally gives us a family
of Bridgeland stability conditions. More specifically, 
when our modified Conjecture  holds for $X$, the  tilt of $\bB_{H,B+\beta H}$  as in the construction of \cite{BMT} together with
some central charge functions define those stability conditions. See Theorem \ref{thm:stab-family} for further  details.

Our modified conjectural inequality coincides with the  Bogomolov-Gieseker type inequality in \cite{BMT} 
when 
$A = 0$, $\Lambda =0$ and $\xi(A) =0$.
In this paper, we are mostly interested in the following choice:
\begin{equation}
\label{def:intro-Lambda}
\Lambda = \frac{c_2(X)}{12} - \frac{c_2(X)\cdot H}{12 H^3} H^2.
\end{equation}
Furthermore, this $\Lambda$ vanishes   for many 3-folds where the Bogomolov-Gieseker type inequality conjecture  in \cite{BMT} holds. 
For example,  when $X$ is an abelian 3-fold 
  ($c_2(X) =0$), or a Fano 3-fold with Picard rank one ($c_2(X)$ is proportional to $H^2$).

In Section \ref{subsec:betabar-restriction} we   extend  the notion of $\obe$-stability in \cite{Li, BMS}. 
For the continuous function $A : B + \RR \langle H \rangle  \to \RR_{\ge 0}$, 
we define $\obe_A(E) $ to be  the set of roots  $\obe$ of 
\begin{equation*}
\label{eqn:intro-betabar-relation}
H \ch_2^{B + \obe H}(E) - \frac{1}{2} (A(B + \obe H))^2 H^3 \ch_0(E) = 0.
\end{equation*}
We call an object   $E$ in  $D^b(X)$ is   $\obe_{A}$-stable if  for any $\obe \in \obe_A(E)$ there is an open neighbourhood $U \subset \mathbb{R}^2 $ containing $(\obe, A(B+ \obe H))$ such that for any $(\beta, \alpha) \in U$ with $\alpha>0$, 
$E \in \bB_{H, B+\beta H}$ is $\nu_{H, B + \beta H,  \alpha}$-stable. 

In this paper we reduce the requirement of our 
modified Bogomolov-Gieseker type inequalities to $\obe_A$ stable objects. 
More precisely, we show that Conjecture \ref{conj:intro-BG-ineq} is equivalent to the following. See Theorem \ref{thm:conj-equivalence} for further details. 
\begin{conj}[=\,\ref{conj:limitBG}]
\label{conj:intro-limitBG}
There exist
$\Lambda \in H^4(X, \QQ)$ satisfying $\Lambda \cdot H =0$, and
a constant $\xi({A})  \in \RR_{\ge 0}$ such that 
any $\obe_{A}$-stable object $E$ in  $D^b(X)$ satisfies the inequality:
$$
D^{B,\xi(A)}_{A(B +\obe H ) ,  \obe}(E) \le 0, \ \text{ for each } \obe \in \obe_A(E).
$$ 
\end{conj}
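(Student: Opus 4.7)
The plan is to verify the conjecture by taking $\Lambda$ as in \eqref{def:intro-Lambda} and constructing $\xi(A)$ explicitly from geometric data on $X$. Fix a $\obe_A$-stable object $E \in D^b(X)$ and a root $\obe \in \obe_A(E)$. By definition, $E$ is $\nu_{H, B+\beta H, \alpha}$-stable on a neighbourhood of the limit point $(\obe, A(B+\obe H))$, and the defining equation of $\obe_A(E)$ is precisely $\nu_{H, B+\obe H, A(B+\obe H)}(E) = 0$. Consequently the classical Bogomolov--Gieseker inequality is at our disposal along the whole approach to the boundary point.

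First I would dispose of the degenerate case in which $H^2\ch_1^{B+\obe H}(E) = 0$. Here tilt stability together with the BG inequality forces $E$ to be, up to shift and twist, a sheaf supported in dimension at most one, and the required bound on $D^{B,\xi(A)}_{A(B+\obe H),\obe}(E)$ follows from $\Lambda \cdot H = 0$ and the classical inequality for such sheaves, independently of $\xi(A)$.

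Next, in the generic case $H^2\ch_1^{B+\obe H}(E) \neq 0$, I would tilt $\bB_{H, B+\obe H}$ a second time along the torsion pair cut out by the sign of $\nu_{H, B+\obe H, A(B+\obe H)}$, following the strategy of Li and of Bernardara--Macr\`i--Schmidt--Zhao. After a shift, $E$ lies in this twice-tilted heart, and Harder--Narasimhan filtrations for perturbed parameters $(\beta, \alpha)$ near $(\obe, A(B+\obe H))$ produce competitor objects whose characters are constrained by classical Bogomolov inequalities in the new heart. Translating these back yields an upper bound of the form
\begin{equation*}
\ch_3^{B+\obe H}(E) \;\leq\; \left( \tfrac{1}{6}A(B+\obe H)^2\, H^2 \;-\; \Lambda \;+\; \Xi(\obe) \right)\cdot \ch_1^{B+\obe H}(E),
\end{equation*}
where $\Xi(\obe)$ is a ``defect'' class measuring the gap between the argument and the original BMT inequality. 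The choice of $\Lambda$ in \eqref{def:intro-Lambda} is designed so that the $c_2(X)$-piece coming from Riemann--Roch and the comparison between $\ch^{B}$ and $\ch^{B+\obe H}$ is already absorbed into the left-hand side, leaving $\Xi(\obe)$ proportional (modulo $H$-orthogonal contributions annihilated by $\ch_1^{B+\obe H}(E)$, which has intersection only with $H$-dependent classes up to the BG-controlled part) to $H^2$.

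Finally, $\xi(A)$ is defined as the supremum over admissible $\obe$ of the ratio of $H \cdot \Xi(\obe)$ against $H^3$. This supremum is finite: outside a compact range of $\obe$, the large--volume limit arguments of \cite{BMT} already give the inequality with $\xi(A) = 0$, and on the compact range $\Xi(\obe)$ depends continuously on $\obe$ through intersection numbers, while continuity of $A$ ensures the boundary parameters vary continuously as well.

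The main obstacle is the second step: for an arbitrary smooth projective 3-fold there is no guarantee that the walls produced by the second tilt admit a geometric interpretation bounding $\Xi(\obe)$. This is exactly where the Fano hypothesis enters in the body of the paper, via nefness of $-K_X$ and the strong tilt-stability estimates adapted from \cite{Li, BMSZ}. The above framework isolates $\Xi(\obe)$ as the only geometric quantity requiring new input, and for the blow-up of $\PP^3$ at a point the optimal $\xi(A)$ is forced by Schmidt's counterexample and pinned down by explicit numerical computations on the exceptional divisor.
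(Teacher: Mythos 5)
You have not produced a proof, and no proof of this statement exists in the paper: the statement is a \emph{conjecture} for an arbitrary smooth projective 3-fold, and it remains open in that generality. What the paper actually proves is (i) Theorem \ref{thm:conj-equivalence}, that Conjecture \ref{conj:limitBG} is \emph{equivalent} to Conjecture \ref{conj:BG-ineq} (by an induction on the discriminant $\Del_{H,B}$ along the curves $C(E)$, using Lemma \ref{prop:D-derivative-wrt-alpha} and Proposition \ref{prop:discrimi-ses-decrease}), and (ii) the conjecture for Fano 3-folds with $B$, $H$ proportional to $-K_X$ (Theorems \ref{thm:fano3-A=0} and \ref{thm:fano3-d-7}). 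Your construction cannot close this gap, and you concede as much in your last paragraph. Concretely: defining $\xi(A)$ as the supremum over $\obe$ of the ratio of $H \cdot \Xi(\obe)$ against $H^3$ is circular unless you prove that supremum is finite, and finiteness of exactly this quantity \emph{is} the conjecture; nothing in the second-tilt/Harder--Narasimhan-competitor argument you sketch yields such a bound for a general 3-fold. Moreover, the claim that ``outside a compact range of $\obe$, the large-volume limit arguments of [BMT] already give the inequality with $\xi(A)=0$'' is unfounded: the inequality is periodic in $\obe$ up to twisting by $\oO_X(mH)$ and dualizing (Propositions \ref{prop:stability-B-dual} and \ref{prop:betabar-stab-dual}), so the problem is identical in every unit interval of $\obe$ and no easier far from the origin; Schmidt's counterexample shows the $\xi=0$, $\Lambda=0$ inequality genuinely fails on the blow-up of $\PP^3$ at a point, and such a failure cannot be excised by a compactness argument.

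Even restricted to the Fano case, where the paper does have a proof, your route does not match it and does not work as sketched. The paper's argument never performs a second tilt to produce competitors; instead it (a) reduces by dualizing and twisting to $\ch_0(E) \ge 0$ and $\obe$ in a unit interval, (b) disposes of $\Del_H(E)=0$ via Lemma \ref{prop:Delta0tiltstableFano3}, (c) proves $\Hom$-vanishings of $E$ against $\oO_X(H)$ and $\oO_X((1-r)H)[1]$ (Proposition \ref{prop:betabar-hom-vanishing}) and combines them with Serre duality --- which is where the hypothesis $-K_X = rH$ enters irreplaceably --- and Hirzebruch--Riemann--Roch to obtain $\chi(E(-H)) \le 0$, and (d) converts this Euler-characteristic bound into $D^{0,\xi}_{\cdot,\obe}(E) \le 0$ using the strong Bogomolov--Gieseker inequality of Theorem \ref{thm:strong-BG-ineq} to control the term $f_1 H^2\ch_1^{\obe H}(E) + f_0 H^3 \ch_0(E)$. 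The sign in the conjectured inequality comes from step (c); your framework never produces any statement of the form $\chi(E(-H)) \le 0$ or a substitute for it, so even granting your ``defect class'' formalism there is no mechanism forcing $D \le 0$ rather than merely $D \le \text{const}$.
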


\subsection{Bridgeland stability conditions on Fano 3-folds}
In this paper we extend the works of \cite{Li, BMSZ} to establish Conjecture \ref{conj:intro-limitBG} for Fano 3-folds in optimal sense, that is having a minimal $\xi(A)$.

\begin{thm}[{\cite{Li}, Picard rank one case; \cite{BMSZ}, general Fano 3-folds; Theorem \ref{thm:fano3-A=0} for an optimal inequality on general Fano 3-folds}]
\label{thm:intro-Fano3-A=0}
Let $X$ be a smooth Fano 3-fold and $B$, $H$ are proportional to $-K_X$. 
Then  Conjecture  \ref{conj:intro-limitBG} holds on $X$ with respect to $A=0$ for some finite $\xi(A) \ge 0$ and $\Lambda$ as in \eqref{def:intro-Lambda}.
\end{thm}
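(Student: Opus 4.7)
The plan is to reduce, via Theorem \ref{thm:conj-equivalence}, to the equivalent Conjecture \ref{conj:intro-limitBG} with $A=0$. Since $A(B+\obe H)=0$, the locus $\obe_0(E)$ coincides with the classical tilt wall $\{H\ch_2^{B+\obe H}(E)=0\}$, and what must be established is that for every $\obe_0$-stable $E\in D^b(X)$ and every $\obe\in\obe_0(E)$,
\[
\ch_3^{B+\obe H}(E) + \Lambda\cdot \ch_1^{B+\obe H}(E) \;\le\; \xi(0)\, H^2\cdot \ch_1^{B+\obe H}(E),
\]
for a fixed finite constant $\xi(0)\ge 0$, with $\Lambda=c_2(X)/12-(c_2(X)\cdot H/12H^3)H^2$. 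Crucially, the constant must be uniform in $E$.

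I would follow the Li / Bernardara-Macr\`i-Schmidt-Zhao strategy. The preliminary reduction is standard: twisting by $\oO_X(nH)$ reduces to $\obe$ in a bounded interval, and $\obe_0$-stability already yields the classical Bogomolov inequality $\Delta_H(E)\ge 0$ on $X$. The core step is a restriction-and-HRR argument: for $m\gg 0$, choose a smooth divisor $D\in|mH|$ so that the restriction functor preserves the relevant notion of tilt/slope stability. Since $H$ is proportional to $-K_X$, Kodaira-type vanishings on $X$ and on $D$ control the cohomologies $H^i(X, E\otimes \oO_X(-kH))$ for suitable $k$. Hirzebruch-Riemann-Roch applied on both $X$ and $D$, combined with the sequence $0\to \oO_X(-mH)\to \oO_X\to \oO_D\to 0$, produces an Euler-characteristic identity expressing $\ch_3^{B+\obe H}(E)$ in terms of the lower Chern data of $E$, of $E|_D$, and of the Todd class of $X$. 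Now the contribution $\td_2(X)=\tfrac{1}{12}(c_1(X)^2+c_2(X))$ is precisely the source of $\Lambda$: since $c_1(X)$ is proportional to $H$, the $c_1(X)^2$ part is absorbed into the $\xi(0) H^2$ term, while the $H$-primitive component of $c_2(X)/12$ is exactly $\Lambda$. A surface Bogomolov inequality on $D$ then bounds the remaining terms in the HRR identity, and together with $\Delta_H(E)\ge 0$ on $X$ yields the claimed estimate on $D^{B,\xi(0)}_{0,\obe}(E)$.

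The main obstacle is the passage from the Picard rank one case (where $c_2(X)$ is automatically proportional to $H^2$, so $\Lambda=0$ and Li's argument runs verbatim) to general Fano 3-folds, where $\Lambda$ is a genuine nonzero correction. The technical heart is to carry out the HRR bookkeeping while tracking the non-$H$-proportional component of $c_2(X)$, and to show that the error term produced by the restriction $E\mapsto E|_D$ is dominated by a \emph{uniform} multiple of $H^2\cdot\ch_1^{B+\obe H}(E)$ over all $\obe_0$-stable $E$; this uniformity is exactly what guarantees a \emph{finite} $\xi(0)$. Sharpening this uniform constant to its optimal value --- the content of the companion Theorem \ref{thm:fano3-A=0} --- is what ultimately allows $\xi(0)=0$ for the blow-up of $\PP^3$ at a point.
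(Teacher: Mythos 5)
Your proposal contains a genuine gap at its core mechanism: the restriction-to-a-divisor step cannot access $\ch_3(E)$ at all. If $D\in|mH|$ and you compute $\chi(D,E|_D)=\chi(X,E)-\chi(X,E(-mH))=\int_X \ch(E)\left(1-e^{-mH}\right)\td(X)$, the factor $\left(1-e^{-mH}\right)$ has no degree-zero component, so $\ch_3(E)$ cancels identically from this identity; a surface Bogomolov inequality on $D$ therefore only re-proves bounds on $\ch_0,\ch_1,\ch_2$ data (essentially $H\cdot\Delta(E)\ge 0$, which tilt stability already gives via Proposition \ref{prop:muslopeboundfortiltstability}), and says nothing about the $\ch_3$ term in $D^{B,\xi}_{\alpha,\beta}(E)$. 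To bound $\ch_3(E)$ one needs an \emph{absolute} Euler characteristic such as $\chi(E(-H))$, and to control its sign one needs cohomology vanishing for the complex $E$ itself. Your appeal to ``Kodaira-type vanishings'' does not do this: Kodaira vanishing applies to line bundles, not to tilt-stable objects of $D^b(X)$. In the paper this is exactly where the real work happens: Proposition \ref{prop:betabar-hom-vanishing} produces $\Hom_{\SX}(\oO_X(H),E[j])=0$ and $\Hom_{\SX}(E,\oO_X((1-r)H)[1+j])=0$ for $j\le 0$ by comparing tilt slopes of $E$ against tilt-stable line bundles, and then Serre duality with $\omega_X=\oO_X(-rH)$ (this is where Fano-ness enters) converts the second vanishing into vanishing of $\hom_{\SX}(\oO_X(H),E[i])$ for $i\ge 2$, yielding $\chi(E(-H))=\chi(\oO_X(H),E)=-\hom_{\SX}(\oO_X(H),E[1])\le 0$. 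Only then does Hirzebruch--Riemann--Roch (Proposition \ref{prop:Euler-char-Fano3}) translate this into the desired inequality. Separately, your assertion that one can ``choose $D\in|mH|$ so that the restriction functor preserves the relevant notion of tilt/slope stability'' is not an available result and is not used anywhere in the paper.

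Two further ingredients of the actual proof are absent from your sketch and cannot be recovered by the restriction argument. First, after establishing $\chi(E(-H))\le 0$, the coefficient $f_0(\obe)$ in the HRR expansion is \emph{negative} for index-one Fano 3-folds near $\obe=0$ (Note \ref{prop:f1-f0-inequlaties}), so one needs a uniform lower bound $\Del_H(E)/(H^3\ch_0(E))^2\ge\kappa(X)$ for the problematic objects; this is the strong Bogomolov--Gieseker inequality, Theorem \ref{thm:strong-BG-ineq}, with the constant $\kappa(X)$ of Definition \ref{def:kappa(X)}, and it is what makes the constant $\xi$ of Proposition \ref{prop:def-xi-A=0} finite and uniform --- not an error estimate from restriction. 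Second, the case $r=1$, $\obe=0$ escapes the Hom-vanishing argument entirely (one of the needed vanishings fails), and the paper handles it by a separate minimal-counterexample argument, building extensions $0\to E_{i+1}\to E_i\to\oO_X[1]\to 0$ to reduce $\chi(E_i(-H))$ until Proposition \ref{prop:support-prop-for-A=0-case} applies; nothing in your outline addresses this case.
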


Moreover, for the blow-up of $\PP^3$ at a point we show that an optimal modified Bogomolov-Gieseker type inequality holds. 
\begin{thm} [=\,\ref{thm:fano3-d-7}]
\label{thm:intro-blowup-P3-xi=0}
Let $X$ be the blow-up of $\PP^3$ at a point, and let $H = {-K_X}/{2}$.  
Let $A : \RR \langle H \rangle \to \RR$ be the continuous function defined by, for $\beta \in [-1/2, \, 1/2]$
$$
A(\beta H) = \begin{cases}
(1- \beta) & \text{if } \beta \in [-1/2, \, 0) \\
(1+ \beta) & \text{if } \beta \in [0, \, 1/2]
\end{cases}
$$
together with the relation $A((\beta + 1)H) = A(\beta H)$.
Then the modified Bogomolov-Gieseker type inequality in 
Conjecture \ref{conj:intro-limitBG} holds for $X$, with $\xi(A) =0$  and $\Lambda$ as in \eqref{def:intro-Lambda}. 
\end{thm}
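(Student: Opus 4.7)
The plan is to invoke the equivalence between Conjectures \ref{conj:intro-BG-ineq} and \ref{conj:intro-limitBG} given by Theorem \ref{thm:conj-equivalence}, so that it suffices to show
\begin{equation*}
\ch_3^{\obe H}(E) + \left( \Lambda - \tfrac{1}{6} A(\obe H)^2 H^2 \right) \ch_1^{\obe H}(E) \le 0
\end{equation*}
for every $\obe_A$-stable object $E \in D^b(X)$ and every $\obe \in \obe_A(E)$, with $B = 0$ and $\Lambda$ as in \eqref{def:intro-Lambda}.

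I would first cut down the range of $\obe$ by exploiting two natural symmetries. The autoequivalence $-\otimes \oO(H)$ satisfies $\ch^{\obe H}(E\otimes\oO(H)) = \ch^{(\obe-1)H}(E)$, so combined with the $1$-periodicity $A((\beta+1)H) = A(\beta H)$ built into the theorem, it interchanges the hypotheses and conclusions at $\obe$ and $\obe - 1$; this reduces verification to $\obe \in [-1/2, 1/2]$. Applying the derived dualizing functor $\BR\calHom(-,\oO_X)$ (suitably shifted), together with the evenness $A(\beta H) = A(-\beta H)$ of the function, reduces further to the fundamental domain $\obe \in [0, 1/2]$, on which $A(\obe H) = 1 + \obe \in [1, 3/2]$.

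Next I would follow the tilt-stability strategy of \cite{Li, BMSZ} underlying Theorem \ref{thm:intro-Fano3-A=0}, but carried out with the larger tilt parameter $\alpha = A(\obe H)$. The function $A$ has been calibrated so that the point $(\obe, A(\obe H))$ lies above all the numerical walls in tilt-stability space produced by the Schmidt counterexamples of \cite{Sch2}; at such points the $\obe_A$-stable objects with $\nu_{H,\obe H, A(\obe H)}(E) = 0$ admit a clean classification through wall-crossing. For each admissible numerical class, one then bounds $\ch_3^{\obe H}(E)$ using the two extremal contractions on $X$, namely the projection presenting $X \cong \PP(\oO_{\PP^2}\oplus \oO_{\PP^2}(1))$ over $\PP^2$ and the blow-down $X \to \PP^3$, following the Bernardara--Macr\`i--Schmidt--Zhao framework. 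The $\Lambda$-correction, which vanishes against $H$, accounts exactly for the failure of $c_2(X)$ to be proportional to $H^2$ and enters the bound in a manner parallel to the Picard rank one case treated by Li.

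The hard part will be the sharpness at the endpoints $\obe = \pm 1/2$, where $A(\obe H) = 3/2$ is maximal and the inequality is tight with $\xi(A) = 0$. Here I would expect the extremal $\obe_A$-stable objects to come from complexes supported on or constructed from the exceptional divisor $E \cong \PP^2$ (or from strict transforms of $\pi^{-1}$ of hyperplanes through the blown-up point), whose twisted Chern characters are accessible by Hirzebruch--Riemann--Roch. Showing that the inequality holds in the interior $\obe \in (0, 1/2)$ then rests on a continuity/wall-crossing argument: any interior violation would, after deforming $\obe$ towards $1/2$, either persist as an $\obe_A$-stable object contradicting the corner case, or decompose across a tilt-stability wall into factors each of which already satisfies the inequality. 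Ruling out both alternatives simultaneously, with the sharp constant $\xi(A) = 0$, is the principal technical obstacle.
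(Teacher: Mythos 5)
Your proposal is a plan rather than a proof, and its load-bearing steps are both unexecuted and different from the mechanism that actually proves Theorem \ref{thm:fano3-d-7}. The paper never classifies $\obe_A$-stable objects, never invokes walls produced by Schmidt's counterexample, and never uses the two extremal contractions of $X$. Instead, after the symmetry reductions you describe --- with one correction: dualizing flips the sign of $\ch_0$ as well as of $\obe$ (Propositions \ref{prop:stability-B-dual} and \ref{prop:betabar-stab-dual}), so one may normalize $\ch_0(E) \ge 0$ and $\obe \in [-1/2,1/2)$ but cannot additionally push $\obe$ into $[0,1/2]$; the paper must treat both half-intervals --- the argument runs: (a) compare tilt slopes of $E$ with those of the tilt-stable objects $\oO_X(H)$ and $\oO_X(-H)[1]$ to get the vanishings $\Hom_{\SX}(\oO_X(H),E[j])=0$ and $\Hom_{\SX}(E,\oO_X(-H)[1+j])=0$ for $j\le 0$ (Proposition \ref{prop:betabar-hom-vanishing}); (b) use Serre duality with $\omega_X = \oO_X(-2H)$ to conclude $\chi(E(-H)) = -\hom_{\SX}(\oO_X(H),E[1]) \le 0$; (c) expand $\chi(E(-H))$ by Hirzebruch--Riemann--Roch as $D^{0,0}_{\oA,\obe}(E) + p\, H^2\ch_1^{\obe H}(E) + q\, H^3\ch_0(E)$ (Proposition \ref{prop:Euler-char-Fano3}); and (d) prove $p\, H^2\ch_1^{\obe H}(E) + q\, H^3\ch_0(E) \ge 0$. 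All the difficulty sits in (d): on one half-interval $q \ge 0$ and the claim is elementary, while on the other half-interval $q<0$ and one needs the strong Bogomolov--Gieseker inequality for tilt-stable objects (Theorem \ref{thm:strong-BG-ineq}) with the sharp constant $\kappa(X) = 2/49$ of Example \ref{ex:blow-up-P3}, together with the explicit polynomial estimates of Proposition \ref{prop:fun-bound-blowup-P3-xi=0-case}. That improved $\kappa(X)$ --- strictly larger than the constant $3/(2rd) = 1/21$ used in \cite{BMSZ} --- is precisely what makes $\xi(A)=0$ attainable; nothing playing this role appears in your proposal. (Note also that Definition \ref{def:blowup-P3-A-for-x=0} has the two branches of $A$ swapped relative to the statement you were given, so in the proof the difficult region is a half-interval handled by Theorem \ref{thm:strong-BG-ineq}, not a ``corner case'' at $\obe = \pm 1/2$.)

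Concretely, the gap is this: your scheme --- classify the $\obe_A$-stable objects of vanishing tilt slope via wall-crossing, bound $\ch_3^{\obe H}$ contraction by contraction, exhibit extremal objects on the exceptional divisor, and propagate from the endpoints to the interior by a continuity argument --- leaves every one of its essential claims unproved: no classification is given or is plausibly available (the class of $\obe_A$-stable objects is far too large to list), no bound on $\ch_3^{\obe H}$ is derived, the claimed extremal objects are not produced, and you yourself flag the interior-to-boundary step as ``the principal technical obstacle.'' Since those steps carry the entire content of the theorem, the proposal cannot be completed as written; it would have to be replaced by the Hom-vanishing/Serre duality/Riemann--Roch argument together with the strong Bogomolov--Gieseker inequality described above.
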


The main ideas of the proofs of above results for Fano 3-folds 
are similar to the work of Li and Bernardara-Macr\`i-Schmidt-Zhao  in \cite{Li, BMSZ}.
 More precisely, by dualizing and tensoring by a line bundle, we reduce the requirement of the 
Bogomolov-Gieseker type inequalities to  $\obe_A$ stable objects having $\obe_A$ values in a unit interval in $\RR$ such that $\ch_0 \ge 0$. 
Then we compare the tilt slopes of such  objects with the tilt slopes of certain tilt stable line bundles and their shift by $[1]$. In this way we get certain $\Hom$ vanishings, and by using the Serre duality we obtain  a bound for the Euler characteristic involving our $\obe_A$ stable objects. Conclusively, the Hirzebruch-Riemann-Roch formula gives us the modified Bogomolov-Gieseker type inequalities for those restricted class of tilt stable objects. 
Particularly, the following strong form of 
Bogomolov-Gieseker  inequality for  tilt stable objects on Fano 3-folds, is crucial for us. 
This generalizes the earliest results \cite[Proposition 3.2]{Li} and \cite[Theorem 3.1]{BMSZ}, and see Remark \ref{rem:relation-with-BMSZ} for further details. 
\begin{thm}[=\,\ref{thm:strong-BG-ineq}]
\label{thm:intro-strong-BG-ineq}
Let $X$ be a Fano 3-fold and let $H$ and $B$ be classes proportional to $-K_X$.
Let $E$ be a tilt stable object with finite tilt slope and non-isomorphic to 
 $\oO_X(mH)[1]$ or $\iI_Z(mH)$ for any  $m \in \ZZ$ and $0$-subscheme $Z \subset X$.
Further we assume $\ch_0(E) \ne 0$, and $E$ satisfies certain conditions, namely \eqref{eqn:strong-BG-ineq-condition}, 
\eqref{eqn:strong-BG-ineq-condition-2}, and \eqref{eq:stron-BG-condition3}. 
Then 
$$
\frac{\overline{\Delta}_{H}(E)}{(H^3\ch_0(E))^2} \ge \kappa(X).
$$
Here $\Del_H$ is the discriminant as  in Definition \ref{def:discriminant}, and $\kappa(X)$ is a constant as in Definition \ref{def:kappa(X)}.
\end{thm}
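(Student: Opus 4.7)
The plan is to extend the approach of Li \cite{Li} and Bernardara-Macr\`i-Schmidt-Zhao \cite{BMSZ}: compare the tilt stability of $E$ with that of twisted line bundles, derive $\Hom$-vanishings from slope inequalities, and then apply Serre duality together with Hirzebruch-Riemann-Roch to convert those vanishings into the desired lower bound on $\overline{\Delta}_H(E)$.

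First, I would exploit the fact that tensoring by $\oO_X(mH)$ preserves tilt stability after translating the parameter $\beta$, so it suffices to prove the bound for $E$ whose reduced slope $H^2 \ch_1^{B}(E)/(H^3 \ch_0(E))$ lies in a single unit interval (and, by a further dualization, with $\ch_0(E)>0$). The hypothesis $\ch_0(E)\neq 0$ ensures the inequality is a genuine quadratic constraint, and the exclusions $E\not\cong \oO_X(mH)[1]$ and $E\not\cong \iI_Z(mH)$ remove precisely the tilt stable objects that saturate the classical Bogomolov-Gieseker inequality, leaving room for a strictly stronger bound.

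Next, the technical conditions \eqref{eqn:strong-BG-ineq-condition}, \eqref{eqn:strong-BG-ineq-condition-2} and \eqref{eq:stron-BG-condition3} should confine $E$ to a chamber where its tilt slope is strictly comparable with those of the twisted line bundles $\oO_X(mH)$ and their shifts $\oO_X(mH)[1]$ for a range of integers $m$; these line bundles are tilt stable on a Fano 3-fold. The strict slope inequalities combined with tilt stability of $E$ force $\Hom(\oO_X(mH), E[k]) = 0$ for an explicit set of pairs $(m,k)$. Since $-K_X$ is proportional to $H$, Serre duality relates such vanishings to $\Hom$-groups in the opposite direction after a shift of the twist $m$, yielding a complementary family of vanishings. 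Together these reduce $\chi(\oO_X(mH), E) = \sum_{k} (-1)^{k}\dim \Hom(\oO_X(mH), E[k])$ to at most one or two non-negative terms, producing a one-sided bound valid for every admissible $m$.

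Finally, Hirzebruch-Riemann-Roch expresses $\chi(\oO_X(mH), E)$ as an explicit cubic polynomial in $m$ whose coefficients involve $\ch_\bullet(E)$ and $c_2(X)\cdot H$. Substituting the admissible values of $m$ into the Euler-characteristic bound and taking the sharpest of the resulting quadratic inequalities in $\ch_\bullet(E)$ gives an estimate of the form $\overline{\Delta}_H(E) \ge \kappa(X)(H^3\ch_0(E))^2$, which is the claim; the constant $\kappa(X)$ is read off as the optimum of this discrete family of bounds, matching Definition \ref{def:kappa(X)}. The main obstacle will be the bookkeeping in the second step: identifying, chamber by chamber, precisely which pairs $(m,k)$ give $\Hom$-vanishings in both directions, checking that the stated exclusions on $E$ together with conditions \eqref{eqn:strong-BG-ineq-condition}--\eqref{eq:stron-BG-condition3} are strong enough to eliminate the boundary coincidences that appear once $\rho(X)>1$ (where the unit slope interval contains more potentially competing tilt stable objects than in the Picard-rank-one case of \cite{Li}), and then maximizing over $m$ to recover the optimal $\kappa(X)$ used in Theorems \ref{thm:intro-Fano3-A=0} and \ref{thm:intro-blowup-P3-xi=0}.
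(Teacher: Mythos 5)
There is a genuine gap here: what you have outlined is not a proof of Theorem \ref{thm:strong-BG-ineq} but (essentially) the paper's proof of Theorem \ref{thm:fano3-A=0}, i.e.\ of the \emph{application} of the strong Bogomolov--Gieseker inequality. Comparing $E$ with tilt stable line bundles $\oO_X(mH)$, $\oO_X(mH)[1]$, extracting $\Hom$-vanishings, and feeding them through Serre duality into $\chi(\oO_X(H),E)\le 0$ is exactly the mechanism of Propositions \ref{prop:betabar-hom-vanishing}, \ref{prop:Euler-char-Fano3} and \ref{prop:support-prop-for-A=0-case}; but each such Euler characteristic is \emph{linear} in $\ch(E)$ (your claim that substituting values of $m$ yields ``quadratic inequalities in $\ch_\bullet(E)$'' is incorrect), and it moreover involves $\ch_3(E)$, which does not occur in $\Del_H(E)$ at all. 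A finite family of one-sided linear bounds of this type produces the upper bound $D^{B,\xi}_{\alpha,\beta}(E)\le 0$ on the third Chern character; it cannot produce the lower bound $\Del_H(E)\ge \kappa(X)(H^3\ch_0(E))^2$, whose right-hand side is quadratic in $\ch_0(E)$. Indeed, the constant $3/(2rd)$ in Definition \ref{def:kappa(X)} arises in the paper from the \emph{self}-Euler characteristic: one shows $\Hom_{\SX}(E,E(-rH)[1])=0$ by comparing the slopes of $E$ and $E(-rH)[1]$ at a point near the intersection of the curves $Z(E)$ and $Z(E(-rH)[1])$ (this is where hypothesis \eqref{eq:stron-BG-condition3} is used), concludes $\chi(E,E)\le 1$ via Serre duality, and then Riemann--Roch \eqref{eqn:RiemannRoch} gives $\chi(E,E)=-(rH/2)\,H\!\cdot\!\Delta(E)+\ch_0(E)^2$, whence $\Del_H(E)/(H^3\ch_0(E))^2\ge (2/rd)(1-1/\ch_0(E)^2)\ge 3/(2rd)$ once $\ch_0(E)\ge 2$. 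The $\ch_0(E)^2$ term, which is indispensable, simply cannot be generated by Euler characteristics against line bundles.

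Two further ingredients of the actual proof are absent from your plan and cannot be supplied by it. First, the case $|\ch_0(E)|=1$ (Proposition \ref{prop:strong-BG-rank-1-case}) and the reduction of the general case both rest on a minimal-counterexample induction on $\Del_H$: one deforms $(\beta,\alpha)$ (with $\beta_0$ rational, so that $\Del_H$ takes values in a discrete set), passes to Jordan--H\"older factors at walls, and uses the no-integer hypotheses \eqref{eqn:strong-BG-ineq-condition}--\eqref{eqn:strong-BG-ineq-condition-2} together with Lemma \ref{prop:Delta0tiltstableFano3} to guarantee that some factor $E_k$ has $\mu_H(E_k)\notin\ZZ$, hence $0<\Del_H(E_k)<\Del_H(E)$, contradicting minimality; this is how one shows $E$ is a reflexive sheaf and that $E^*$, $E(-rH)[1]$ are tilt stable where needed (Proposition \ref{prop:stability-E[1]}). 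In other words, conditions \eqref{eqn:strong-BG-ineq-condition}--\eqref{eqn:strong-BG-ineq-condition-2} are used to control wall-crossing factors, not to ``confine $E$ to a chamber'' for comparisons with line bundles. Second, the lattice-theoretic constants $e_1(X)$ and $e_2(X)$ in $\kappa(X)$ come from the large-$\alpha$ limit (Proposition \ref{prop:limittiltstableobjects}) applied to limiting stable factors with $\ch_1\notin\ZZ\langle H\rangle$, combined with the Hodge index theorem and Proposition \ref{prop:discrimi-ses-decrease}; an argument that only sees the $H$-degrees $H^2\ch_1$, $H\ch_2$ of $E$, as yours does, has no access to the N\'eron--Severi lattice minima that define these constants, which is precisely the new difficulty when $\rho(X)>1$.
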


\subsection{Relation to the existing works}
This paper supersedes the author's unpublished work \cite{PiyFano3}. 

A modification  of the Bogomolov-Gieseker type  inequality conjecture for Fano 3-folds appeared 
in \cite{BMSZ} and the author's unpublished preprint \cite{PiyFano3} almost at the same time.
One of the key ingredients in those works is the formulation of a strong form of Bogomolov-Gieseker  inequality 
for tilt stable objects on Fano 3-folds having  higher Picard ranks, extending the previous work of Li  in the Picard rank one case (see \cite{Li}). However, both formulations had a gap and it was fixed by the authors in \cite{BMSZ}. 
In this paper we further strengthen their inequality as presented in Theorem \ref{thm:intro-strong-BG-ineq} above.
See Remark \ref{rem:relation-with-BMSZ} for further details. 

Following similar ideas in \cite{Li, BMSZ}, we 
 establish Conjecture \ref{conj:intro-limitBG} or equivalently Conjecture \ref{conj:intro-BG-ineq}  for Fano 3-folds as stated in Theorems \ref{thm:intro-Fano3-A=0} and \ref{thm:intro-blowup-P3-xi=0}. 
The class  $\Gamma$  in Theorem 1.1 of \cite{BMSZ} is exactly equal to 
the class $\left( -\Lambda + \xi(A)H^2\right)$ with $A=0$ in our paper.
However, the modified Bogomolov-Gieseker type inequality 
that we propose in this paper is rather general, and also, the following are some significant points relevant to Fano 3-folds. 
\begin{enumerate}
 \item According to our notation, in \cite{BMSZ} the authors  only consider the modified Bogomolov-Gieseker type inequalities for Fano 3-folds with respect to $A =0$. 
 
\item  Since Theorem \ref{thm:intro-strong-BG-ineq} further generalizes \cite[Theorem 3.1]{BMSZ}, our modified 
 Bogomolov-Gieseker type inequalities for Fano 3-folds become stronger, specifically for the Fano 3-folds having 
 $\xi(A) >0$ in Theorem \ref{thm:intro-Fano3-A=0}. 
 
 \item In \cite{BMSZ}, the authors did not optimize the modified Bogomolov-Gieseker type inequalities for the blow-up of $\PP^3$ at a point. Specifically, $\Gamma$ in  \cite[Theorem 1.1]{BMSZ} is a class with $\Gamma \cdot H >0$ (see Sections 4.B and 6 of \cite{BMSZ}). However, in Section \ref{sec:blowup-P3}, we show that $\xi(A) =0$ for some non-zero $A$ as stated in Theorem \ref{thm:intro-blowup-P3-xi=0} above.
\end{enumerate}
 
\subsection{Plan of the paper} 
In Section \ref{sec:prelim}, we briefly recall the notion of tilt stability and some  important results associated to sheaves and Fano 3-folds. In Section \ref{sec:tiltproperties}, we discuss properties of tilt stable objects in detail. In particular, we see that tilt stability on 3-folds is preserved under the dualizing of objects. More precisely, we see that objects in the first tilted category of 3-folds  behave somewhat similar to coherent sheaves on a projective surface under the dualizing. 
In Section \ref{sec:BGconj}, we  develop the framework to   modify  the Bogomolov-Gieseker type inequality introduced by Bayer, Macr\`i and Toda, in order to  construct a family of geometric Bridgeland stability conditions on any smooth projective 3-fold. Moreover, we  introduce the notion of $\obe_A$ stability, and reduce the requirement of 
modified Bogomolov-Gieseker type inequality conjecture to those stable objects.
In Section \ref{sec:Hom-vanishing}, we  get certain $\Hom$ vanishing results for $\obe_A$ stable objects with respect to some line bundles. We prove a strong from of Bogomolov-Gieseker  inequality for Fano 3-folds in Section \ref{sec:strong-BG-ineq-for-tiltstable}. In Section \ref{sec:Fano3-A=0}, we establish the modified Bogomolov-Gieseker type inequalities for Fano 3-folds, and in Section \ref{sec:blowup-P3} an optimal inequality for the blow-up of $\PP^3$ at a point.


\subsection{Notation}
Let us collect some of the important notations that we use in this paper as follows:

\begin{itemize}[leftmargin=*]

\item When  $\aA$ is the heart of a bounded t-structure  on 
a triangulated category $\dD$,  by 
$H_{\aA}^i(-)$ we denote the corresponding  $i$-th cohomology functor. 

\item 
For a set of objects $\sS \subset \dD$ in a triangulated category $\dD$,  by 
$\langle \sS \rangle \subset \dD$ we denote
its extension closure, that is the smallest extension closed subcategory 
of $\dD$ which contains $\sS$.

\item Unless otherwise stated,  throughout this paper, all the varieties are smooth projective and defined over 
$\mathbb{C}$.  For a variety $X$, by $\Coh(X)$ we denote  the category of 
coherent sheaves on $X$, and by $D^b(X)$ we denote the bounded derived category of $\Coh(X)$.

\item For a variety $X$, 
by $\omega_X$ we denote its canonical line bundle, and let $K_X = c_1(\omega_X) $.

\item For  $ M = \QQ, \RR, \text{ or } \CC$  we write $\NS_M(X) = \NS(X) \otimes_{\ZZ} M$.

\item For $E, F \in D^b(X)$, denote $\hom_{\SX}(E,F) = \dim \Hom_{\SX}(E,F)$, and when $E$ is a sheaf, 
$h^i(E) = \dim H^i(E, X)$.  

\item For  the bounded derived category of a variety $X$, we simply write  $\hH^i(-)$ for $H_{\Coh(X)}^i(-)$.

\item For $0 \le i \le \dim X$, $\Coh_{\le i}(X)  = \{E \in \Coh(X): \dim \Supp(E)  \le i  \}$, $\Coh_{\ge i}(X) = \{E \in \Coh(X): \text{for } 0 \ne F \subset E,   \ \dim \Supp(F)  \ge i  \}$ and $\Coh_{i}(X) = \Coh_{\le i}(X) \cap \Coh_{\ge i}(X)$. 

\item For $E \in D^b(X)$, $E^\vee = \dR \calHom(E, \oO_X)$. When $E$ is a sheaf we write its dual sheaf $\hH^0(E^\vee)$ by $E^*$.

\item The skyscraper sheaf of a  closed point $x\in X$ is  denoted by $\oO_x$.

\item For  $B \in \NS_{\RR}(X)$, the twisted Chern character $ch^B(-) = e^{-B} \cdot \ch(-)$. 
For ample $H \in \NS(X)$ and, $\mu_{H,B}(E) = (H^2 \ch_1^B(E))/(H^3 \ch_0(E))$.
We write $\mu_H = \mu_{H,0}$ and $\mu_{H, \beta} = \mu_{H, \beta H}$.

\item  Tilt slope on $\bB_{H,B}$ is defined by $\nu_{H,B, \alpha} (E) = \frac{H \ch_2^{B}(E) - (\alpha^2/2) H^3\ch_0(E)}{H^2 \ch^B_1(E)}$. Sometimes we write $\nu_{\beta, \alpha} = \nu_{H, \beta H, \alpha}$ and 
$\bB_{\beta} = \bB_{H, \beta H}$. 

\item $\HN^{\mu}_{H, B}(I) = \langle E \in \Coh(X) : E \text{ is } \mu_{H , B}\text{-semistable with }
\mu_{H , B}(E) \in I \rangle$. Similarly, we define  $\HN^{\nu}_{H, B}(I) \subset \bB_{H,B}$. 

\item For $E \in \bB_{H,B}$ we write
$E^i = H^{i}_{\bB_{H,-B}}(E^\vee)$. 
So for example $E^{12} =H^{2}_{\bB_{H, B}} \left( \left(H^{1}_{\bB_{H,-B}}(E^\vee)\right)^\vee\right)$.

\end{itemize}

\subsection{Acknowledgements}
I am grateful to Sergey Galkin, 
Chen Jiang, Ilya Karzhemanov, and Alexander Kuznetsov for some useful discussions on Fano varieties.
Special thanks go to 
Marcello Bernardara, Emanuele Macr\`i, Benjamin Schmidt, and  Xiaolei Zhao for drawing my attention to the problem appeared in \cite[Theorem 3.1]{BMSZ} of an early preprint, and it also affected a somewhat similar result in my previous unpublished work \cite{PiyFano3}.
This work is supported by the World Premier International Research Center Initiative (WPI Initiative), MEXT, Japan.


\section{Preliminaries}
\label{sec:prelim}
\subsection{Tilt stability on 3-folds}
\label{subsec:tiltstab}
Let us briefly recall the notions of slope and tilt stabilities  
for a given smooth projective threefold $X$ as
introduced in \cite{BMT}.

Let $H \in \NS(X)$ be an ample divisor class, and  $B \in \NS_{\mathbb{R}}(X)$. The twisted Chern
character with respect to $B$
 is defined by $\ch^B(-) = e^{-B} \ch (-)$.
The twisted slope $\mu_{H , B} $ on  $Coh(X)$ is defined by, for $E \in \Coh(X)$
$$
\mu_{H, B} (E) = \begin{cases}
+ \infty & \text{if } E \text{ is a torsion sheaf} \\
\frac{H^{2} \ch_1^B(E)}{H^3 \ch^B_0(E)} & \text{otherwise}.
\end{cases}
$$
For simplicity we write
$$
\mu_{H} = \mu_{H,0}.
$$
So we have $\mu_{H,B} = \mu_{H} - (BH^2)/(H^3)$.

We say  $E \in \Coh(X)$ is $\mu_{H , B}$-(semi)stable, if for any $0 \ne
F \varsubsetneq E$, $\mu_{H , B}(F)< (\le) \mu_{H ,
  B}(E/F)$. The Harder-Narasimhan property holds for
$\Coh(X)$, and  
 for a given interval $I \subset \mathbb{R} \cup\{+\infty\}$, we  define 
 the subcategory $\HN^{\mu}_{H, B}(I) \subset
\Coh(X)$ by
\begin{equation}
\label{eqn:HN-mu-interval-subcat}
\HN^{\mu}_{H, B}(I) = \langle E \in \Coh(X) : E \text{ is } \mu_{H , B}\text{-semistable with }
\mu_{H , B}(E) \in I \rangle.
\end{equation}
The subcategories  $\tT_{H , B}$ and $\fF_{H , B}$ of $\Coh(X)$ are defined by
\begin{align*}
\tT_{H , B} = \HN^{\mu}_{H, B}((0, +\infty]), \ \ \
\fF_{H , B} = \HN^{\mu}_{H, B}((-\infty, 0]).
\end{align*}
Now $( \tT_{H , B} , \fF_{H, B})$ forms a torsion pair on $\Coh(X)$
and let the abelian category $\bB_{H , B} = \langle \fF_{H , B}[1],
\tT_{H, B} \rangle \subset D^b(X)$ be the corresponding tilt of $\Coh(X)$.

Let $\alpha \in \RR_{>0}$. Following \cite{BMT}, the tilt-slope $\nu_{H, B, \alpha} $ on $\bB_{H , B}$ is defined by, for $E \in \bB_{H,B}$
$$
\nu_{H, B, \alpha}(E) =
\begin{cases}
+\infty & \text{if } H^2 \ch^B_1(E) = 0 \\
\frac{H \ch_2^{B}(E) - (\alpha^2/2) H^3\ch_0(E)}{H^2 \ch^B_1(E)} & \text{otherwise}.
\end{cases}
$$
In \cite{BMT} the notion of $\nu_{H , B, \alpha}$-stability for objects in
$\bB_{H , B}$ is introduced in a similar way to $\mu_{H,
  B}$-stability for $\Coh(X)$. Also it is proved that the abelian
category $\bB_{H , B}$ satisfies the Harder-Narasimhan property with respect to
$\nu_{H , B, \alpha}$-stability. 
Then similar to \eqref{eqn:HN-mu-interval-subcat}  we define the subcategory $\HN^{\nu}_{H, B, \alpha}(I) \subset \bB_{H, B}$ for an
interval $I \subset \mathbb{R} \cup\{+\infty\}$.
The subcategories $\tT_{H , B, \alpha}'$ and $\fF_{H , B, \alpha}'$ of $\bB_{H, B}$ are defined by
$\tT_{H , B, \alpha}' = \HN^{\nu}_{H, B, \alpha}((0, +\infty])$ and $\fF_{H, B}' = \HN^{\nu}_{H, B, \alpha}((-\infty, 0])$. Then the pair $( \tT_{H , B, \alpha}' , \fF_{H , B, \alpha}')$ forms a torsion pair on
$\bB_{H , B}$ and let the abelian category 
\begin{equation}
\label{def:double-tilt}
\aA_{H , B, \alpha} = \langle
\fF_{H , B, \alpha}'[1],\tT_{H , B, \alpha}' \rangle \subset D^b(X)
\end{equation}
 be the corresponding tilt.

\subsection{Some homological algebraic results}

An object of an abelian category is called {minimal} when it has no
proper subobjects or equivalently no nontrivial quotients in the category. 
For example skyscraper sheaves of closed points are the only minimal objects of the abelian category of coherent sheaves on a scheme. 
Moreover, for the abelian category $\bB_{H,B}$ of a 3-fold, we have the following:

\begin{prop}
\label{prop:minimalobj}
The objects  which are isomorphic to the following types are minimal in $\bB_{H,B}$:
\begin{enumerate}
\item skyscraper sheaves $\oO_x$ of $x \in X$.
\item $E[1]$, where $E$ is a $\mu_{H, B}$-stable reflexive sheaf with $\mu_{H, B}(E) =0$.
\end{enumerate}
\end{prop}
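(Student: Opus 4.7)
For both statements the strategy is the same: starting from a short exact sequence $0 \to A \to M \to C \to 0$ in $\bB_{H,B}$, I will extract the long exact sequence in $\Coh(X)$-cohomology, exploit the defining constraints $\hH^{-1}\in\fF_{H,B}$ and $\hH^0\in\tT_{H,B}$, and then use the structural input on $M$---simplicity of $\oO_x$ in part~(i); $\mu_{H,B}$-stability at slope $0$ together with reflexivity in part~(ii)---to force $A=0$ or $A=M$.

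For part~(i), since $\oO_x$ is concentrated in degree $0$, the long exact sequence yields $\hH^{-1}(A)=0$ together with
\[
0\to\hH^{-1}(C)\to\hH^0(A)\to\oO_x\to\hH^0(C)\to 0.
\]
As $\oO_x$ is a simple sheaf, the image of $\hH^0(A)\to\oO_x$ is either $0$ or all of $\oO_x$. The former forces $\hH^0(A)=\hH^{-1}(C)\in\tT_{H,B}\cap\fF_{H,B}=\{0\}$, so $A=0$. The latter yields the short exact sequence $0\to\hH^{-1}(C)\to\hH^0(A)\to\oO_x\to 0$; Chern character additivity, combined with the sign constraints $H^2\ch_1^B|_{\fF_{H,B}}\le 0$ and $H^2\ch_1^B|_{\tT_{H,B}}\ge 0$ (with equality in $\tT_{H,B}$ forcing torsion in codimension $\ge 2$), pins $\hH^0(A)$ into torsion of codimension $\ge 2$; comparing ranks then gives $\ch_0(\hH^{-1}(C))=0$, and as $\hH^{-1}(C)$ is torsion-free this forces $\hH^{-1}(C)=0$, hence $A=\oO_x$.

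For part~(ii) with $M=E[1]$, the long exact sequence produces $\hH^0(C)=0$ and
\[
0\to\hH^{-1}(A)\to E\to\hH^{-1}(C)\to\hH^0(A)\to 0,
\]
and I will split on the subsheaf $\hH^{-1}(A)\subseteq E$. If $\hH^{-1}(A)=E$, then $\hH^{-1}(C)\simeq\hH^0(A)\in\fF_{H,B}\cap\tT_{H,B}=\{0\}$, so $A=E[1]$. If $0\neq\hH^{-1}(A)\subsetneq E$, strict $\mu_{H,B}$-stability of $E$ at slope $0$ forces $\mu_{H,B}(E/\hH^{-1}(A))>0$, but $E/\hH^{-1}(A)\hookrightarrow \hH^{-1}(C)\in\fF_{H,B}$ forces a nonpositive maximal slope, a contradiction (the torsion-freeness of $E$ rules out the degenerate possibility that $E/\hH^{-1}(A)$ is torsion). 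If $\hH^{-1}(A)=0$, the same $H^2\ch_1^B$-additivity, now combined with $\mu_{H,B}(E)=0$, forces $\hH^0(A)$ to be torsion of codimension $\ge 2$; dualizing $0\to E\to\hH^{-1}(C)\to\hH^0(A)\to 0$ and invoking $\calExt^{\le 1}(\hH^0(A),\oO_X)=0$ gives $\hH^{-1}(C)^{*}\simeq E^{*}$, whence by reflexivity of $E$,
\[
E\subseteq\hH^{-1}(C)\subseteq\hH^{-1}(C)^{**}\simeq E^{**}=E,
\]
forcing $\hH^0(A)=0$ and $A=0$.

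The step I expect to be the main obstacle is this final case of part~(ii): it demands both a precise numerical squeeze (to pin $\hH^0(A)$ into torsion of codimension $\ge 2$) and the reflexivity of $E$ (to promote the inclusion $E\subseteq\hH^{-1}(C)$, whose cokernel has codimension $\ge 2$, to an equality). Neither input is sufficient on its own, and this is the unique place where the reflexivity hypothesis is indispensable---without it, a nontrivial extension of $E$ by codimension $\ge 2$ torsion inside $\hH^{-1}(C)$ would furnish a genuine proper subobject in $\bB_{H,B}$.
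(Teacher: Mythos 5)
Your proof is correct, and it is essentially the argument the paper has in mind: the paper writes no details, deferring to the K3 surface case of \cite[Proposition 2.2]{Huy} and to \cite[Aside 2.12]{PT}, and your argument is exactly that proof adapted to dimension three --- long exact sequence of $\Coh(X)$-cohomologies, the sign constraints $H^2\ch_1^B \le 0$ on $\fF_{H,B}$ and $\ge 0$ on $\tT_{H,B}$, stability of $E$, and reflexivity replacing local freeness in the final double-dual step. One misattribution in part (ii) should be fixed: a torsion-free sheaf can certainly have torsion quotients (e.g.\ $\oO_X \twoheadrightarrow \oO_D$ for a divisor $D$), so it is not ``the torsion-freeness of $E$'' that rules out $E/\hH^{-1}(A)$ being torsion; what rules it out is the embedding $E/\hH^{-1}(A) \hookrightarrow \hH^{-1}(C)$ that you already have, since $\hH^{-1}(C) \in \fF_{H,B}$ is torsion-free, so $E/\hH^{-1}(A)$ is torsion-free and nonzero, after which the slope contradiction goes through exactly as you wrote. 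With that one-line correction the proof is complete.
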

\begin{proof}
Similar to the proof of \cite[Proposition 2.2]{Huy}. (Also one can see these objects as examples of the class of minimal objects considered abstractly in \cite[Aside 2.12]{PT}.)
\end{proof}

Let $E, F$ be two objects in the derived category $D^b(X)$ of a smooth projective variety $X$. 
The Euler characteristic $\chi(E,F)$ is defined by 
$$
\chi(E, F) = \sum_{i \in \mathbb{Z}} \hom_{\SX} (E, F[i]).
$$
We write $\chi(\oO_X, E)$ by $\chi(E)$, and so $\chi(E, F) = \chi(E^\vee \otimes F)$. 
The Hirzebruch-Riemann-Roch theorem says, 
\begin{equation}
\label{eqn:RiemannRoch}
\chi(E) = \int_X \ch(E) \cdot \td(X).
\end{equation}
Here $\td(X)$ is the Todd class $\td(T_X)$ of the tangent bundle $T_X$ of $X$. 
When  $X$ is $3$-dimensional, from \cite[Section 4, Appendix A]{Har}
\begin{equation}
\label{eqn:Todd}
\td(X) = 1 +\frac{1}{2} c_1(X) + \frac{1}{12}\left(c_1(X)^2 + c_2(X)\right) +  \frac{1}{24} c_1(X) c_2(X).
\end{equation} 
Here $c_i(X)$ denotes the $i$-th Chern class $c_i(T_X)$ of the tangent bundle $T_X$. 

\subsection{Some sheaf theory}
Let us recall some useful results for coherent sheaves.  
\begin{prop}[{\cite{OSS, HL}}]
\label{prop:dualsheaf}
Let $X$ be an $n$-dimensional smooth projective variety. Then we have the following for $E \in \Coh(X)$:
\begin{enumerate}
\item If  $E \in \Coh_{\le d}(X)$ then it fits into the short exact sequence 
$$
0 \to E_{\le d-1} \to E \to E_d \to 0
$$
in $\Coh(X)$ for some $ E_{\le d-1} \in \Coh_{\le d-1}(X)$ and $ E_d  \in \Coh_d(X)$.
\item $\calExt^i(E,\oO_X) \in \Coh_{\le n-i}(X)$.
\item If $E \in \Coh_d(X)$ then it fits into the short exact sequence 
$$
0 \to E \to \calExt^{n-d}\left( \calExt^{n-d}(E)\right) \to Q  \to 0
$$
in $\Coh(X)$ for some $Q  \in \Coh_{\le d-2}(X)$.
\end{enumerate}
\end{prop}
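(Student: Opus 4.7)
The three statements are standard results in sheaf theory, and my plan is to prove them in order, each feeding into the next.

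For part (i), the plan is to use Noetherian induction. The collection of subsheaves of $E$ of dimension at most $d-1$ is closed under taking sums (since $\dim \Supp(F_1 + F_2) \le \max\{\dim \Supp F_1, \dim \Supp F_2\}$), and $\Coh(X)$ is Noetherian, so there exists a unique maximal subsheaf $E_{\le d-1} \subset E$ of dimension at most $d-1$. Setting $E_d = E/E_{\le d-1}$, I would verify purity by contradiction: if $F \subset E_d$ were nonzero of dimension $\le d-1$, then its preimage in $E$ would be a strictly larger subsheaf of dimension $\le d-1$, contradicting maximality.

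For part (ii), the plan is purely local: the statement $\calExt^i(E,\oO_X) \in \Coh_{\le n-i}(X)$ is equivalent to saying the support has codimension $\ge i$. At any point $x \in \Supp \calExt^i(E,\oO_X)$, the stalk $\Ext^i_{\oO_{X,x}}(E_x, \oO_{X,x})$ is nonzero, so the projective dimension of $E_x$ over the regular local ring $R = \oO_{X,x}$ is at least $i$. By the Auslander--Buchsbaum formula, $\pd_R E_x + \depth E_x = \depth R = \dim R$, and since $\depth E_x \ge 0$, I conclude $\dim \oO_{X,x} \ge i$, i.e., $\codim x \ge i$.

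For part (iii), the key input is the biduality spectral sequence
\begin{equation*}
E_2^{p,q} = \calExt^p\bigl(\calExt^{-q}(E, \oO_X), \oO_X\bigr) \Rightarrow \hH^{p+q}(E^{\vee\vee}),
\end{equation*}
together with the fact that on the smooth variety $X$ any coherent sheaf is a perfect complex, so $E^{\vee\vee} \cong E$ canonically. Writing $c = n - d$, purity of $E$ and part (ii) imply $\calExt^q(E,\oO_X) = 0$ for $q < c$, while for $q \ge c$ the sheaf $\calExt^q(E,\oO_X)$ has codimension $\ge q$; applying (ii) again, $E_2^{p,q}$ can be nonzero only when $-q \ge c$ and $p \ge -q$. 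In particular, the only contribution to total degree $0$ with $p < c$ would be $p = 0$, but $E_2^{0,0} = E^{**}$ vanishes since $E$ is torsion (dimension $d < n$, with the case $d = n$ handled analogously using that then the cokernel of $E \to E^{**}$ is supported in codimension $\ge 2$). Thus the edge map supplies a natural morphism $E \to \calExt^c(\calExt^c(E,\oO_X), \oO_X) = E_2^{c,-c}$ whose kernel and cokernel $Q$ are assembled from the terms $E_2^{p,-p}$ with $p > c$; by part (ii) each such term lies in $\Coh_{\le n - p} \subset \Coh_{\le d - 1}$, and a closer look shows they actually live in $\Coh_{\le d-2}$ (because the nonvanishing of $\calExt^p$ of a sheaf of codimension exactly $p$ refines the support bound by one further codimension via local duality on the support). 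Injectivity of the edge map, finally, follows from the purity of $E$: the kernel is a subsheaf of $E$ of dimension $\le d - 2 < d$, hence zero.

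The main obstacle is the careful bookkeeping in (iii): one must verify that the cokernel lies in $\Coh_{\le d-2}$ rather than merely $\Coh_{\le d-1}$, which requires the sharper local-duality statement that $\calExt^p(F, \oO_X)$ for $F$ pure of codimension exactly $c$ has support of codimension $\ge c + 1$ whenever $p > c$. Once that refinement is in place, the spectral sequence argument closes cleanly.
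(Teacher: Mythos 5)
The paper itself gives no proof of this proposition: it is quoted as standard background with the citation \cite{OSS, HL}, so the relevant baseline is the textbook argument of \cite[Section~1.1]{HL}, which is exactly the route you follow --- the torsion filtration for (i), Auslander--Buchsbaum for (ii), and the biduality spectral sequence for (iii). Your parts (i) and (ii) are correct and complete.

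Part (iii), however, contains a genuine gap, and it sits precisely at the point you flag as the main obstacle. There are two problems. First, the vanishing $\calExt^q(E,\oO_X)=0$ for $q<c:=n-d$ does \emph{not} follow from ``purity and part (ii)'': part (ii) only bounds supports from above, and purity is irrelevant here. This vanishing is a separate standard fact (grade equals codimension over the regular local rings $\oO_{X,x}$, equivalently local duality), and you in fact need it twice --- once to kill the rows $-q<c$ of the $E_2$-page, and once to kill the columns $p<-q$, including the terms that could map into the spot $(c,-c)$. Second, and more seriously, you claim that the cokernel $Q$ of the edge map $\theta\colon E\to E_2^{c,-c}=\calExt^c\left(\calExt^c(E,\oO_X),\oO_X\right)$ is assembled from the diagonal terms $E_2^{p,-p}$ with $p>c$. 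That is not the case: since nothing maps into $(c,-c)$, one has $E_\infty^{c,-c}\subseteq E_2^{c,-c}$, and the diagonal terms $E_\infty^{p,-p}$, $p\ge c+1$, assemble only the \emph{kernel} of $E\twoheadrightarrow E_\infty^{c,-c}$ (where the bound $\Coh_{\le d-1}$ coming from (ii) already suffices, because purity kills any subsheaf of dimension $<d$; no $d-2$ is needed there). The cokernel $Q=E_2^{c,-c}/E_\infty^{c,-c}$ is instead filtered by the images of the differentials $d_r\colon E_r^{c,-c}\to E_r^{c+r,\,1-c-r}$, $r\ge 2$, i.e.\ by subquotients of the total-degree-one terms $\calExt^{c+r}\left(\calExt^{c+r-1}(E,\oO_X),\oO_X\right)$. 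Because of this misidentification, your attempted repair via a ``sharper local-duality statement'' does not close the argument: as you phrase it (codimension $\ge c+1$ whenever $p>c$) the statement is already implied by (ii), hence cannot improve anything, while its genuinely sharper form (codimension $\ge p+1$ for $p>c$ when the sheaf is pure, \cite[Prop.~1.1.10]{HL}) is left unproven. So as written the bound $Q\in\Coh_{\le d-2}$ is not established. The fix is simply the correct identification of the cokernel's composition factors: their outer Ext index is $c+r\ge c+2$, so part (ii) alone gives dimension $\le n-(c+r)\le d-2$, and the proof then closes with no extra input.
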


\begin{lem}[{\cite[Theorem 7.3.1]{HL}, \cite[Theorem 2]{Sim}}]
\label{prop:sheafBG}
Let $X$ be a smooth projective variety of dimension $n \ge 3$ and let $H \in \NS(X)$ be an ample divisor class.
Let $E$ be a $\mu_H$ slope semistable torsion free sheaf on $X$. 
Then we have the following:
\begin{enumerate}
\item  Sheaf $E$ satisfies the so called Bogomolov-Gieseker inequality:
\begin{equation*}
H^{n-2}  \Delta(E) \ge 0, \ \text{where } \Delta(E) = (\ch_1(E))^2 - 2\ch_0(E) \ch_2(E).
\end{equation*}
\item If $H^{n-1} \ch_1(E^{**})=0$ and $H^{n-2} \ch_2(E^{**}) =0$,
then all Jordan-H\"{o}lder slope stable factors of $E^{**}$ are locally free sheaves which have vanishing Chern classes.
\item If $E$ is a $\mu_H$ semistable reflexive sheaf with $H^{n-2}\Delta(E) = 0$, then 
$E$ is a locally free sheaf with $\ch_{i}(E\otimes E^{*}) = 0$ for $i \ge 1$; in particular $\Delta(E) =0$.
\end{enumerate}
\end{lem}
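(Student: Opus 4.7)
The plan is to reduce all three statements to the two-dimensional case via the Mehta--Ramanathan restriction theorem. For a sufficiently divisible integer $m$, a general smooth surface $S \subset X$ cut out by $n-2$ general members of $|mH|$ has the property that $E|_S$ remains $\mu_H$-semistable, and a projection-formula computation gives $\deg_S \Delta(E|_S) = m^{n-2}\, H^{n-2}\Delta(E)$. The classical Bogomolov inequality on the surface $S$ therefore yields $H^{n-2}\Delta(E) \ge 0$, proving (i).

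For (ii), set $F = E^{**}$, which is again $\mu_H$-semistable. Under the hypotheses and by (i), $H^{n-2}(\ch_1(F))^2 \ge 2\ch_0(F)\cdot H^{n-2}\ch_2(F) = 0$, while the Hodge index theorem applied to $\ch_1(F)$, which is $H$-primitive since $H^{n-1}\ch_1(F)=0$, gives $H^{n-2}(\ch_1(F))^2 \le 0$. Equality in Hodge index forces $\ch_1(F) \equiv 0$ numerically. Running the same argument on each Jordan--H\"older stable factor $G$ of $F$ yields $H^{n-2}\ch_2(G) \le 0$, and summing against $\sum H^{n-2}\ch_2(G) = H^{n-2}\ch_2(F) = 0$ forces $H^{n-2}\Delta(G) = 0$ for every $G$. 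Passing to $G^{**}$ (which is reflexive and $\mu_H$-stable with the same vanishings at the relevant level), part (iii) then delivers the desired vanishing of Chern classes of each factor.

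For (iii), restrict once more to a very general complete intersection surface $S$; semistability of $E|_S$ and $\Delta(E|_S) = 0$ both descend from the corresponding statements on $X$ by the computation above. The equality case of the surface Bogomolov inequality (Gieseker's theorem, algebraically via Langer's foliation method or analytically via the Donaldson--Uhlenbeck--Yau correspondence producing a projectively flat Hermitian--Einstein metric) forces $E|_S$ to be locally free with projectively flat Jordan--H\"older pieces, so $\ch_i((E\otimes E^*)|_S) = 0$ for all $i \ge 1$. Varying $S$ through the family of general complete intersections and invoking the classical fact that a reflexive sheaf on a smooth variety of dimension $\ge 3$ is locally free as soon as its restriction to a general hyperplane section is, one concludes that $E$ is locally free on $X$ with $\ch_i(E\otimes E^*) = 0$ for $i \ge 1$.

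The main obstacle is precisely this last step: the equality case of Bogomolov's inequality is substantially deeper than the inequality itself, and either the Kobayashi--Hitchin correspondence or Langer's purely algebraic argument is needed; in addition, some care is required to propagate local freeness and the vanishing of $\ch(E\otimes E^*)$ from the general restriction $E|_S$ back to $E$ on $X$.
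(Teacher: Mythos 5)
The first thing to note is that the paper does not prove this lemma at all: it is imported from the literature, with the proof delegated to \cite[Theorem 7.3.1]{HL} and \cite[Theorem 2]{Sim}. So your proposal is really an attempt to reprove the cited results. Your part (i) is the standard argument (Mehta--Ramanathan restriction to a general complete-intersection surface in $|mH|^{n-2}$, then Bogomolov's inequality on that surface) and is correct, and the Hodge-index bookkeeping at the start of part (ii) is also fine.

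The genuine gap is in part (iii), and it contaminates (ii). You restrict to a very general complete-intersection surface $S$, apply the equality case of Bogomolov there, and then transfer local freeness back to $X$ via ``the classical fact that a reflexive sheaf on a smooth variety of dimension $\ge 3$ is locally free as soon as its restriction to a general hyperplane section is.'' No such fact exists, and for a structural reason that also explains why restriction cannot prove (iii): the locus where a reflexive sheaf fails to be locally free has codimension $\ge 3$, so a general hyperplane section --- a fortiori a general complete-intersection surface --- misses it entirely, and $E|_S$ is locally free for \emph{every} reflexive $E$. Any stable reflexive non-locally-free rank-two sheaf on $\PP^3$ (these exist in abundance, with isolated singular points, by Hartshorne's Serre construction) is a counterexample to your ``classical fact.'' For the same reason, restriction to $S$ only ever sees the number $H^{n-2}\Delta(E)$, which is the hypothesis; it can never yield the conclusions of (iii), namely local freeness of $E$ on $X$ and the vanishing of $\ch_i(E\otimes E^*)$ as cohomology classes rather than merely their products with $H^{n-2}$. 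The cited proofs work on $X$ itself: a stable reflexive sheaf carries an admissible Hermitian--Einstein metric (Bando--Siu, as used by Simpson), the vanishing of $H^{n-2}\Delta(E)$ forces the trace-free curvature to vanish, projective flatness then gives $\ch(E)=\rk(E)\,e^{c_1(E)/\rk(E)}$, hence $\ch_i(E\otimes E^*)=0$ for $i \ge 1$, and flatness lets the bundle extend across the codimension $\ge 3$ singular locus, which is precisely what proves local freeness; Langer's algebraic method is the alternative. Finally, in (ii) you apply (iii) to $G^{**}$ for each Jordan--H\"older factor $G$, but the statement asserts that $G$ itself is locally free; the identification $G=G^{**}$, i.e.\ the vanishing of the codimension $\ge 3$ quotient $G^{**}/G$, is again a consequence of the flatness theory and does not follow from the formal argument you give.
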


\subsection{Fano 3-folds}
Let us recall some important notions associated to Fano varieties. 
A Fano variety $X$  is a smooth projective variety whose anti-canonical divisor $-K_{X}$ is ample. 
A basic invariant of $X$ is its \textit{index}, this is the maximal integer $r(X)$ such that $K_X$ is divisible by $r(X)$ in $\NS(X)$. So $-K_X = r(X) \cdot H$ for an ample divisor class $H$ in $\NS(X)$. 
The number $d(X) = H^{\dim X}$ is usually called the \textit{degree} of $X$.

If $X$ is an $n$-dimensional Fano variety then  $r(X) \le n +1$. 
Moreover, if $r(X) = n +1$  then
$X \cong \mathbb{P}^n$, and   if $r(X) = n$  then $X$ is a quadric. 
For Fano 3-folds there is an explicit Iskovskikh-Mori-Mukai classification. See \cite[Chapter 12]{Fano} or \cite{MM} for further details.  

Let us collect some basic properties for Fano 3-folds, that we will need in the proceeding sections. 
\begin{prop}
\label{prop:Fano3-prop}
Let $X$ be a Fano 3-fold of index $r(X) = r$ and degree $d(X) = d$. Then we have the following:
\begin{enumerate}
\item $h^i(\oO_X) =0$ for all $i>0$, and $\chi(\oO_X) = 1$. 
\item $H \cdot c_2(X) = 24/r$. 
\item $\td(X) = \left(1, \, \frac{1}{2}r H, \, \frac{1}{12}\left( r^2 H^2 + c_2(X)  \right) , \, 1\right)$. 
\end{enumerate}
\end{prop}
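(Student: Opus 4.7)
The plan is to derive the three statements in order, with each one feeding into the next.

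For part (i), I would invoke Kodaira vanishing. Since $X$ is Fano, the line bundle $-K_X$ is ample, so for any $i>0$ we have
\[
H^i(X,\oO_X) \;=\; H^i(X, \omega_X \otimes \oO_X(-K_X)) \;=\; 0.
\]
Combined with $h^0(\oO_X)=1$ (because $X$ is a connected smooth projective variety), this gives $\chi(\oO_X) = 1$.

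For part (ii), I would apply the Hirzebruch-Riemann-Roch formula \eqref{eqn:RiemannRoch} to the structure sheaf. Only the degree $3$ part of $\td(X)$ survives integration, so
\[
1 \;=\; \chi(\oO_X) \;=\; \int_X \td(X) \;=\; \int_X \tfrac{1}{24} c_1(X)\cdot c_2(X).
\]
Since $-K_X = rH$ we have $c_1(X)= rH$, giving $\tfrac{1}{24} r\, H\cdot c_2(X) = 1$, i.e.\ $H\cdot c_2(X) = 24/r$.

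For part (iii), I would simply substitute $c_1(X) = rH$ into the expansion \eqref{eqn:Todd}:
\[
\td(X) \;=\; 1 + \tfrac{1}{2} c_1(X) + \tfrac{1}{12}\bigl(c_1(X)^2 + c_2(X)\bigr) + \tfrac{1}{24} c_1(X)\, c_2(X).
\]
The first three components become $1$, $\tfrac{1}{2}rH$ and $\tfrac{1}{12}(r^2 H^2 + c_2(X))$ as claimed. For the degree-$3$ component, using part (ii) we compute $\tfrac{1}{24} c_1(X)\cdot c_2(X) = \tfrac{1}{24}\, r\cdot (24/r) = 1$.

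There is no real obstacle here; all three items are standard consequences of Kodaira vanishing and Hirzebruch-Riemann-Roch, and the only mild point is ordering the arguments so that (i) is used to compute $\chi(\oO_X)$ in (ii), and (ii) is then used to identify the degree-$3$ component of $\td(X)$ in (iii).
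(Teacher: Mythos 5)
Your proposal is correct and follows essentially the same route as the paper: Kodaira vanishing (using ampleness of $-K_X$) for part (i), Hirzebruch--Riemann--Roch applied to $\oO_X$ together with $c_1(X)=rH$ for part (ii), and substitution into the expansion \eqref{eqn:Todd} for part (iii). The only cosmetic difference is that you spell out the Kodaira vanishing input as $H^i(X,\omega_X\otimes\oO_X(-K_X))=0$, whereas the paper cites the theorem directly and also records the identification $c_1(X)=-c_1(\omega_X)=-K_X=rH$ explicitly.
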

\begin{proof}
Since $-K_X$ is ample, from the Kodaira's vanishing theorem $H^i(\oO_X, X) =0$ for all $i >0$. So 
we have $\chi(\oO_X) = h^0(\oO_X) =1$. 

Let us compute the Todd class of the tangent bundle $T_X$ of $X$. Since the cotangent bundle is $\Omega_X \cong  T_X^{*}$, $c_1(X) = -c_1(\Omega_X)$. Also $\omega_X = \det(\Omega_X)$ and so 
$c_1(X) = -c_1(\omega_X) = -K_X = rH$. 
From the Hirzebruch-Riemann-Roch theorem \eqref{eqn:RiemannRoch}, $\chi(\oO_X) = \int_X \ch(\oO_X) \cdot \td(X)$, and so 
$ \frac{1}{24}c_1(X) c_2(X) =1$. 
The required expression for Todd class follows from \eqref{eqn:Todd}. 
\end{proof}

\section{Some Properties of Tilt Stable Objects}
\label{sec:tiltproperties}
\subsection{Some slope bounds for tilt stable objects}
\label{sec:tiltslopebounds}
Let $X$ be a smooth projective 3-fold.
We follow the same notations for tilt stability introduced in Section \ref{subsec:tiltstab} for $X$. 

By construction, $\Coh_{\le 2}(X) \subset \bB_{H, B}$. 
Moreover, we have the following for its subcategory $\Coh_{\le1}(X)$. 
\begin{prop}
\label{prop:Tor1inftyslope}
We have  
$\Coh_{\le1}(X) \subset \HN^{\nu}_{H, B, \alpha} (+\infty)$.
\end{prop}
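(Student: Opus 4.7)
The plan is to verify the two ingredients: that every $E \in \Coh_{\le 1}(X)$ sits in the tilted heart $\bB_{H,B}$, and that inside this heart it lies in the extension-closure of $\nu_{H,B,\alpha}$-semistable objects of slope $+\infty$.

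First I would observe that any $E \in \Coh_{\le 1}(X)$ is a torsion sheaf on $X$, so $\mu_{H,B}(E) = +\infty$. In particular every Harder--Narasimhan factor of $E$ with respect to $\mu_{H,B}$ is torsion, has slope $+\infty$, and therefore lies in $\tT_{H,B} = \HN^{\mu}_{H,B}((0,+\infty])$. Since $\tT_{H,B}$ is extension-closed and sits inside $\bB_{H,B}$, we conclude $E \in \bB_{H,B}$.

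Next I would compute the denominator of the tilt slope. Because $\Supp(E)$ has dimension at most $1$, the class $\ch(E)$ is supported in codimension at least $2$, so $\ch_0(E) = 0$ and $\ch_1(E) = 0$. Therefore
\[
\ch_1^B(E) = \ch_1(E) - B \cdot \ch_0(E) = 0,
\]
and hence $H^2 \ch_1^B(E) = 0$. By the very definition of the tilt slope this gives $\nu_{H,B,\alpha}(E) = +\infty$.

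Finally, to place $E$ inside $\HN^{\nu}_{H,B,\alpha}(+\infty)$, I would appeal to the HN property of $\nu_{H,B,\alpha}$-stability on $\bB_{H,B}$ recalled in Section~\ref{subsec:tiltstab}. The HN filtration of $E$ has semistable factors $F_i$, each satisfying $H^2\ch_1^B(F_i) \geq 0$ (the standard nonnegativity for objects of $\bB_{H,B}$). Since $\sum_i H^2 \ch_1^B(F_i) = H^2 \ch_1^B(E) = 0$, each $F_i$ must satisfy $H^2\ch_1^B(F_i) = 0$, i.e.\ each $F_i$ is $\nu_{H,B,\alpha}$-semistable of slope $+\infty$. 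Hence $E$ is an iterated extension of such semistable objects, so $E \in \HN^{\nu}_{H,B,\alpha}(+\infty)$, as desired. The only mildly non-routine step is the last bookkeeping with the HN filtration, and it reduces to the sign convention $H^2 \ch_1^B \ge 0$ on $\bB_{H,B}$, which is built into the tilt construction.
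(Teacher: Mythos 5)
Your proof is correct, but it takes a genuinely different route from the paper's. The paper argues by contradiction: assuming $E \in \Coh_{\le 1}(X)$ is not in $\HN^{\nu}_{H,B,\alpha}(+\infty)$, it takes a quotient $E \twoheadrightarrow Q$ in $\bB_{H,B}$ with $\nu_{H,B,\alpha}(Q) < +\infty$ (the last Harder--Narasimhan factor), and then analyzes the long exact sequence of $\Coh(X)$-cohomologies to show that $\hH^{-1}(Q) = 0$, so that $Q$ is in fact a sheaf quotient of $E$, hence lies in $\Coh_{\le 1}(X)$ and has slope $+\infty$ --- a contradiction. You instead argue directly: $H^2\ch_1^B(E) = 0$ since $E$ is supported in codimension $\ge 2$, the quantity $H^2\ch_1^B$ is nonnegative on all of $\bB_{H,B}$ and additive over the HN factors, so every HN factor of $E$ has $H^2\ch_1^B = 0$, i.e.\ slope $+\infty$. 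Your key input --- nonnegativity of $H^2\ch_1^B$ on the heart --- is exactly the ``weak stability function'' property of $Z^{\nu}_{\alpha,\beta}$ that the paper invokes later (Section 4, citing \cite{PT}), and its verification uses the same torsion-pair decomposition $\bB_{H,B} = \langle \fF_{H,B}[1], \tT_{H,B}\rangle$ that powers the paper's cohomology-sheaf analysis; so the underlying facts coincide, but your packaging is cleaner and more abstract (it would prove the analogous statement for any heart equipped with a weak stability function whose imaginary part vanishes on the objects in question). What the paper's argument buys in exchange is slightly finer structural information: it identifies any finite-slope quotient of $E$ in $\bB_{H,B}$ as an honest sheaf quotient, which is in the same spirit as the cohomology-sheaf bookkeeping used repeatedly elsewhere in Section 3. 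One cosmetic remark: since HN slopes are strictly decreasing, your conclusion that every HN factor has slope $+\infty$ actually forces the filtration to be trivial, i.e.\ $E$ itself is $\nu_{H,B,\alpha}$-semistable of slope $+\infty$; this is harmless but slightly stronger than what you state.
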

\begin{proof}
Let $E \in \Coh_{\le1}(X)$. Assume the opposite for a contradiction; so that $0 \to P \to E\to Q \to 0$ is a short exact sequence on $\bB_{H,B}$ with $\nu_{H, B, \alpha} (Q) < +\infty$.
 By considering the long exact sequence of $\Coh(X)$ cohomologies we have 
$\hH^{-1}(P) = 0$, and since $\ch_1(E) = 0$, 
$\ch_1 (\hH^{-1}(Q)) = \ch_1(\hH^{0}(P))$. Since $\hH^{-1}(Q) \in \HN^{\mu}_{H, B} ((-\infty, 0])$ and 
$\hH^{0}(P) \in \HN^{\mu}_{H, B} ((0, +\infty])$, we have $H^2 \ch_1^B(\hH^0(P))=0$. 
So  $\hH^{0}(P)  \in \Coh_{\le1}(X) $, and $\hH^{-1}(Q) =0$.
Hence, $Q \cong \hH^0(Q)$ is a quotient sheaf of $E \in \Coh_{\le1}(X)$; that is 
$Q \in \Coh_{\le1}(X)$.
Therefore, $\nu_{H, B, \alpha}(Q) = +\infty$. This is the required contradiction. 
\end{proof}

\begin{prop}
\label{prop:reflexivityatminus1place}
Let $E \in \HN^{\nu}_{H, B, \alpha}((-\infty,+\infty))$. Then $\hH^{-1}(E)$ is a reflexive sheaf.
\end{prop}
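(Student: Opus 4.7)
Write $F \cneq \hH^{-1}(E)$. By definition of the tilt $\bB_{H,B}$ we have $F \in \fF_{H,B}$, so in particular $F$ is torsion-free, and the canonical inclusion $F \hookrightarrow F^{**}$ sits in a short exact sequence of coherent sheaves
\[
0 \to F \to F^{**} \to Q \to 0,
\]
where the cokernel $Q$ is supported in codimension at least two, hence $Q \in \Coh_{\le 1}(X)$. The goal is to show $Q=0$.

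The first step is to verify that $F^{**}$ also lies in $\fF_{H,B}$. Given any nonzero subsheaf $G \subset F^{**}$, the intersection $G \cap F$ must be nonzero (otherwise $G$ would inject into the torsion sheaf $Q$, contradicting that $G$ is torsion-free as a subsheaf of the reflexive sheaf $F^{**}$), and the quotient $G/(G\cap F)$ embeds into $Q$, hence lies in $\Coh_{\le 1}(X)$. Consequently $H^3\ch_0$ and $H^2\ch_1^B$ are unchanged in passing between $G$ and $G \cap F$, which gives $\mu_{H,B}(G) = \mu_{H,B}(G\cap F) \le \mu^+_{H,B}(F) \le 0$. Therefore $\mu^+_{H,B}(F^{**}) \le 0$, so $F^{**} \in \fF_{H,B}$ and hence $F^{**}[1] \in \bB_{H,B}$.

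Rotating the shifted triangle $F[1] \to F^{**}[1] \to Q[1]$ and recalling that $Q \in \Coh_{\le 1}(X) \subset \bB_{H,B}$, we obtain a short exact sequence in the heart $\bB_{H,B}$:
\[
0 \to Q \to F[1] \to F^{**}[1] \to 0.
\]
The canonical short exact sequence $0 \to F[1] \to E \to \hH^0(E) \to 0$ in $\bB_{H,B}$ identifies $F[1]$ with a subobject of $E$, and composing gives an injection $Q \hookrightarrow E$ in $\bB_{H,B}$.

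Suppose for contradiction that $Q \ne 0$. Proposition~\ref{prop:Tor1inftyslope} gives $Q \in \HN^{\nu}_{H,B,\alpha}(+\infty)$, so every nonzero $\nu_{H,B,\alpha}$-semistable subobject of $Q$ has slope $+\infty$. Pulling back the maximal destabilizing subobject of $Q$ along $Q \hookrightarrow E$ produces a subobject of $E$ with tilt-slope $+\infty$, which forces the maximal tilt-slope of $E$ to be $+\infty$. This contradicts the hypothesis that $E \in \HN^{\nu}_{H,B,\alpha}((-\infty,+\infty))$. Therefore $Q=0$ and $F = F^{**}$ is reflexive.

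The main subtlety in the argument is ensuring that $F^{**}$ stays inside $\fF_{H,B}$ so that the triangle can be interpreted inside the tilted heart; once that is established, the rest of the argument is a direct application of Proposition~\ref{prop:Tor1inftyslope} together with the HN property of $\nu_{H,B,\alpha}$-stability.
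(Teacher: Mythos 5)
Your proof is correct and takes essentially the same route as the paper's: double-dualize $\hH^{-1}(E)$, rotate the resulting triangle to exhibit the cokernel $Q \in \Coh_{\le 1}(X)$ as a subobject of $E$ in $\bB_{H,B}$, and apply Proposition~\ref{prop:Tor1inftyslope} to conclude $Q=0$. The only difference is that you explicitly verify $F^{**} \in \fF_{H,B}$ so that the rotated triangle is a genuine short exact sequence in the heart --- a detail the paper leaves implicit --- which is a welcome addition rather than a divergence.
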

\begin{proof}
For $E \in \HN^{\nu}_{H, B, \alpha}((-\infty,+\infty))$, let us denote $E_j = \hH^j(E)$. 
Object $E$ fits into the short exact sequence
$ 0\to E_{-1} [1] \to E \to E_0 \to 0$
 in $\bB_{H,B}$. 
Here $E_{-1}$ is torsion free and so it fits into the short exact sequence $0 \to E_{-1} \to E_{-1}^{**} \to Q \to 0$ in $\Coh(X)$
 for some $Q \in \Coh_{\le 1}(X)$. Therefore, $0 \to Q \to E_{-1}[1] \to E_{-1}^{**}[1] \to 0$ is a short exact sequence in $\bB_{H,B}$. Hence $Q$ is a subobject of $E\in \HN^{\nu}_{H, B,\alpha}((-\infty,+\infty))$. By Proposition \ref{prop:Tor1inftyslope}, $Q \in \HN^{\nu}_{H, B,\alpha}(+\infty)$, and so $Q =0$. That is $E_{-1}$ is reflexive. 
 \end{proof}

\begin{defi}
\label{def:discriminant}
For $E \in D^b(X)$ we define
\begin{align*}
\Delta(E) & = (\ch_1(E))^2- 2 \ch_0(E) \ch_2(E) \in H^4(X, \ZZ) ,\\
\overline{\Delta}_{H, B}(E) & = (H^2 \ch_1^B(E))^2 - 2 H^3 \ch_0(E)  H \ch_2^B(E).
\end{align*}
We simply write $\overline{\Delta}_{H}= \overline{\Delta}_{H, 0}$.
\end{defi}
We have $\overline{\Delta}_{H, B} = H^3 H \cdot \Delta(E) + (H^2 \ch_1^B(E))^2 - H^3 H (\ch_1^B(E))^2$. 
From the Hodge index theorem, $(H^2 \ch_1^B(E))^2 - H^3 H (\ch_1^B(E))^2 \ge 0$, 
and so
\begin{equation*}
\overline{\Delta}_{H, B} (E) \ge  H^3 H \cdot \Delta(E).
\end{equation*}

For any $t \in \RR$ we have 
\begin{align}
\label{eqn:nu-to-0-stab-para}
\nu_{H, B, \alpha}- t & = \frac{H \ch_2^B- (1/2) \alpha^2H^3 \ch_0}{H^2 \ch_1^B} -t \\
& =  \frac{H \ch_2^{B+tH}  - (1/2)(t^2+\alpha^2)H^3 \ch_0 }{H^2 \ch_1^B }. \nonumber
\end{align}

Let us recall the following slope bounds from \cite{PT} for cohomology sheaves of complexes in the abelian category  $\bB_{H,B}$. 
\begin{prop}[{\cite[Proposition 3.13]{PT}}]
\label{prop:slope-bounds} 
Let $E \in \bB_{H, B}$ and $E_{i}= \hH^{i}(E)$.
Then we have the following:
 \begin{enumerate}
\item if $E \in \HN^{\nu}_{H, B, \alpha}((-\infty, t)) $,  then 
$E_{-1} \in \HN^{\mu}_{H, B} ((-\infty, t- \sqrt{t^2 + \alpha^2}))$;

\item if $E \in \HN^{\nu}_{H, B}((t, +\infty)) $,  then 
$E_0 \in \HN^{\mu}_{H, B} ((t + \sqrt{t^2 + \alpha^2}, +\infty])$;
and

\item if $E$ is tilt semistable with $\nu_{H,B, \alpha}(E) =t$, then
\begin{enumerate}
\item $E_{-1} \in \HN^{\mu}_{H, B} ((-\infty, t- \sqrt{t^2 + \alpha^2}])$ with equality
$\mu_{H,B}(E_{-1}) = t- \sqrt{t^2 + \alpha^2}$ holds
  if and only if $H^2 \ch_2^{B+(t- \sqrt{t^2 + \alpha^2})H}(E_{-1}) = 0$, that is 
  when $\Del_{H,B}(E_{-1})=0$,
   and
\item when $E_0$ is torsion free $E_0 \in \HN^{\mu}_{H, B} ([t+ \sqrt{t^2 + \alpha^2}, +\infty))$ with equality
$\mu_{H,B}(E_{0}) =t+ \sqrt{t^2 + \alpha^2}$ holds
  if and only if $H^2 \ch_2^{B+(t- \sqrt{t^2 + \alpha^2})H}(E_{0}) =0$, that is 
  when $\Del_{H,B}(E_0)=0$.
\end{enumerate}
\end{enumerate}
\end{prop}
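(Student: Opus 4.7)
The plan is to unpack the canonical triangle $E_{-1}[1] \to E \to E_0$ in $\bB_{H,B}$ (coming from the standard $t$-structure) and to bound the $\mu_{H,B}$-slopes of the Harder--Narasimhan factors of the cohomology sheaves by combining the $\nu_{H,B,\alpha}$-Harder--Narasimhan filtration of $E$ with the classical Bogomolov--Gieseker inequality of Lemma~\ref{prop:sheafBG}.

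For (i), let $F \subseteq E_{-1}$ be the maximal $\mu_{H,B}$-destabilizing subsheaf, so $F$ is $\mu_{H,B}$-semistable of slope $\mu$ equal to the maximal $\mu_{H,B}$-HN slope of $E_{-1}$. Since $E_{-1} \in \fF_{H,B}$, both $F$ and $E_{-1}/F$ lie in $\fF_{H,B}$, so $F[1] \hookrightarrow E_{-1}[1] \hookrightarrow E$ in $\bB_{H,B}$. The Harder--Narasimhan property for $\nu_{H,B,\alpha}$ forces $\nu_{H,B,\alpha}(F[1]) < t$; in particular $\mu < 0$, as otherwise $F[1]$ would have infinite tilt slope and violate this finite bound. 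Using $H^2 \ch_1^B(F) = \mu\, H^3 \ch_0(F)$ and $\Del_{H,B}(F) \ge 0$, a direct calculation gives
\begin{equation*}
\nu_{H,B,\alpha}(F[1]) \;=\; \frac{\mu^2 - \alpha^2}{2\mu} \;+\; \frac{\Del_{H,B}(F)}{2|\mu|\,\bigl(H^3\ch_0(F)\bigr)^2} \;\ge\; \frac{\mu^2 - \alpha^2}{2\mu}.
\end{equation*}
Combined with $\nu_{H,B,\alpha}(F[1]) < t$ and $\mu < 0$, this yields the quadratic inequality $\mu^2 - 2t\mu - \alpha^2 > 0$, and the sign constraint $\mu < 0 < t + \sqrt{t^2+\alpha^2}$ singles out the branch $\mu < t - \sqrt{t^2+\alpha^2}$. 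Since all HN slopes of $E_{-1}$ are bounded above by $\mu$, (i) follows.

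Part (ii) is formally dual. Let $G$ be the minimal $\mu_{H,B}$-HN quotient of $E_0$; since $E_0 \in \tT_{H,B}$, the surjection $E \twoheadrightarrow E_0 \twoheadrightarrow G$ is a map in $\bB_{H,B}$, forcing $\nu_{H,B,\alpha}(G)$ to be strictly greater than $t$. If $G$ is a torsion sheaf then $\mu_{H,B}(G) = +\infty$ by Proposition~\ref{prop:Tor1inftyslope}; otherwise $G$ is $\mu_{H,B}$-semistable and torsion-free with slope $\mu > 0$, and the analogous Bogomolov--Gieseker computation, now with positive denominator $H^2\ch_1^B(G) > 0$ so that no sign flip is needed, gives $\mu > t + \sqrt{t^2+\alpha^2}$.

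For (iii), tilt semistability of $E$ with $\nu_{H,B,\alpha}(E) = t$ converts each strict bound of (i)--(ii) into its non-strict counterpart, yielding $\mu_{H,B}(E_{-1}) \le t - \sqrt{t^2+\alpha^2}$ and, when $E_0$ is torsion-free, $\mu_{H,B}(E_0) \ge t + \sqrt{t^2+\alpha^2}$. Equality $\mu_{H,B}(E_{-1}) = t - \sqrt{t^2+\alpha^2}$ forces every HN factor of $E_{-1}$ to share this common slope, so $E_{-1}$ is itself $\mu_{H,B}$-semistable; applying the displayed identity with $F = E_{-1}$ and $\mu = t - \sqrt{t^2+\alpha^2}$ gives $\nu_{H,B,\alpha}(E_{-1}[1]) = t + \Del_{H,B}(E_{-1})/(2|\mu|(H^3\ch_0(E_{-1}))^2)$, and then $\nu_{H,B,\alpha}(E_{-1}[1]) \le t$ together with $\Del_{H,B}(E_{-1}) \ge 0$ forces $\Del_{H,B}(E_{-1}) = 0$, which a direct substitution identifies with $H\ch_2^{B+(t-\sqrt{t^2+\alpha^2})H}(E_{-1}) = 0$; the case of $E_0$ is entirely symmetric. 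The main delicate points, beyond the quadratic manipulations, are verifying that the extremal HN factor of $E_{-1}$ (resp.\ $E_0$) genuinely lifts to a sub-object (resp.\ quotient) of $E$ in $\bB_{H,B}$, and carefully tracking the sign of the denominator $H^2\ch_1^B$ so that the direction of the BG-derived inequality is flipped correctly in the $E_{-1}$ case.
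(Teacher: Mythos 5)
First, a structural point: this paper does not prove the proposition at all --- it is quoted verbatim from \cite[Proposition 3.13]{PT}, so there is no internal proof to compare yours against, and your submission has to be judged as a reconstruction of the standard argument. On that score your route is the expected one and its core is correct. The two ``delicate points'' you single out are handled properly: $F[1]\hookrightarrow E_{-1}[1]\hookrightarrow E$ in $\bB_{H,B}$ because $F$ and $E_{-1}/F$ both lie in $\fF_{H,B}$, and $E\twoheadrightarrow E_0\twoheadrightarrow G$ is a composition of surjections in $\bB_{H,B}$ because $E_0$, its minimal HN quotient $G$, and the kernel all lie in $\tT_{H,B}$. The exclusion of $\mu=0$ via the infinite-tilt-slope contradiction, the identity for $\nu_{H,B,\alpha}(F[1])$, the use of $\Del_{H,B}\ge 0$ for slope-semistable torsion-free sheaves (Lemma \ref{prop:sheafBG} plus Hodge index), and the sign analysis of $\mu^2-2t\mu-\alpha^2>0$ are all correct. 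One small misattribution: a torsion sheaf $G$ has $\mu_{H,B}(G)=+\infty$ by the definition of $\mu_{H,B}$, not by Proposition \ref{prop:Tor1inftyslope}, which concerns tilt slopes of objects of $\Coh_{\le 1}(X)$.

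The one genuine shortfall is in part (iii): the statement asserts an ``if and only if,'' and you prove only the forward implication --- equality of slopes forces $E_{-1}$ semistable and then $\Del_{H,B}(E_{-1})=0$, equivalently (given the equality) the vanishing of $H\ch_2^{B+(t-\sqrt{t^2+\alpha^2})H}(E_{-1})$. You never address the converse, that these vanishings force $\mu_{H,B}(E_{-1})=t-\sqrt{t^2+\alpha^2}$. Writing $\mu=t-\sqrt{t^2+\alpha^2}$, $m=\mu_{H,B}(E_{-1})$ and $R=H^3\ch_0(E_{-1})$, for semistable $E_{-1}$ one has
\[
H\ch_2^{B+\mu H}(E_{-1})\;=\;\frac{R}{2}\,(m-\mu)^2\;-\;\frac{\Del_{H,B}(E_{-1})}{2R},
\]
so the two vanishing conditions equated in the statement (``$H\ch_2^{B+\mu H}(E_{-1})=0$, that is $\Del_{H,B}(E_{-1})=0$'') are themselves equivalent only once $m=\mu$ is known; without that, $H\ch_2^{B+\mu H}(E_{-1})=0$ merely says $\Del_{H,B}(E_{-1})=R^2(m-\mu)^2$, which your subobject constraint $\nu_{H,B,\alpha}(E_{-1}[1])\le t$ does not rule out for $m<\mu$. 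So your proof establishes exactly the necessary-condition direction. That happens to be the only direction this paper ever uses (e.g.\ in the proof of Proposition \ref{prop:muslopeboundfortiltstability}), and the quoted equality clause is loosely phrased (note also the typo $H^2\ch_2$ for $H\ch_2$), but a proof of the proposition as literally stated must either prove the reverse implication or explicitly argue that the equality clause should be read as ``equality holds only when.''
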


\begin{defi}
\label{def:Psi}
For an object $E$ and $\delta \in \RR_{\ge 0}$, we define
\begin{align*}
\Psi_{H,B,\alpha, \delta}^{\pm}(E) = \nu_{H,B,\alpha}(E) \pm \sqrt{ (\nu_{H,B,\alpha}(E))^2 + \alpha^2 + \delta}.
\end{align*}
\end{defi}

\begin{prop}
\label{prop:muslopeboundfortiltstability}
Let $E \in \bB_{H,B}$ be a tilt stable object with $\nu_{H,B,\alpha}(E) = t < + \infty$. 
Then we have the following:
\begin{enumerate}
\item $H^2 \ch_1^{B + tH}(E) - \sqrt{t^2 + \alpha^2} H^3 \ch_0(E) \ge 0$, with equality holds when $\hH^{-1}(E) =0$ and $\hH^0(E)$ is a slope stable torsion free sheaf such that 
$\hH^0(E)^{**}$ is locally free with $\Del_{H,B} =0$. In particular, when $\ch_0(E) >0$, 
$$
\mu_{H,B}(E) \ge t +  \sqrt{t^2 + \alpha^2} = \Psi_{H,B,\alpha, 0}^{+}(E).
$$
\item $H^2 \ch_1^{B + tH}(E) + \sqrt{t^2 + \alpha^2} H^3 \ch_0(E) \ge 0$, 
with equality holds when $\hH^{0}(E) =0$ and $\hH^{-1}(E)$ is a slope stable locally  free sheaf with $\Del_{H,B} =0$. In particular, when $\ch_0(E) < 0$, 
$$
\mu_{H,B}(E) \le t -  \sqrt{t^2 + \alpha^2} = \Psi_{H,B,\alpha, 0}^{-}(E) .
$$
\item $\Del_{H,B}(E) \ge 0$; if $\Del_{H,B}(E) =0$ then 
$E$ is isomorphic to either $E_{-1}[1]$ for some $\mu_H$  stable locally free sheaf $E_{-1}$, or $\mu_H$ stable torsion free sheaf $E_0$ such that $E_0^{**}$ is locally free with $E_{0}^{**}/E_{0} \in \Coh_0(X)$.
\end{enumerate}
\end{prop}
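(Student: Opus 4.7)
The plan is to derive all three parts from the cohomology-sheaf decomposition $0 \to \hH^{-1}(E)[1] \to E \to \hH^0(E) \to 0$ in $\bB_{H,B}$, combined with the slope bounds supplied by Proposition~\ref{prop:slope-bounds}(3). Throughout, write $r_i = H^3\ch_0(\hH^i(E))$ and $d_i = H^2\ch_1^B(\hH^i(E))$ for $i = -1, 0$, so that $H^3\ch_0(E) = r_0 - r_{-1}$ and $H^2\ch_1^B(E) = d_0 - d_{-1}$, with both $r_0, r_{-1} \ge 0$.

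For part (1), I would apply Proposition~\ref{prop:slope-bounds}(3) to get $d_{-1} \le (t - \sqrt{t^2+\alpha^2})\,r_{-1}$ and, when $\hH^0(E)$ is torsion-free, $d_0 \ge (t + \sqrt{t^2+\alpha^2})\,r_0$. Subtracting these yields
\[
H^2\ch_1^{B+tH}(E) = (d_0 - t r_0) - (d_{-1} - t r_{-1}) \ge \sqrt{t^2+\alpha^2}\,(r_0 + r_{-1}) \ge \sqrt{t^2+\alpha^2}\,H^3\ch_0(E),
\]
using $r_{-1} \ge 0$ in the last step. Possible torsion in $\hH^0(E)$ is ruled out using Proposition~\ref{prop:Tor1inftyslope} together with the tilt-stability of $E$ (any $\Coh_{\le 1}$-subsheaf would destabilize, and any $\Coh_2$ component can be shown to violate strict tilt stability given $\nu_{H,B,\alpha}(E) = t$ is finite). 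Part (2) is then obtained either by the analogous computation with reversed roles of $\hH^{-1}(E)$ and $\hH^0(E)$, or, more conceptually, by invoking preservation of tilt stability under the dualizing functor from Section~\ref{sec:tiltproperties} and applying (1) to $E^\vee[1]$.

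Part (3) follows by a purely algebraic manipulation from (1) and (2). Together these two inequalities give $H^2\ch_1^{B+tH}(E) \ge \sqrt{t^2+\alpha^2}\,|H^3\ch_0(E)|$, and squaring yields $(H^2\ch_1^{B+tH}(E))^2 \ge (t^2+\alpha^2)(H^3\ch_0(E))^2$. Substituting $\ch_1^{B+tH} = \ch_1^B - tH\ch_0$ on the left and using the tilt-slope identity $t\,H^2\ch_1^B(E) = H\ch_2^B(E) - (\alpha^2/2)H^3\ch_0(E)$ to eliminate the cross term reproduces exactly $\overline{\Delta}_{H,B}(E)$, by the same cancellation seen in \eqref{eqn:nu-to-0-stab-para}. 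For the equality statements in (1) and (2), equality in the displayed slope chain forces $r_{-1} = 0$ (resp.\ $r_0 = 0$); Proposition~\ref{prop:reflexivityatminus1place} then upgrades $\hH^{-1}(E)$ (resp.\ $\hH^0(E)$) from ``torsion of zero rank'' to genuinely zero. Equality in the Proposition~\ref{prop:slope-bounds}(3) slope bound for the surviving cohomology sheaf moreover forces $\overline{\Delta}_{H,B}$ of that sheaf to vanish, and Lemma~\ref{prop:sheafBG}(3) applied to its reflexive hull gives that $\hH^0(E)^{**}$ (resp.\ $\hH^{-1}(E)$) is locally free with $\Delta = 0$; the quotient $\hH^0(E)^{**}/\hH^0(E)$ is then $\Coh_0$ because any higher-dimensional component would strictly increase the discriminant.

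The main technical obstacle will be the careful handling of torsion in $\hH^0(E)$, since Proposition~\ref{prop:slope-bounds}(3)(b) as stated only controls the torsion-free case. This requires combining Proposition~\ref{prop:Tor1inftyslope} with a dimension-split on the support of any hypothetical torsion subsheaf, and verifying that pulling back such a subsheaf through the quotient $E \twoheadrightarrow \hH^0(E)$ in $\bB_{H,B}$ produces a subobject of $E$ whose tilt slope exceeds $t$, contradicting tilt stability. A secondary subtlety is relating $\overline{\Delta}_{H,B}(\hH^0(E))$ to $\overline{\Delta}_{H,B}(\hH^0(E)^{**})$ under the exact sequence $0 \to \hH^0(E) \to \hH^0(E)^{**} \to Q \to 0$ with $Q \in \Coh_{\le 1}(X)$, where the sign contribution of $H\ch_2(Q) \ge 0$ (ampleness of $H$ against an effective $1$-cycle) must be tracked to conclude $Q \in \Coh_0(X)$ in the equality case.
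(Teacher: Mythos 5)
Your skeleton is the paper's own proof: the decomposition $0 \to \hH^{-1}(E)[1] \to E \to \hH^0(E) \to 0$, the slope bounds of Proposition~\ref{prop:slope-bounds}(iii) fed into the same subtraction, part (iii) obtained as the product (equivalently the square) of the quantities in (i) and (ii) via $\overline{\Delta}_{H,B}(E) = (H^2\ch_1^{B+tH}(E))^2 - (t^2+\alpha^2)(H^3\ch_0(E))^2$, and Lemma~\ref{prop:sheafBG}(iii) closing the equality cases. However, the two steps you yourself single out as the technical core are not just unproved --- they are false as stated. First, torsion in $\hH^0(E)$ cannot be ``ruled out'' for a tilt stable object of finite slope. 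Your mechanism (pull a torsion subsheaf $T \subset \hH^0(E)$ back to a subobject $E' \subset E$ and claim $\nu_{H,B,\alpha}(E') > t$) fails whenever $\hH^{-1}(E) \ne 0$: the pullback sits in $0 \to \hH^{-1}(E)[1] \to E' \to T \to 0$, and for $T \in \Coh_{\le 1}(X)$ one computes
\[
\nu_{H,B,\alpha}(E') \;=\; \nu_{H,B,\alpha}\bigl(\hH^{-1}(E)[1]\bigr) + \frac{H\ch_2(T)}{-H^2\ch_1^B(\hH^{-1}(E))},
\]
which is governed by $\nu_{H,B,\alpha}(\hH^{-1}(E)[1]) < t$ and need not exceed $t$. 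The paper's own duality results produce genuine counterexamples to your claim: for a curve $C \subset X$, in any chamber where $\iI_C(mH)$ is tilt stable of finite slope (these exist, e.g.\ for $\alpha \gg 0$), Propositions~\ref{prop:dual-B-object}, \ref{prop:B-obj-like-sheaves-dual} and \ref{prop:stability-B-dual} make $F^1 = H^1_{\bB_{H,-B}}\bigl((\iI_C(mH))^\vee\bigr)$ tilt stable of finite slope, with $\hH^0(F^1) \cong \calExt^2(\oO_C,\oO_X)(-mH) \in \Coh_1(X)$ nonzero; note Proposition~\ref{prop:B-obj-like-sheaves-dual}(iii) forbids $\Coh_{\le 1}$ \emph{subobjects} of such objects, not such cohomologies. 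Likewise your parenthetical assertion that a $\Coh_2$ component of $\hH^0(E)$ violates strict tilt stability has no justification: a two-dimensional torsion subsheaf of a tilt stable sheaf is merely a subobject of finite tilt slope, which stability only constrains to be $< t$. The correct repair is not to exclude torsion but to observe it is harmless: $T$ contributes $H^2\ch_1^{B+tH}(T) = H^2\ch_1(T) \ge 0$ to $d_0$ and nothing to $r_0$, while the torsion-free quotient $\hH^0(E)/T$ is a quotient of $E$ in $\bB_{H,B}$, to which the relevant slope bound does apply (this quotient argument is exactly what the paper runs in its equality analysis).

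Second, your treatment of the equality case in (ii)/(iii) collapses. Equality in (ii) gives $H^3\ch_0(\hH^0(E)) = 0$ \emph{and} $H^2\ch_1^{B+tH}(\hH^0(E)) = 0$, which only says $\hH^0(E) \in \Coh_{\le 1}(X)$; Proposition~\ref{prop:reflexivityatminus1place} is a statement about $\hH^{-1}$ and cannot ``upgrade'' a torsion sheaf sitting in degree $0$ to zero. To conclude $\hH^0(E) = 0$ you need a stability argument: when $\overline{\Delta}_{H,B}(\hH^{-1}(E)) = 0$ and $\mu_{H,B}(\hH^{-1}(E)) = t - \sqrt{t^2+\alpha^2}$, a direct computation (using $H\ch_2^B = (H^2\ch_1^B)^2/(2H^3\ch_0)$ for the discriminant-zero sheaf) gives $\nu_{H,B,\alpha}(\hH^{-1}(E)[1]) = t$, so strict stability of $E$ forces the inclusion $\hH^{-1}(E)[1] \subset E$ to be an isomorphism. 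Relatedly, in the equality case of (i) you invoke Lemma~\ref{prop:sheafBG}(iii) for $\hH^0(E)^{**}$ without first establishing that $\hH^0(E)$ is slope (semi)stable; this does not follow from the numerical equality alone, and is precisely where the paper inserts its argument that any slope stable quotient $G$ with $\mu_{H,B}(G) \le t + \sqrt{t^2+\alpha^2}$ would contradict Proposition~\ref{prop:slope-bounds}(ii). With these repairs your outline becomes the paper's proof; without them, the key reductions rest on claims that the paper's own examples refute.
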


\begin{proof}
We have $H \ch_2^{B + tH}(E) = (1/2)(t^2 + \alpha^2)H^3\ch_0(E)$.
Let us denote $E_{i} = \hH^i(E)$ and 
$$
D_i = H^2 \ch_1^{B+tH}(E_i), \ \text{  } \ R_i = \sqrt{t^2 + \alpha^2} H^3 \ch_0(E_i).
$$
From Proposition \ref{prop:slope-bounds}, $D_0 \ge R_0$ and $D_{-1} \le - R_{-1}$. 

\noindent (i) \ We have 
\begin{align*}
H^2 \ch_1^{B + tH}(E) - \sqrt{t^2 + \alpha^2} H^3 \ch_0(E) & =  (D_0 - D_{-1}) - (R_0 - R_{-1}) \\ 
& = (D_ 0 - R_0) + (- D_{-1} + R_{-1}) \\
& \ge (D_ 0 - R_0) + 2 R_{-1} \ge 0.
\end{align*} 
Here the equality in the last ``$\ge$''  holds when $D_0 =R_0$ and $R_{-1} =0$. Let us consider this case.
We have $E_{-1} =0$, that is $E \cong E_0$. 
Let us prove $E_0$ is a slope stable torsion free sheaf. 
Assume the opposite; so there exists a slope stable quotient sheaf $G$ of $E_0$   with
$\mu_{H,B}(G) \le t + \sqrt{t^2 + \alpha^2}$. 
Moreover, $E _0 \twoheadrightarrow G$ is also a surjection in
$\bB_{H,B}$, and since $E_0$ is tilt stable, $G \in  \HN^{\nu}_{H,B,\alpha}((t, +\infty])$. From (ii) of Proposition \ref{prop:slope-bounds}, $\mu_{H,B}(G) > t + \sqrt{t^2 + \alpha^2}$; this is not possible. Hence $E_0$ is slope stable. 

So $\overline{\Delta}_{H, B}(E_{0}) = 0$. 
From Lemma \ref{prop:sheafBG}, slope stable reflexive sheaf $E_{0}^{**}$ is  locally free with $\overline{\Delta}_{H, B}=0$; hence, $E_0^{**}/E_0 \in \Coh_0(X)$.  \\ 

\noindent (ii) Proof is similar to that of (i).  \\

\noindent (iii) \ We have 
\begin{align*}
\overline{\Delta}_{H, B}(E) & = (H^2 \ch_1^{B+tH}(E))^2- (t^2 + \alpha^2) (H^3 \ch_0(E))^2   \\
& = \left\{H^2 \ch_1^{B + tH}(E) - \sqrt{t^2 + \alpha^2} H^3 \ch_0(E) \right\} \times
       \left\{H^2 \ch_1^{B + tH}(E) + \sqrt{t^2 + \alpha^2} H^3 \ch_0(E) \right\}. 
\end{align*}
From   (i) and (ii), $\overline{\Delta}_{H, B}(E) \ge 0$; and  the equality holds when we have the equalities in either  (i) or (ii). 
\end{proof}

\begin{prop}
\label{prop:PsiDeltarelation}
Let $E$ be an $\nu_{H,B,\alpha}$ semistable object in $ \bB_{H,B}$ with  $\nu_{H,B,\alpha}(E) < +\infty$, 
$\ch_0(E) >0$,  and let $\lambda_1, \lambda_2$ be some non-negative  constants. We have
\begin{equation*}
\label{eqn:Del-frac-ineq}
 \lambda_1 <   \frac{\Del_{H,B}}{(H^3 \ch_0(E))^2} <   \lambda_2
\end{equation*}
if and only if 
\begin{equation*}
\label{eqn:mu-Psi-ineq}
 \Psi_{H,B,\alpha, \lambda_1}^{+}(E) <  \mu_{H,B}(E)<  \Psi_{H,B,\alpha, \lambda_2}^{+}(E). 
\end{equation*}
Moreover, if we have one of the above equivalent inequalities for $E$, then
$$
\frac{\partial \Psi_{H,B,\alpha, \lambda_1}^{+}(E)}{\partial \alpha} > 0 > \frac{\partial \Psi_{H,B,\alpha, \lambda_2}^{+}(E)}{\partial \alpha}.
$$
\end{prop}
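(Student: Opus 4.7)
The plan is to reduce the statement to an elementary algebraic identity relating $\overline{\Delta}_{H,B}(E)$ to $\mu_{H,B}(E)$, $\nu_{H,B,\alpha}(E)$ and $\alpha$, and then extract signs from positivity information that comes from tilt semistability.

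First I would set $r = H^3\ch_0(E)>0$, $m = \mu_{H,B}(E)$ and $t = \nu_{H,B,\alpha}(E)$, so that $H^2\ch_1^B(E) = mr$ and, by the definition of $\nu$, $H\ch_2^B(E) = tmr + (\alpha^2/2)r$. Substituting into $\overline{\Delta}_{H,B}(E) = (H^2\ch_1^B(E))^2 - 2H^3\ch_0(E)\, H\ch_2^B(E)$ gives, after a one-line computation,
\[
\frac{\overline{\Delta}_{H,B}(E)}{(H^3\ch_0(E))^2} \;=\; m^2 - 2tm - \alpha^2.
\]
The two roots in $x$ of the quadratic $x^2 - 2tx - (\alpha^2+\lambda) = 0$ are exactly $\Psi^{\pm}_{H,B,\alpha,\lambda}(E) = t\pm\sqrt{t^2+\alpha^2+\lambda}$, so
\[
\frac{\overline{\Delta}_{H,B}(E)}{(H^3\ch_0(E))^2} - \lambda \;=\; \bigl(m - \Psi^{+}_{H,B,\alpha,\lambda}(E)\bigr)\bigl(m - \Psi^{-}_{H,B,\alpha,\lambda}(E)\bigr).
\]

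Next I would observe that $\Psi^{-}_{H,B,\alpha,\lambda}(E) < 0 < m$: indeed $\alpha>0$ and $\lambda\ge 0$ force $\sqrt{t^2+\alpha^2+\lambda}>|t|$, hence $\Psi^{-}<0$; and since $E\in\bB_{H,B}$ with $\nu_{H,B,\alpha}(E)<+\infty$ we have $H^2\ch_1^B(E)>0$, so $m>0$ (this is also subsumed by Proposition \ref{prop:muslopeboundfortiltstability}(i)). Therefore the sign of $\overline{\Delta}_{H,B}(E)/(H^3\ch_0(E))^2 - \lambda$ coincides with the sign of $m - \Psi^{+}_{H,B,\alpha,\lambda}(E)$. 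Applying this with $\lambda = \lambda_1$ and $\lambda=\lambda_2$ yields the claimed equivalence
\[
\lambda_1 < \frac{\overline{\Delta}_{H,B}(E)}{(H^3\ch_0(E))^2} < \lambda_2 \;\Longleftrightarrow\; \Psi^{+}_{H,B,\alpha,\lambda_1}(E) < m < \Psi^{+}_{H,B,\alpha,\lambda_2}(E).
\]

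For the moreover part, I would differentiate $\Psi^{+}_{H,B,\alpha,\lambda}(E) = t + \sqrt{t^2+\alpha^2+\lambda}$ in $\alpha$, remembering that $t = \nu_{H,B,\alpha}(E)$ itself depends on $\alpha$. Since $\ch(E)$ is fixed, $\partial t/\partial\alpha = -\alpha/m$ directly from the definition of $\nu$. A short computation then gives
\[
\frac{\partial \Psi^{+}_{H,B,\alpha,\lambda}(E)}{\partial \alpha} \;=\; \frac{\alpha\bigl(m - \Psi^{+}_{H,B,\alpha,\lambda}(E)\bigr)}{m\sqrt{t^2+\alpha^2+\lambda}},
\]
so its sign agrees with the sign of $m - \Psi^{+}_{H,B,\alpha,\lambda}(E)$; feeding in $\lambda=\lambda_1$ and $\lambda=\lambda_2$ under the hypothesis produces the stated sign pattern. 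There is no real obstacle here beyond bookkeeping: the only point one must not forget is that $t$ is a function of $\alpha$ in this differentiation, and that $m>0$ (guaranteed by $\ch_0(E)>0$ together with $E\in\bB_{H,B}$ having finite tilt slope) is what turns the factorisation of $\overline{\Delta}/(H^3\ch_0)^2-\lambda$ into a one-sided comparison with $\Psi^{+}$ rather than a two-sided one.
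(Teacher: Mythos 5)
Your proof is correct and takes essentially the same route as the paper's: writing $\nu=\nu_{H,B,\alpha}(E)$ and $\mu=\mu_{H,B}(E)$, both arguments rest on the identity $\Del_{H,B}(E)=(H^2\ch_1^{B+\nu H}(E))^2-(\nu^2+\alpha^2)(H^3\ch_0(E))^2$ (your $m^2-2tm-\alpha^2$ is the same expression) and on the identical derivative computation $\partial_\alpha\Psi^{+}_{H,B,\alpha,\lambda}(E)=\alpha\,\bigl(\mu-\Psi^{+}_{H,B,\alpha,\lambda}(E)\bigr)/\bigl(\mu\sqrt{\nu^2+\alpha^2+\lambda}\bigr)$ obtained from $\partial_\alpha\nu=-\alpha/\mu$. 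The only (cosmetic) difference is how the sign is extracted for the equivalence: you factor $\Del_{H,B}(E)/(H^3\ch_0(E))^2-\lambda=(\mu-\Psi^{+}_{H,B,\alpha,\lambda}(E))(\mu-\Psi^{-}_{H,B,\alpha,\lambda}(E))$ and use $\Psi^{-}_{H,B,\alpha,\lambda}(E)<0<\mu$, while the paper instead invokes $H^2\ch_1^{B+\nu H}(E)\ge 0$ from Proposition \ref{prop:muslopeboundfortiltstability} and finishes by direct computation.
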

\begin{proof}
As in the proof of (iii) of Proposition \ref{prop:muslopeboundfortiltstability}, we have 
$$
\Del_{H, B} (E) = \left( H^2 \ch_{1}^{B + \nu_{H,B, \alpha}(E) H}(E) \right)^2 - \left( (\nu_{H,B, \alpha}(E))^2 + \alpha^2 \right) (H^3\ch_0(E))^2.
$$
From Proposition \ref{prop:muslopeboundfortiltstability}, $H^2 \ch_{1}^{B + \nu_{H,B, \alpha}(E) H}(E) \ge 0$ and  since $\ch_0(E) >0$,  by direct computation one can get the required inequalities in both directions. 

By differentiating $\nu_{H,B,\alpha}(E)$ with respect to $\alpha$ we get
$$
\frac{\partial \nu_{H,B,\alpha}(E)}{\partial \alpha} = \frac{-\alpha}{\mu_{H,B}(E)}.
$$
By differentiating  $\Psi_{H,B,\alpha, \lambda_1}^{+}(E)$ with respect to $\alpha$ we get
\begin{align*}
\frac{\partial \Psi_{H,B,\alpha, \lambda_1}^{+}(E)}{\partial \alpha} & = 
\frac{\partial \nu_{H,B,\alpha}(E)}{\partial \alpha} + \frac{1}{ \sqrt{(\nu_{H,B, \alpha}(E))^2 + \alpha^2 + \lambda_1}} \left( \alpha + \nu_{H,B,\alpha}(E) \cdot \frac{\partial \nu_{H,B,\alpha}(E)}{\partial \alpha} \right) \\
& = \frac{\alpha}{\mu_{H,B}(E) \sqrt{(\nu_{H,B, \alpha}(E))^2 + \alpha^2 + \lambda_1}} \left(\mu_{H,B}(E) - \Psi_{H,B,\alpha, \lambda_1}^{+}(E)   \right) >0 
\end{align*}
as required. Similarly one can get   the other inequality. 
\end{proof}

\begin{rmk}
\rm
One can have a similar Proposition considering $\ch_0(E) <0$ case involving $\Psi_{H,B,\alpha, \lambda_i}^{-}(E)$. 
\end{rmk}

Recall the following result about the walls for tilt stable objects from \cite{PT}:
\begin{prop}[{\cite[Lemma 3.15]{PT}}]
\label{prop:wall-tiltstable}
Let $E \in \bB_{H,B}$ be a tilt stable object with $\nu_{H,B,\alpha}(E)  < +\infty$. 
Then $E \in \bB_{H, B+ b H}$ is 
$\nu_{H, B +b H, a}$-stable for all $a \in \mathbb{R}_{>0}$ and $b \in \mathbb{R}$ such that
$$
a^2  +  \left( b -\nu_{H,B,\alpha}(E) \right)^2  =(\nu_{H,B,\alpha}(E))^2 + \alpha^2.
$$
\end{prop}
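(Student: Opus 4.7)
The plan is to exploit the fact that the given semicircle is designed precisely so that $\nu_{H,B+bH,a}(E) = t-b$ along it, where $t = \nu_{H,B,\alpha}(E)$. Once that identity is established, I can argue by contradiction: any destabilizing subobject at a new point $(b,a)$ on the semicircle pulls back to a destabilizing subobject at the original $(0,\alpha)$, contradicting tilt stability there.

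First I would check that $E$ lies in the heart $\bB_{H,B+bH}$ for every $(b,a)$ on the semicircle. Applying Proposition~\ref{prop:slope-bounds} to $E$ at $(0,\alpha)$ yields $\mu_{H,B}(\hH^{-1}(E)) \le t-\sqrt{t^2+\alpha^2}$ and, on the torsion-free quotient, $\mu_{H,B}(\hH^0(E)) \ge t+\sqrt{t^2+\alpha^2}$. Since $a>0$ forces $b \in (t-\sqrt{t^2+\alpha^2},\, t+\sqrt{t^2+\alpha^2})$, after subtracting $b$ these bounds become $\mu_{H,B+bH}(\hH^{-1}(E))<0$ and $\mu_{H,B+bH}$ of the torsion-free part of $\hH^0(E)$ is $>0$, placing $E\in\bB_{H,B+bH}$. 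To compute the new tilt slope I would use \eqref{eqn:nu-to-0-stab-para} in the form $H\ch_2^{B+bH}(E) = (t-b)H^2\ch_1^B(E) + \tfrac{1}{2}(b^2+\alpha^2)H^3\ch_0(E)$, substitute into the definition of $\nu_{H,B+bH,a}(E)$, and simplify using the semicircle identity $b^2+\alpha^2-a^2 = 2b(b-t)$, obtaining $\nu_{H,B+bH,a}(E) = t-b$.

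Next, suppose for contradiction that there is a destabilizing sequence $0\to F \to E \to G \to 0$ in $\bB_{H,B+bH}$ with $\nu_{H,B+bH,a}(F)\ge t-b \ge \nu_{H,B+bH,a}(G)$. Repeating the computation of the previous paragraph for $F$ (and $G$) gives the general identity $(\nu_{H,B,\alpha}(\cdot)-t)\,H^2\ch_1^B(\cdot) = (\nu_{H,B+bH,a}(\cdot)-(t-b))\,(H^2\ch_1^B(\cdot)-bH^3\ch_0(\cdot))$ along the semicircle, so the destabilizing inequality at $(b,a)$ transfers directly into $\nu_{H,B,\alpha}(F) \ge t \ge \nu_{H,B,\alpha}(G)$ \emph{provided} $F,G \in \bB_{H,B}$. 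Heart membership for $F$ and $G$ should follow by applying Proposition~\ref{prop:slope-bounds} to $F$ and $G$ at $(b,a)$ (together with Proposition~\ref{prop:Tor1inftyslope} for the infinite-slope pieces supported in codimension $\ge 2$) and then running the $\Coh(X)$-long exact sequence against the known memberships $\hH^{-1}(E)\in\fF_{H,B}$, $\hH^0(E)\in\tT_{H,B}$ to force $\hH^{-1}(F),\hH^{-1}(G)\in\fF_{H,B}$ and $\hH^0(F),\hH^0(G)\in\tT_{H,B}$. Once this is in place, the exact sequence persists in $\bB_{H,B}$ and the inequality above contradicts the tilt stability of $E$ at $(0,\alpha)$.

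The main obstacle is this last heart-membership step together with the degenerate cases in which $H^2\ch_1^{B+bH}(F)$ or $H^2\ch_1^{B+bH}(G)$ vanishes (so one of the new tilt slopes is $+\infty$). There the slope-transfer identity degenerates and the argument must be replaced by a direct examination of the codimension filtration via Proposition~\ref{prop:Tor1inftyslope}; likewise, tight inequalities in the bounds of Proposition~\ref{prop:slope-bounds} require separate treatment via its equality characterization. These edge cases can be disposed of by shrinking the semicircle parameter slightly (using $a>0$ to preserve strictness) and invoking the openness of tilt stability, but getting the bookkeeping right is the delicate point.
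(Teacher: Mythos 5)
A preliminary remark: the paper never proves Proposition \ref{prop:wall-tiltstable} at all --- it is imported verbatim from \cite{PT} --- so there is no in-paper proof to compare against, and your argument has to stand on its own. Its two computational pillars are correct: membership $E\in\bB_{H,B+bH}$ along the semicircle follows from Proposition \ref{prop:slope-bounds} exactly as you say (this is also how the paper proves the weaker Proposition \ref{prop:tilt-stable-obj-same-B-category}), and the identity
\begin{equation*}
\bigl(\nu_{H,B+bH,a}(D)-(t-b)\bigr)\,H^2\ch_1^{B+bH}(D)\;=\;\bigl(\nu_{H,B,\alpha}(D)-t\bigr)\,H^2\ch_1^{B}(D),
\end{equation*}
valid for any $D$ when $(b,a)$ lies on the semicircle, is the right engine for transferring slope inequalities. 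The genuine gap is the claim that Proposition \ref{prop:slope-bounds} together with the $\Coh(X)$-long exact sequence ``forces'' all four memberships $\hH^{-1}(F),\hH^{-1}(G)\in\fF_{H,B}$ and $\hH^{0}(F),\hH^{0}(G)\in\tT_{H,B}$. The long exact sequence gives two of them for free ($\hH^{-1}(F)\subseteq\hH^{-1}(E)$, and $\hH^{0}(G)$ is a quotient of $\hH^{0}(E)$), but the crossover terms $\hH^{0}(F)$ and $\hH^{-1}(G)$ need the slope bounds, and those bounds control an object only through its tilt-HN factors, i.e.\ they apply to whichever piece you have arranged to be $\nu_{H,B+bH,a}$-semistable. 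You can make $F$ semistable (first HN factor of a destabilizer) or $G$ semistable (last HN factor), but not both at once, and the companion piece then has HN factors of uncontrolled slope. Concretely, if $b>0$ and $F$ is the maximal destabilizing subobject, the quotient $G$ may have tilt-HN factors of arbitrarily large slope; the only bound then available for $\hH^{-1}(G)$ is that its maximal $\mu_{H,B}$-HN slope is at most $b$, which is useless since $b>0$. Without $G\in\bB_{H,B}$ the map $F\to E$ need not be injective in $\bB_{H,B}$ (its kernel there is $H^{-1}_{\bB_{H,B}}(G)$), so the sequence does not transfer and no contradiction is obtained. Your fallback --- shrink the semicircle and invoke openness of tilt stability --- does not address this sign asymmetry, and openness is itself a nontrivial input established nowhere in this paper.

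The repair is a case split on the sign of $b$, choosing which end of the HN filtration to destabilize with. If $b\le 0$, then $\fF_{H,B+bH}\subseteq\fF_{H,B}$ automatically (slopes shift by $b$), so take $F$ semistable with $\nu_{H,B+bH,a}(F)\ge t-b$: then $\hH^{-1}(F)\subseteq\hH^{-1}(E)\in\fF_{H,B}$, while Proposition \ref{prop:slope-bounds}, the monotonicity of $s\mapsto s+\sqrt{s^2+a^2}$, and the circle equation $(t-b)^2+a^2=t^2+\alpha^2$ give that the torsion-free part of $\hH^{0}(F)$ has all $\mu_{H,B}$-slopes at least $t+\sqrt{t^2+\alpha^2}>0$ (if instead $\nu_{H,B+bH,a}(F)=+\infty$, then $\hH^{0}(F)$ is torsion); both cohomologies of $G=E/F$ lie in the correct categories automatically. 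If $b\ge 0$, then $\tT_{H,B+bH}\subseteq\tT_{H,B}$ automatically, and the dual argument with $G$ the last HN factor (semistable, of slope $\le t-b<+\infty$) gives that the maximal $\mu_{H,B}$-slope of $\hH^{-1}(G)$ is at most $t-\sqrt{t^2+\alpha^2}<0$, while both cohomologies of the kernel are automatic. In either case $F,E,G\in\bB_{H,B}$, so the triangle becomes a short exact sequence in $\bB_{H,B}$, and the displayed identity yields $\nu_{H,B,\alpha}(F)\ge t$ or $\nu_{H,B,\alpha}(G)\le t$, contradicting stability at $(0,\alpha)$; the degenerate case $H^2\ch_1^{B}=0$ for one piece is handled by switching the contradiction to the other piece of the same sequence, whose $H^2\ch_1^{B}$ then equals that of $E$ and is positive. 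With this bookkeeping your strategy closes up into a complete proof.
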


The following results are crucial for us.
\begin{prop}[{\cite[Lemma 2.7]{BMS}}] 
\label{prop:limittiltstableobjects}
Let $E \in \bB_{H,B}$ be  $\nu_{H,B,\alpha}$ tilt stable for all $\alpha \ge \alpha_0$ for some 
$\alpha_0>0$ with 
$\nu_{H,B,\alpha_0}(E) < +\infty$. Then we have the following:
\begin{enumerate}
\item If $\ch_0(E)> 0$ then $E$ is a slope semistable torsion free sheaf.
\item If $\ch_0(E) =0$ then $E$ is a  slope semistable pure torsion sheaf in $\Coh_{\le 2}(X)$. 
\item If $\ch_0(E) <0$ then $\hH^{-1}(E)$ is a slope semistable reflexive sheaf and 
$\hH^0(E) \in \Coh_{\le 1}(X)$. 
\end{enumerate}
\end{prop}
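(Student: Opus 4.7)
The unifying idea is that as $\alpha \to \infty$ the tilt slope behaves as
\[
\nu_{H,B,\alpha}(E) \;\sim\; -\frac{\alpha^2}{2\mu_{H,B}(E)},
\]
so it tends to $-\infty$ in case (i), is constant in $\alpha$ in case (ii), and tends to $+\infty$ in case (iii). Combining this with the slope bounds of Proposition \ref{prop:slope-bounds} on $\hH^{-1}(E)$ and $\hH^{0}(E)$ collapses the complex $E$ to a sheaf (in cases (i)--(ii)) or forces a specific two-term shape (in case (iii)), and upgrades tilt stability to ordinary slope semistability in the appropriate category.

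\textbf{Step 1: Vanishing of $\hH^{-1}(E)$ in cases (i) and (ii).} Let $t = \nu_{H,B,\alpha}(E)$. By Proposition \ref{prop:slope-bounds}(3)(a),
\[
\mu_{H,B,\max}(\hH^{-1}(E)) \;\le\; t-\sqrt{t^2+\alpha^2}.
\]
In case (i), $t\to -\infty$, so the right-hand side tends to $-\infty$; in case (ii), $t$ is constant and $\sqrt{t^2+\alpha^2}\to\infty$, so again the right-hand side tends to $-\infty$. Since $\hH^{-1}(E)\in\fF_{H,B}$ is torsion free with finite maximal slope, it must vanish. Hence $E=\hH^{0}(E)$ is a sheaf.

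\textbf{Step 2: Case (i).} First show $E$ is torsion free: any torsion subsheaf $T\subset E$ sits in $\tT_{H,B}$ and is a subobject in $\bB_{H,B}$, but $\nu_{H,B,\alpha}(T)$ is either $+\infty$ (Proposition \ref{prop:Tor1inftyslope}) or a finite number independent of $\alpha$, which contradicts $\nu_{H,B,\alpha}(T)<\nu_{H,B,\alpha}(E)\to-\infty$. Then, applying Proposition \ref{prop:slope-bounds}(3)(b) to the torsion free sheaf $E$,
\[
\mu_{H,B,\min}(E)\;\ge\; t+\sqrt{t^2+\alpha^2}\;\xrightarrow[\alpha\to\infty]{}\;\mu_{H,B}(E),
\]
forcing $\mu_{H,B,\min}(E)\ge \mu_{H,B}(E)$, hence equality, and $E$ is $\mu_H$-semistable.

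\textbf{Step 3: Case (ii).} Since $\ch_0(E)=0$, the sheaf $E$ lies in $\Coh_{\le 2}(X)$. If $T\subset E$ were a nonzero subsheaf in $\Coh_{\le 1}(X)$, then $T\in\bB_{H,B}$, $0\to T\to E\to E/T\to 0$ is exact in $\bB_{H,B}$ (both are torsion and hence in $\tT_{H,B}$), and Proposition \ref{prop:Tor1inftyslope} gives $\nu_{H,B,\alpha}(T)=+\infty>\nu_{H,B,\alpha}(E)$, contradicting tilt stability. Thus $E$ is pure of dimension $2$. For any subsheaf $T\subsetneq E$ in $\Coh_2(X)$, $T\subset E$ in $\bB_{H,B}$ and tilt stability gives $\nu_{H,B,\alpha}(T)\le\nu_{H,B,\alpha}(E)$; since both sides are $\alpha$-independent, this is exactly slope semistability with respect to $\frac{H\ch_2^B}{H^2\ch_1^B}$.

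\textbf{Step 4: Case (iii).} Now $\nu_{H,B,\alpha}(E)\to+\infty$. Dualize the previous quotient argument: let $Q$ be the maximal quotient of $\hH^0(E)$ of pure $2$-dimensional support (the quotient by its $\Coh_{\le 1}$-torsion). Then $Q$ is a quotient of $E$ in $\bB_{H,B}$, and $\nu_{H,B,\alpha}(Q)$ is finite and $\alpha$-independent. Tilt stability forces $\nu_{H,B,\alpha}(E)<\nu_{H,B,\alpha}(Q)$ for all $\alpha\ge\alpha_0$, contradicting $\nu_{H,B,\alpha}(E)\to+\infty$ unless $Q=0$; hence $\hH^{0}(E)\in\Coh_{\le 1}(X)$. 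Reflexivity of $\hH^{-1}(E)$ is immediate from Proposition \ref{prop:reflexivityatminus1place}. Finally, from $\hH^{0}(E)\in\Coh_{\le 1}(X)$ one checks $\mu_{H,B}(E)=\mu_{H,B}(\hH^{-1}(E))$, and the bound $\mu_{H,B,\max}(\hH^{-1}(E))\le t-\sqrt{t^2+\alpha^2}$ of Proposition \ref{prop:slope-bounds}(3)(a) tends to $\mu_{H,B}(E)$ as $\alpha\to\infty$ (here one uses $t\sim -\alpha^2/(2\mu_{H,B}(E))$ with $\mu_{H,B}(E)<0$). So $\mu_{H,B,\max}(\hH^{-1}(E))\le \mu_{H,B}(\hH^{-1}(E))$, which is exactly slope semistability.

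\textbf{Main obstacle.} The only delicate points are (a) the asymptotic analysis of $t\pm\sqrt{t^2+\alpha^2}$ as $\alpha\to\infty$ in the three sign regimes of $\ch_0(E)$, and (b) the reduction in case (iii) from ``$\hH^0(E)$ has no $2$-dimensional component'' to a tilt-destabilizing quotient; once one chooses $Q$ as the $\Coh_2$-part quotient rather than a subobject, everything collapses to the same limiting argument used in cases (i) and (ii).
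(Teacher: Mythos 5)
The paper does not actually prove this proposition: it is imported as \cite[Lemma 2.7]{BMS}, so there is no in-paper argument to compare with. Your proof is a correct reconstruction of the large-volume-limit argument behind that cited lemma: Step 1's collapse of $\hH^{-1}(E)$ via $t-\sqrt{t^2+\alpha^2}\to-\infty$, the limit $t+\sqrt{t^2+\alpha^2}\to\mu_{H,B}(E)$ in Step 2, and the limiting semistability arguments in Steps 2--4 are all sound and are essentially how the result is proved in the source.

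One imprecision in Step 4 is worth flagging. You describe $Q$ as ``the maximal quotient of $\hH^0(E)$ of pure $2$-dimensional support (the quotient by its $\Coh_{\le 1}$-torsion)'' and assert that $\nu_{H,B,\alpha}(Q)$ is finite and $\alpha$-independent. Those two descriptions of $Q$ coincide, and the $\alpha$-independence holds, only if $\hH^0(E)$ is already a torsion sheaf; but in case (iii) nothing rules out $\ch_0(\hH^0(E))>0$ a priori, and excluding positive rank is part of what the statement $\hH^0(E)\in\Coh_{\le 1}(X)$ asserts. The repair stays inside your own framework: let $Q$ be the quotient of $\hH^0(E)$ by its maximal subsheaf lying in $\Coh_{\le 1}(X)$. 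Then $Q\neq 0$ exactly when $\hH^0(E)\notin\Coh_{\le 1}(X)$; $Q$ is a proper quotient of $E$ in $\bB_{H,B}$ (proper since $\ch_0(E)<0$ forces $\hH^{-1}(E)\neq 0$); and $\nu_{H,B,\alpha}(Q)$ is either $\alpha$-independent (when $\ch_0(Q)=0$) or tends to $-\infty$ (when $\ch_0(Q)>0$), so in either case it is eventually below $\nu_{H,B,\alpha}(E)\to+\infty$, giving the same contradiction. A smaller gloss of the same flavour: in Steps 2 and 3 you replace the defining inequality $\nu(T)<\nu(E/T)$ by $\nu(T)<\nu(E)$; this seesaw step needs the quotient to have $H^2\ch_1^B>0$, which holds in your case (i) and in the purity argument of case (ii), while for a subsheaf $T\subset E$ in case (ii) whose quotient lies in $\Coh_{\le 1}(X)$ the desired inequality $\nu(T)\le\nu(E)$ follows instead from effectivity of $\ch_2$ of that quotient, so the conclusion is unaffected.
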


\begin{prop}[{\cite[Proposition 7.4.1]{BMT}}] 
\label{prop:Delta0mustabletiltstable}
If $E$ is a $\mu_{H, B}$-(semi)stable sheaf with $\overline{\Delta}_{H, B}(E) =0$, then
$E$ or $E^{**}[1]$ in $\bB_{H, B}$ is  $\nu_{H,B, \alpha}$-(semi)stable.
\end{prop}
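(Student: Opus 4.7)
The plan is first to use $\overline{\Delta}_{H,B}(E)=0$ to pin down the structure of $E$. From the Hodge index inequality $\overline{\Delta}_{H,B}(E) \ge H^3 \cdot H \cdot \Delta(E)$ recorded in Section~\ref{sec:tiltslopebounds}, together with the classical Bogomolov--Gieseker bound $H \cdot \Delta(E) \ge 0$ of Lemma~\ref{prop:sheafBG}(i), the vanishing forces $H \cdot \Delta(E) = 0$. Applying Lemma~\ref{prop:sheafBG}(iii) to the $\mu_H$-semistable reflexive hull $E^{**}$ then gives that $E^{**}$ is locally free with $\Delta(E^{**}) = 0$, and comparing second Chern characters in the sequence $0 \to E \to E^{**} \to E^{**}/E \to 0$ yields $H \cdot \ch_2(E^{**}/E) = 0$, hence $E^{**}/E \in \Coh_0(X)$. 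In particular $\mu_{H,B}(E) = \mu_{H,B}(E^{**})$ and $\overline{\Delta}_{H,B}(E^{**}) = 0$.

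I then select the target according to the sign of $\mu_{H,B}(E)$: set $F := E \in \tT_{H,B}$ when $\mu_{H,B}(E) > 0$, and $F := E^{**}[1]$ when $\mu_{H,B}(E) \le 0$, using that $E^{**}$ is $\mu_{H,B}$-semistable so $E^{**} \in \fF_{H,B}$. In either case $F \in \bB_{H,B}$ and $\overline{\Delta}_{H,B}(F) = 0$. Suppose for contradiction that $F$ is not $\nu_{H,B,\alpha}$-semistable for some $\alpha > 0$, and choose a destabilizing short exact sequence
\begin{equation*}
0 \to A \to F \to Q \to 0
\end{equation*}
in $\bB_{H,B}$ with $\nu_{H,B,\alpha}(A) > \nu_{H,B,\alpha}(F) \ge \nu_{H,B,\alpha}(Q)$. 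The long exact $\Coh(X)$-cohomology sequence, combined with the $\mu_{H,B}$-semistability of $E$ (equivalently of $E^{**}$), tightly constrains the possible $\hH^i(A)$ and $\hH^i(Q)$; any $\Coh_{\le 1}(X)$ subobject sitting at $\nu = +\infty$ is split off first via Proposition~\ref{prop:Tor1inftyslope}, reducing to the case where $A$ and $Q$ have finite tilt slopes with $H^2 \ch_1^B \ne 0$.

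The decisive step is a discriminant see-saw. Writing $\overline{\Delta}_{H,B}$ as the quadratic form $u^2 - 2vw$ on the triple $(v,u,w) := (H^3 \ch_0, H^2 \ch_1^B, H \ch_2^B)$, additivity of $\ch^B$ across the short exact sequence yields
\begin{equation*}
0 = \overline{\Delta}_{H,B}(F) = \overline{\Delta}_{H,B}(A) + \overline{\Delta}_{H,B}(Q) + 2 \bigl( u_A u_Q - v_A w_Q - v_Q w_A \bigr).
\end{equation*}
Each of $A$ and $Q$ is built from tilt-stable Harder--Narasimhan factors, so Proposition~\ref{prop:muslopeboundfortiltstability}(iii) supplies $\overline{\Delta}_{H,B}(A), \overline{\Delta}_{H,B}(Q) \ge 0$. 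Substituting $w = u\,\nu_{H,B,\alpha} + (\alpha^2/2) v$ into the cross term, I would recast it as a quantity strictly positive precisely when $\nu_{H,B,\alpha}(A) \ne \nu_{H,B,\alpha}(Q)$, delivering the required contradiction. The borderline case $\nu_{H,B,\alpha}(A) = \nu_{H,B,\alpha}(F) = \nu_{H,B,\alpha}(Q)$ is consistent only with $F$ being tilt semistable, and when $E$ is in addition $\mu_{H,B}$-stable, feeding the proper subobject back into the sheaf-level slope stability forces it to vanish, giving the strict tilt stability claim. The main obstacle I anticipate is proving the cross-term positivity cleanly in the degenerate cases where a rank-zero or infinite-slope Harder--Narasimhan factor slips through the reduction, together with ensuring that destabilizers of $F = E^{**}[1]$ translate faithfully back to sheaf-level data on $E^{**}$ via the $\hH^{-1}$ of the rotated triangle, so that the $\Coh_0$-difference $E^{**}/E$ does not upset the slope comparison.
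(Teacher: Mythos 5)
The paper does not prove this statement itself; it quotes it directly from \cite{BMT} (Proposition 7.4.1), so your attempt can only be measured against the standard argument and on its own merits. Your opening reduction (forcing $H\cdot\Delta(E)=0$ via Hodge index and Lemma \ref{prop:sheafBG}, deducing that $E^{**}$ is locally free with $E^{**}/E\in\Coh_0(X)$, and choosing $F=E$ or $F=E^{**}[1]$ according to the sign of $\mu_{H,B}(E)$) is correct. The proof fails at the step you yourself call decisive.

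The claim that the cross term $u_Au_Q-v_Aw_Q-v_Qw_A$ is ``strictly positive precisely when $\nu_{H,B,\alpha}(A)\ne\nu_{H,B,\alpha}(Q)$'' is false, and so is the assertion that $\overline{\Delta}_{H,B}(A),\overline{\Delta}_{H,B}(Q)\ge 0$ merely because $A$ and $Q$ have tilt-stable Harder--Narasimhan factors. Take $B=-\vep H$ with $0<\vep\ll1$, $\alpha=1$, and $F=\oO_X\oplus\oO_X(2H)\in\bB_{H,B}$. Then $A=\oO_X(2H)$ and $Q=\oO_X$ are both tilt-stable with $\Del_H=0$, and $\nu_{H,B,1}(A)>0>\nu_{H,B,1}(Q)$, so $0\to A\to F\to Q\to 0$ is exactly a destabilizing sequence of the type you consider; yet the cross term equals $-2(H^3)^2<0$ and $\Del_H(F)=-4(H^3)^2<0$. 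This also shows $\overline{\Delta}_{H,B}$ is not nonnegative on objects with stable HN factors: superadditivity of the discriminant, which is Proposition \ref{prop:discrimi-ses-decrease} in this paper, is only available when the sub and quotient are semistable of \emph{equal} finite tilt slope (its proof needs the bounds $A_i\pm B_i\ge 0$ of Proposition \ref{prop:muslopeboundfortiltstability} simultaneously for both factors after moving both to slope zero). A destabilizing sequence has unequal slopes by definition, so no sign can be extracted at the point $(\beta,\alpha)$ where instability is assumed, and your contradiction never materializes.

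The missing idea is wall-crossing to a locus where the slopes \emph{are} equal. The argument of \cite{BMT} runs: a $\mu_{H,B}$-stable sheaf (resp.\ its shifted reflexive hull) is tilt-stable for $\alpha\gg0$; walls for a fixed class are bounded semicircles (Proposition \ref{prop:wall-tiltstable}), so if $F$ were unstable at some $\alpha_0$ there would be a largest $\alpha_1\ge\alpha_0$ at which $F$ is strictly $\nu_{H,B,\alpha_1}$-semistable, with Jordan--H\"older factors $F_i$ all of the same tilt slope. Only there does Proposition \ref{prop:discrimi-ses-decrease} apply, giving $0=\overline{\Delta}_{H,B}(F)\ge\sum_i\overline{\Delta}_{H,B}(F_i)\ge0$; the resulting equalities, combined with Proposition \ref{prop:muslopeboundfortiltstability}(iii) (each $F_i$ is then a slope-stable sheaf or a shifted one with extremal slope), produce a subsheaf of $E$ (or of $E^{**}$) of the same $\mu_{H,B}$-slope, contradicting slope (semi)stability. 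Your proof has no analogue of the passage to $\alpha\gg0$ and to the wall, and without it the discriminant see-saw cannot be invoked.
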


We have following result for certain short exact sequences in $\bB_{H,B}$.
\begin{prop}
\label{prop:discrimi-ses-decrease}
Let $0 \to E_1 \to E \to E_2 \to 0$ be a short exact sequence in $\bB_{H,B}$ such that 
$E, E_1, E_2$ are $\nu_{H,B, \alpha}$-semistable
with $\nu_{H,B,\alpha}(E_1) = \nu_{H,B, \alpha}(E_2) < +\infty$. Then 
$$
\overline{\Delta}_{H, B}(E) \ge \overline{\Delta}_{H, B}(E_1) + \overline{\Delta}_{H, B}(E_2),
$$
where the equality holds when $\overline{\Delta}_{H, B}= 0$ for $E, E_1, E_2$. 

Moreover, when $B \in \RR\langle H \rangle$,  if 
$\Del_{H,B}(E_i) =0$ and $\Del_{H,B}(E_j) >0$ for $i \neq j$ with $i, j \in \{1,2\}$, then 
$$
\overline{\Delta}_{H, B}(E) \ge \overline{\Delta}_{H, B}(E_j)+  1.
$$
\end{prop}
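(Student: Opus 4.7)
The plan is to introduce the factorization $\Del_{H,B}(E_k) = A_k B_k$ with $A_k, B_k \ge 0$, where for $k \in \{1,2\}$ and $\sigma := \sqrt{t^2 + \alpha^2}$ with $t := \nu_{H,B,\alpha}(E_k)$, I set
$$
A_k = H^2 \ch_1^B(E_k) - (t+\sigma)H^3 \ch_0(E_k), \qquad
B_k = H^2 \ch_1^B(E_k) - (t-\sigma) H^3 \ch_0(E_k).
$$
The identity $A_k B_k = \Del_{H,B}(E_k)$ is a direct calculation after substituting the relation $H\ch_2^B(E_k) = t \cdot H^2\ch_1^B(E_k) + (\alpha^2/2) H^3\ch_0(E_k)$ coming from $\nu_{H,B,\alpha}(E_k) = t$; it is the very factorization exploited in Proposition \ref{prop:muslopeboundfortiltstability}(iii). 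The non-negativity $A_k, B_k \ge 0$ is Proposition \ref{prop:muslopeboundfortiltstability}(i) and (ii) for tilt stable objects; for tilt semistable $E_k$ one applies it to each factor of a Jordan--H\"older filtration (all tilt stable with tilt slope $t$) and sums.

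By additivity of Chern characters in the short exact sequence, $A_1 + A_2$ and $B_1 + B_2$ are the analogous quantities for $E$, whence
$$
\Del_{H,B}(E) = (A_1+A_2)(B_1+B_2) = \Del_{H,B}(E_1) + \Del_{H,B}(E_2) + (A_1 B_2 + A_2 B_1).
$$
Since $A_k, B_k \ge 0$, the cross-term is non-negative, giving the first inequality; the claim that equality holds when all three discriminants vanish is immediate because both sides reduce to $0$.

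For the moreover part, the key input is that when $B \in \RR\langle H\rangle$, a direct expansion of $\ch^B = e^{-B}\ch$ yields $\Del_{H,B}(F) = \Del_H(F)$ for every $F \in D^b(X)$, so in particular all three discriminants are integers. Hence $A_1 B_2 + A_2 B_1 \in \ZZ_{\ge 0}$, and it suffices to show the cross-term is strictly positive. Writing $c_i := H^2 \ch_1^B(E_i)$ and $r_i := H^3\ch_0(E_i)$, the finite tilt slope hypothesis forces $c_i > 0$, and $\alpha > 0$ gives $\sigma > |t|$ so that $t+\sigma > 0 > t-\sigma$. From $\Del_{H,B}(E_i) = A_i B_i = 0$, either $A_i = 0$, which (since $c_i > 0$) forces $r_i > 0$ and hence $B_i = 2\sigma r_i > 0$, or $B_i = 0$, which symmetrically forces $r_i < 0$ and $A_i = -2\sigma r_i > 0$; the degenerate case $A_i = B_i = 0$ would give $r_i = c_i = 0$, contradicting finite tilt slope. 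Combined with $A_j, B_j > 0$ (from $\Del_{H,B}(E_j) > 0$), this yields $A_1 B_2 + A_2 B_1 > 0$, and integrality upgrades this to $\ge 1$.

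The only real subtlety is the sign analysis in the moreover part, where one must rule out the degenerate case $A_i = B_i = 0$ and track the sign of $r_i$ in each branch; the main inequality itself is essentially a quadratic-form identity once the factorization $\Del_{H,B}(-) = A \cdot B$ is in place.
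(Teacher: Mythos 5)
Your proof of the main inequality and of the ``moreover'' part is correct, and it is essentially the paper's own argument in different coordinates. Your $A_k, B_k$ are exactly the two factors $H^2\ch_1^{B+tH}(E_k) \mp \sqrt{t^2+\alpha^2}\,H^3\ch_0(E_k)$ whose non-negativity is Proposition \ref{prop:muslopeboundfortiltstability}(i)--(ii), and your cross-term $A_1B_2+A_2B_1$ is, after the change of variables, the paper's identity $2(A_1A_2-B_1B_2)=(A_1-B_1)(A_2+B_2)+(A_1+B_1)(A_2-B_2)$. The only structural difference is that the paper first normalizes to tilt slope zero via Proposition \ref{prop:wall-tiltstable} (replacing $B$ by $B+\vartheta H$ and $\alpha$ by $\sqrt{\vartheta^2+\alpha^2}$), whereas you carry the general slope $t$ through; your explicit reduction from semistable to stable objects via Jordan--H\"older factors is a point the paper glosses over, and your integrality-plus-sign analysis in the last part matches the paper's.

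The one genuine discrepancy is the equality clause. You prove only the trivial direction (all three discriminants vanish $\Rightarrow$ equality), but the paper's proof establishes, and its later applications actually use (see the proofs of Proposition \ref{prop:strong-BG-rank-1-case} and of Theorem \ref{thm:strong-BG-ineq}, where it is cited as ``equality holds \emph{only} when''), the converse: equality forces $\Del_{H,B}(E_1)=\Del_{H,B}(E_2)=\Del_{H,B}(E)=0$. Fortunately this follows at once from your own setup: equality means $A_1B_2+A_2B_1=0$, hence $A_1B_2=A_2B_1=0$ since both terms are non-negative; if, say, $\Del_{H,B}(E_1)=A_1B_1>0$ then $A_1,B_1>0$, forcing $A_2=B_2=0$, which your degenerate-case analysis shows contradicts the finite tilt slope of $E_2$. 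So equality forces both $\Del_{H,B}(E_i)=0$ and then $\Del_{H,B}(E)=0$ as well; you should add this paragraph, since the statement is invoked later in the ``only when'' sense.
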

\begin{proof}
If $\nu_{H,B,\alpha}(E_1) = \nu_{H,B, \alpha}(E_2) = \vt$ for some $\vt < +\infty$ then from Proposition \ref{prop:wall-tiltstable},
$E_1, E_2 \in \bB_{H, B+ \vt H}$
are $\nu_{H,B+ \vt H, \sqrt{\vt^2 + \alpha^2}}$ tilt semistable with 
zero tilt slopes. Therefore, we can assume $\nu_{H,B, \alpha}(E_1) = \nu_{H,B,\alpha}(E_2)=0$. 

 Let us write, for $i=1,2$:
$$
A_i = H^2 \ch_1^B(E_i), \ \text{ and } \ B_i = \alpha H^3 \ch_0(E_i).
$$
Since $E_1, E_2$ are tilt semistable with zero tilt slopes, from Proposition \ref{prop:muslopeboundfortiltstability} we have 
$A_ i + B_i \ge 0$ and $A_i - B_i \ge 0$ for $i=1,2$. 
Therefore, 
\begin{equation*}
2(A_1 A_2 - B_1 B_2)  = (A_1 -B_1)(A_2 + B_2) + (A_1 + B_1)(A_2 -B_2)  \ge 0.
\end{equation*}
Since $E, E_1, E_2$ have zero tilt slopes,
\begin{align*}
\overline{\Delta}_{H, B}(E)  & = (A_1 + A_2)^2 - (B_1 + B_2)^2  = \sum_{i=1}^2 (A_1^2- B_i^2) +  2(A_1 A_2 - B_1 B_2)   \ge  \sum_{i=1}^2 \overline{\Delta}_{H, B}(E_i);
\end{align*}
and the equality holds when $A_i= B_i$ or $A_i =-B_i$ for each $i=1,2$; hence, $\overline{\Delta}_{H, B}= 0$ for $E, E_1, E_2$.

Suppose $B \in \RR \langle H \rangle$. Let us  consider the case $\Del_{H,B}(E_1)=0$ and 
$\Del_{H,B}(E_2)>0$, and the arguments for the other case is similar. 
Since $\Del_{H,B}(E_1)=0$ and $A_1 >0$, we have either $A_1 = B_1$ or $A_1 = -B_1$. 
Also $\Del_{H,B} = \Del_{H,0}=\Del_H$ and it is integral. Therefore,
$$
\Del_{H,B}(E) - \Del_{H,B}(E_2) = (A_1 -B_1)(A_2 + B_2) + (A_1 + B_1)(A_2 -B_2) \ge 1.
$$
This completes the proof. 
\end{proof}

\begin{lem}
\label{prop:Delta0tiltstableFano3}
Suppose $X$ is a Fano 3-fold of index $r$; so $-K_X = rH$ for some ample divisor class $H$. 
Any $\mu_H$ stable reflexive sheaf with $\Del_{H} = 0$ are line bundles $\oO_X(mH)$ for some $m \in \ZZ$.

Therefore, from (iii) of Proposition  \ref{prop:muslopeboundfortiltstability}, if $E$ is a $\nu_{H, \beta H, \alpha}$ tilt  stable object on $X$ with $\Del_{H}(E) = 0$,  then $E$ is isomorphic to $\oO_X(mH)[1]$ or $\iI_Z(mH)$ for some $m \in \ZZ$ and $0$-subscheme $Z \subset X$.
\end{lem}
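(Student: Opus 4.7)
The plan is to extract the maximum from the Bogomolov-Gieseker-type vanishings already on the table, then use simple-connectedness of Fano varieties to force rank one.

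\textbf{Reducing to $\Delta(E)=0$.} I would begin from the identity
$\overline{\Delta}_H(E) = H^3\bigl(H\cdot\Delta(E)\bigr) + \bigl[(H^2\ch_1(E))^2 - H^3(H\cdot\ch_1(E)^2)\bigr]$
recorded just before Proposition \ref{prop:slope-bounds}. The bracketed term is nonnegative by the Hodge index theorem, and $H\cdot\Delta(E)\ge 0$ by Lemma \ref{prop:sheafBG}(i). Thus $\overline{\Delta}_H(E)=0$ forces both contributions to vanish, and in particular $H\cdot\Delta(E)=0$. Lemma \ref{prop:sheafBG}(iii) then gives that $E$ is locally free with $\ch_i(E\otimes E^*)=0$ for all $i\ge 1$. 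Since $\ch_2(E\otimes E^*)=-\Delta(E)$ by a direct expansion, this upgrades to the full class vanishing $\Delta(E)=0$ in $H^4(X,\QQ)$.

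\textbf{From $\Delta(E)=0$ to a line bundle.} This is where the main work sits. With $E$ a $\mu_H$-stable locally free sheaf of rank $r$ with $\Delta(E)=0$, I would invoke the Kobayashi--Hitchin correspondence to equip $E$ with a Hermite--Einstein metric. The curvature identity then says that $\Delta(E)=0$ is equivalent to the induced connection on $\End(E)$ being flat, i.e.\ $\mathbb{P}(E)\to X$ is a flat projective bundle. Flat projective bundles are classified by representations $\pi_1(X)\to\PGL_r(\CC)$, and smooth complex Fano varieties are simply connected by Koll\'ar--Miyaoka--Mori. Hence $\mathbb{P}(E)\cong X\times\mathbb{P}^{r-1}$ and $E\cong L^{\oplus r}$ for some line bundle $L$; $\mu_H$-stability then forces $r=1$, so $E$ is a line bundle.

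\textbf{Identifying the line bundle.} Write $E=\oO_X(D)$. The vanishing $\overline{\Delta}_H(E)=(H^2\cdot D)^2 - H^3(H\cdot D^2)=0$ is the equality case of Hodge index, so $D\equiv cH$ numerically for some $c\in\QQ$. Now $H=-K_X/r$ is primitive in $\NS(X)$ by the maximality built into the definition of the index (any further divisibility of $H$ would give a larger index), and $\Pic(X)=\NS(X)$ is torsion-free on a Fano 3-fold (from $H^1(\oO_X)=0$ and simple connectedness). Therefore $c$ is forced to be an integer $m$, and $E\cong\oO_X(mH)$.

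\textbf{The tilt-stable corollary.} For the ``therefore'' statement, apply Proposition \ref{prop:muslopeboundfortiltstability}(iii) directly: a tilt-stable $E$ with $\overline{\Delta}_H(E)=0$ is either $F[1]$ for some $\mu_H$-stable locally free $F$, or a $\mu_H$-stable torsion free $E_0$ with $E_0^{**}$ locally free and $E_0^{**}/E_0\in\Coh_0(X)$. The first case yields $E\cong\oO_X(mH)[1]$ by the main statement. In the second case $E_0^{**}$ is $\mu_H$-stable reflexive and shares $\ch_0,\ch_1,\ch_2$ with $E_0$, hence still has $\overline{\Delta}_H=0$; so $E_0^{**}\cong\oO_X(mH)$ and then $E_0\cong\iI_Z(mH)$ where $Z$ is the $0$-dimensional subscheme cut out by the cokernel. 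I expect the projective-flatness / simple-connectedness step to be the main obstacle, as it is the point at which transcendental input (Kobayashi--Hitchin plus simple connectedness of smooth Fanos) is genuinely needed.
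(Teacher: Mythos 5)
Your proof is correct, but it reaches the crucial step --- a $\mu_H$-stable reflexive sheaf with $\Del_{H}=0$ must be $\oO_X(mH)$ --- by a genuinely different route than the paper. The paper stays entirely inside its algebraic toolkit: by Proposition \ref{prop:Delta0mustabletiltstable} both $E$ and $E[1]$ are tilt stable, so comparing tilt slopes of $E$ and $E(-rH)[1]$ at a suitable parameter $(\beta,\alpha)$ gives $\Hom_{\SX}(E,E(-rH)[1])=0$; Serre duality (this is where the Fano hypothesis $\omega_X=\oO_X(-rH)$ enters) converts this into $\Hom_{\SX}(E,E[2])=0$, hence $\chi(E,E)\le 1$, and Riemann--Roch then yields $\ch_0(E)^2\le 1+\tfrac{r}{2}H\cdot\Delta(E)$. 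Since $\Del_H(E)=0$ forces $H\cdot\Delta(E)=0$ (the same Hodge-index/Bogomolov decomposition you use), the paper concludes $\ch_0(E)=1$, so $E$ is a line bundle whose class lies in $\ZZ\langle H\rangle$. You instead route through Simpson's result (Lemma \ref{prop:sheafBG}(iii)), the Kobayashi--Hitchin correspondence with the L\"ubke equality case to obtain projective flatness, and simple connectedness of Fano varieties (Koll\'ar--Miyaoka--Mori) to split $E\cong L^{\oplus r}$ and force $r=1$ by stability; the Fano hypothesis enters for you through topology rather than through Serre duality. Both arguments are valid, but yours imports transcendental and topological facts external to the paper, whereas the paper's $\chi(E,E)\le 1$ pattern is self-contained and indeed recurs almost verbatim in the proof of Theorem \ref{thm:strong-BG-ineq}, so it is the more economical choice in context. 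One point in your favor: your identification of the line bundle is more careful than the paper's terse ``that is $D\in\ZZ\langle H\rangle$'' --- you record why equality in Hodge index plus primitivity of $H$ and torsion-freeness of $\NS(X)$ pin down $D=mH$ with $m$ an integer. Your handling of the tilt-stable corollary via Proposition \ref{prop:muslopeboundfortiltstability}(iii), including passing from $E_0$ to $E_0^{**}$ and recognizing the kernel as $\iI_Z(mH)$, coincides with what the paper leaves implicit.
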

\begin{proof}
Let $E$ be a $\mu_H$ stable reflexive sheaf with $\Del_{H} = 0$. 
Let $k$ be the rational number defined by
$$
k =\frac{  H^2 \ch_1(E) }{H^3 \ch_0(E)}.
$$
From Proposition \ref{prop:Delta0mustabletiltstable} $E$ and $E[1]$ are tilt stable. In particular, let us  consider the tilt slopes with respect to the stability parameters 
\begin{align*}
\beta & = k - (r/2), \\
\alpha & = r/2 -\vep
\end{align*}
for some $\vep \in (0, \ r/2)$.
We have $E, E(-rH)[1] \in \bB_{H,\beta H}$, and by direct computation,
$$
\nu_{H,\beta H, \alpha}(E) > 0 > \nu_{H,\beta H, \alpha}(E(-rH)[1])
$$
So we have 
$$
\Hom_{\SX}(E, E(-rH)[1]) = 0.
$$
Since $\omega_X = \oO_X(-rH)$, from the Serre duality, $\Hom_{\SX}(E, E[2]) = 0$. 
Since $E$ is tilt stable $\Hom_{\SX}(E, E) \cong \CC$; therefore
\begin{align*}
\chi(E, E) & = \sum_{i} \hom_{\SX}(E, E[i]) \le \hom_{\SX}(E, E) +  \hom_{\SX}(E, E[2]) = \hom_{\SX}(E, E)  = 1.
\end{align*}
On the other hand from the Riemann-Roch formula \eqref{eqn:RiemannRoch}
\begin{align*}
\chi(E, E) = \int_X \ch(E) \ch(E^{\vee}) \td_X =-\frac{r}{2} H (\ch_1(E)^2 - 2\ch_0(E) \ch_2(E)) + \ch_0(E)^2.
\end{align*}
Therefore, $\chi(E, E)  \le 1$ implies 
\begin{align*}
\ch_0(E)^2 \le 1 + \frac{r}{2}  H \Del(E).
\end{align*}
Since $0 = \Del_H (E) \ge H^3 H \Del(E) \ge 0$,  we have $H \Del(E)  =0$.
Therefore, $\ch_0(E) \le 1$. Since $E$ is torsion free $\ch_0(E)$ is integral, and so we have $\ch_0(E) =1$. Also the reflexivity of $E$ implies it is a line bundle. 
So $ \ch(E) = e^{D}$ for some $D \in \NS(X)$. 
Since $ \Del_H (E) =0$, we have $(H^2D)^2 = H^3 HD^2$, that is $D \in \ZZ\langle H \rangle$ as required. 
This completes the proof.
\end{proof}

\subsection{Tilt stability under dualizing}
\label{sec:dualtiltstability}
\begin{nota}
\rm
For $E \in \bB_{H,B}$ we write
$$
E^i = H^{i}_{\bB_{H,-B}}(E^\vee). 
$$
So for example $E^{12} =H^{2}_{\bB_{H, B}} \left( \left(H^{1}_{\bB_{H,-B}}(E^\vee)\right)^\vee\right)$.
\end{nota}
We have the following. 
\begin{prop}
\label{prop:dual-B-object}
Let $E \in \HN^{\nu}_{H, B, \alpha}((-\infty,+\infty))$. Then $E^i = 0$ for $i \ne 1,2$ with $E^2 \in \Coh_0(X)$.
\end{prop}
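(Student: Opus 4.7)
The plan is to dualize the canonical triangle $E_{-1}[1] \to E \to E_0$ in $\bB_{H,B}$ (with $E_j = \hH^j(E)$) to obtain
\[
E_0^\vee \to E^\vee \to E_{-1}^\vee[-1],
\]
and then compute $H^i_{\bB_{H,-B}}$ of each piece via the associated long exact sequence. The basic computational tool is that for a coherent sheaf $F$ placed in $\Coh$-degree $n$ with torsion-pair decomposition $(T(F), Q(F))$ in $(\tT_{H,-B}, \fF_{H,-B})$, one has $H^n_{\bB_{H,-B}}(F[-n]) = T(F)$ and $H^{n+1}_{\bB_{H,-B}}(F[-n]) = Q(F)[1]$, all other cohomologies vanishing; for a general complex, one iterates this through its Postnikov tower.

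I would first exploit the hypothesis to sharpen the structure of $E_{-1}$ and $E_0$. Proposition~\ref{prop:reflexivityatminus1place} gives that $E_{-1}$ is reflexive. Moreover, any $\mu_{H,B}$-slope-zero subsheaf $F$ of $E_{-1}$ would yield $F[1] \hookrightarrow E_{-1}[1] \hookrightarrow E$ in $\bB_{H,B}$ with $\nu = +\infty$, contradicting the HN bound on $E$; hence all $\mu_{H,B}$-slopes of $E_{-1}$ are strictly negative and $E_{-1}^* \in \tT_{H,-B}$. Similarly, any $\Coh_{\leq 1}$-subsheaf $T \subset E_0$ would pull back under $E \twoheadrightarrow E_0$ to a subobject $T' \subset E$ admitting $T$ as a quotient; but $\nu_{\max}(T) \leq \nu_{\max}(T') \leq \nu_{\max}(E) < +\infty$ contradicts $\nu(T) = +\infty$ from Proposition~\ref{prop:Tor1inftyslope}. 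Reflexivity on the smooth 3-fold then gives $\calExt^{\ge 2}(E_{-1}, \oO_X) = 0$ and $\calExt^1(E_{-1}, \oO_X) \in \Coh_0$, so $H^i_{\bB_{H,-B}}(E_{-1}^\vee[-1])$ is non-zero only for $i = 1, 2$, with $H^1 = E_{-1}^*$ and $H^2 \in \Coh_0$. For $E_0^\vee$, dualizing the strict positivity of $\mu_{H,B}$-slopes on the torsion-free quotient of $E_0$ gives $E_0^* \in \fF_{H,-B}$, and by Proposition~\ref{prop:dualsheaf}(ii) the higher $\calExt^j(E_0, \oO_X)$ for $j \ge 1$ are torsion, hence in $\tT_{H,-B}$; iterating the truncation formula then yields $H^0_{\bB_{H,-B}}(E_0^\vee) = H^4_{\bB_{H,-B}}(E_0^\vee) = 0$.

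Assembling through the LES gives $E^0 = E^4 = 0$ immediately. The delicate remaining tasks are $E^3 = 0$ and $E^2 \in \Coh_0(X)$; I expect $E^3 = 0$ to be the main obstacle. From the LES, $E^3$ is identified with a quotient of $\calExt^3(E_0, \oO_X) \in \Coh_0$, and to force this to vanish I would use that $\oO_x$ is a minimal object of $\bB_{H,B}$ with $\nu = +\infty$ (Proposition~\ref{prop:minimalobj}), so the hypothesis yields $\Hom(\oO_x, E) = 0$ for every closed point $x \in X$. Serre duality on the smooth projective 3-fold (using $\omega_X \otimes \oO_x \cong \oO_x$) upgrades this to $\Ext^3(E, \oO_x) = 0$, and local Grothendieck duality on the regular local rings $\oO_{X,x}$ identifies this stalkwise with $\calExt^3(E, \oO_X)_x = \hH^3(E^\vee)_x = 0$, forcing $E^3 = 0$. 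For $E^2 \in \Coh_0$, one chases the remaining LES terms using the $\Coh_{\le 1}$-purity of $E_0$ established above, together with Proposition~\ref{prop:dualsheaf}(ii), so that $\calExt^2(E_0, \oO_X)$ and the kernel of $\calExt^1(E_{-1}, \oO_X) \to \calExt^3(E_0, \oO_X)$ collapse into $\Coh_0$.
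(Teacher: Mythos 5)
Your skeleton agrees with the paper's proof: dualize the triangle $E_{-1}[1] \to E \to E_0$, compute the $\bB_{H,-B}$-cohomologies of the two pieces, and assemble through the long exact sequence. Your treatment of $E^0 = E^4 = 0$ is correct, and your route to $E^3 = 0$ (Serre duality giving $\Ext^3(E,\oO_x) \cong \Hom_{\SX}(\oO_x,E)^* = 0$, then identifying $E^3$ with $\calExt^3(E,\oO_X)$, which is legitimate once one knows $E^3 \in \Coh_0(X)$ and $E^4 = 0$) is a sound alternative to the paper's adjunction $\Hom_{\SX}(E^3,\oO_x) \cong \Hom_{\SX}(\oO_x,E) = 0$. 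The genuine gap is in the last step, $E^2 \in \Coh_0(X)$: you derive it from the claim that $E_0$ has no subsheaves in $\Coh_{\le 1}(X)$, and that claim is both incorrectly argued and false.

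The argument is backwards: if $T' \subset E$ is the preimage of $T \subset E_0$, then $T$ is a \emph{quotient} of $T'$, and quotients only satisfy $\nu_{\min}(T) \ge \nu_{\min}(T')$; a quotient can have strictly larger, even infinite, tilt slope (e.g.\ $\oO_X \twoheadrightarrow \oO_Z$ for $Z$ a $0$-dimensional subscheme), so no contradiction with $\nu_{\max}(E) < +\infty$ arises. (Equivalently, a map $T \to E_0$ lifts to $T \to E$ only if its obstruction in $\Ext^2(T,E_{-1})$ vanishes, which it need not; note that for your claim about $E_{-1}$ you used the correct direction, for subobjects, which is why that part is fine.) Moreover the conclusion fails for objects covered by the hypothesis: for a curve $C \subset X$ and $\beta<0$ one has $\iI_C \in \HN^{\nu}_{H,\beta H,\alpha}((-\infty,+\infty))$ (a subobject of infinite slope is forced to be a sheaf in $\Coh_{\le 1}(X)$ whose map to the torsion-free $\iI_C$ vanishes, so it injects into a torsion-free sheaf in $\fF_{H,\beta H}$ and is zero), hence by Proposition \ref{prop:stability-B-dual} the object $E = (\iI_C)^1$ lies in $\HN^{\nu}_{H,-\beta H,\alpha}((-\infty,+\infty))$; but $\hH^0(E) \cong \calExt^1(\iI_C,\oO_X) \cong \calExt^2(\oO_C,\oO_X)$ is a nonzero sheaf in $\Coh_1(X)$ --- compare Proposition \ref{prop:limittiltstableobjects}(iii), which already signals that negative-rank objects in this class have $\hH^0 \in \Coh_{\le 1}(X)$. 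For such $E$ the sheaf $\calExt^2(E_0,\oO_X)$ is genuinely $1$-dimensional, so your piecewise collapse of the long exact sequence cannot go through: the $1$-dimensional part of $E_0^2$ is cancelled only by the connecting map from $(E_{-1}[1])^1 = E_{-1}^*$, which no analysis of the two pieces in isolation can detect. This is exactly why the paper argues globally on $E$ at this point: for every $T \in \Coh_1(X)$ it shows $\Hom_{\SX}(E^2,T) \cong \Hom_{\SX}((T[-2])^\vee,E) \cong \Hom_{\SX}(\calExt^2(T,\oO_X),E) = 0$, since $\calExt^2(T,\oO_X) \in \Coh_1(X) \subset \HN^{\nu}_{H,B,\alpha}(+\infty)$ while all HN factors of $E$ have finite slope; applied to the pure $1$-dimensional quotient of $E^2$ (Proposition \ref{prop:dualsheaf}(i)) this forces $E^2 \in \Coh_0(X)$. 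You need this, or another argument carried out on $E$ itself rather than on $E_{-1}$ and $E_0$ separately, to close the gap.
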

\begin{proof}
For $E \in \HN^{\nu}_{H, B, \alpha}((-\infty,+\infty))$, let us denote $E_j = \hH^j(E)$. 
Object $E$ fits into the short exact sequence
\begin{equation*}
\label{ses:tstruct-B-object}
 0\to E_{-1} [1] \to E \to E_0 \to 0
\end{equation*}
 in $\bB_{H,B}$. 
From Proposition \ref{prop:reflexivityatminus1place},  $E_{-1}$ is a reflexive sheaf. 

By dualizing the above short exact sequence, we have the following distinguished triangle
\begin{equation}
\label{ses:dual-triangle}
E_0^{\vee} \to E^\vee \to E_{-1}^{\vee}[-1] \to E_0^{\vee}[1].
\end{equation}

By considering $t \to +\infty$ in (i) of Proposition \ref{prop:slope-bounds},  we have $E_{-1} \in \HN^{\mu}_{H,B}((-\infty,0))$.  
So $E_{-1}^* \in  \HN^{\mu}_{H,B}((0, +\infty))$. Since $E_{-1}$ is reflexive
$\calExt^1(E_{-1}, \oO_X) = \hH^{1}(E_{-1}^{\vee}) \in \Coh_0(X)$ and 
$\calExt^i(E_{-1}, \oO_X) = \hH^{i}(E_{-1}^{\vee}) = 0$ for $i \ge 2$.
Therefore, $\left(E_{-1}[1]\right)^i =0$ for $i \ne 1,2$. 

The sheaf $E_0 \in \HN^{\mu}_{H,B}((0, +\infty])$ and so $E_0^* \in \HN^{\mu}_{H,-B}((-\infty,0)) \subset \bB_{H,-B}[-1]$. Moreover, for $i\ge 1$,  $\calExt^i(E_{0}, \oO_X) = \hH^{i}(E_{0}^{\vee}) \in \Coh_{\le (3-i)}(X) \in \bB_{H,-B}$. So $E_0^i = 0$ for $i \ne 1,2,3$ with $E_0^2 \in \Coh_{\le 1}(X)$ and $E_0^3 \in \Coh_0(X)$.
Therefore, by considering the long exact sequence of $\bB_{H,-B}$-cohomologies associated to the triangle \eqref{ses:dual-triangle}, we have 
$E^{i} = 0$ for  $i \ne 1,2,3$ with $E^2 \in \Coh_{\le 1}(X)$ and $E^3 \in \Coh_0(X)$. 

For any $x \in X$, 
\begin{align*}
\Hom_{\SX}(E^3, \oO_x) & \cong \Hom_{\SX}(E^\vee[3], \oO_x ) \\
 & \cong \Hom_{\SX}(E^\vee, \oO_x[-3] ) \\
 & \cong \Hom_{\SX}(\left(\oO_x[-3]\right)^\vee , E  ) \\
 & \cong \Hom_{\SX}(\oO_x, E) = 0,
\end{align*}
as the skyscraper sheaf $\oO_x \in \Coh_0(X) \subset \HN^{\nu}_{H,B, \alpha}(+\infty)$ and $E \in \HN^{\nu}_{H, B, \alpha}((-\infty,+\infty))$. Therefore, $E^3  =0$. 

For any $T \in Coh_1(X)$, 
\begin{align*}
\Hom_{\SX}(E^2, T) & \cong \Hom_{\SX}(E^\vee[2], T ) \\
 & \cong \Hom_{\SX}(E^\vee, T[-2] ) \\
 & \cong \Hom_{\SX}(\left(T[-2]\right)^\vee , E  ) \\
 & \cong \Hom_{\SX}(\calExt^2(T, \oO_X), E) = 0,
\end{align*}
as $\calExt^2(T, \oO_X) \in \Coh_1(X) \subset \HN^{\nu}_{H,B, \alpha}(+\infty)$ and $E \in \HN^{\nu}_{H, B, \alpha}((-\infty,+\infty))$. Therefore, $E^2 \in \Coh_0(X)$. This completes the proof. 
\end{proof}

\begin{prop}
\label{prop:B-obj-like-sheaves-dual}
We have the following for $E \in \HN^{\nu}_{H,B,\alpha}((-\infty,+\infty))$:
\begin{enumerate}
\item $E$ fits into the short exact sequence 
$$
0 \to E \to E^{11} \to E^{23} \to 0
$$
in $\bB_{H,B}$, where $ E^{23} \in \Coh_0(X)$, 
\item $E^{1,k}=0$ for $k \ne 1$, 
\item $\Hom_{\SX}(\Coh_{\le 1}(X), E^1) = 0$, and
\item $\Hom_{\SX}(\Coh_{0}(X), E^1[1]) = 0$.
\end{enumerate}
\end{prop}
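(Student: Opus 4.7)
The plan is to exploit biduality $E^{\vee\vee}\cong E$ in $D^{b}(X)$ together with Proposition~\ref{prop:dual-B-object}, which already tells us that $E^\vee$ has $\bB_{H,-B}$-cohomology concentrated in degrees $1,2$ with $E^{2}\in\Coh_{0}(X)$. This produces the distinguished triangle
\[
E^{1}[-1]\to E^\vee\to E^{2}[-2]\to E^{1}
\]
in $D^{b}(X)$, and dualizing reverses it to
\[
(E^{2})^\vee[2]\to E\to (E^{1})^\vee[1]\to (E^{2})^\vee[3].
\]
Since $E^{2}\in\Coh_{0}(X)$ and $X$ is smooth of dimension $3$, I first record that $(E^{2})^\vee\cong (E^{2})^{*}[-3]$ with $(E^{2})^{*}\in\Coh_{0}(X)$, so $(E^{2})^\vee[2]$ is a $0$-dimensional sheaf placed in sheaf-degree $1$ and hence concentrated in $\bB_{H,B}$-degree $1$.

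For (i) and (ii), I would take the long exact sequence of $\bB_{H,B}$-cohomology of the dualized triangle. Because $E$ lies in $\bB_{H,B}$-degree $0$ while $(E^{2})^\vee[2]$ lies purely in $\bB_{H,B}$-degree $1$, the LES forces $(E^{1})^\vee[1]$ to be concentrated in $\bB_{H,B}$-degree $0$ and returns the short exact sequence
\[
0\to E\to H^{0}_{\bB_{H,B}}\!\big((E^{1})^\vee[1]\big)\to (E^{2})^{*}\to 0
\]
in $\bB_{H,B}$. The middle term equals $H^{1}_{\bB_{H,B}}\!((E^{1})^\vee)=E^{11}$, and $(E^{2})^{*}=E^{23}$ (since $(E^{2})^\vee=(E^{2})^{*}[-3]$ is $\bB_{H,B}$-concentrated in degree $3$), establishing (i). The vanishing of $H^{k}_{\bB_{H,B}}\!((E^{1})^\vee[1])$ for $k\ne 0$ is exactly $E^{1,k}=0$ for $k\ne 1$, which is (ii).

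For (iii) and (iv), the key byproduct of (ii) is the isomorphism $(E^{1})^\vee[1]\cong E^{11}$ in $D^{b}(X)$, equivalently $E^{1}\cong (E^{11})^\vee[1]$. Combined with the duality-adjunction identity $\Hom_{\SX}(T,A^\vee)=\Hom_{\SX}(A,T^\vee)$ this rewrites
\[
\Hom_{\SX}(T,E^{1}[k])=\Hom_{\SX}(E^{11},T^\vee[k+1])
\]
for any $T$. For $T\in\Coh_{\le 1}(X)$, Proposition~\ref{prop:dualsheaf} gives $\calExt^{2}(T,\oO_{X})\in\Coh_{\le 1}(X)$ and $\calExt^{3}(T,\oO_{X})\in\Coh_{0}(X)$ as the only nonzero sheaves in $T^\vee$; both sit in $\tT_{H,B}\subset\bB_{H,B}$, so $T^\vee[1]$ has $\bB_{H,B}$-cohomology only in the strictly positive degrees $1$ and $2$. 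Running $\Hom_{\SX}(E^{11},-)$ through the two-step filtration of $T^\vee[1]$ then produces only negative-degree $\Ext$-groups between objects of the heart $\bB_{H,B}$, all of which vanish, giving (iii). Part (iv) is the same argument applied to $T\in\Coh_{0}(X)$, where $T^\vee\cong T^{*}[-3]$ reduces $\Hom_{\SX}(T,E^{1}[1])$ directly to $\Ext^{-1}_{\bB_{H,B}}(E^{11},T^{*})=0$.

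The main task is the bookkeeping of the two LES/Hom computations; no serious obstacle arises, since Proposition~\ref{prop:dual-B-object} together with biduality already forces $E^{11}$ to play the role of a ``reflexive hull'' of $E$ inside $\bB_{H,B}$, and the Hom-vanishings of (iii) and (iv) follow automatically from heart-degree considerations once (ii) identifies $(E^{1})^\vee[1]$ with $E^{11}$.
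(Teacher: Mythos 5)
Your proof is correct and follows essentially the same route as the paper: biduality $E^{\vee\vee}\cong E$ combined with Proposition \ref{prop:dual-B-object}, then heart-degree bookkeeping via the duality adjunction for the $\Hom$-vanishings in (iii) and (iv). The only cosmetic difference is in (i)--(ii): the paper invokes the spectral sequence $H^{p}_{\bB_{H,B}}\bigl(\bigl(H^{-q}_{\bB_{H,-B}}(E^\vee)\bigr)^{\vee}\bigr)\Rightarrow H^{p+q}_{\bB_{H,B}}(E)$, whereas you dualize the truncation triangle of $E^\vee$ and take the long exact sequence of $\bB_{H,B}$-cohomology --- since that spectral sequence has only two nonzero rows, the two computations coincide.
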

\begin{proof}
By Proposition \ref{prop:dual-B-object}, $E^i = 0$ for $i \ne 1,2$ and $E^2 \in \Coh_0(X)$. So $(E^2)^{\vee} \cong E^{23}[-3]$.

Since $E^{\vee \vee} \cong E$, we have the spectral sequence:
\begin{equation*}
\label{specseq:doubledual-B-obj}
H^{p}_{\bB_{H, B}}\left(\left( H^{-q}_{\bB_{H, -B}} (E^{\vee})\right)^{\vee}\right) \Longrightarrow H^{p+q}_{\bB_{H, B}}(E).
\end{equation*}
Consider the convergence of this spectral sequence for $E \in \HN^{\nu}_{H,B, \alpha}((-\infty,+\infty))$.
From the convergence we get  $E^{1,k} = 0$ for $k \ne 1$ and also we have the short
 exact sequence
$
0 \to E \to E^{11} \to E^{23} \to 0
$
in $\bB_{H,B}$. 

For $T \in \Coh_{\le 1}(X)$, $\calExt^i(T, \oO_X) \in \Coh_{(3-i)}(X)$; and so 
$T^{\vee}\in \langle \bB_{H,B}[-2], \bB_{H,B}[-3]\rangle$. 
On the other hand $(E^{1})^{\vee} \in \bB_{H,B}[-1]$. 
Hence, 
$$
\Hom_{\SX}(T, E^1) \cong \Hom_{\SX}((E^1)^\vee, T^\vee)=0
$$
as required in part (iii). 

For any skyscraper sheaf $\oO_x$ of $x \in X$, we have 
$$
\Hom_{\SX}(\oO_x, E^1[1]) \cong \Hom_{\SX}((E^1[1])^\vee, \oO_x^\vee) \cong \Hom_{\SX}(E^{11}[-2] , \oO_x[-3]) = 0
$$
as required in part (iv). 
\end{proof}

\begin{prop}
\label{prop:stability-B-dual}
Let $E  \in \HN^{\nu}_{H,B,\alpha}((-\infty,+\infty))$. Then 
\begin{enumerate}
\item $E$ is $\nu_{H,B,\alpha}$-stable (resp. $\nu_{H,B,\alpha}$-semistable)  if and only if $E^{11}$  is $\nu_{H,B,\alpha}$-stable  (resp. $\nu_{H,B,\alpha}$-semistable),
\item  $\nu_{H, -B,\alpha}(E^1) = -\nu_{H, B,\alpha}(E) $,
\item $E$ is $\nu_{H,B,\alpha}$-stable  (resp. $\nu_{H,B,\alpha}$-semistable)  if and only if $E^1$ is 
$\nu_{H, -B,\alpha}$-stable  (resp. $\nu_{H,B,\alpha}$-semistable), and
\item  $E^1 \in \HN^{\nu}_{H,-B,\alpha}((-\infty,+\infty))$. 
\end{enumerate}
\end{prop}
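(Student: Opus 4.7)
The plan is to prove parts (i)--(iv) in the order (ii), (i), (iv), (iii), using two structural tools: the truncation triangle $E^1[-1] \to E^\vee \to E^2[-2] \to E^1$ with $E^2 \in \Coh_0(X)$ (from Proposition \ref{prop:dual-B-object}), and the short exact sequence $0 \to E \to E^{11} \to E^{23} \to 0$ in $\bB_{H,B}$ with $E^{23} \in \Coh_0(X)$ (from Proposition \ref{prop:B-obj-like-sheaves-dual}). Part (ii) is a Chern character computation: from the triangle together with $\ch_i(E^2) = 0$ for $i \le 2$ and $\ch_i(E^\vee) = (-1)^i\ch_i(E)$, one obtains $\ch_i(E^1) = (-1)^{i+1}\ch_i(E)$ for $i \le 2$; substitution yields $H^2\ch_1^{-B}(E^1) = H^2\ch_1^B(E)$ and the analogous negation for $H\ch_2^{-B}(E^1) - (\alpha^2/2)H^3\ch_0(E^1)$, proving the slope identity.

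Part (i) exploits that $E^{23} \in \Coh_0(X) \subset \HN^\nu_{H,B,\alpha}(+\infty)$ has $\ch_i = 0$ for $i \le 2$, so $\nu_{H,B,\alpha}(E) = \nu_{H,B,\alpha}(E^{11})$. For any short exact sequence $0 \to F \to E^{11} \to G \to 0$ in $\bB_{H,B}$, set $F' = F \cap E$ and $G' = \mathrm{Im}(E \to G)$; the quotients $F/F'$ and $G/G'$ embed as $\bB_{H,B}$-subobjects of $E^{23}$, and analysis of the torsion pair forces them to lie in $\Coh_0(X)$. Hence $\nu_{H,B,\alpha}(F) = \nu_{H,B,\alpha}(F')$ and $\nu_{H,B,\alpha}(G) = \nu_{H,B,\alpha}(G')$, and (semi)stability transfers freely between $E$ and $E^{11}$ in both directions.

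For part (iv), (ii) gives $H^2\ch_1^{-B}(E^1) = H^2\ch_1^B(E) > 0$, so $E^1$ has finite tilt slope; to rule out $+\infty$-slope HN subobjects $T \subset E^1$, I dualize $0 \to T \to E^1 \to Q \to 0$ and use that $(E^1)^\vee \cong E^{11}[-1]$ is concentrated in degree $1$ of $\bB_{H,B}$ (from $E^{1,k} = 0$ for $k \ne 1$). An analog of Proposition \ref{prop:dual-B-object} showing that $T^\vee$ has $\bB_{H,B}$-cohomology in degrees $\ge 2$, combined with the cohomology long exact sequence, then forces $T = 0$. Part (iii) combines all of the above: given a short exact sequence $0 \to F \to E^1 \to G \to 0$ in $\bB_{H,-B}$ with $F, G \in \HN^\nu_{H,-B,\alpha}((-\infty,+\infty))$ (by (iv) and closure of this subcategory under subobjects/quotients), I dualize and take $\bB_{H,B}$-cohomology to extract, modulo $\Coh_0(X)$ terms, a short exact sequence $0 \to G^1 \to E^{11} \to (F^1)' \to 0$ in $\bB_{H,B}$ with $F^1/(F^1)' \in \Coh_0(X)$. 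By (i), (semi)stability of $E^{11}$ matches that of $E$; by (ii) applied to $F, G \in \bB_{H,-B}$, one has $\nu_{H,B,\alpha}(F^1) = -\nu_{H,-B,\alpha}(F)$ and $\nu_{H,B,\alpha}(G^1) = -\nu_{H,-B,\alpha}(G)$; hence (semi)stability of $E$ is equivalent to (semi)stability of $E^1$. The converse direction follows by applying the forward direction of (iii) to $E^1$ and invoking (i).

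The main obstacle is part (iv): ruling out $+\infty$-slope HN subobjects of $E^1$ amounts to establishing a converse of Proposition \ref{prop:dual-B-object} for $+\infty$-slope objects in $\bB_{H,-B}$. The case $T \in \Coh_{\le 1}(X)$ is straightforward via $\calExt^i$-vanishings, but a general $+\infty$-slope object may contain torsion-free slope-zero pieces with vanishing $\Del_{H,-B}$ (where tilt-(semi)stability persists by Proposition \ref{prop:Delta0mustabletiltstable}); controlling these requires careful tracking via the torsion pair defining $\bB_{H,-B}$ together with the spectral sequence from the proof of Proposition \ref{prop:B-obj-like-sheaves-dual}.
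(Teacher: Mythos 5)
Your part (ii) is exactly the paper's computation and is fine, but the proposal as a whole has a genuine gap, and it sits at the load-bearing joint of your plan: the order (ii), (i), (iv), (iii). Your proof of (iii) consumes (iv) (you invoke it to place $F, G$ in $\HN^{\nu}_{H,-B,\alpha}((-\infty,+\infty))$), yet you concede that (iv) is not actually proved -- the case of a $+\infty$-slope subobject $T \subset E^1$ whose $\hH^{-1}(T)$ is a nonzero torsion-free slope-zero sheaf with $\Del_{H,-B}=0$ is left as an acknowledged obstacle. So the chain (iv) $\Rightarrow$ (iii) is unsupported, and with it both remaining parts of the proposition. The paper proves these two parts in the opposite order precisely to sidestep this difficulty: assuming $E$ semistable and $E^1$ unstable, it dualizes only the \emph{lowest} Harder--Narasimhan quotient $E^1 \twoheadrightarrow Q$. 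Since $\nu_{H,-B,\alpha}(E^1) = -\nu_{H,B,\alpha}(E) < +\infty$ by (ii) and HN slopes decrease, $Q$ is automatically semistable with $\nu_{H,-B,\alpha}(Q) < \nu_{H,-B,\alpha}(E^1) < +\infty$; no a priori exclusion of $+\infty$-slope pieces of $E^1$ is ever needed. Then $Q^1 \hookrightarrow E^{11}$ with $\nu_{H,B,\alpha}(Q^1) = -\nu_{H,-B,\alpha}(Q) > \nu_{H,B,\alpha}(E^{11})$ contradicts semistability of $E^{11}$ from (i), and (iv) is deduced afterwards as a consequence of (iii), not the other way round. Note also that your auxiliary claim that $\HN^{\nu}_{H,-B,\alpha}((-\infty,+\infty))$ is closed under quotients is false (any surjection $E^1 \twoheadrightarrow \oO_x$ is a counterexample); this is a symptom of the same issue.

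There is a second, smaller gap in (i), where you replace the paper's citation of \cite{LM} by a direct argument. The slope identity $\nu_{H,B,\alpha}(F) = \nu_{H,B,\alpha}(F')$ says nothing when $F' = F \cap E = 0$: in that case $F$ embeds into $E^{23}$, hence $F \in \Coh_0(X)$ and $\nu_{H,B,\alpha}(F) = +\infty$, and any such nonzero $F$ destabilizes $E^{11}$ even in the semistable sense, because $\nu_{H,B,\alpha}(F) \le \nu_{H,B,\alpha}(E^{11}/F) = \nu_{H,B,\alpha}(E^{11}) < +\infty$ fails under the paper's convention. So the direction ``$E$ (semi)stable implies $E^{11}$ (semi)stable'' additionally requires $\Hom_{\SX}(\Coh_0(X), E^{11}) = 0$. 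This is true and short -- since $E^{1,k}=0$ for $k \ne 1$ (Proposition \ref{prop:B-obj-like-sheaves-dual}) one has $E^{11} \cong (E^1)^\vee[1]$, and for $T \in \Coh_0(X)$, $\Hom_{\SX}(T, E^{11}) \cong \Hom_{\SX}(E^1, T^\vee[1])$ with $T^\vee[1] = \calExt^3(T,\oO_X)[-2] \in \bB_{H,-B}[-2]$, forcing vanishing -- but it is a necessary step that your intersection argument does not supply.
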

\begin{proof}
From part (2) of Proposition 3.5 in \cite{LM}, we have (i). 

By Proposition \ref{prop:dual-B-object} and from definition of the twisted Chern character we have   
\begin{align*}
- \ch^{-B}(E^1)  + \ch^{-B}(E^2) &  = \ch^{-B}(E^\vee)= e^{B}\ch(E^{\vee}) =  (e^{-B}\ch(E) )^{\vee} \\ 
    & = (ch^B(E))^\vee = (\ch_0^B(E), -\ch_1^B(E),\ch_2^B(E),- \ch_3^B(E)).
\end{align*}
Since  $E^2 \in \Coh_0(X)$, we have $\nu_{H, -B,\alpha}(E^1) = -\nu_{H, B,\alpha}(E) $.

Let $E \in \bB_{H,B}$ be a $\nu_{H,B,\alpha}$-semistable object. Assume  $E^{1} \in \bB_{H,-B}$ is
 $\nu_{H,-B,\alpha}$-unstable. From the Harder-Narasimhan filtration there exists a quotient $E^1 \twoheadrightarrow Q$ in $\bB_{H,-B}$, where $Q$ is  the lowest $\nu_{H,-B,\alpha}$-semistable Harder-Narasimhan factor. Since 
 $\nu_{H, -B, \alpha}(E^1) = -\nu_{H, B, \alpha}(E) $, $\nu_{H, -B, \alpha}(Q) <\nu_{H, -B, \alpha}(E^1) < +\infty$. 
 By (ii), $\nu_{H, B, \alpha}(Q^1) > \nu_{H, B, \alpha}(E^{11}) $ with $Q^1 \hookrightarrow E^{11}$ in $\bB_{H,B}$; this is not possible as $E^{11}$ is $\nu_{H,B, \alpha}$-semistable by (i). 
 
 Part (iv) is a direct consequence of (iii). 
\end{proof}
Consequently, we have the following:
\begin{cor}
\label{prop:reduce-BGineq-class}
We only need to check Bogomolov-Gieseker type inequalities in \cite{BMT, BMS, PT},  Conjectures \ref{conj:BG-ineq-single} and \ref{conj:BG-ineq}   for tilt stable objects $E$ satisfying
\begin{itemize}
\item $E \cong E^{11}$
\item $\ch_0(E) \ge 0$ (or $\ch_0(E)\le 0$).
\end{itemize}
\end{cor}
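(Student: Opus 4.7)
The plan is to perform two successive reductions built on the duality machinery of Section~\ref{sec:dualtiltstability}. First I would start from an arbitrary tilt stable $E\in\bB_{H,B+\beta H}$ with $\nu_{H,B+\beta H,\alpha}(E)=0$ and invoke the canonical short exact sequence
$$0\to E\to E^{11}\to E^{23}\to 0$$
of Proposition~\ref{prop:B-obj-like-sheaves-dual}, noting that $E^{23}\in\Coh_0(X)$ and that $E^{11}$ is again tilt stable by Proposition~\ref{prop:stability-B-dual}(i). Because $E^{23}$ is supported in dimension zero, the twisted Chern characters agree in degrees $\le 2$ and satisfy $\ch_3^{B+\beta H}(E)\le \ch_3^{B+\beta H}(E^{11})$; in particular the tilt slopes coincide, and any BG-type expression $D^{B,\xi}_{\alpha,\beta}$ --- which is of the form ``$\ch_3^{B+\beta H}$ plus a multiple of $\ch_1^{B+\beta H}$'' with the multiplier depending only on $(H,\alpha,\xi,\Lambda)$ --- satisfies $D^{B,\xi}_{\alpha,\beta}(E)\le D^{B,\xi}_{\alpha,\beta}(E^{11})$. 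So the inequality for $E^{11}$ automatically gives it for $E$, and it suffices to check the case $E\cong E^{11}$.

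To then handle the sign of $\ch_0$, I would first observe that once $E\cong E^{11}$ the vanishing of $E^{23}$ combined with the identification $(E^2)^\vee\cong E^2[-3]$ --- valid for any $E^2\in\Coh_0(X)$ since $\dR\calHom(\oO_x,\oO_X)=\oO_x[-3]$ --- yields $E^{23}\cong E^2$, and hence $E^2=0$. I would then pass to the dual $E^1\in\bB_{H,-(B+\beta H)}$, which Proposition~\ref{prop:stability-B-dual} certifies to be tilt stable with $\nu_{H,-(B+\beta H),\alpha}(E^1)=0$. Unpacking the identity $-\ch^{-(B+\beta H)}(E^1)+\ch^{-(B+\beta H)}(E^2)=(\ch^{B+\beta H}(E))^\vee$ componentwise with $E^2=0$ gives
$$\ch_0(E^1)=-\ch_0(E),\quad \ch_1^{-(B+\beta H)}(E^1)=\ch_1^{B+\beta H}(E),\quad \ch_3^{-(B+\beta H)}(E^1)=\ch_3^{B+\beta H}(E),$$
so that with the reparametrization $(B',\beta')=(-B,-\beta)$ one obtains the clean identity $D^{B',\xi}_{\alpha,\beta'}(E^1)=D^{B,\xi}_{\alpha,\beta}(E)$. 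Applying the BG inequality to $E^1$, which has $\ch_0\ge 0$ when $\ch_0(E)\le 0$, therefore yields the inequality for $E$; the symmetric argument reduces the $\ch_0\ge 0$ case to the $\ch_0\le 0$ case.

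Both reductions are essentially bookkeeping on top of Propositions~\ref{prop:dual-B-object}--\ref{prop:stability-B-dual}, so I do not anticipate genuine technical difficulties. The only point where care is needed is the identification $E^{23}\cong E^2$, which is what makes ``$E\cong E^{11}$'' equivalent to ``$E^2=0$'' and ensures the Chern character identities under duality close up exactly, rather than producing a positive-dimensional error term in $\ch_3$ that would break the equivalence. Once that is recorded, the two reductions combine to yield the corollary.
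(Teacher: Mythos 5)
Your proposal is correct and takes essentially the same route the paper intends: Corollary \ref{prop:reduce-BGineq-class} is stated there without proof as an immediate consequence of Propositions \ref{prop:B-obj-like-sheaves-dual} and \ref{prop:stability-B-dual}, namely the monotonicity $D^{B,\xi}_{\alpha,\beta}(E)\le D^{B,\xi}_{\alpha,\beta}(E^{11})$ coming from the $\Coh_0(X)$-cokernel, followed by dualization (with the reparametrization $B\mapsto -B$, $\beta\mapsto-\beta$) to flip the sign of $\ch_0$, exactly as you spell out. Two minor remarks: $(E^2)^\vee$ is $\calExt^3(E^2,\oO_X)[-3]$ rather than literally $E^2[-3]$ (only its vanishing and length are used, so nothing breaks), and the exact closure $E^2=0$ is not actually needed for the second reduction --- the error term $\ch_3(E^2)\ge 0$ enters $D^{-B,\xi}_{\alpha,-\beta}(E^1)=D^{B,\xi}_{\alpha,\beta}(E)+\ch_3(E^2)$ with the favorable sign, so the one-directional implication you need holds even without first passing to $E\cong E^{11}$.
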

\begin{aside}
\rm
Let $E$ be an $\nu_{H,B,\alpha}$-stable object in $\bB_{H,B}$ with $\nu_{H,B, \alpha}(E) = 0$. By Proposition \ref{prop:B-obj-like-sheaves-dual}, it fits into the short exact sequence 
$0 \to E \to E^{11} \to E^{23} \to 0$ in $\bB_{H,B}$ with $E^{23} \in \Coh_0(X)$. 
Moreover, by Proposition \ref{prop:stability-B-dual}, $E^{11} \in  \bB_{H,B}$ is $\nu_{H,B,\alpha}$-stable with 
$\nu_{H,B, \alpha}(E^{11}) = 0$. Also by Proposition \ref{prop:B-obj-like-sheaves-dual}, $\Hom_{\SX}(\Coh_0(X), E^{11}[1]) = 0$.
Hence by \cite[Lemma 2.3]{MP1} or \cite[Aside 2.12]{PT}, $E^{11}[1] \in \aA_{H,B, \alpha}$ is a minimal object. 
\end{aside}
\section{Bogomolov-Gieseker  Type Inequality Conjecture for 3-folds}
\label{sec:BGconj}
In this section we let $X$ be a  smooth projective 3-fold.

\subsection{Modified conjectural inequality}
Let us  modify the   Bogomolov-Gieseker type inequality conjecture  for our smooth projective 3-fold $X$.

First we introduce the expression of the inequality as follows:
\begin{defi}
\label{def:BGineq-term}
Let us fix classes $H \in \NS(X)$, $B \in \NS_{\RR}(X)$ such that 
$H$ is ample, and
 $\Lambda \in H^4(X, \QQ)$ satisfying  $\Lambda \cdot H =0$.  
For $\xi \in \mathbb{R}_{\ge 0}$, $ \alpha \in \mathbb{R}_{>0}$ and $\beta \in \mathbb{R}$, we define
$$
D^{B, \xi}_{\alpha, \beta}(E) = \ch_3^{B + \beta H}(E) + \left(  \Lambda - \left( \xi+  \frac{1}{6}\alpha^2 \right) H^2 \right) \ch_1^{B + \beta H}(E).
$$
\end{defi}
\begin{rmk}
\label{def:Lambda-main-choice}
\rm 
In the next sections we are mostly interested in the following choice for $\Lambda$:
$$
\Lambda = \frac{c_2(X)}{12} - \frac{c_2(X)\cdot H}{12 H^3} H^2.
$$
Since  $\Lambda \cdot H =0$, we  can write 
\begin{equation}
\label{eq:reduced-BG-ineq-term}
D^{B,\xi}_{\alpha, \beta}(E) = \ch_3^{B + \beta H}(E) +\Lambda \ch_1^{B}(E) -  \left(\xi + \frac{1}{6}\alpha^2 \right)  H^2 \ch_1^{B + \beta H}(E)  .
\end{equation}
Moreover, $D^{B, \xi}_{\alpha, \beta} = D^{B+ \beta H,\xi}_{\alpha, 0}$. 
\end{rmk}

\begin{conj}
\label{conj:BG-ineq-single}
Let us fix classes $H \in \NS(X)$, $B \in \NS_{\RR}(X)$ such that 
$H$ is ample. 
Then there exist
$\Lambda \in H^4(X, \QQ)$ satisfying $\Lambda \cdot H =0$, and
for any   $\alpha \in \mathbb{R}_{>0}$, $\beta \in \mathbb{R}$,  there is a minimal constant 
$$
\xi(\alpha,\beta) \in \RR_{\ge 0}
$$
such that  all tilt slope $\nu_{H, B+ \beta H, \alpha}$-stable objects $E \in \bB_{ H, B+ \beta H}$
with $\nu_{H, B+ \beta H, \alpha}(E) =0$ satisfy the inequality:
\begin{equation*}
D^{B,\xi(\alpha, \beta)}_{\alpha, \beta}(E) \le 0.
\end{equation*}
Hence, for any $\xi \ge \xi (\alpha,\beta)$, we have $D^{B,\xi}_{\alpha, \beta}(E) \le 0$.
\end{conj}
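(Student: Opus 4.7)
The plan is to reduce the problem via the duality results in Section~\ref{sec:dualtiltstability}: by Corollary~\ref{prop:reduce-BGineq-class} it suffices to verify the inequality for tilt stable objects $E$ with $E \cong E^{11}$ and $\ch_0(E) \ge 0$, the opposite sign case following by applying $(-)^1$ and using that $D^{B,\xi}_{\alpha,\beta}$ transforms in a controlled way under dualizing (Proposition~\ref{prop:stability-B-dual}). Thus the problem splits naturally into the torsion case $\ch_0(E)=0$ and the generic case $\ch_0(E)>0$.

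For the degenerate case $\ch_0(E) = 0$, the condition $\nu_{H, B+\beta H, \alpha}(E) = 0$ forces $H\ch_2^{B+\beta H}(E) = 0$. Proposition~\ref{prop:limittiltstableobjects} identifies such $E$ (when it remains tilt stable on an unbounded $\alpha$-interval) as a slope semistable pure 2-dimensional sheaf, and a standard application of classical sheaf theory plus Hirzebruch-Riemann-Roch \eqref{eqn:RiemannRoch} yields an effective bound on $\ch_3^{B+\beta H}(E) + \Lambda \cdot \ch_1^B(E)$ in terms of $H^2 \ch_1^{B+\beta H}(E)$.

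For $\ch_0(E) > 0$, I would exploit the tilt stability constraint $\nu_{H,B+\beta H,\alpha}(E) = 0$ together with the Bogomolov-Gieseker inequality $\overline{\Delta}_{H, B+\beta H}(E) \ge 0$ from Proposition~\ref{prop:muslopeboundfortiltstability}(iii) to pin down $H \ch_2^{B+\beta H}(E)$ and control $H^2\ch_1^{B+\beta H}(E)$ in terms of $H^3 \ch_0(E)$, and then seek a uniform upper bound on the ratio
\[
\frac{\ch_3^{B+\beta H}(E) + \Lambda \cdot \ch_1^{B}(E)}{H^2 \ch_1^{B+\beta H}(E)}
\]
over all tilt stable objects with $\nu = 0$. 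Once this bound exists, the coefficient $-\bigl(\xi + \tfrac{1}{6}\alpha^2\bigr)H^2$ in front of $\ch_1^{B+\beta H}(E)$ (which is paired with a positive quantity, since $\nu<+\infty$ forces $H^2\ch_1^{B+\beta H}(E) > 0$) makes $D^{B,\xi}_{\alpha,\beta}(E)$ monotonically decreasing in $\xi$. Defining $\xi(\alpha,\beta)$ as the infimum of those $\xi \ge 0$ for which $D^{B,\xi}_{\alpha,\beta}(E) \le 0$ holds uniformly, the minimality assertion then reduces to showing this infimum is attained, which follows from the closedness of the admissible set under limits.

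The main obstacle is precisely establishing the uniform upper bound in the $\ch_0(E) > 0$ step. A priori the Chern characters of tilt stable objects with $\nu=0$ form an unbounded family, so input beyond the classical Bogomolov-Gieseker inequality is required. In the Fano setting, as anticipated by Sections~\ref{sec:Hom-vanishing}--\ref{sec:Fano3-A=0}, the approach is to compute $\chi(\oO_X(mH), E)$ via Proposition~\ref{prop:Fano3-prop}, extract $\Hom$-vanishings from tilt slope comparisons between $E$ and suitable shifts $\oO_X(mH)$, $\oO_X(mH)[1]$, and then invoke Serre duality; the sharpness of this comparison requires the strong Bogomolov-Gieseker inequality of Theorem~\ref{thm:intro-strong-BG-ineq} (an enhancement of \cite[Theorem 3.1]{BMSZ}) together with the explicit choice of $\Lambda$ in Remark~\ref{def:Lambda-main-choice}. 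Outside of geometries where such comparison objects exist, no uniform bound is currently known, and this is why Conjecture~\ref{conj:BG-ineq-single} must be stated as a conjecture rather than a theorem.
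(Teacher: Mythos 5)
You were asked to prove a statement that the paper itself never proves: Conjecture \ref{conj:BG-ineq-single} is stated as a conjecture for an arbitrary smooth projective 3-fold, and the paper only establishes it (via the equivalent Conjecture \ref{conj:limitBG}) for Fano 3-folds with $B$, $H$ proportional to $-K_X$. Your proposal implicitly recognizes this: your closing paragraph concedes that the uniform bound in the $\ch_0(E)>0$ case is exactly the open content, so what you have written is a reduction framework plus an honest admission, not a proof --- which is the correct posture here. The formal scaffolding you set up is sound and consistent with the paper: $D^{B,\xi}_{\alpha,\beta}(E)$ is affine and strictly decreasing in $\xi$ because finite tilt slope forces $H^2\ch_1^{B+\beta H}(E)>0$; for each $E$ the admissible set of $\xi$ is a closed ray, so a finite uniform bound (if it exists) is attained and yields the minimal $\xi(\alpha,\beta)$; and the reduction to $E \cong E^{11}$ with $\ch_0(E)\ge 0$ is exactly Corollary \ref{prop:reduce-BGineq-class}.

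Two concrete defects remain in the parts you do assert. First, in the $\ch_0(E)=0$ case you invoke Proposition \ref{prop:limittiltstableobjects}, but that proposition requires $E$ to be tilt stable for \emph{all} $\alpha\ge\alpha_0$, a hypothesis unavailable when $(\alpha,\beta)$ is fixed; and the ``standard application of classical sheaf theory plus Hirzebruch-Riemann-Roch'' you claim for pure 2-dimensional sheaves does not exist --- bounding $\ch_3$ of rank-zero tilt stable objects is itself part of the conjectural content, not a consequence of Lemma \ref{prop:sheafBG}. Second, even in the Fano setting your sketch omits the mechanism that makes the $\Hom$-vanishing machinery applicable: the paper does not attack a fixed $(\alpha,\beta)$ directly, but first proves Theorem \ref{thm:conj-equivalence}, deforming a hypothetical counterexample along the curve $C(E)$ (where $D^{B,\xi}$ only increases as $\alpha$ decreases, by Lemma \ref{prop:D-derivative-wrt-alpha}), replacing $E$ by a Jordan-H\"older factor at each wall with strictly smaller $\overline{\Delta}_{H,B}$ (Proposition \ref{prop:discrimi-ses-decrease}), and terminating by discreteness of $\overline{\Delta}_{H,B}$ for rational $B$. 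This reduces everything to $\obe_A$-stable objects, and only for those does Proposition \ref{prop:betabar-hom-vanishing} provide the $\Hom$-vanishings feeding into Serre duality, the Euler characteristic computation of Proposition \ref{prop:Euler-char-Fano3}, and the strong inequality of Theorem \ref{thm:strong-BG-ineq}. Stability at a single point $(\alpha,\beta)$ is strictly weaker than $\obe_A$-stability, so without this wall-crossing reduction your Fano-case outline does not close.
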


\begin{rmk}
\label{def:xi(A)}
\rm
Let 
$$
A : B + \RR \langle H \rangle  \to \RR_{\ge 0}
$$
 be a continuous function. Assume Conjecture \ref{conj:BG-ineq-single} holds 
for all $\alpha \in \mathbb{R}_{>0}, \beta \in \mathbb{R}$ such that $ \alpha \ge A(B + \beta H)$. 
Then we can define the following  non-negative constant
$$
\xi(A) = \max \{ \xi (\alpha, \beta) : \alpha \in \mathbb{R}_{>0}, \beta \in \mathbb{R}, \alpha \ge A(B +\beta H) \}.
$$
\end{rmk}
Therefore, in addition to Conjecture \ref{conj:BG-ineq-single}, we can conjecture the following for a family of stability parameters.
\begin{conj}
\label{conj:BG-ineq}
Let us fix classes $H \in \NS(X)$, $B \in \NS_{\RR}(X)$ such that 
$H$ is ample. Let $A : B + \RR \langle H \rangle  \to \RR_{\ge 0}$ be a continuous function.
There exist
$\Lambda \in H^4(X, \QQ)$ satisfying $\Lambda \cdot H =0$, and
a constant $\xi({A})  \in \RR_{\ge 0}$ such that 
for any   $\alpha \in \mathbb{R}_{>0}$, $\beta \in \mathbb{R}$ 
with $\alpha \ge A(B +\beta H)$,
all tilt slope $\nu_{H, B+ \beta H, \alpha}$-stable objects $E \in \bB_{H, B+ \beta H}$
with $\nu_{H, B+ \beta H, \alpha}(E) =0$ satisfy the inequality:
\begin{equation*}
D^{B,\xi(A)}_{\alpha, \beta}(E) \le 0.
\end{equation*}
\end{conj}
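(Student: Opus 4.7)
The plan is three-stage: first reduce the class of tilt-stable objects to be tested, then cut down by sign of rank via dualizing, and finally extract $\xi(A)$ from a uniform Hirzebruch--Riemann--Roch estimate.

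For the first stage I would exploit the semicircular wall structure from Proposition~\ref{prop:wall-tiltstable}. If $E \in \bB_{H, B+\beta H}$ is tilt stable with $\nu_{H,B+\beta H,\alpha}(E)=0$ and $(\beta,\alpha)$ lies in the admissible region $\{\alpha \ge A(B+\beta H)\}$, then $E$ remains tilt stable with zero tilt slope along an entire semicircle in the $(\beta',\alpha')$ plane. This semicircle either meets the boundary curve $\alpha' = A(B+\beta' H)$, so $E$ is $\obe_A$-stable in the sense introduced before Conjecture~\ref{conj:intro-limitBG}, or escapes to $\alpha' \to \infty$, in which case Propositions~\ref{prop:limittiltstableobjects} and~\ref{prop:Delta0mustabletiltstable} identify $E$ (up to shift and dualization) with a slope-semistable torsion free sheaf, and the usual sheaf-theoretic Bogomolov--Gieseker inequality of Lemma~\ref{prop:sheafBG} applies. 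Since $D^{B,\xi}_{\alpha,\beta}(E)$ depends only on $\ch(E)$ and the numerical parameters, and varies continuously along the semicircle, it suffices to verify the conjectural inequality at a single point on that semicircle, which I would take on the boundary curve $\alpha = A(B+\beta H)$. In other words, the full statement is equivalent to Conjecture~\ref{conj:intro-limitBG}.

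For the second stage I would invoke Corollary~\ref{prop:reduce-BGineq-class}: the symmetry $\ch^{-B}(E^\vee) = (\ch^{B}(E))^\vee$ used implicitly in Proposition~\ref{prop:stability-B-dual} lets one restrict to tilt-stable $E$ with $E \cong E^{11}$ and $\ch_0(E) \ge 0$. The coefficient of $\xi$ in $D^{B,\xi}_{\alpha, \beta}(E)$ is $-H^2 \ch_1^{B+\beta H}(E) \le 0$ for such $E$ (Proposition~\ref{prop:muslopeboundfortiltstability}), so enlarging $\xi$ only strengthens the inequality, and producing any finite uniform $\xi(A)$ is enough. For this I would bound $\ch_3^{B+\beta H}(E)$ via $\Hom$-vanishings against the line bundles $\oO_X(mH)$ and their shifts, combined with Serre duality and the Hirzebruch--Riemann--Roch formula~\eqref{eqn:RiemannRoch}--\eqref{eqn:Todd}. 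The distinguished choice $\Lambda = \frac{c_2(X)}{12} - \frac{c_2(X)\cdot H}{12H^3} H^2$ from Remark~\ref{def:Lambda-main-choice} is calibrated precisely so that the $c_2(X)$ contribution in $\td(X)$ is absorbed into the $\Lambda \cdot \ch_1^B(E)$ term of~\eqref{eq:reduced-BG-ineq-term}, and the periodicity $D^{B,\xi}_{\alpha,\beta} = D^{B+\beta H, \xi}_{\alpha, 0}$ allows bounds obtained in a fundamental domain for $\beta$ to propagate over all of $\RR$.

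The hard part is exactly this final extraction of a \emph{finite} $\xi(A)$ that does not blow up as $\overline{\Delta}_H(E)/(H^3 \ch_0(E))^2$ grows or as $\beta$ approaches critical rational values. This is why the paper establishes the conjecture only for Fano 3-folds (Theorem~\ref{thm:intro-Fano3-A=0}), where the strong Bogomolov--Gieseker inequality of Theorem~\ref{thm:intro-strong-BG-ineq} provides a positive lower bound on the normalized discriminant outside the explicit exceptional family of tilt-stable line bundles and their shifts. Absent an analogous strong inequality for general 3-folds, the estimates in stage three cannot be uniformized in $(\beta,\alpha)$ and a single finite $\xi(A)$ cannot be produced; this appears to be the essential obstruction to proving Conjecture~\ref{conj:BG-ineq} in full generality.
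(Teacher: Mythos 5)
Your Stage 1 contains a genuine gap, and it sits exactly where the paper's only general-3-fold result about this conjecture (Theorem \ref{thm:conj-equivalence}) does its real work. You assert that a tilt stable object with $\nu_{H,B+\beta H,\alpha}(E)=0$ ``remains tilt stable with zero tilt slope along an entire semicircle,'' citing Proposition \ref{prop:wall-tiltstable}. This conflates two different curves. The locus along which Proposition \ref{prop:wall-tiltstable} preserves stability is the semicircle $a^2+b^2 = \alpha^2$ (for a zero-slope object), and along that semicircle the tilt slope is \emph{not} zero away from the starting point; conversely, the zero-slope locus $C(E)$ is a hyperbola-type curve (see Figures \ref{fig:C(E)whench_0>0}, \ref{fig:C(E)whench_0<0}), and along $C(E)$ stability is \emph{not} preserved in general --- the paper only proves the much weaker statement that $E$ stays in the same tilted heart along $C(E)$ (Proposition \ref{prop:tilt-stable-obj-same-B-category}). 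Because stability can fail at walls crossing $C(E)$, the equivalence with Conjecture \ref{conj:limitBG} cannot be obtained by simply sliding the given object down to the boundary curve $\alpha = A(B+\beta H)$. The paper's proof instead runs a contradiction-induction: deform $B$ to a rational class; use the monotonicity $\frac{d}{d\alpha} D^{B,\xi}_{\alpha,\beta}(E) \le 0$ along $C(E)$ (Lemma \ref{prop:D-derivative-wrt-alpha} --- note that your appeal to mere continuity of $D$ is insufficient, since an inequality does not transport along a path by continuity); when $E$ destabilizes at a wall, replace it by a Jordan--H\"older factor still violating the inequality (possible because $D$ is additive on factors of equal slope), with strictly smaller $\overline{\Delta}_{H,B}$ by Proposition \ref{prop:discrimi-ses-decrease}; and terminate the induction using the discreteness of the values of $\overline{\Delta}_{H,B}$, which is exactly why $B$ was made rational. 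None of this appears in your proposal, and without it the reduction to $\obe_A$-stable objects does not go through.

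A secondary problem: your dichotomy ``the semicircle either meets the boundary curve or escapes to $\alpha'\to\infty$'' is not the right one. The relevant branch of $C(E)$ descends from $(0,\alpha_0)$ to the line $\alpha=0$, so since $A\ge 0$ is continuous it always meets the graph of $A$ (this is precisely how $\obe_A(E)$ is defined); the true alternative is not escape to infinity but destabilization before reaching the boundary. Moreover, even in your escape scenario, identifying $E$ with a slope-semistable sheaf and invoking Lemma \ref{prop:sheafBG} only controls $\Delta(E)\ge 0$; it gives no bound on $\ch_3$ and hence cannot yield $D^{B,\xi}_{\alpha,\beta}(E)\le 0$. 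Your Stages 2 and 3 (reduction via Corollary \ref{prop:reduce-BGineq-class}, Hom-vanishings against $\oO_X(mH)$, Serre duality, Hirzebruch--Riemann--Roch with the calibrated $\Lambda$, and the role of Theorem \ref{thm:strong-BG-ineq} in the Fano case) do track the paper's strategy for Sections \ref{sec:Hom-vanishing}--\ref{sec:Fano3-A=0}, and your closing caveat correctly reflects that the statement remains a conjecture for general 3-folds; but the equivalence step needs to be repaired as above before any of that machinery can be brought to bear.
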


\begin{rmk}
\rm
This modified conjectural inequality coincides with  Bogomolov-Gieseker type inequality in \cite{BMT} 
when 
\begin{equation}
\label{eqn:usual-BG-condition}
A = 0, \ \Lambda =0, \ \text{ and } \  \xi(A) =0.
\end{equation}
In this paper, we are mostly interested in the class $\Lambda$ as defined in Remark \ref{def:Lambda-main-choice}.
Many 3-folds where the Bogomolov-Gieseker type inequality conjecture  in \cite{BMT} holds satisfy \eqref{eqn:usual-BG-condition}
  for $\Lambda$ defined  in Remark \ref{def:Lambda-main-choice}.   For example,  when $X$ is an abelian 3-fold 
  ($c_2(X) =0$), or a Fano 3-fold with Picard rank one ($c_2(X)$ is proportional to $H^2$). 
\end{rmk}

\begin{rmk}
\rm
The case when $A =0$ gives the modified  Bogomolov-Gieseker type inequality conjecture  in \cite{BMSZ} for Fano 3-folds. We consider details of this case in Section \ref{sec:Fano3-A=0}.
\end{rmk}

\begin{note}
\rm
Suppose Conjecture \ref{conj:BG-ineq} holds on $X$ with respect to some continuous function
 $A : B + \RR \langle H \rangle  \to \RR_{\ge 0}$. Since tilt stability is preserved under the tensoring by line bundles and dualizing (see Proposition \ref{prop:stability-B-dual}), optimal $A$ should satisfy the following periodic property
\begin{equation*}
A(B + (\beta +1)H)=  A(B + \beta H),
\end{equation*}
and when $B \in \RR \langle H \rangle$, $A(-B -\beta H)=A(B + \beta H)$.
\end{note}

The following is a straightforward expectation from the formulation of the modified Bogomolov-Gieseker type   inequalities.
\begin{note}
\rm
Suppose Conjecture \ref{conj:BG-ineq} holds on $X$ with respect to two continuous functions $A, A' : B + \RR \langle H \rangle  \to \RR_{\ge 0}$, such that $A'(B+ \beta H) \ge A(B +\beta H)$ for all $\beta \in \RR$.
Then for minimal possible $\xi(A)$ and $\xi(A')$, we have 
$$
\xi (A') \le \xi (A).
$$
\end{note}

In particular, we conjecture the following:
\begin{conj}
\label{conj:compareAforBGineq}
Suppose Conjecture \ref{conj:BG-ineq} holds for $X$ with respect to some continuous function $A : B + \RR \langle H \rangle  \to \RR_{\ge 0}$ having $\xi(A)>0$. Then
there exists a continuous function $A' : B + \RR \langle H \rangle  \to \RR_{\ge 0}$, such that
Conjecture \ref{conj:BG-ineq} holds with respect to $A'$ with
$$
\xi(A') =0.
$$
\end{conj}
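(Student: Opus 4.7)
The plan hinges on the following key identity: for any tilt stable object $E \in \bB_{H, B + \beta H}$ with $\ch_0(E) > 0$, restricting $D^{B, 0}_{\alpha, \beta}(E)$ to the tilt-zero locus $\alpha(\beta)^2 = (\beta - \mu_{H, B}(E))^2 - \Del_{H, B}(E)/(H^3 \ch_0(E))^2$ yields a function $g_E(\beta) := D^{B, 0}_{\alpha(\beta), \beta}(E)$ which is \emph{affine} in $\beta$ with constant slope $g_E'(\beta) = \Del_{H, B}(E)/(3 H^3 \ch_0(E)) \ge 0$. Indeed, expanding via $\partial_\beta \ch_k^{B + \beta H}(E) = -H \ch_{k-1}^{B + \beta H}(E)$, together with $\Lambda \cdot H = 0$ (so $\Lambda \ch_1^{B+\beta H}=\Lambda \ch_1^B$ is constant in $\beta$), and substituting $\alpha^2 = 2 H \ch_2^{B + \beta H}(E)/(H^3 \ch_0(E))$ into the $\alpha^2 / 6$-term, a direct computation shows the degree-two and degree-three monomials in $\beta$ cancel identically, leaving a linear term with coefficient $\Del_{H, B}(E)/(3 H^3 \ch_0(E))$, non-negative by Proposition \ref{prop:muslopeboundfortiltstability}(iii). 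The case $\ch_0(E) < 0$ reduces to the positive case via the dualization of Proposition \ref{prop:stability-B-dual}, while $\ch_0(E) = 0$ is immediate: the $\nu_E = 0$ locus is then the vertical line $\beta = H \ch_2^B(E)/(H^2 \ch_1^B(E))$, and $D^{B, 0}_{\alpha, \beta}(E) \to -\infty$ as $\alpha \to \infty$.

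Given this affineness, Conjecture \ref{conj:BG-ineq} for $A$, combined with the elementary identity $D^{B, 0}_{\alpha, \beta}(E) = D^{B, \xi(A)}_{\alpha, \beta}(E) + \xi(A) \cdot H^2 \ch_1^{B + \beta H}(E)$, yields at any reference point $(\beta_0, \alpha(\beta_0))$ on the $\nu_E = 0$ curve with $\alpha(\beta_0) \ge A(B + \beta_0 H)$ and $E$ tilt stable there the bound $g_E(\beta_0) \le \xi(A) (\mu_{H, B}(E) - \beta_0) H^3 \ch_0(E)$. Propagating by affineness, I would then deduce $g_E(\beta) \le 0$ whenever $\beta \le \beta_0 - 3 \xi(A)(\mu_{H,B}(E) - \beta_0)(H^3 \ch_0(E))^2/\Del_{H, B}(E)$; translating back to $\alpha$ via the $\nu=0$ relation provides the required lower $\alpha$-threshold above which $D^{B, 0}_{\alpha, \beta}(E) \le 0$. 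The function $A'$ is then defined pointwise in $\beta$ so that $\alpha \ge A'(B+\beta H)$ exceeds this threshold for every tilt stable $E$ whose $\nu_E = 0$ curve passes through $(\beta, \alpha)$. Objects with $\Del_{H, B}(E)=0$, where the argument degenerates, are handled separately: by Proposition \ref{prop:muslopeboundfortiltstability}(iii) (and Lemma \ref{prop:Delta0tiltstableFano3} in the Fano case) these are essentially ideal sheaves of $0$-subschemes twisted by line bundles together with their shifts, and the inequality can be checked on these finite numerical classes directly by Hirzebruch--Riemann--Roch.

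The main obstacle is producing a \emph{uniform-in-$E$} continuous threshold. The bound above involves the ratio $(H^3 \ch_0(E))^2/\Del_{H, B}(E)$, which is a priori unbounded as $E$ varies, so the naive supremum of thresholds over all relevant $E$ need not be finite, let alone continuous. Controlling this ratio uniformly requires a strong-form Bogomolov--Gieseker inequality $\Del_H(E)/(H^3 \ch_0(E))^2 \ge \kappa(X) > 0$ for every tilt stable $E$ with finite tilt slope, which is exactly the content of Theorem \ref{thm:intro-strong-BG-ineq} in the Fano setting; in that case one can take $A'$ essentially of the form $(1 + 3 \xi(A)/\kappa(X))$ times $A$ after absorbing the $\Del/(H^3\ch_0)^2$ correction into a continuous function. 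For an arbitrary $3$-fold lacking such a uniform bound the plan does not conclude, which is consistent with Conjecture \ref{conj:compareAforBGineq} being open in general. A secondary technical issue is that $E$ must remain tilt stable at the reference point $(\beta_0, \alpha(\beta_0))$, which I would arrange by pushing $\beta_0$ into the large-volume chamber of Proposition \ref{prop:limittiltstableobjects} where tilt stability reduces to $\mu_{H, B}$-semistability of the underlying sheaf.
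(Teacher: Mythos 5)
You should first be aware that the statement you set out to prove is stated in the paper as a \emph{conjecture}: the paper contains no general proof of it, and only \emph{verifies} it for the blow-up of $\PP^3$ at a point, via Theorem \ref{thm:fano3-d-7}. That verification takes a route entirely different from yours: it exhibits the explicit piecewise-linear function $A'$ of Definition \ref{def:blowup-P3-A-for-x=0} and proves Conjecture \ref{conj:limitBG} for it directly with $\xi(A')=0$, using the $\Hom$-vanishings of Proposition \ref{prop:betabar-hom-vanishing}, Serre duality, the Riemann-Roch expansion of Proposition \ref{prop:Euler-char-Fano3}, the strong Bogomolov-Gieseker inequality of Theorem \ref{thm:strong-BG-ineq} (with $\kappa(X)=2/49$), and the positivity estimates of Proposition \ref{prop:fun-bound-blowup-P3-xi=0-case}; at no point does it transfer information from a previously established inequality with $\xi(A)>0$. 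So your closing admission that your scheme cannot conclude for a general 3-fold is consistent with the actual status of the statement: what you have produced is a conditional strategy, not a proof, and it could at best be judged as an alternative (unrealized) approach even in the Fano case.

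Beyond the uniformity issue you flagged, your transfer argument has a concrete internal gap. Your affineness identity is correct -- it is precisely Lemma \ref{prop:D-derivative-wrt-alpha} combined with \eqref{eqn:dbeta-over-dalpha}, giving $dg_E/d\beta = \Del_{H,B}(E)/(3H^3\ch_0(E)) \ge 0$ along $C(E)$ for $\ch_0(E)>0$ -- but exactly because this slope is non-negative, an upper bound on $g_E$ propagates only toward \emph{smaller} $\beta$. Hence your reference point $(\beta_0,\alpha(\beta_0))$ must lie to the \emph{right} of the target point, which along $C(E)$ means at \emph{smaller} $\alpha$ (since $d\beta/d\alpha<0$ there when $\ch_0(E)>0$). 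Tilt stability of $E$ at the target, where $\alpha$ is large, does not imply tilt stability at such a reference point: walls can be crossed as $\alpha$ decreases along $C(E)$, and without stability at the reference point the hypothesis (Conjecture \ref{conj:BG-ineq} for $A$) cannot be invoked there. This is precisely the difficulty the paper confronts in its proof of Theorem \ref{thm:conj-equivalence}, where it is resolved by passing at each wall to a Jordan--H\"older factor with $D>0$ and strictly smaller $\DelH$ (Proposition \ref{prop:discrimi-ses-decrease}) and running an induction that terminates by discreteness after making $B$ rational; your proposal contains no counterpart of this step. Worse, your suggested remedy -- pushing $\beta_0$ into the large-volume chamber of Proposition \ref{prop:limittiltstableobjects} -- moves the reference point in the wrong direction: the large-volume region is $\alpha\to\infty$, i.e.\ $\beta\to-\infty$ along $C(E)$, which places the reference to the \emph{left} of the target, where affineness yields only the useless inequality $g_E(\beta_{\mathrm{target}}) \ge g_E(\beta_0)$. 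Any repair must therefore build in the wall-crossing induction of Theorem \ref{thm:conj-equivalence}; but then the reference-point bound holds for a Jordan--H\"older factor rather than for $E$ itself, so the discriminant and threshold bookkeeping would have to be redone on top of the uniform $\kappa(X)$-bound you already identified as necessary.
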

\begin{note}
\rm
We verify the above conjecture for the blow-up of $\PP^3$ at a point in Section \ref{sec:blowup-P3}. 
\end{note}

This modification of the conjectural inequalities does not affect the corresponding constructions of Bridgeland stability conditions. In particular, similar to \cite[Lemma 8.3]{BMS} we have the following:

\begin{thm}
\label{thm:stab-family}
If   Conjecture \ref{conj:BG-ineq-single} holds for $X$ with respect to some $\alpha, \beta$ then the pair
$$
\left(\aA_{ H, B +\beta H, \alpha}, Z_{ H, B+ \beta H,  \alpha}^{a,b}\right)
$$
 defines a Bridgeland stability condition on $X$. Here $\aA_{H, B +\beta H, \alpha}$ is the heart of a bounded t-structure as constructed in \eqref{def:double-tilt} of Section \ref{subsec:tiltstab},  and 
\begin{align*}
Z_{ H, B+ \beta H, \alpha}^{a,b} = \left( -\ch_3^{B+\beta H} + bH \ch_2^{B+\beta H} + \left( - \Lambda + aH^2 \right) \ch_1^{B+\beta H} \right)
+  \sqrt{-1}  \left( H \ch_2^{B+\beta H} - \frac{\alpha^2}{2} H^3 \ch_0 \right)
\end{align*}
with $a,b \in \mathbb{R}$ satisfying $a >  \xi(\alpha, \beta) + (\alpha^2/6) + (\alpha |b|/2)$. 
\end{thm}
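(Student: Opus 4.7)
The plan is to follow the standard recipe (cf.\ \cite[Lemma 8.3]{BMS}) for producing a Bridgeland stability condition on $D^b(X)$ from the double-tilted heart $\aA := \aA_{H, B+\beta H, \alpha}$ together with the central charge $Z := Z^{a,b}_{H, B+\beta H, \alpha}$. Three items must be verified: (a) $Z$ is a stability function on $\aA$; (b) $Z$-semistability admits Harder--Narasimhan filtrations; and (c) the support property holds with respect to the quadratic form $\Del_H$.

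For (a), the non-negativity $\Im Z(E) \ge 0$ on every $E \in \aA$ follows from the construction of the double-tilt \eqref{def:double-tilt}, together with the observation that the only objects of $\bB_{H, B+\beta H}$ with $\nu_{H, B+\beta H, \alpha} = +\infty$ have non-negative $\Im Z$: for $E \in \Coh_{\le 1}(X)$ this is immediate from Proposition \ref{prop:Tor1inftyslope}, and for $E = F[1]$ with $F$ a $\mu$-semistable torsion-free sheaf of slope $0$ it follows from the usual Bogomolov--Gieseker inequality of Proposition \ref{prop:sheafBG}(i). For the reverse direction $\Re Z(E) < 0$ when $\Im Z(E) = 0$, I would argue that the only minimal constituents of such $E$ in $\aA$ are (i) zero-dimensional sheaves (for which $\Re Z = -\ch_3 < 0$ trivially), and (ii) shifts $F[1]$ of $\nu$-semistable $F \in \bB_{H, B+\beta H}$ with $\nu_{H, B+\beta H, \alpha}(F) = 0$, which enter $\aA$ shifted by $[1]$.

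The heart of the proof is the estimate in case (ii). It combines three ingredients: the identity $H \ch_2^{B+\beta H}(F) = (\alpha^2/2) H^3 \ch_0(F)$ from $\nu(F) = 0$; the Bogomolov--Gieseker inequality $\Del_{H, B+\beta H}(F) \ge 0$ of Proposition \ref{prop:muslopeboundfortiltstability}(iii), which after substitution yields $\alpha |H^3 \ch_0(F)| \le H^2 \ch_1^{B + \beta H}(F)$; and the hypothesis Conjecture \ref{conj:BG-ineq-single}, giving $\ch_3^{B+\beta H}(F) \le \bigl((\xi(\alpha, \beta) + \alpha^2/6) H^2 - \Lambda\bigr) \ch_1^{B+\beta H}(F)$. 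Substituting these into the real part of $-Z(F)$ produces
$$
\Re Z(F[1]) \le \left( \xi(\alpha, \beta) + \frac{\alpha^2}{6} + \frac{\alpha |b|}{2} - a \right) H^2 \ch_1^{B + \beta H}(F) < 0,
$$
the strict negativity coming from $a > \xi(\alpha, \beta) + \alpha^2/6 + \alpha |b|/2$ and from $H^2 \ch_1^{B+\beta H}(F) > 0$ (automatic since $F$ has finite tilt slope).

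For (b) and (c), I would invoke Bridgeland's deformation criterion: once (a) and the support property are established, the Harder--Narasimhan property is automatic. For the support property one takes $\Del_H$ as the quadratic form; the inequality $\Del_H \ge 0$ on $\nu$-semistable objects comes from Proposition \ref{prop:muslopeboundfortiltstability}(iii), while negativity on $\ker Z$ follows from the strengthened modified Bogomolov--Gieseker inequality in Conjecture \ref{conj:BG-ineq-single}. The main obstacle throughout is the sign bookkeeping in (a): the indefinite term $b H \ch_2^{B+\beta H}$ in $\Re Z$ is precisely what forces the correction $\alpha |b|/2$ in the threshold for $a$, beyond the naive $\xi(\alpha, \beta) + \alpha^2/6$ suggested by the raw form of Conjecture \ref{conj:BG-ineq-single}, and this is absorbed exactly by the $\ch_0$-bound extracted from $\Del_{H, B+\beta H}(F) \ge 0$.
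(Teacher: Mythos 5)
Your step (a) is correct and is precisely the argument the paper has in mind: the paper gives no proof of Theorem \ref{thm:stab-family} beyond the pointer to \cite[Lemma 8.3]{BMS}, and the computation you give --- combining $H\ch_2^{B+\beta H}(F) = (\alpha^2/2)H^3\ch_0(F)$, the bound $\alpha\,|H^3\ch_0(F)| \le H^2\ch_1^{B+\beta H}(F)$ extracted from $\Del_{H,B+\beta H}(F)\ge 0$ to absorb the indefinite $b$-term, and Conjecture \ref{conj:BG-ineq-single} --- is exactly how the threshold $a > \xi(\alpha,\beta) + \alpha^2/6 + \alpha|b|/2$ enters. Two routine points should still be tightened: the conjecture is stated for tilt \emph{stable} objects, so one passes to Jordan--H\"older factors and uses additivity of $D^{B,\xi}_{\alpha,\beta}$ and of $H^2\ch_1^{B+\beta H}$; and the identification of the objects of $\aA_{H,B+\beta H,\alpha}$ with $\Imm Z = 0$ as extensions of sheaves in $\Coh_0(X)$ and of $F[1]$ with $F$ tilt semistable of slope zero needs the torsion-pair decomposition spelled out.

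The genuine gaps are in (b) and (c). For (b): the Harder--Narasimhan property is not ``automatic'' from the stability-function property plus the support property, and Bridgeland's deformation theorem cannot be invoked --- deformation produces stability conditions only near an already-constructed one, which is exactly what is at stake, so the argument is circular. The route behind \cite[Lemma 8.3]{BMS} (going back to BMT, Corollary 5.2.4) is noetherianity of the tilted heart together with discreteness/chain-condition arguments for the image of $\Imm Z$. For (c): $\Del_H$ cannot serve as the support-property quadratic form, because it is not negative definite on $\ker Z^{a,b}_{H,B+\beta H,\alpha}$. Indeed, on the kernel one has $H\ch_2^{B+\beta H} = (\alpha^2/2)H^3\ch_0$ while $\ch_3^{B+\beta H}$ is determined by $\Ree Z = 0$, so the restriction of $\Del_H$ equals $(H^2\ch_1^{B+\beta H})^2 - \alpha^2(H^3\ch_0)^2$, which is indefinite: taking $\ch_0 = 0$, $H\ch_2^{B+\beta H} = 0$, $H^2\ch_1^{B+\beta H} \ne 0$ and solving $\Ree Z = 0$ for $\ch_3^{B+\beta H}$ produces a nonzero kernel class with $\Del_H > 0$. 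Your logic is also inverted: negative definiteness on $\ker Z$ is a linear-algebra constraint on the form, independent of any inequality satisfied by semistable objects, while the conjecture can only supply the other half (non-negativity on semistable classes) --- and even there it would be needed for $Z$-semistable objects of $\aA_{H,B+\beta H,\alpha}$, not merely for $\nu$-semistable objects of $\bB_{H,B+\beta H}$. (There is also no ``strengthened'' form of Conjecture \ref{conj:BG-ineq-single} in the paper; the strong inequality of Theorem \ref{thm:strong-BG-ineq} concerns $\Del_H$ on Fano 3-folds and says nothing about $\ker Z$.) A working support-property form must involve $\ch_3$, as in the quadratic inequality of \cite{BMS}, and extracting such a quadratic inequality from the linear Conjecture \ref{conj:BG-ineq-single} is done in \cite{BMS} by letting $(\alpha,\beta)$ vary, not from a single pair of parameters.
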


\subsection{Equivalent form of the conjecture}
\label{subsec:betabar-restriction}
In this subsection we formulate an equivalent form of Conjecture \ref{conj:BG-ineq} which only considers the modified Bogomolov-Gieseker  type inequalities for  a small class of tilt stable objects. 
This can be considered as a modification of \cite[Conjecture 5.3]{BMS} and in the next subsection we show that it is equivalent to Conjecture \ref{conj:BG-ineq}. 
We adapt some methods   from \cite[Section 5]{BMS} and \cite{Mac}. 

Let us consider the $\nu_{ H, B+ \beta H, \alpha}$ tilt stability parametrized by $\alpha \in \mathbb{R}_{>0}$ and $\beta \in \mathbb{R}$. 
By definition  
$$
\nu_{ H, B+ \beta H, \alpha} = \frac{H \ch_2^{B+ \beta H}  - ({\alpha^2}/{2}) H^3 \ch_0 }{H^2 \ch_1^{B+ \beta H}}.
$$
Hence, we  consider 
$$
Z_{\alpha, \beta}^{\nu} = -\left( H \ch_2^{B+\beta H}  - \frac{\alpha^2}{2} H^3 \ch_0  \right) +\sqrt{-1} \, H^2 \ch_1^{B+\beta H}
$$
as the associated group homomorphism, more precisely, the weak stability function as introduced in \cite{PT} of the corresponding tilt stability.

In the rest of this section, let us fix some $\alpha_0 \in \mathbb{R}_{>0}$.  

\begin{defi}
Let $E$ be an object in $\bB_{H , B}$  with $\nu_{H,B, \alpha_0}(E) =0$.
\begin{equation*}
C(E) = \left\{ (\beta, \alpha): \ \Ree Z_{\alpha, \beta}^{\nu}  (E) = H \ch_2^{B+\beta H}(E)  - \frac{\alpha^2}{2} H^3 \ch_0(E)  = 0, \text{ and } 0 \le \alpha \le \alpha_0 \right\}.
\end{equation*}
Hence,  $(0, \alpha_0) \in C(E)$.  See Figures \ref{fig:C(E)whench_0>0} and \ref{fig:C(E)whench_0<0}.
\end{defi}

\begin{minipage}{0.50\linewidth}
\begin{center}
\begin{tikzpicture}
      \draw[->] (-1,0) -- (4,0) node[right] {$\beta$};
      \draw[->] (0,-1) -- (0,4) node[above] {$\alpha$};
      \draw[scale=0.5,domain=0:5,smooth,variable=\x,black] plot ({\x},{(sqrt((49)/(40))*sqrt((\x-5)*(\x-8))});
           \draw (0, 3.5 ) node[left] {$(0, \alpha_0)$};
      \draw (0, 3.5 ) node {$\bullet$};
        \draw (1.5, 1.79571 ) node[right] { $C(E)$};
    \end{tikzpicture}
        \captionof{figure}{$C(E)$ when $\ch_0(E)> 0$}
        \label{fig:C(E)whench_0>0}
\end{center}
\end{minipage}
\begin{minipage}{0.50\linewidth}
\begin{center}
\begin{tikzpicture}
      \draw[->] (-4,0) -- (1,0) node[right] {$\beta$};
      \draw[->] (0,-1) -- (0,4) node[above] {$\alpha$};
      \draw[scale=0.5,domain=-5:0,smooth,variable=\x,black] plot ({\x},{(sqrt((49)/(40))*sqrt((-\x-5)*(-\x-8))});
     \draw (0, 3.5 ) node[right] {$(0, \alpha_0)$};
      \draw (0, 3.5 ) node {$\bullet$};
            \draw (-1.5, 1.79571 ) node[left] { $C(E)$};
    \end{tikzpicture}
        \captionof{figure}{$C(E)$ when  $\ch_0(E) < 0$}
        \label{fig:C(E)whench_0<0}
\end{center}
\end{minipage} 
\\

Let $A : B + \RR \langle H \rangle  \to \RR_{\ge 0}$ be a continuous function. 
For a given object $E$, if we have
$$
\lim_{\alpha \to A(B + \beta H)^+} - \Ree Z_{\alpha , \beta}^\nu(E) =0,
$$ 
when  
$\beta \to \overline{\beta}$, then $\overline{\beta}$  satisfies 
\begin{equation}
\label{eqn:betabar-relation}
H \ch_2^{B+\overline{\beta} H }(E) - \frac{(A(B+\overline{\beta} H))^2}{2}H^3 \ch_0(E) = 0.
\end{equation}
That is, 
$\obe^2 (H^3 \ch_0(E))- 2 \obe (H^2 \ch_1^B(E)) - (A(B + \obe H))^2 (H^3 \ch_0(E)) + 2 H\ch_2^B = 0$.
\begin{defi}
\label{defn:betabar}
We define $\obe_A(E) $ to be the set of roots $\obe$ of \eqref{eqn:betabar-relation}; so that 
  $( \obe , A(B + \obe H)) \in C(E)$ for each $\obe \in \obe_A(E)$. See Figure \ref{fig:betabar(E)}.
\end{defi}

\begin{center}
\begin{tikzpicture}
      \draw[->] (-1,0) -- (4,0) node[right] {$\beta$};
      \draw[->] (0,-1) -- (0,4) node[above] {$\alpha$};
      \draw[scale=0.5,domain=0:5,smooth,variable=\x,black] plot ({\x},{(sqrt((49)/(40))*sqrt((\x-5)*(\x-8))});
           \draw (0, 3.5 ) node[left] {$(0, \alpha_0)$};
      \draw (0, 3.5 ) node {$\bullet$};
       \draw[scale=0.5,domain=-2:9,smooth,variable=\x,black] plot ({\x},{3 +  sin(50*\x)});
               \draw (4.5, 2 ) node[right] { $\alpha = A(B + \beta H)$};
       \draw (0.5, 2.6 ) node[above right] { $C(E)$};
         \draw (1.5, 1.79571 ) node[right] { $(\obe, \oA)$};
      \draw  (1.5, 1.79571  ) node {$\bullet$};
    \end{tikzpicture}
        \captionof{figure}{$\obe \in \obe_A(E) $ such that $(\obe, \oA) \in C(E)$}
        \label{fig:betabar(E)}
\end{center}

\begin{exam}
\rm
Unlike the case in Figure \ref{fig:betabar(E)},  the set $\obe_A(E)$ can have many points. The following  is such an example, and it appears in Section \ref{sec:blowup-P3}.
For any $m \in \ZZ$,  $\oO_{X}(m H)$ and $\oO_X(mH)[1]$ are tilt stable. 
Let us consider the  continuous function  
$A: \RR \langle H \rangle \to \RR_{\ge 0}$,
defined by, for $\beta \in [-1/2, 0)$, $A(\beta H) = 1+ \beta$;
 for $\beta \in [0,1/2)$, $A(\beta H ) = 1- \beta$; together with the relation
 $A((\beta +1)H) = A(\beta H)$.
 One can check that 
 $$
 \obe_A(\oO_X(mH)) = [m-1, \, m-(1/2)].
 $$
See Figure \ref{fig:betabar-multi-values}, for  $\obe_A(\oO_X(2H))$.
\begin{center}
\begin{tikzpicture}
      \draw[->] (-2.5,0) -- (5.5,0) node[right] {$\beta$};
      \draw[->] (0,-0.5) -- (0,5) node[above] {$\alpha$};
            \draw[scale=2,domain=-1:-0.5,smooth,variable=\x,black] plot ({\x},{(-\x)});
            \draw[scale=2,domain=-0.5:0,smooth,variable=\x,black] plot ({\x},{(1+\x)});
            \draw[scale=2,domain=0:0.5,smooth,variable=\x,black] plot ({\x},{(1-\x)});
              \draw[scale=2,domain=0.5:1.0,smooth,variable=\x,black] plot ({\x},{(\x)});    
              \draw[scale=2,domain=1:1.5,smooth,variable=\x,black] plot ({\x},{(2- \x)});   
              \draw[scale=2,domain=1.5:2.0,smooth,variable=\x,black] plot ({\x},{(-1+\x)});   
                \draw[scale=2,domain=2:2.5,smooth,variable=\x,black] plot ({\x},{(3- \x)});   

              \draw[scale=2,domain=0:2,smooth,variable=\x,black] plot ({\x},{(2- \x)}); 
              
    \draw [dashed]  (-2.5,1)  -- (0,1) node [below left] {$\frac{1}{2}$}
       -- (5.5,1);
    \draw [dashed]  (1,1)  -- (1,0) node [below] {$\frac{1}{2}$};
        \draw [dashed]  (2,2)  -- (2,0) node [below] {$1$};
            \draw [dashed]  (1,1)  -- (1,0) node [below] {$\frac{1}{2}$};
                        \draw [dashed]  (3,1)  -- (3,0) node [below] {$\frac{3}{2}$};

            \draw  (0,0)  node [below left] {$0$};
              \draw  (0,2)  node [above left] {$1$};
                            \draw  (4,0)  node [below] {$2$};
                                     \draw  (0,4)  node [above left] {$2$};
                     \draw  (1,3)  node [right] {$C(\oO(2H))$};
                                                 \draw  (4.5,1.5)  node [right] {$\alpha = A(\beta H)$};

    \end{tikzpicture}
        \captionof{figure}{$\obe_A(\oO_X(2H)) =[1, \, 3/2]$ }
        \label{fig:betabar-multi-values}
\end{center}
\end{exam}

We need the following definition extending the similar notion in \cite{Li}.
\begin{defi}
\label{defn:betabar-stability}
An object     $E \in D^b(X)$ is called $\obe_{A}$-stable if  for any $\obe \in \obe_A(E)$ there is an open neighbourhood $U \subset \RR_{\beta,\alpha}^2$ containing $(\obe, A(B+ \obe H))$ such that for any $(\beta, \alpha) \in U$ with $\alpha>0$,
$E \in \bB_{H, B+\beta H}$ is $\nu_{H, B + \beta H,  \alpha}$-stable. 
\end{defi}

\begin{rmk}\rm
When $A=0$, the above notion of $\obe_A$ stability is exactly the same notion of $\obe$ stability  in \cite{Li}.
\end{rmk}

From the definition of $\obe_A$-stability and Proposition \ref{prop:stability-B-dual}, we have the following:
\begin{prop}
\label{prop:betabar-stab-dual}
Let $E$ be an object in $D^b(X)$. Then $E $ is $\obe_{A}$-stable with respect to
the stability parameters $B \in \NS_{\RR}(X)$,  and 
 some continuous function 
$A : B + \RR \langle H \rangle  \to \RR_{\ge 0}$ if and only if 
$E^1 = H^{1}_{\bB_{H,-B}}(E^\vee)$ is $\obe_{\widehat{A}}$-stable 
with respect to
the stability parameters $- B \in \NS_{\RR}(X)$,  and the
 continuous function 
$\widehat{A} : -B + \RR \langle H \rangle  \to \RR_{\ge 0}$ defined by 
$\widehat{A}(-B -\beta H) = A(B + \beta H)$.
\end{prop}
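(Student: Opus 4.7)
The plan is to combine Proposition \ref{prop:stability-B-dual} with a direct Chern-character computation that matches up the sets $\obe_A(E)$ and $\obe_{\widehat{A}}(E^1)$. First I would record the invariant comparison: from the proof of Proposition \ref{prop:stability-B-dual}(ii), together with $E^2 \in \Coh_0(X)$ (Proposition \ref{prop:dual-B-object}), one reads off
$$
\ch_0(E^1) = -\ch_0(E), \quad \ch_1^{-B}(E^1) = \ch_1^B(E), \quad \ch_2^{-B}(E^1) = -\ch_2^B(E).
$$
A short expansion of $\ch_2^{-B+\obe' H}(E^1)$ together with the identity $\widehat{A}(-B-\obe H) = A(B+\obe H)$ then shows that the defining equation \eqref{eqn:betabar-relation} for $\obe' \in \obe_{\widehat{A}}(E^1)$, after substituting $\obe = -\obe'$, becomes exactly the defining equation for $\obe \in \obe_A(E)$. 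This yields a bijection $\obe_A(E) \leftrightarrow \obe_{\widehat{A}}(E^1)$, $\obe \mapsto -\obe$, that identifies the two base points $(\obe, A(B+\obe H))$ and $(-\obe, \widehat{A}(-B-\obe H))$ (which have identical second coordinates).

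Next I would transfer the local tilt stability. Given $\obe_A$-stability of $E$ with witness open set $U \ni (\obe, A(B+\obe H))$, I would apply Proposition \ref{prop:stability-B-dual}(iii),(iv) at each $(\beta,\alpha) \in U$ with base class $B$ replaced by $B+\beta H$, obtaining $\nu_{H, -(B+\beta H), \alpha}$-stability of $H^{1}_{\bB_{H, -(B+\beta H)}}(E^\vee)$ together with membership in $\HN^{\nu}_{H, -(B+\beta H), \alpha}((-\infty, +\infty))$. The change of variables $\beta' = -\beta$ maps $U$ to an open neighbourhood $U'$ of $(-\obe, \widehat{A}(-B-\obe H))$, which serves as the candidate witness for $\obe_{\widehat{A}}$-stability of $E^1$.

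The main technical obstacle is that the notation $E^i = H^{i}_{\bB_{H,-B}}(E^\vee)$ is anchored at $B$, whereas the object naturally produced by the previous step is $H^{1}_{\bB_{H,-(B+\beta H)}}(E^\vee)$, which a priori depends on $\beta$. I would resolve this by shrinking $U$ so that none of the finitely many $\mu_H$-slopes of the Harder--Narasimhan factors of the sheaf cohomologies $\hH^i(E^\vee)$ crosses the critical slope $\mu_{H, -(B+\beta H)} = 0$ as $(\beta,\alpha)$ varies in $U$. Then the torsion pair $(\tT_{H,-(B+\beta H)}, \fF_{H,-(B+\beta H)})$ agrees with $(\tT_{H,-B}, \fF_{H,-B})$ on each $\hH^i(E^\vee)$, hence $H^{1}_{\bB_{H,-(B+\beta H)}}(E^\vee) = E^1$ throughout $U$. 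Combined with the fact that the degree-$2$ piece of $E^\vee$ lies in $\Coh_0(X) \subset \HN^{\nu}_{H,-(B+\beta H),\alpha}(+\infty)$ by Proposition \ref{prop:dual-B-object} and so does not interfere with tilt stability at finite slope, this gives $E^1 \in \bB_{H, -B+\beta' H}$ together with $\nu_{H, -B+\beta' H, \alpha}$-stability on the whole $U'$.

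For the reverse implication I would invoke symmetry: one checks $\widehat{\widehat{A}} = A$ on $B + \RR\langle H\rangle$, and $(E^1)^1 = E^{11}$ by definition. Applying the already established forward direction to $E^1$ with base data $(-B, \widehat{A})$ gives $\obe_A$-stability of $E^{11}$, while the short exact sequence $0 \to E \to E^{11} \to E^{23} \to 0$ in $\bB_{H,B}$ with $E^{23} \in \Coh_0(X)$ from Proposition \ref{prop:B-obj-like-sheaves-dual}(i) transfers this stability back to $E$, since $\Coh_0(X)$-extensions do not affect $\nu$-stability at finite tilt slope. This closes the equivalence.
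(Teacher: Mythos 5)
Your steps 1, 2 and 4 are correct and together amount to the paper's entire argument: the paper proves this proposition in one line, as a direct consequence of the definition of $\obe_A$-stability and Proposition \ref{prop:stability-B-dual} applied pointwise with base class $B+\beta H$, combined with exactly the Chern-character identity you record in step 1. The problem is step 3, which you rightly single out as the technical crux but whose resolution does not work. Shrinking $U$ only controls the variation of the critical cut $-\tfrac{H^2B}{H^3}-\beta$ for $(\beta,\alpha)$ \emph{inside} $U$, where it stays near $-\tfrac{H^2B}{H^3}-\obe$; but the object $E^1$ in the statement is computed with the cut $-\tfrac{H^2B}{H^3}$ at the anchor $\beta=0$, and these two cuts differ by $|\obe|$, a quantity that no shrinking of $U$ makes small. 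If some $\mu_H$-Harder--Narasimhan factor of some $\hH^i(E^\vee)$ has slope strictly between the two cuts, the torsion-pair decompositions at the two anchors differ, and so do the tilted cohomologies; $\obe_A$-stability gives information about $E$ only for $\beta$ near the points of $\obe_A(E)$, not at $\beta=0$, so nothing in the hypotheses excludes this. (Even constancy of $H^1_{\bB_{H,-(B+\beta H)}}(E^\vee)$ within $U$ does not follow from shrinking alone: an HN slope of $\hH^i(E^\vee)$ can sit exactly at the critical value when $\beta=\obe$, and then every neighbourhood sees the crossing. What actually forces constancy on the connected region where $E$ is tilt stable with finite slope is Proposition \ref{prop:dual-B-object}: a crossing would create cohomology of $E^\vee$ outside degrees $1,2$, or degree-$2$ cohomology not lying in $\Coh_0(X)$, which is impossible there.)

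The way out, and what the paper implicitly does, is to read $E^1$ with a ``floating'' anchor: $E^i$ is computed in $\bB_{H,-(B+\obe H)}$, i.e.\ relative to the heart in which $E$ is actually known to be stable. This is how the proposition is used later: in the proof of Proposition \ref{prop:betabar-hom-vanishing}(iii) the sequence $0 \to E \to E^{11} \to E^{23} \to 0$ of Proposition \ref{prop:B-obj-like-sheaves-dual} is taken in $\bB_{H,\obe H}$, not in $\bB_{H,B}$. Under that convention your step 3 becomes unnecessary (apart from the local-constancy point above, which is needed when $\obe_A(E)$ is an interval, as for line bundles in Section \ref{sec:blowup-P3}), and your steps 1, 2, 4 complete the proof; for the converse you can also simply quote that Proposition \ref{prop:stability-B-dual}(iii) is already an equivalence, rather than routing through the double dual. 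Under the literal anchored-at-$B$ reading of the statement, however, your argument has a genuine gap: the identification $H^{1}_{\bB_{H,-(B+\obe H)}}(E^\vee) \cong H^{1}_{\bB_{H,-B}}(E^\vee)$ is not established by shrinking $U$, and it requires either an additional hypothesis on the HN slopes of the $\hH^i(E^\vee)$ or the reinterpretation just described.
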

For a small class of tilt stable objects, Conjecture \ref{conj:BG-ineq} reads as follows:
\begin{conj}
\label{conj:limitBG}
Let us fix classes $H \in \NS(X)$, $B \in \NS_{\RR}(X)$ such that 
$H$ is ample. Let $A : B + \RR \langle H \rangle  \to \RR_{\ge 0}$ be a continuous function.
There exist
$\Lambda \in H^4(X, \QQ)$ satisfying $\Lambda \cdot H =0$, and
a constant $\xi({A})  \in \RR_{\ge 0}$ such that 
any $\obe_{A}$-stable object $E \in D^b(X)$ satisfies the inequality 
$$
D^{B,\xi(A)}_{A(B +\obe H ) ,  \obe}(E) \le 0, \ \text{ for each } \obe \in \obe_A(E).
$$ 
\end{conj}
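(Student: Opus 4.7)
Since Conjecture \ref{conj:limitBG} is stated for an arbitrary smooth projective 3-fold and is not expected to admit a uniform proof, my plan is twofold: first, to establish its equivalence with Conjecture \ref{conj:BG-ineq}, thereby reducing the problem to the smaller class of $\obe_A$-stable objects; and second, to verify the resulting inequality in concrete settings such as Fano 3-folds, following the Li--Bernardara--Macr\`i--Schmidt--Zhao strategy outlined in Theorems \ref{thm:intro-Fano3-A=0} and \ref{thm:intro-blowup-P3-xi=0}.

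The direction Conjecture \ref{conj:BG-ineq} $\Rightarrow$ Conjecture \ref{conj:limitBG} is essentially tautological. If $E$ is $\obe_A$-stable and $\obe\in\obe_A(E)$, the defining equation of $\obe_A(E)$ is precisely $\nu_{H,B+\obe H,A(B+\obe H)}(E)=0$, and $\obe_A$-stability by definition provides an open neighbourhood of $(\obe,A(B+\obe H))$ on which $E$ is tilt stable; hence Conjecture \ref{conj:BG-ineq} applies at the boundary point $(\obe, A(B+\obe H))$ and yields the desired inequality.

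For the reverse implication, assume $E\in\bB_{H,B+\beta_0H}$ is tilt stable with $\nu_{H,B+\beta_0H,\alpha_0}(E)=0$ and $\alpha_0\ge A(B+\beta_0H)$. Setting $B':=B+\beta_0H$ and applying Proposition \ref{prop:wall-tiltstable} to $(B',\alpha_0)$, $E$ remains tilt stable along the semicircle $\Gamma = \{(\beta,\alpha):\alpha^{2}+(\beta-\beta_{0})^{2}=\alpha_{0}^{2},\ \alpha>0\}$. A direct substitution shows that along $\Gamma$ one has $\nu_{H,B+\beta H,\alpha}(E)=-(\beta-\beta_{0})$, and using $\Lambda\cdot H=0$ the quantity $D^{B,\xi(A)}_{\alpha,\beta}(E)$ becomes an explicit cubic in $u:=\beta-\beta_{0}$ whose coefficients depend only on $\ch^{B'}(E)$, $\Lambda$, and $\xi(A)$. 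Starting from $(\beta_0,\alpha_0)$ and sliding along $\Gamma$ in the direction of decreasing $\alpha$, either $\Gamma$ meets the graph $\alpha=A(B+\beta H)$ at a point $(\obe, A(B+\obe H))$ with $\obe\in\obe_A(E)$, where openness of tilt stability combined with Conjecture \ref{conj:limitBG} provides the inequality; or $E$ is destabilized first by a short exact sequence $0\to E_1\to E\to E_2\to 0$ of tilt semistable objects with common tilt slope, in which case Proposition \ref{prop:discrimi-ses-decrease} forces strictly smaller discriminants for $E_1,E_2$ and one proceeds by induction on $\Del_H$.

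The principal obstacle is the comparison of $D^{B,\xi(A)}_{\alpha_0,\beta_0}(E)$ with $D^{B,\xi(A)}_{A(B+\obe H),\obe}(E)$: since the cubic in $u$ above is not constant along $\Gamma$, one must show that its variation has controlled sign, bounded by a non-negative expression vanishing on zero-discriminant objects, so that the discrepancy can be absorbed into $\xi(A)$. For the Fano 3-fold cases, the plan is then to follow the Li--BMSZ blueprint: use Propositions \ref{prop:stability-B-dual} and \ref{prop:betabar-stab-dual} together with twisting by $\oO_X(H)$ to reduce to $\obe$ in a unit interval with $\ch_0(E)\ge 0$; compare the tilt slope of $E$ with those of $\oO_X(mH)$ and $\oO_X(mH)[1]$ at the parameters of interest to extract $\Hom$-vanishings; apply Serre duality to bound $\chi(E,\oO_X(mH))$; and finally combine Hirzebruch--Riemann--Roch with Proposition \ref{prop:Fano3-prop} to rearrange the resulting numerical bound into $D^{B,\xi(A)}_{A(B+\obe H),\obe}(E)\le 0$.
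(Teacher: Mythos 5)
Your overall architecture (prove the equivalence with Conjecture \ref{conj:BG-ineq}, then verify the Fano cases) matches the paper, and your treatment of the easy direction is the same as the paper's. But your proof of the hard implication --- which is the actual content of Theorem \ref{thm:conj-equivalence} --- breaks down because you slide along the wrong curve. The semicircle $\Gamma$ produced by Proposition \ref{prop:wall-tiltstable} is a locus along which tilt stability of $E$ is \emph{preserved}, so your ``destabilized first'' branch never occurs; but along $\Gamma$ you yourself compute $\nu_{H,B+\beta H,\alpha}(E)=-(\beta-\beta_0)$, which vanishes only at the starting point $(\beta_0,\alpha_0)$. Consequently the point where $\Gamma$ meets the graph $\alpha=A(B+\beta H)$ is \emph{not} an element of $\obe_A(E)$: by the defining relation \eqref{eqn:betabar-relation}, $\obe\in\obe_A(E)$ means precisely that $(\obe,A(B+\obe H))$ lies on $C(E)$, i.e.\ that the $\nu$-numerator of $E$ vanishes there. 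So Conjecture \ref{conj:limitBG} simply does not apply at the endpoint of your slide, and the argument never closes. (A secondary gap: even at a genuine point of $\obe_A(E)$, Conjecture \ref{conj:limitBG} applies only to $\obe_A$-\emph{stable} objects, which by Definition \ref{defn:betabar-stability} requires stability near \emph{every} point of $\obe_A(E)$, not just the one you reach.)

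The paper's proof of Theorem \ref{thm:conj-equivalence} instead moves along $C(E)$ itself, where $\nu\equiv 0$, so that the terminal crossing with the graph of $A$ is a point of $\obe_A$ by definition. The price is that stability is not automatic along $C(E)$, and this is exactly where the wall-crossing machinery you attached to $\Gamma$ belongs: at the first point of $C(E)$ where $E$ becomes strictly semistable, additivity of $D$ gives a Jordan--H\"older factor $E_1$ with $D>0$, Proposition \ref{prop:discrimi-ses-decrease} gives $\Del_{H,B}(E_1)<\Del_{H,B}(E)$, and the recursion terminates because, after perturbing $B$ to a \emph{rational} class, the values of $\Del_{H,B}$ form a discrete set --- a hypothesis your ``induction on $\Del_H$'' silently needs. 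Finally, your ``principal obstacle'' (controlling the variation of $D$) is resolved in the paper precisely by working on $C(E)$: Lemma \ref{prop:D-derivative-wrt-alpha} gives, along $C(E)$,
$$
\frac{d}{d\alpha} D^{B,\xi}_{\alpha,\beta}(E) \;=\; \frac{-\alpha\,\Del_{H,B}(E) - 3\xi\,(H^3\ch_0(E))^2}{3\,H^2\ch_1^{B+\beta H}(E)} \;\le\; 0,
$$
so $D$ can only increase as $\alpha$ decreases, and a positive value at $(\beta_0,\alpha_0)$ propagates to a positive value at the $\obe_A$ point, contradicting Conjecture \ref{conj:limitBG}. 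No such sign is available along $\Gamma$, and ``absorbing the discrepancy into $\xi(A)$'' is not permissible: the equivalence is asserted for the \emph{same} $\Lambda$ and $\xi(A)$ in both conjectures. Your final paragraph on the Fano cases is a reasonable summary of the Li--BMSZ strategy the paper follows in Sections \ref{sec:Hom-vanishing}--\ref{sec:blowup-P3}, but as written it is a plan, not a proof.
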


The following is the key theorem for us.
\begin{thm}
\label{thm:conj-equivalence}
Conjectures \ref{conj:BG-ineq} and \ref{conj:limitBG} are equivalent. 
\end{thm}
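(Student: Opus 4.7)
The plan is to prove the two implications separately. The forward direction $\ref{conj:BG-ineq} \Rightarrow \ref{conj:limitBG}$ will be a pure continuity argument: given an $\obe_A$-stable object $E$ and $\obe \in \obe_A(E)$, Definition \ref{defn:betabar-stability} furnishes an open neighbourhood $U$ of $(\obe, A(B+\obe H))$ on which $E$ is $\nu_{H, B+\beta H, \alpha}$-stable. I would then choose a sequence $(\beta_n, \alpha_n) \in U \cap C(E)$ with $\alpha_n > A(B+\beta_n H)$ converging to $(\obe, A(B+\obe H))$; since $\nu_{H, B+\beta_n H, \alpha_n}(E) = 0$, Conjecture \ref{conj:BG-ineq} applies to give $D^{B,\xi(A)}_{\alpha_n, \beta_n}(E) \le 0$, and continuity of $D^{B,\xi}_{\alpha,\beta}(-)$ in $(\alpha,\beta)$ yields the limiting inequality $D^{B,\xi(A)}_{A(B+\obe H), \obe}(E) \le 0$.

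For the reverse direction $\ref{conj:limitBG} \Rightarrow \ref{conj:BG-ineq}$, the key preliminary is the behaviour of $D$ along $C(E)$. A direct computation, using $\Lambda\cdot H=0$ and the defining relation $\nu=0$ of $C(E)$, will show that when $\ch_0(E)\neq 0$,
\[
\frac{d}{d\beta}\,D^{B,\xi}_{\alpha,\beta}(E)\bigg|_{C(E)} \;=\; \frac{3\xi\bigl(H^3\ch_0(E)\bigr)^2 + \overline{\Delta}_{H,B}(E)}{3\,H^3\ch_0(E)},
\]
with an analogous monotonicity of $D$ in $\alpha$ when $\ch_0(E)=0$ and $C(E)$ degenerates to a vertical ray. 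Since $\overline{\Delta}_{H,B}(E)\ge 0$ for tilt stable $E$ by Proposition \ref{prop:muslopeboundfortiltstability}(iii), this derivative has the same sign as $\ch_0(E)$. Invoking Corollary \ref{prop:reduce-BGineq-class} together with the dualizing symmetry of Proposition \ref{prop:betabar-stab-dual}, I may reduce to the case $\ch_0(E)\ge 0$; then by Proposition \ref{prop:muslopeboundfortiltstability}(i) the starting point $(\beta_0,\alpha_0)$ lies on the branch of $C(E)$ on which $\beta$ is a decreasing function of $\alpha$, and $D$ is nondecreasing in $\beta$ along this branch.

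The main step will be a wall-crossing induction on the discriminant $\overline{\Delta}_{H,B}(E)$. Starting at $(\beta_0,\alpha_0)$, I would trace along $C(E)$ in the direction of decreasing $\alpha$. Two cases arise: either (A) $E$ remains $\nu$-stable along the entire segment until reaching a point $(\obe, A(B+\obe H))$, in which case openness of stability makes $E$ in fact $\obe_A$-stable at $\obe$ and Conjecture \ref{conj:limitBG} yields $D\le 0$ at the endpoint; or (B) $E$ becomes strictly $\nu$-semistable at a wall $(\beta_w,\alpha_w)$ with $\alpha_w > A(B+\beta_w H)$, whose Jordan--H\"older factors $E_i$ all have tilt slope zero there and, by Proposition \ref{prop:discrimi-ses-decrease}, strictly smaller $\overline{\Delta}_{H,B}$, so that the inductive hypothesis gives $D^{B,\xi(A)}_{\alpha_w,\beta_w}(E_i)\le 0$ and linearity of $D$ in the Chern character combines these into $D^{B,\xi(A)}_{\alpha_w,\beta_w}(E)\le 0$. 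In either case, the monotonicity calculation above transfers the inequality $D\le 0$ from the endpoint back along $C(E)$ to the initial point $(\beta_0,\alpha_0)$. The base case $\overline{\Delta}_{H,B}(E) = 0$ will be handled via Proposition \ref{prop:muslopeboundfortiltstability}(iii), which constrains $E$ to be (a shift of) a slope-stable torsion-free sheaf with vanishing discriminant, so that the inequality reduces to a direct Riemann--Roch computation.

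The hard part will be rigorously arranging the wall-crossing induction: I need that the set of walls of $E$ meeting the relevant arc of $C(E)$ is locally finite and that the descent on discriminant terminates. To secure this, I would adapt the standard arguments of \cite[Section 2]{BMS}, combining boundedness of numerical types of tilt semistable objects with the strict discriminant drop at each wall. A secondary technical point is that when $\ch_0(E) =0$ the curve $C(E)$ is a vertical line rather than a conic, but the $\alpha$-monotonicity computed above plays exactly the role of the $\beta$-monotonicity, and the same induction goes through verbatim.
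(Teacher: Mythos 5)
Your overall strategy for the hard direction --- monotonicity of $D$ along $C(E)$ (your derivative formula is off by a factor, the correct one being $\tfrac{d}{d\beta}D|_{C(E)} = \tfrac{\alpha\overline{\Delta}_{H,B}(E)+3\xi(H^3\ch_0(E))^2}{3\alpha H^3\ch_0(E)}$, though the sign analysis survives) plus a wall-crossing descent on $\overline{\Delta}_{H,B}$ terminating in an object to which Conjecture \ref{conj:limitBG} applies --- is exactly the paper's, and the forward direction is likewise treated there as immediate. However, two steps of your descent have genuine gaps.

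First, your stopping rule is wrong. In case (A) you halt the descent at the first point where $C(E)$ meets the graph of $A$ and claim this makes $E$ ``$\obe_A$-stable at $\obe$''. But Definition \ref{defn:betabar-stability} requires tilt stability in a neighbourhood of $(\obe, A(B+\obe H))$ for \emph{every} $\obe\in\obe_A(E)$, and $\obe_A(E)$ can contain several points, indeed a whole interval (see Figure \ref{fig:betabar-multi-values}). An object that is stable along $C(E)$ down to one crossing may destabilize below the graph of $A$ or near another crossing; such an object is simply not $\obe_A$-stable, and Conjecture \ref{conj:limitBG} asserts nothing about it. This is why the paper's descent runs along \emph{all} of $C(E)$, ignoring the graph of $A$, until it produces an object that is stable at every point of its curve; only then is the object $\obe_A$-stable and eligible for the assumed conjecture.

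Second, the base case $\overline{\Delta}_{H,B}(E)=0$ is mishandled, and the termination argument is incomplete. When $\overline{\Delta}_{H,B}(E)=0$, Proposition \ref{prop:discrimi-ses-decrease} gives no strict drop (any Jordan--H\"older factors would again have $\overline{\Delta}_{H,B}=0$), so your induction is not well-founded there; and the proposed fix, a ``direct Riemann--Roch computation'', is not available on a general smooth projective 3-fold with arbitrary $\Lambda$, $\xi$, $A$: Proposition \ref{prop:muslopeboundfortiltstability}(iii) and Lemma \ref{prop:sheafBG} control only $\ch_{\le 2}$ of a discriminant-zero object (note $\ch_3(E\otimes E^*)=0$ holds for trivial reasons), so $\ch_3$, hence $D$, is not pinned down, and outside the Fano setting there are no Hom-vanishing/Serre duality arguments to run Riemann--Roch. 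No computation is needed: by Propositions \ref{prop:muslopeboundfortiltstability} and \ref{prop:Delta0mustabletiltstable}, a tilt stable object with $\overline{\Delta}_{H,B}=0$ remains stable at every point of $C(E)$, hence is $\obe_A$-stable, and the hypothesis of Conjecture \ref{conj:limitBG} itself supplies the inequality for it --- this is precisely how the paper closes the loop. Finally, ``strict discriminant drop plus boundedness'' does not by itself terminate the descent, since a strictly decreasing sequence of nonnegative reals need not stabilize; the paper secures termination by first perturbing $B$ to a rational class, so that $\overline{\Delta}_{H,B}$ takes values in a discrete subset of $\RR$, and your proposal is missing this (or an equivalent) ingredient.
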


\subsection{Proof of the equivalences of the conjectures} 
We need  few results to prove Theorem \ref{thm:conj-equivalence}. 

Let   $\xi \in \RR_{\ge 0}$ be some fixed constant. 

\begin{lem}
\label{prop:D-derivative-wrt-alpha}
Let $E$ be an object in $\bB_{ H , B}$  with $\nu_{H, B, \alpha_0}(E) =0$.
Then  along $C(E)$ we have
$$
\frac{d}{d\alpha}\left( D^{B,\xi}_{\alpha, \beta}(E) \right) = \frac{-\alpha \ \overline{\Delta}_{H, B}(E) - 3\xi (H^3 \ch_0(E))^2}{3H^2 \ch_1^{B+\beta H}(E)}.
$$
\end{lem}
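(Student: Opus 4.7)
The plan is to prove this by implicit differentiation along the curve $C(E)$. The curve is cut out by
\[
F(\alpha,\beta) := H\ch_2^{B+\beta H}(E) - \tfrac{\alpha^2}{2}H^3\ch_0(E) = 0,
\]
so I would first extract $\frac{d\beta}{d\alpha}$ by differentiating $F=0$. Using the standard identity $\frac{\partial}{\partial\beta}\ch_k^{B+\beta H}(E) = -H\cdot \ch_{k-1}^{B+\beta H}(E)$ (which follows from $\ch^v=e^{-v}\ch$), one gets
$\frac{\partial F}{\partial\beta} = -H^2\ch_1^{B+\beta H}(E)$ and $\frac{\partial F}{\partial\alpha}= -\alpha H^3\ch_0(E)$, so that
\[
\frac{d\beta}{d\alpha} = \frac{-\alpha\, H^3\ch_0(E)}{H^2\ch_1^{B+\beta H}(E)}.
\]

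Next, I would apply the chain rule to $D^{B,\xi}_{\alpha,\beta}(E)$ term by term. Writing
\[
D^{B,\xi}_{\alpha,\beta}(E) = \ch_3^{B+\beta H}(E) + \Lambda\cdot\ch_1^{B+\beta H}(E) - \Bigl(\xi+\tfrac{\alpha^2}{6}\Bigr)H^2\ch_1^{B+\beta H}(E),
\]
the middle term is $\beta$-independent because $\Lambda\cdot H=0$, so it contributes nothing. The $\ch_3$-term contributes $-H\ch_2^{B+\beta H}(E)\cdot\tfrac{d\beta}{d\alpha}$, which on $C(E)$ becomes $-\tfrac{\alpha^2}{2}H^3\ch_0(E)\cdot\tfrac{d\beta}{d\alpha}$. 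The last term contributes both an explicit $-\tfrac{\alpha}{3}H^2\ch_1^{B+\beta H}(E)$ from the $\alpha$-dependence, and $\bigl(\xi+\tfrac{\alpha^2}{6}\bigr)H^3\ch_0(E)\tfrac{d\beta}{d\alpha}$ from the $\beta$-dependence. Combining the coefficients of $H^3\ch_0(E)\tfrac{d\beta}{d\alpha}$ gives $\bigl(\xi-\tfrac{\alpha^2}{3}\bigr)H^3\ch_0(E)$.

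Substituting the formula for $\tfrac{d\beta}{d\alpha}$ and clearing the common denominator $3H^2\ch_1^{B+\beta H}(E)$, the numerator becomes
\[
-\alpha\bigl(H^2\ch_1^{B+\beta H}(E)\bigr)^2 + \alpha^3(H^3\ch_0(E))^2 - 3\alpha\xi\,(H^3\ch_0(E))^2.
\]
The final step is the key algebraic simplification: since $\overline{\Delta}_{H,B}$ is a twist-invariant quantity, on $C(E)$ the relation $H\ch_2^{B+\beta H}(E)=\tfrac{\alpha^2}{2}H^3\ch_0(E)$ yields
\[
\bigl(H^2\ch_1^{B+\beta H}(E)\bigr)^2 - \alpha^2(H^3\ch_0(E))^2 = \overline{\Delta}_{H,B}(E).
\]
Plugging this in collapses the first two numerator terms to $-\alpha\overline{\Delta}_{H,B}(E)$ and yields the claimed formula. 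There is no real obstacle here; it is a direct calculation, and the only thing to be careful about is keeping track of the implicit $\beta(\alpha)$ dependence consistently and invoking $\Lambda\cdot H=0$ to discard the $\Lambda$-term.
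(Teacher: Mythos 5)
Your proposal is correct and follows essentially the same route as the paper: implicit differentiation of the defining equation of $C(E)$ to obtain $d\beta/d\alpha$, the chain rule applied to the reduced form of $D^{B,\xi}_{\alpha,\beta}$ (using $\Lambda \cdot H = 0$ to discard the $\Lambda$-term), and then the relation on $C(E)$ combined with the twist-invariance of $\overline{\Delta}_{H,B}$. One point worth recording: your algebra, carried out to the end, gives the numerator $-\alpha\,\overline{\Delta}_{H,B}(E) - 3\alpha\xi\,(H^3\ch_0(E))^2$, i.e.\ the $\xi$-term also carries a factor of $\alpha$, which differs from the displayed statement (whose $\xi$-term lacks the $\alpha$); this is a typo in the statement rather than an error in your derivation, and it is harmless because every later use of the lemma only needs the sign of the derivative, which is negative in either form.
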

\begin{proof}
For $( \beta, \alpha) \in C(E)$,   we have $H \ch_2^{B+\beta H}(E)  - (\alpha^2/2) H^3 \ch_0(E) = 0$. By differentiating both sides with respect to 
$\alpha$ we get
\begin{equation}
\label{eqn:dbeta-over-dalpha}
\frac{d\beta}{d \alpha} = - \frac{\alpha H^3 \ch_0(E)}{H^2 \ch_1^{B+\beta H}(E)}.
\end{equation}
By differentiating the expression of $D^{B,\xi}_{\alpha, \beta}(E)$ in \eqref{eq:reduced-BG-ineq-term} with respect to $\alpha$, we get
$$
\frac{d}{d\alpha}\left( D^{B,\xi}_{\alpha, \beta}(E) \right) = - H \ch_2^{B+\beta H}(E) \, \frac{d\beta}{d \alpha}  - \frac{\alpha}{3}H^2 \ch_1^{B+\beta H}(E) + 
\left(\xi + \frac{\alpha^2}{6}\right)H^3 \ch_0(E) \, \frac{d\beta}{d \alpha}.
$$
Since $ H \ch_2^{B+\beta H}(E)  =  (\alpha^2/2) H^3 \ch_0 (E)$ and by substituting the expression of ${d\beta}/{d \alpha}$, we obtain the required expression. 
\end{proof}

\begin{note}
\label{prop:alpha-beta-curve}
\rm
Let $E$ be an object satisfying the conditions in above lemma. 
So $H \ch^B_2(E) = (\alpha_0^2/2) H^3 \ch_0(E)$, and for $(\beta, \alpha) \in C(E)$ we have
$$
 \left(\beta^2 - \alpha^2\right) H^3 \ch_0(E) - 2 \beta H^2 \ch_1^B(E) + \alpha_0^2 H^3 \ch_0(E) =0.
$$
Moreover, by Proposition \ref{prop:muslopeboundfortiltstability} we have   
$$
\Del_{H,B}(E) = (H^2 \ch_1^B(E))^2 - \alpha_0^2 (H^3 \ch_0(E))^2 \ge 0.
$$
When $\ch_0(E) =0$, $C(E)$ is a vertical line at $\beta =0$ from $\alpha =0$ to $\alpha_0$ in $(\beta, \alpha)$-plane. 

Let us consider the case $\ch_0(E) \ne 0$. By \eqref{eqn:dbeta-over-dalpha} in Lemma \ref{prop:D-derivative-wrt-alpha}, along $C(E)$ at $(\beta, \alpha)$ we have 
\begin{align*}
\left(\frac{d\alpha}{d \beta}\right)^2   & = \left( \frac{H^2\ch_1^B(E)}{\alpha H^3 \ch_0(E)}\right)^2
= \frac{\Del_{H,B}(E)}{\alpha^2 (H^3 \ch_0(E))^2}+ 1 \\
& \ge  \frac{\Del_{H,B}(E)}{\alpha_0^2 (H^3 \ch_0(E))^2} + 1 \ge 1.
\end{align*}
\end{note}

\begin{prop}
\label{prop:tilt-stable-obj-same-B-category}
Let $E \in \bB_{H, B}$ be a tilt stable object with $\nu_{H, B, \alpha_0}(E) =0$. 
Then $E \in \bB_{ H, B + \beta H}$ for $\beta \in [-\alpha_0, \alpha_0]$; in particular
$E \in \bB_{ H, B + \beta H} $ for all $(\alpha, \beta) \in C(E)$. 
\end{prop}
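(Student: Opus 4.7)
The plan is to verify directly the two defining conditions for membership in the tilted category, reducing them via the identity $\mu_{H, B+\beta H}(-) = \mu_{H,B}(-) - \beta$ to $\mu_{H,B}$-slope bounds on the cohomology sheaves of $E$. Since $E \in \bB_{H,B}$ is already concentrated in degrees $-1, 0$, what needs to be checked is that
\begin{itemize}
\item every $\mu_{H,B}$-HN factor of $\hH^{-1}(E)$ has slope $\leq \beta$ (i.e.\ $\hH^{-1}(E) \in \fF_{H, B+\beta H}$), and
\item every $\mu_{H,B}$-HN factor of the torsion-free part of $\hH^0(E)$ has slope $> \beta$ (i.e.\ $\hH^0(E) \in \tT_{H, B+\beta H}$).
\end{itemize}

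The key input is Proposition~\ref{prop:slope-bounds}(iii) applied to the tilt stable object $E$ with $\nu_{H,B,\alpha_0}(E) = 0$: it yields $\hH^{-1}(E) \in \HN^\mu_{H,B}((-\infty, -\alpha_0])$ and, for the torsion-free part, $\hH^0(E) \in \HN^\mu_{H,B}([\alpha_0, +\infty))$. For any $\beta \in [-\alpha_0, \alpha_0]$ the bound on $\hH^{-1}(E)$ gives $\mu_{H,B} \le -\alpha_0 \le \beta$, so the first condition is automatic. For the second condition, on the open interval $\beta \in (-\alpha_0, \alpha_0)$ the torsion-free part of $\hH^0(E)$ has $\mu_{H,B} \ge \alpha_0 > \beta$, so the strict inequality holds. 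Equivalently, one can invoke Proposition~\ref{prop:wall-tiltstable} along the circle $a^2 + b^2 = \alpha_0^2$ centered at the origin to obtain membership (together with tilt stability) on the full interior in one stroke. The second assertion $(\alpha,\beta) \in C(E) \Rightarrow E \in \bB_{H, B+\beta H}$ then follows because the curve $C(E)$ projects into the $\beta$-interval $[-\alpha_0, \alpha_0]$ by Note~\ref{prop:alpha-beta-curve}.

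The main obstacle is the boundary case $\beta = \pm\alpha_0$, where the inequality $\mu_{H,B}(\hH^0(E)) \ge \alpha_0$ need not be strict. The equality statement in Proposition~\ref{prop:slope-bounds}(iii)(b) says that an HN factor of the torsion-free part of $\hH^0(E)$ with $\mu_{H,B} = \alpha_0$ forces $\Del_{H,B} = 0$ on that factor. To rule this out I would take $Q$ to be the minimal HN quotient of $\hH^0(E)$, which is a quotient of $E$ in $\bB_{H,B}$; a direct computation using $\mu_{H,B}(Q) = \alpha_0$ and $\Del_{H,B}(Q) = 0$ gives $\nu_{H,B,\alpha_0}(Q) = 0$, and then tilt stability (strict) of $E$ forces $Q \cong E$, reducing $E$ to a $\mu_{H,B}$-semistable torsion-free sheaf with vanishing discriminant of slope $\alpha_0$. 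In this degenerate edge case (and its symmetric counterpart at $\beta = -\alpha_0$) the membership is verified directly using Lemma~\ref{prop:sheafBG} and Proposition~\ref{prop:Delta0mustabletiltstable}, closing the argument.
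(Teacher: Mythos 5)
Your main computation---reducing membership in $\bB_{H,B+\beta H}$ to $\mu_{H,B}$-slope bounds on $\hH^{-1}(E)$ and on the torsion-free part of $\hH^0(E)$, and feeding in Proposition \ref{prop:slope-bounds}(iii)---is exactly the paper's (very terse) proof, and it settles every $\beta\in[-\alpha_0,\alpha_0)$: the $\fF$-condition only needs the non-strict bound $\mu_{H,B}\le-\alpha_0\le\beta$, while the $\tT$-condition holds strictly once $\beta<\alpha_0$. You are also right, and more careful than the paper, in flagging $\beta=+\alpha_0$ as a genuine boundary problem. (There is, however, no ``symmetric counterpart'' at $\beta=-\alpha_0$: since $\fF$ is cut out by a non-strict inequality, that endpoint is automatic.)

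The gap is your final sentence. Your reduction of the edge case is correct---if the torsion-free part of $\hH^0(E)$ has an HN factor of $\mu_{H,B}$-slope exactly $\alpha_0$, then the minimal HN quotient $Q$ satisfies $\nu_{H,B,\alpha_0}(Q)=0$, is a quotient of $E$ in $\bB_{H,B}$, and stability forces $E\cong Q$, so $E$ is a $\mu_{H,B}$-stable torsion-free sheaf with $\mu_{H,B}(E)=\alpha_0$ and $\Del_{H,B}(E)=0$---but at that point no appeal to Lemma \ref{prop:sheafBG} or Proposition \ref{prop:Delta0mustabletiltstable} can verify membership, because membership is false. Such an $E$ has $\mu_{H,B+\alpha_0 H}(E)=\mu_{H,B}(E)-\alpha_0=0$, hence $E\in\fF_{H,B+\alpha_0 H}$ and $E\notin\bB_{H,B+\alpha_0 H}$ (it is $E[1]$ that lies in the heart). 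Concretely, take $B=0$, $E=\oO_X(H)$, $\alpha_0=1$: all hypotheses of the Proposition hold, yet $\mu_{H,H}(\oO_X(H))=0$ gives $\oO_X(H)\notin\bB_{H,H}$, and the point $(\beta,\alpha)=(1,0)\in C(E)$ likewise defeats the ``in particular'' clause. So the closed-interval statement is itself false in this degenerate case (a defect the paper's one-line proof silently inherits, since Proposition \ref{prop:slope-bounds} only yields non-strict bounds at the endpoint), and the correct repair is not to prove the endpoint but to discard it: by Note \ref{prop:alpha-beta-curve} every $(\beta,\alpha)\in C(E)$ with $\alpha>0$ has $\beta\in(-\alpha_0,\alpha_0)$, and this open-interval version---which your main argument, or equivalently your alternative via Proposition \ref{prop:wall-tiltstable}, already establishes---is all that is used later, e.g.\ in the proof of Theorem \ref{thm:conj-equivalence}, where only parameters with $\alpha>0$ along $C(E)$ ever occur.
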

\begin{proof}
From Proposition \ref{prop:slope-bounds}, 
we have $E \in \bB_{ H, B + \beta H}$ for all $\beta \in [-\alpha_0, \alpha_0]$. 
In particular,  from the discussion in Note \ref{prop:alpha-beta-curve}, for any  $( \beta, \alpha)$ on $C(E)$.
\end{proof}

Let us prove the key theorem. 
\begin{proof}[Proof of Theorem \ref{thm:conj-equivalence}] 
One implication in the theorem is obvious. Let us prove the other implication using contradiction method.  

Assume Conjecture \ref{conj:limitBG} holds for our  3-fold $X$, and  there is a counterexample for Conjecture \ref{conj:BG-ineq}.
Let $E \in \bB_{H,B}$ be a $\nu_{H,B, \alpha_0}$ tilt stable object with $\nu_{H,B, \alpha_0}(E) =0$. 
By deforming tilt stability parameters appropriately  in a small neighbourhood,  we can assume $B$ is a rational class.

Suppose  $D^{B,\xi}_{\alpha_0, 0}(E) > 0$ for a contradiction. 
By Proposition \ref{prop:tilt-stable-obj-same-B-category}, $E$ stays in the same tilt category for all $(\beta, \alpha )$ in $C(E)$.

Let us consider the tilt stability of $E$ along $C(E)$ when $\alpha$ is decreasing from $P_0 = (0, \alpha_0)$. 

\subsection*{Notation}
For a sequence of pairs $P_j = (\beta_j, \alpha_j)$, $j \ge 0$ in $\mathbb{R}^2 $  we simply write 
\begin{align*}
\bB_{P_j} = & \bB_{H, B+ (\beta_1 + \cdots + \beta_j)H}, \\
\nu_{P_j} = & \nu_{H, B+ (\beta_1 + \cdots + \beta_j)H, \alpha_j } .
\end{align*}

By Proposition \ref{prop:muslopeboundfortiltstability}, $\overline{\Delta}_{H,B}(E) \ge 0$. 
When $\overline{\Delta}_{H,B}(E) >0$,   there might be a  point $P_1 = (\beta_1, \alpha_1) \in C(E)$  such that $ E \in \bB_{P_1}$ becomes strictly $\nu_{P_1}$-semistable.
From Lemma \ref{prop:D-derivative-wrt-alpha}, we have 
$$
0 < D^{B,\xi}_{\alpha_0, 0}(E) < D^{B, \xi}_{\alpha_1, \beta_1}(E) = D^{B + \beta_1 H, \xi}_{\alpha_1, 0}(E).
$$
From the Jordan-H\"older filtration of $E$, there exists $\nu_{P_1}$-stable factor $E_1 \in \bB_{P_1}$  of $E$ with $ D^{B + \beta_1 H, \xi}_{\alpha_1, 0}(E_1)>0$.
Moreover, from Proposition \ref{prop:discrimi-ses-decrease}
$$
\overline{\Delta}_{H,B}(E) > \overline{\Delta}_{H,B}(E_1).
$$
Now we take $E_1 \in \bB_{P_1}$ and consider the tilt stability along $C(E_1)$ in $\alpha$ decreasing direction from $(0, \alpha_1) \in C(E_1)$. 
In this way there exists a sequence of points $P_j =(\beta_j, \alpha_j) \in C(E_{j-1})$ with
$$
\alpha_0 > \alpha_1 > \alpha_2 > \cdots > \alpha_j > \cdots 
$$
$$
 D_{\alpha_j ,0}^{B+ (\beta_1 +\cdots + \beta_j)H , \xi }(E_j) > 0  \  \text{for all } j, \ \text{ and}
$$
$$
\overline{\Delta}_{H,B}(E) > \overline{\Delta}_{H,B}(E_1) > \cdots >\overline{\Delta}_{H,B}(E_j) > \cdots  \ge 0.
$$ 

Since $B$ is chosen to be rational, the image of $\overline{\Delta}_{H,B}$ forms a discrete set in $\mathbb{R}$; 
hence, this sequence  terminates. That is there exists $E_j \in \bB_{P_j}$ which is $\nu_{P_j}$-stable, with 
$ D_{\alpha_j ,0}^{B+ (\beta_1 +\cdots + \beta_j)H , \xi}(E_j) > 0$, and 
\begin{enumerate}
\item either $\overline{\Delta}_{H,B} (E_j) = 0$, 
\item  or $E_j$ is $ \nu_{H, B+ (\beta_1 + \cdots + \beta_j + \beta)H, \alpha } $-stable for all $(\beta, \alpha) \in C(E_j)$.
\end{enumerate} 
From Propositions \ref{prop:muslopeboundfortiltstability} and \ref{prop:Delta0mustabletiltstable}, in case (i) we have $E_j$ is $ \nu_{ H, B+ (\beta_1 + \cdots + \beta_j + \beta)H, \alpha}$-stable for all $(\beta, \alpha) \in C(E_j)$. 
From Lemma \ref{prop:D-derivative-wrt-alpha}, we have 
$$
0 <  D_{\alpha_j ,0}^{B+ (\beta_1 +\cdots + \beta_j)H ,\xi}(E_j) 
\le  D_{\overline{A} ,\obe}^{B+ (\beta_1 +\cdots + \beta_j)H ,\xi}(E_j),
$$
where $(\obe, \overline{A}) \in C(E_j)$, such that  $\overline{A} = A({B+ (\beta_1 +\cdots + \beta_j + \obe)H})$; that is $\obe \in \obe_A(E_j)$.
But this is  not possible as we already assume Conjecture \ref{conj:limitBG} holds for $X$.  This completes the proof.
\end{proof}

\section{Some $\Hom$ Vanishing Results for $\obe_A$ Stable Objects}
\label{sec:Hom-vanishing}
We follow the same notation in Section \ref{sec:BGconj} for our smooth projective 3-fold $X$. 
Let $H \in \NS(X)$ be an ample divisor class. 
Let $B$ be a class proportional to $H$. 

We have the following vanishing results for $\obe_{A}$-stable objects.

\begin{prop}
\label{prop:betabar-hom-vanishing}
Let $A: B + \RR \langle H \rangle \to \RR_{\ge 0}$ be a continuous function. 
Let $E \in D^b(X)$ be a $\obe_{A}$-stable object.
Let $(\obe, \overline{A}) \in C(E)$, such that 
$\overline{A} = A(\obe H)$.
In other words, 
there is a small  neighbourhoud $U \subset \RR_{\beta, \alpha}^2$ containing $(\obe, \oA)$, 
such that for any $(\beta, \alpha) \in U$ with $\alpha>0$, 
$E_{H,\beta H}$ is $\nu_{H, \beta H, \alpha}$ tilt stable, satisfying 
$H \ch_2^{\obe H}(E) - (\oA^2/2)H^3 \ch_0(E)=0$.
Suppose
 $H^2 \ch_1^{\obe H}(E) >0$. 
 For any  $k \in \mathbb{Z}$ we have the following:
 \begin{enumerate}[leftmargin=*]
 \item If $k < \obe - \overline{A}$ then for all $j \le 0$
 $$
 \Hom_{\SX}(E, \oO_X(kH)[1+j]) =0.
 $$
  \item 
If $k = \obe - \overline{A}$, with $\Del_H(E) >0$, and $\overline{A} >0$, then  for all $j \le 0$
$$
\Hom_{\SX}(E, \oO_X(k H)[1+j]) =0.
$$
 \item If $k > \obe+ \overline{A}$ then for all $j \le 0$
 $$
 \Hom_{\SX}(\oO_X(kH), E[j]) =0.
 $$
 \item 
If $k = \obe + \overline{A}$, with $\Del_H(E) >0$, and $\overline{A} >0$, then  for all $j \le 0$
$$
\Hom_{\SX}(\oO_X(k H) , E[j]) =0.
$$
 \end{enumerate}
\end{prop}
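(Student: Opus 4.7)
The approach is to place $\oO_X(kH)$ or $\oO_X(kH)[1]$ in the heart $\bB_{H,\obe H}$ according to the sign of $\mu_{H,\obe H}(\oO_X(kH))=k-\obe$; the resulting shifted line bundle is $\nu_{H,\obe H,\oA}$-tilt stable by Proposition~\ref{prop:Delta0mustabletiltstable} since $\Del_H(\oO_X(kH))=0$. A direct computation gives
$$
\nu_{H,\obe H,\oA}(\oO_X(kH))=\frac{(k-\obe)^2-\oA^2}{2(k-\obe)},\qquad
\nu_{H,\obe H,\oA}(\oO_X(kH)[1])=\frac{\oA^2-(k-\obe)^2}{2(\obe-k)},
$$
while $\nu_{H,\obe H,\oA}(E)=0$ follows from the hypotheses $H^2\ch_1^{\obe H}(E)>0$ and $H\ch_2^{\obe H}(E)=(\oA^2/2)H^3\ch_0(E)$. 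The inequalities $k<\obe-\oA$, $k>\obe+\oA$, $k=\obe\mp\oA$ translate into the corresponding shifted-line-bundle slopes being, respectively, strictly negative, strictly positive, or equal to $0$. For every $j<0$ the required vanishing reduces to the standard fact $\Hom_{D^b(X)}(A,B[n])=0$ for $A,B$ in the same heart of a bounded t-structure and $n<0$, once both sides are placed in $\bB_{H,\obe H}$; so only $j=0$ requires argument.

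In case~(i), a non-zero morphism $f\colon E\to\oO_X(kH)[1]$ in $\bB_{H,\obe H}$ factors through its image $I$. Tilt semistability of $E$ at $(\obe,\oA)$ (which holds by continuity from the $\obe_A$-stability neighborhood) forces $\nu(I)\ge 0$, while tilt stability of $\oO_X(kH)[1]$ forces $\nu(I)\le\nu(\oO_X(kH)[1])<0$; a contradiction. Case~(iii) is symmetric, swapping the roles of quotient and sub-object.

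Cases~(ii) and~(iv) are the boundary cases in which the slope comparison at $(\obe,\oA)$ collapses to equality, so the above direct argument fails; the plan is to use the genuine two-dimensional open neighborhood $U\subset\RR^2_{\beta,\alpha}$ of $(\obe,\oA)$ provided by $\obe_A$-stability (which exists because $\oA>0$ keeps us away from the boundary $\alpha=0$). Because $\Del_H(E)>0$ while $\Del_H(\oO_X(kH))=0$, the numerical characters of $E$ and the shifted line bundle $L$ are not proportional, so the wall
$$
W=\{(\beta,\alpha)\,:\,\nu_{H,\beta H,\alpha}(E)=\nu_{H,\beta H,\alpha}(L)\}
$$
is a proper line or semicircle in the upper half-plane (Proposition~\ref{prop:wall-tiltstable}) passing through $(\obe,\oA)$, and locally separates $U$ into two open components on which the slope inequality is strict in opposite directions. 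At any point $(\beta,\alpha)\in U$ on the destabilizing side, both $E$ and $L$ still lie in $\bB_{H,\beta H}$ by Proposition~\ref{prop:tilt-stable-obj-same-B-category}, any non-zero derived morphism $f$ persists there, and the argument of cases~(i) and (iii) applied at $(\beta,\alpha)$ contradicts the $\nu_{H,\beta H,\alpha}$-tilt stability of $E$ guaranteed by $\obe_A$-stability.

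The main obstacle is making the deformation argument of cases~(ii) and (iv) precise: verifying that the cone of $f$ (the kernel in $\bB_{H,\obe H}$) remains in the nearby heart $\bB_{H,\beta H}$, so that $f$ continues to furnish an honest destabilizing quotient, and confirming that the wall $W$ is transverse to $C(E)$ at $(\obe,\oA)$ so that the destabilizing component of $U\setminus W$ is non-empty with $\alpha>0$. Both of these are precisely where the hypotheses $\Del_H(E)>0$ and $\oA>0$ are indispensable: without the former $W$ would coincide with $C(E)$ and there would be no destabilizing direction, and without the latter $U$ would collapse to a one-sided boundary neighborhood.
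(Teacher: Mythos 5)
Your core mechanism --- comparing tilt slopes of $E$ and a (shifted) line bundle that are simultaneously stable in a common heart --- is exactly the paper's, but there is a genuine gap in your cases (i) and (iii): you run the comparison \emph{at} $(\obe,\oA)$ and invoke ``tilt semistability of $E$ at $(\obe,\oA)$, by continuity.'' The proposition does not assume $\oA>0$ in (i) and (iii), and $\oA=A(\obe H)=0$ is not a fringe case: it is the only case occurring in the paper's main application (Theorem \ref{thm:fano3-A=0}, where $A\equiv 0$). When $\oA=0$ the point $(\obe,0)$ is not a legitimate stability parameter (tilt slopes are defined only for $\alpha>0$), and your assertion $\nu_{H,\obe H,\oA}(E)=0$ fails at every nearby legitimate point: since $H\ch_2^{\obe H}(E)=0$ one has $\nu_{H,\obe H,\alpha}(E)=-\alpha^{2}H^{3}\ch_{0}(E)/\bigl(2H^{2}\ch_{1}^{\obe H}(E)\bigr)$, which is strictly negative for all $\alpha>0$ whenever $\ch_{0}(E)>0$. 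So the inequality ``$\nu(I)\ge 0$'' that you extract from semistability of $E$ has no meaning as written. The repair is what the paper does: move to an honest parameter, e.g.\ $(\obe-\vep,\oA+\vep)$ for (i), where $\nu_{H,(\obe-\vep)H,\oA+\vep}(E)=\vep H^{2}\ch_{1}^{(\obe-\oA)H}(E)/H^{2}\ch_{1}^{(\obe-\vep)H}(E)\ge 0$ by Proposition \ref{prop:muslopeboundfortiltstability}, while the slope of $\oO_X(kH)[1]$ stays strictly negative; alternatively, note that as $\alpha\to 0^{+}$ the slope of $E$ tends to $0$ while that of the line bundle stays bounded away from $0$. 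Either way the fix is routine, but your proof as written silently assumes $\oA>0$.

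Beyond that, your route differs from the paper's in two harmless ways. For (iii) you argue symmetrically (the image is a quotient of the stable line bundle and a subobject of $E$), whereas the paper reduces (iii) to (i) through the dualizing functor $E\mapsto E^{1}$ (Propositions \ref{prop:betabar-stab-dual} and \ref{prop:B-obj-like-sheaves-dual}); both are valid, and yours is more direct. For the boundary cases (ii) and (iv) your wall-separation argument is a structural version of the paper's explicit perturbation to $(\obe\mp\vep,\oA-\vep)$, and it is sound: $\Del_H(E)>0$ together with $\Del_H(\oO_X(kH))=0$ ensures the characters are not proportional, so the wall is a genuine curve through $(\obe,\oA)$, and $\oA>0$ puts a whole disk of legitimate parameters around that point. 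However, the two ``main obstacles'' you flag are not obstacles at all: you never need the cone of $f$ to lie in the nearby heart, because $\Hom_{\SX}(E,L)=0$ is a statement about two objects that are stable with ordered slopes in the \emph{same} heart $\bB_{H,\beta H}$ at the new parameter --- Hom spaces do not depend on the choice of heart; and transversality of the wall to $C(E)$ is irrelevant, since all you need is that both sides of the wall meet $U\cap\{\alpha>0\}$, which is automatic for a curve through an interior point of that set. Deleting those worries and adding either the explicit slope computation or the paper's choice of perturbation direction makes (ii) and (iv) complete.
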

\begin{proof}
(i) \ Let $k$ be an integer such that $ k <  \obe - \overline{A}$.
Since  $E, \oO_X(kH)[1] \in \bB_{H, \obe H}$, we have 
$\Hom_{\SX}(E, \oO_X(kH)[1+j]) =0$ for all $j \le -1$. 
Let us prove the $\Hom$ vanishing for $j=0$ case. Let 
 $$
  0 < \vep < (\obe   - \overline{A}  - k)/2.
$$  
By Proposition \ref{prop:Delta0mustabletiltstable}
$$
\oO_X(kH)[1] \in \bB_{ H, (\obe - \vep) H}
$$
 is $\nu_{H,  (\obe - \vep) H,  \overline{A}+ \vep}$-stable  with 
\begin{align*}
\nu_{H,  (\obe - \vep) H,  (\overline{A}+ \vep)}(\oO_X(kH)[1]) & 
  = - \frac{(\obe - \overline{A} -k -2\vep)(\obe + \overline{A} - k)}{(\obe - k -\vep) } < 0.
\end{align*}
Since $H \ch_2^{\obe H}(E) - (\overline{A}^2/2) H^3 \ch_0(E)= 0$,
\begin{align*}
\nu_{H,  (\obe - \vep) H,  (\overline{A}+ \vep)}(E) & = \frac{\vep H^2 \ch_1^{(\obe -\overline{A})H}(E)}{ H^2 \ch_1^{(\obe -\vep)H}(E)}.
\end{align*}
Since $E$ is $\obe_{A}$-stable with $H^2 \ch_1^{\obe H}(E) > 0$, so for small enough $\vep >0$, $H^2 \ch_1^{( \obe - \vep)H}(E) > 0$. 
Also by (ii) of Proposition \ref{prop:muslopeboundfortiltstability},  
$H^2 \ch_1^{( \obe -\overline{A})H}(E) \ge 0$.
Therefore,
$$
\nu_{H,  (\obe - \vep) H,  (\overline{A}+ \vep)}(E) \ge 0,
$$
and hence, we have $\Hom_{\SX}(E, \oO_X(kH)[1]) = 0$  as required. \\

\noindent (ii) \  For $\vep >0$, by direct computation,
\begin{align*}
& \nu_{H, (\obe - \vep)H, (\overline{A} - \vep)}(\oO_X(kH)[1])  = 0,  \ \text{and} \\
& \nu_{H, (\obe - \vep)H, (\overline{A} - \vep)}(E)   = \frac{\vep H^2 \ch_1^{(\obe - \overline{A})H}(E)}{H^2 \ch_1^{(\obe -\vep)H}(E)}.
\end{align*}
From (i) of Proposition \ref{prop:muslopeboundfortiltstability}, we have 
$H^2 \ch_1^{(\obe - \overline{A})H}(E) >0$, and so for small enough $\vep>0$
$$
\nu_{H, (\obe - \vep)H, (\overline{A} - \vep)}(E)  >0.
$$
Therefore, we get the required $\Hom$ vanishings by comparing the tilt slopes of tilt stable objects $\oO_X(kH)[1]$ and $E$ for small enough $\vep>0$. \\

\noindent (iii) \ Let $k$ be an integer such that $ k >  \obe +  \overline{A}$.
Since  $E, \oO_X(kH) \in \bB_{ H, \obe H}$, we have 
$\Hom_{\SX}(\oO_X(kH), E[j]) =0$ for all $j \le -1$. 
Let us prove the vanishing for $j=0$ case. 

From Proposition \ref{prop:betabar-stab-dual}, $E^1 \in D^b(X)$ is $\obe_{\widehat{A}}$-stable
where $\widehat{A}$ is defined by $\widehat{A}(\beta H) = A (-\beta H)$. Hence,   $\obe_{\widehat{A}}(E^1) = - \obe_{A}(E)$.
So from part (i),  for $-k< -\obe -\overline{A}$, we have $\Hom_{\SX}(E^1, \oO_X(-kH)[1]) = 0$.
By Proposition \ref{prop:B-obj-like-sheaves-dual}, $E$ fits into the short exact sequence:
$$
0 \to E \to E^{11} \to E^{23} \to 0
$$
in $\bB_{ H,\obe H}$ with $E^{23} \in \Coh_0(X)$. By applying the functor  $\Hom_{\SX}(O(kH), -)$ we get
$$
\Hom_{\SX}(O(kH), E) \hookrightarrow  \Hom_{\SX}(O(kH),  E^{11}) \cong \Hom_{\SX}(E^1, \oO_X(-kH)[1]) = 0.
$$ 
So we have $\Hom_{\SX}(O(kH), E)=0$ as required. \\

\noindent (iv) \ For $\vep >0$, by direct computation,
\begin{align*}
& \nu_{H, (\obe + \vep)H, (\overline{A}-\vep)}(\oO_X(kH))  = 0,  \ \text{and} \\
& \nu_{H, (\obe + \vep)H, (\overline{A}-\vep)}(E)  = \frac{-\vep H^2 \ch_1^{(\obe + \overline{A})H}(E)}{H^2 \ch_1^{(\obe + \vep)H}(E)}.
\end{align*}
From (i) of Proposition \ref{prop:muslopeboundfortiltstability}, we have 
$H^2 \ch_1^{(\obe + \oA)H}(E) >0$, and so for small enough $\vep>0$
$$
\nu_{H, (\obe + \vep)H, (\overline{A}-\vep)}(E)  >0.
$$
Therefore, we get the required $\Hom$ vanishings by comparing the tilt slopes of tilt stable objects $\oO_X(kH)$ and $E$ for small enough $\vep>0$. 
\end{proof}

\section{Strong Form of Bogomolov-Gieseker Inequality for Tilt Stable Objects}
\label{sec:strong-BG-ineq-for-tiltstable}
\subsection{Formulation of the inequality}
This section discusses a strong form of Bogomolov-Gieseker inequality for tilt stable objects. 
In the earliest preprint \cite{PiyFano3} of this work this generalized formulation appeared, and that was somewhat 
similar to the one appeared in the previous preprints of \cite{BMSZ} by Bernardara-Macr\`i-Schmidt-Zhao.  However, there were some issues in those formulations, and the authors of \cite{BMSZ} fixed the problem in their published work.  In particular, they formed a strong form of Bogomolov-Gieseker inequality
for tilt stable objects generalizing the previous work of Li in \cite{Li}. 
In this section we further  generalize \cite[Theorem 3.1]{BMSZ}.

First we need some notions for Fano 3-folds. 
Suppose $X$ be a Fano 3-fold of index $r$.  So $-K_X = rH$ for some ample divisor class $H$, where 
 $r \in \{1,2,3,4\}$. 
Let $d =H^3$ be the degree of $X$. 
 Let $\rho(X) = \rk \NS(X)$ be its Picard rank.

\begin{nota}
\rm
We use the following notation in the rest of this paper:
\begin{itemize}
 \item $\mu_{H, \beta}= \mu_{H, \beta H}$ and $\mu_{H} = \mu_{H,0}$.
 \item $\bB_{\beta}=\bB_{H, \beta H}$.
 \item $\nu_{\beta, \alpha}=\nu_{H,\beta H,\alpha}$.
 \item We say $E \in \bB_{H, \beta H}$ is $\nu_{H, \beta H, \alpha}$-(semi)stable simply by
 $E$ is tilt (semi)stable with respect to the stability parameter  $(\beta, \alpha)$.
\item $\overline{\Delta}_H = \overline{\Delta}_{H,tH} = (H^2 \ch_1)^2 - 2 H^3 \ch_0 H \ch_2$
 (see Definition \ref{def:discriminant}). 
\end{itemize}
\end{nota} 

\begin{defi}
\label{def:kappa(X)}
If $\rho(X) >1$ then, we define:
\begin{align*}
e_1(X) & = \min \,  \{(H^2D)^2 - H^3 (HD^2) >0 : \ D \in \NS(X) \}, \\
e_2(X) & = \min \,  \{(H^2D)^2 + 1 : \ D \in \NS(X) \text{ is effective} \}, \\
\kappa(X) & = \min\left\{ \frac{e_1(X)}{d^2}, \ \frac{e_2(X)}{d^2}, \  \frac{3}{2rd} \right\}.
\end{align*}
Otherwise, that is, for $\rho(X)=1$  we set
$$
\kappa(X)  =\frac{3}{2rd}.
$$
\end{defi}

\begin{exam}
\rm
\label{ex:blow-up-P3}
Let $X$ be the blowup of $\PP^3$ at a point. Let $f : X \to \PP^3$ be the blow up morphism. 
Let $L = c_1 \left( f^* \oO_{\PP^3}(1) \right)$  and let $E$ be the exceptional divisor class.
We have $L^3 =E^3=1$, and $L^{i} E^j =0$ for $i, j \neq 0$.
Also the group $\NS(X) = \ZZ \langle L,  E \rangle$. 
From the blowup formula, $-K_X = 4L -2E$ and so $X$ is a index $2$ Fano 3-fold. 
Therefore, in the above notation
$$
H = 2L-E.
$$
By direct computation,   the degree of $X$ is 
$d = H^3 = 7$.

Let $D =aL + bE$ for some $a, b \in \ZZ$. Then we have 
$H^2 D =( 4a +b)$, $HD^2 = (2a^2 -b^2)$, and so
$(H^2D)^2 - H^3 (H D^2) = 2(a+2b)^2$.
Hence we have
\begin{align*}
e_1(X) =2, \ e_2(X) =2, \text{ and } \ 3/(2rd) = 1/21.
\end{align*}
Therefore,
$$
\kappa(X) = \frac{2}{49}.
$$
\end{exam}

\begin{exam}
\rm
\label{ex:P2timesP1}
Let us consider the Fano 3-fold $X = \PP^2 \times \PP^1$ which is of index one.
Let $p_1 : X \to \PP^2$, $p_2 : X \to \PP^1$ be the corresponding projections.
Denote $L_1 =  c_1 \left( p_1^* \oO_{\PP^2}(1) \right)$ and 
$L_2 =  c_1 \left( p_2^* \oO_{\PP^1}(1) \right)$.
Then $\NS(X) = \ZZ \langle L_1, L_2 \rangle$, and $L_1^2 L_2 =1$. 
Also $-K_X = H =3L_1  +2L_2$ and the degree of $X$ is $d= H^3=54$.

Let $D =aL_1 + bL_2$ for some $a, b \in \ZZ$. Then we have 
$H^2 D =( 12a +9b)$, $HD^2 = (2a^2+6ab)$, and so
$(H^2D)^2 - H^3 (H D^2) = 9(2a-3b)^2$.
Hence we have
\begin{align*}
e_1(X) =9, \ e_2(X) =9+12+1 =22,  \text{ and } \ 3/(2rd) = 1/36.
\end{align*}
Therefore,
$$
\kappa(X) = \frac{e_1(X)}{d^2} = \frac{1}{324}.
$$
\end{exam}

The aim of the rest of this section is to prove the following, which  generalizes  \cite[Theorem 3.1]{BMSZ}.
\begin{thm}
\label{thm:strong-BG-ineq}
Let $E$ be a $\nu_{\beta_0, \alpha_0}$-tilt stable object with finite tilt slope and non-isomorphic to 
 $\oO_X(mH)[1]$ or $\iI_Z(mH)$ for any  $m \in \ZZ$ and $0$-subscheme $Z \subset X$.
Let us suppose $\ch_0(E) \ne 0$,

\begin{align}
\label{eqn:strong-BG-ineq-condition}
 \text{if } \ch_0(E) >0, \ & \text{there exist no integers between} \\
 &  \beta_0 + \nu_{\beta_0, \alpha_0}(E) + \sqrt{ \left(\nu_{\beta_0, \alpha_0}(E)\right)^2+ \alpha_0^2}, \text{ and } \nonumber \\
&    2\mu_H(E) -  \beta_0 -\nu_{\beta_0, \alpha_0}(E) - \sqrt{ \left(\nu_{\beta_0, \alpha_0}(E)\right)^2+ \alpha_0^2},\nonumber
\end{align}
\begin{align}
\label{eqn:strong-BG-ineq-condition-2}
 \text{if } \ch_0(E) <0, \  & \text{there exist no integers between} \\
 & -\beta_0 - \nu_{\beta_0, \alpha_0}(E) + \sqrt{ \left(\nu_{\beta_0, \alpha_0}(E)\right)^2+ \alpha_0^2}, 
 \text{ and } \nonumber \\
&   -2\mu_H(E) + \beta_0 + \nu_{\beta_0, \alpha_0}(E) - \sqrt{ \left(\nu_{\beta_0, \alpha_0}(E)\right)^2+ \alpha_0^2},\nonumber
\end{align}
and 
\begin{align}
\label{eq:stron-BG-condition3}
2 \mu_H -  \left( \beta_0 + \nu_{\beta_0, \alpha_0}(E)\right) < r. 
\end{align}
Then 
$$
\frac{\overline{\Delta}_{H}(E)}{(H^3\ch_0(E))^2} \ge \kappa(X).
$$
\end{thm}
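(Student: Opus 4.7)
My plan is to argue by contradiction: assume $\Del_H(E)/(H^3\ch_0(E))^2<\kappa(X)$ and derive $\chi(\oO_X(mH),E)>0$ for a suitably chosen integer $m$, contradicting a $\Hom$-vanishing that forces $\chi\le 0$. The approach is modeled on Li's argument in the Picard rank one case and its higher-Picard-rank extension in \cite{BMSZ}, refined by the lattice constants of Definition \ref{def:kappa(X)}.

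First I will reduce to the case $\ch_0(E)>0$: if $\ch_0(E)<0$, replacing $E$ by $E^1$ (Proposition \ref{prop:stability-B-dual}) preserves tilt stability, exchanges the roles of conditions \eqref{eqn:strong-BG-ineq-condition} and \eqref{eqn:strong-BG-ineq-condition-2}, and preserves $\Del_H$ up to the zero-dimensional correction $E^2$. Setting $s=\sqrt{\nu_{\beta_0,\alpha_0}(E)^2+\alpha_0^2}$ and $\Psi^+=\beta_0+\nu_{\beta_0,\alpha_0}(E)+s$, Proposition \ref{prop:PsiDeltarelation} will let me reinterpret the contradictory assumption as a very small upper bound on $2(\mu_H(E)-\Psi^+)$, the length of the forbidden interval $[\Psi^+,\,2\mu_H(E)-\Psi^+]$.

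Next, using condition \eqref{eqn:strong-BG-ineq-condition} (no integers in that interval) together with \eqref{eq:stron-BG-condition3}, which says $2\mu_H(E)-(\beta_0+\nu_{\beta_0,\alpha_0}(E))<r$, I will pick an integer $m$ with $m$ just above $2\mu_H(E)-\Psi^+$ and $m-r$ just below $\Psi^+$. Comparing tilt slopes in the categories $\bB_{H,\beta H}$ for $\beta$ near $\Psi^+$ and near $2\mu_H(E)-\Psi^+$, in the style of Propositions \ref{prop:muslopeboundfortiltstability}, \ref{prop:wall-tiltstable} and \ref{prop:betabar-hom-vanishing}, will yield $\Hom_{\SX}(\oO_X(mH),E[i])=0$ for $i\le 0$ and $\Hom_{\SX}(E,\oO_X((m-r)H)[i])=0$ for $i\le 1$. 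Since $\omega_X=\oO_X(-rH)$, Serre duality turns the second vanishing into $\Hom_{\SX}(\oO_X(mH),E[j])=0$ for $j\ge 2$, so that only $\Hom^1$ can survive and hence $\chi(\oO_X(mH),E)\le 0$.

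Finally, Hirzebruch-Riemann-Roch with the explicit Todd class of a Fano 3-fold from Proposition \ref{prop:Fano3-prop} will expand $\chi(E(-mH))$ as a polynomial in $m$ and the (twisted) Chern numbers of $E$. Combining this with the slope position of $m$ and the identity $\Del_H(E)=(H^2\ch_1(E))^2-2H^3\ch_0(E)\cdot H\ch_2(E)$, the bound $\chi\le 0$ will force either a Hodge-index gap $(H^2 D)^2-H^3(H\cdot D^2)<e_1(X)$, an effective-contribution gap bounded by $e_2(X)$, or a numerical bound sharper than $3/(2rd)$, for some integral class $D\in\NS(X)$ built from $\ch_1(E)$; each contradicts the definition of $\kappa(X)$. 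The degenerate case $\Del_H(E)=0$ is handled directly by Lemma \ref{prop:Delta0tiltstableFano3}, which identifies $E$ with one of the excluded objects. The hardest part will be the Picard rank $>1$ analysis: the contribution of $\ch_1(E)$ to $\chi$ is controlled by the intersection number $H\cdot D^2$ rather than $H^2\cdot D$ alone, so pairing $\chi\le 0$ against the Hodge index theorem requires a case analysis on how $\ch_1(E)$ sits in $\NS(X)$ in order to recover the sharp constants $e_1(X)$ and $e_2(X)$; this is exactly where the earlier formulations in \cite{PiyFano3, BMSZ} had gaps that are now fixed by the extra hypotheses \eqref{eqn:strong-BG-ineq-condition}--\eqref{eq:stron-BG-condition3}.
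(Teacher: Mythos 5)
There is a genuine gap, and it lies in your final step. You propose to derive the discriminant bound from $\chi(\oO_X(mH),E)\le 0$, but by Hirzebruch--Riemann--Roch (cf.\ Proposition \ref{prop:Euler-char-Fano3}) this Euler characteristic contains the term $\ch_3^{mH}(E)$ and the term $c_2(X)\cdot\ch_1(E)$, neither of which is constrained by the hypotheses of Theorem \ref{thm:strong-BG-ineq}, whose conclusion involves only $\ch_0,\ch_1,\ch_2$. A single inequality $\chi(\oO_X(mH),E)\le 0$ constrains $\ch_3(E)$ relative to the lower Chern characters; it cannot force a lower bound on $\Del_H(E)$ alone. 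In fact the paper uses exactly this kind of vanishing \emph{later}, in Theorems \ref{thm:fano3-A=0} and \ref{thm:fano3-d-7}, where $\chi(E(-H))\le 0$ produces the $\ch_3$-inequality $D\le 0$ \emph{taking Theorem \ref{thm:strong-BG-ineq} as input} to control the lower-order terms; your plan runs that argument backwards. The paper's proof of Theorem \ref{thm:strong-BG-ineq} instead uses the self-Euler characteristic: it exhibits a point $Q_\varepsilon$ where $E$ and $E(-rH)[1]$ are \emph{simultaneously} tilt stable with $\nu_{Q_\varepsilon}(E(-rH)[1])<0<\nu_{Q_\varepsilon}(E)$ --- this is precisely where hypothesis \eqref{eq:stron-BG-condition3} enters, forcing the curves $Z(E)$ and $Z(E(-rH)[1])$ to intersect --- whence $\Hom(E,E(-rH)[1])=0$, so by Serre duality $\Hom(E,E[2])=0$ and $\chi(E,E)\le \hom(E,E)=1$. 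In $\chi(E,E)=\ch_0(E)^2-\tfrac{r}{2}H\cdot\Delta(E)$ the $\ch_3$ and $c_2(X)$ contributions cancel identically, and this is what yields $\Del_H(E)/(H^3\ch_0(E))^2\ge \tfrac{2}{rd}\bigl(1-1/\ch_0(E)^2\bigr)\ge 3/(2rd)\ge\kappa(X)$ once $\ch_0(E)\ge 2$.

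Even granting a repaired endgame, your outline omits the structural work needed to make that simultaneous-stability point legitimate, and it misassigns the roles of the hypotheses and of the lattice constants. The paper takes a counterexample with minimal $\Del_H$ (using discreteness of $\Del_H$), and proves by a wall-crossing/Jordan--H\"older argument that it is a reflexive sheaf, tilt stable for all $\alpha\ge\alpha_0$, and that $E^*$ (equivalently $E[1]$) is stable in the dual region (Proposition \ref{prop:stability-E[1]}); in these steps the no-integer conditions \eqref{eqn:strong-BG-ineq-condition}--\eqref{eqn:strong-BG-ineq-condition-2} are used to show that any destabilizing Jordan--H\"older factor $E_k$ has $\mu_H(E_k)\notin\ZZ$, hence $\Del_H(E_k)>0$ by Lemma \ref{prop:Delta0tiltstableFano3}, hence is a strictly smaller counterexample, contradicting minimality --- they are not used to select a twist $m$. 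Finally, the constants $e_1(X)$ and $e_2(X)$ of Definition \ref{def:kappa(X)} do not arise from pairing Riemann--Roch against the Hodge index theorem as you suggest: they appear only in the separate case $|\ch_0(E)|=1$ (Proposition \ref{prop:strong-BG-rank-1-case}), where Jordan--H\"older factors with $\ch_1\notin\ZZ\langle H\rangle$ satisfy $\Del_H\ge e_1(X)$ (torsion-free or reflexive factors, via Hodge index) or $\Del_H+1\ge e_2(X)$ (factors in $\Coh_{\le 2}(X)$), while for $|\ch_0(E)|\ge 2$ only the constant $3/(2rd)$ is ever used.
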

The proof of this theorem is similar to that of \cite[Theorem 3.1]{BMSZ}, and we discuss it in the next subsections.

\begin{rmk}
\label{rem:relation-with-BMSZ}
\rm
Suppose $E$ be an object as in Theorem \ref{thm:strong-BG-ineq} with $\ch_0(E) >0$.
Let us write  $\tal_0=\sqrt{ \left(\nu_{\beta_0, \alpha_0}(E)\right)^2+ \alpha_0^2}$, and $\tbe_0 =  \beta_0 + \nu_{\beta_0, \alpha_0}(E)$.
We have 
$\Del_H(E) = (H^2 \ch_1^{\tbe_0 H}(E))^2 - \tal_0^2 (H^3\ch_0(E))^2$. 
So 
\begin{align*}
\frac{\Del_H(E)}{(H^3\ch_0(E))^2}  
&=  \frac{H^2 \ch_1^{(\tbe_0 -\tal_0)H}(E) \cdot H^2 \ch_1^{(\tbe_0 + \tal_0)H}(E)}{(H^3 \ch_0(E))^2} \\
& = \frac{(H^2 \ch_1^{(\tbe_0 +\tal_0)H}(E))^2 + 2 \tal_0 H^3 \cdot H^2 \ch_1^{(\tbe_0 + \tal_0)H}(E)}{(H^3 \ch_0(E))^2}.
\end{align*}
Therefore,
\begin{align*}
 \frac{(H^2 \ch_1^{(\tbe_0 +\tal_0)H)}(E))^2}{(H^3 \ch_0(E))^2} 
< {\frac{\Del_H(E)}{(H^3\ch_0(E))^2}}.
\end{align*}
Hence, 
\begin{align*}
 \mu_H - \tbe_0 - \tal_0
 < \sqrt{\frac{\Del_H(E)}{(H^3\ch_0(E))^2}}.
\end{align*}
That is, 
\begin{align*}
 \mu_H -  \sqrt{\frac{\Del_H(E)}{(H^3\ch_0(E))^2}} <  \tbe_0 + \tal_0,  \\
 2\mu_H - (\tbe + \tal_0) < \mu_H +  \sqrt{\frac{\Del_H(E)}{(H^3\ch_0(E))^2}}.
\end{align*}
So we have \eqref{eqn:strong-BG-ineq-condition} of Theorem \ref{thm:strong-BG-ineq} when there are no integers in the interval  
$$
\left(\mu_H -  \sqrt{\frac{\Del_H(E)}{(H^3\ch_0(E))^2}} , \ \mu_H +  \sqrt{\frac{\Del_H(E)}{(H^3\ch_0(E))^2}} \right].
$$
This is the interval that the authors used in the formulation of Theorem 3.1 in \cite{BMSZ}. 
Similarly one can consider the case $\ch_0 (E)<0$. 

Clearly we have $
\kappa(X) \ge \min \left\{ 1/d^2, \ 3/(2rd)\right\}$; 
where the later constant was considered in \cite[Theorem 3.1]{BMSZ}. 
In particular,  for   Examples \ref{ex:blow-up-P3} and \ref{ex:P2timesP1} we have 
$$
\kappa(X) > \min  \left\{ 1/d^2, \ 3/(2rd)\right\}.
$$ 
\end{rmk}

\subsection{Strong form of Bogomolov-Gieseker inequality for   $|\ch_0| =1$ case}
\begin{prop}
\label{prop:strong-BG-rank-1-case}
Let $E$ be a tilt stable object with a finite tilt slope,  $\DelH(E) >0$ and $|\ch_0(E) | =1$.
Then 
$$
\frac{\overline{\Delta}_H(E)}{d^2} \ge \kappa(X).
$$
\end{prop}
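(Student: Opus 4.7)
The plan is to reduce to the case where $E$ is a twisted ideal sheaf $\iI_Z(D)$, and then to bound $\Del_H(E)$ directly using the Hodge index theorem together with the definition of $\kappa(X)$.

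First, I would reduce to the case $\ch_0(E) = 1$. If instead $\ch_0(E) = -1$, replace $E$ by $E^1 = H^1_{\bB_{H,-B}}(E^\vee)$; by Proposition~\ref{prop:stability-B-dual}, $E^1$ is tilt stable in $\bB_{H,-B}$, and using $E^2 \in \Coh_0(X)$ from Proposition~\ref{prop:dual-B-object} a direct Chern character computation shows $\ch_0(E^1) = 1$, $\ch_1(E^1) = \ch_1(E)$, and $\ch_2(E^1) = -\ch_2(E)$, so $\Del_H(E^1) = \Del_H(E)$.

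Second, I would reduce to the case $E \cong \iI_Z(D)$ for some $D \in \NS(X)$ and subscheme $Z \subset X$ of codimension $\ge 2$. If $E$ remains tilt stable for all $\alpha \ge \alpha_0$, then Proposition~\ref{prop:limittiltstableobjects} identifies $E$ as a slope semistable rank-$1$ torsion free sheaf, which on a smooth variety has the form $\iI_Z(D)$. Otherwise, by wall-crossing $E$ strictly semistabilizes at some wall with Jordan--H\"older factors $\{E_i\}$ satisfying $\Del_H(E_i) \le \Del_H(E)$ by Proposition~\ref{prop:discrimi-ses-decrease}; one descends on $\Del_H$, handling factors with $\Del_H(E_i) > 0$ by the inductive hypothesis and using Lemma~\ref{prop:Delta0tiltstableFano3} to classify factors with $\Del_H(E_i) = 0$ as $\oO(m_i H)[1]$ or $\iI_{Z_i}(m_i H)$, invoking the mixed-case $+1$ bonus in Proposition~\ref{prop:discrimi-ses-decrease} whenever both types of factors occur together.

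Third, for $E \cong \iI_Z(D)$ let $[C] \in H^4(X,\ZZ)$ denote the $2$-dimensional part of $Z$. Expanding $\ch(E) = e^D(1 - \ch(\oO_Z))$ gives
$$
\Del_H(E) = (H^2 D)^2 - d\,(H D^2) + 2d\,(H \cdot [C]).
$$
By the Hodge index theorem, $(H^2 D)^2 \ge d\,(H D^2)$, with equality iff $D$ is proportional to $H$. In the strict case, the definition of $e_1(X)$ gives $(H^2 D)^2 - d\,(H D^2) \ge e_1(X)$, so $\Del_H(E) \ge e_1(X) \ge d^2\,\kappa(X)$. In the equality case, $\Del_H(E) = 2d\,(H \cdot [C])$, and the hypothesis $\Del_H(E) > 0$ forces $[C] \ne 0$, whence $H \cdot [C] \ge 1$ and $\Del_H(E) \ge 2d \ge 3d/(2r) \ge d^2\,\kappa(X)$.

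The main obstacle is the wall-crossing step, specifically the subcase in which all Jordan--H\"older factors at a wall have zero discriminant; there the inductive hypothesis yields nothing and one must bound $\Del_H(E)$ directly via explicit Chern character arithmetic, exploiting the integrality of the twist exponents $m_i$ of the $\oO(m_i H)[1]$ and $\iI_{Z_i}(m_i H)$ factors together with the constraint that a non-trivial extension with $\ch_0 = 1$ forces distinct $m_i$'s, to obtain $\Del_H(E) \ge 2d^2 \ge d^2\,\kappa(X)$.
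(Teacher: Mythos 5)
Your strategy breaks down at the wall-crossing step, and the breakdown is not where you think it is. You write that factors with $\DelH(E_i)>0$ are ``handled by the inductive hypothesis,'' but the inductive hypothesis is Proposition \ref{prop:strong-BG-rank-1-case} itself, which applies only to objects with $|\ch_0|=1$. The Jordan--H\"older factors at a wall can have arbitrary rank --- their ranks need only sum to $\pm 1$ (e.g.\ ranks $2$ and $-1$) --- so induction on $\DelH$ within the class of rank-$\pm1$ objects says nothing about them. Nor can you substitute a general bound: the assertion that an arbitrary tilt stable object with $\DelH>0$ satisfies $\DelH\ge d^2\kappa(X)$ is essentially Theorem \ref{thm:strong-BG-ineq}, which needs the extra hypotheses \eqref{eqn:strong-BG-ineq-condition}, \eqref{eqn:strong-BG-ineq-condition-2}, \eqref{eq:stron-BG-condition3}, and whose proof takes Proposition \ref{prop:strong-BG-rank-1-case} as its starting point --- invoking it here would be circular. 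Ironically, the case you single out as the main obstacle (all factors of zero discriminant) is the part you handle correctly: there Lemma \ref{prop:Delta0tiltstableFano3} classifies the factors, rank one of $E$ makes $\Delta(E)=2c_2(E)$, and integrality gives $\DelH(E)\ge 2d^2$. Your steps 1 and 3 (the dualization reduction via Proposition \ref{prop:stability-B-dual}, and the Hodge-index computation for $\iI_Z(D)$ using $e_1(X)$ and effectivity of the curve part of $Z$) are also sound and mirror pieces of the paper's argument.

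The paper closes the gap you leave open by never inducting on the proposition. Fixing $\beta_0\in\QQ$, it crosses walls in the increasing-$\alpha$ direction \emph{repeatedly}, taking Jordan--H\"older factors of every factor at every subsequent wall; the process terminates because $H^2\ch_1^{\beta_0 H}$ is additive, strictly positive on each finite-slope factor, and valued in a discrete set. This produces finitely many objects $F_s$ that are tilt stable for all $\alpha\gg0$, with $\DelH(E)\ge\sum_s\DelH(F_s)$ by Proposition \ref{prop:discrimi-ses-decrease}. By Proposition \ref{prop:limittiltstableobjects} each $F_s$ is sheaf-like \emph{regardless of its rank}, so the classical Bogomolov--Gieseker inequality (Lemma \ref{prop:sheafBG}) applies: if some $F_s$ has $\ch_1(F_s)\notin\ZZ\langle H\rangle$, then $\DelH(F_s)\ge (H^2\ch_1(F_s))^2-H^3\,H(\ch_1(F_s))^2\ge e_1(X)$ when $\ch_0(F_s)\ne 0$, while rank-zero factors yield $e_2(X)$ via the $+1$ refinement in Proposition \ref{prop:discrimi-ses-decrease}; and if every $F_s$ has Chern characters proportional to powers of $H$, rank $\pm1$ of $E$ gives $\DelH(E)\ge 2d$. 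Each route is insensitive to the ranks of the factors, which is exactly the control your induction lacks; repairing your argument amounts to replacing the induction by this iteration-to-limit-stability mechanism.
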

\begin{proof}
Let  $E  \in \bB_{\beta_0}$ be a $\nu_{\beta_0, \alpha_0}$ tilt stable object with 
a finite  tilt slope. 
Since tilt stability is preserved under small deformation of the numerical parameters, we can choose 
$$
\beta_0 \in \QQ.
$$
Now consider the stability of $E$ along the line $\beta = \beta_0$ from $\alpha = \alpha_0$ in the $\alpha$ increasing direction on $R_{\beta, \alpha}^2$ plane. 
There might be a point $P = (\beta_0, \alpha_1)$ where $E$ becomes strictly semistable. Let $E_{i}$ be the Jordan-H\"older tilt stable factors of $E$ at $P$. Since $E$ has a finite slope, all $E_i$'s have  finite tilt slopes.

Now consider the tilt stability of each $E_i$ in the $\alpha$ increasing direction from $P$ along $\beta = \beta_0$. So each $E_i$ has Jordan-H\"older tilt stable factors $E_{i,j}$ at a point $P_{i}$. 
In this way we can find a sequence of tilt stable objects 
$\left\{ E_{i,j,k, \ldots} \right\}$,
with finite tilt slopes.
So 
$$
H^2 \ch_1^{ \beta_0 H}(E)  = \sum_{i,j,k, \ldots}  H^2 \ch_1^{ \beta_0 H} ( E_{i,j,k, \ldots} ), \ \text{ and } \ 0< H^2 \ch_1^{ \beta_0 H} ( E_{i,j,k, \ldots} )< \infty.
$$
Since $\beta_0 \in \QQ$, this sequence terminates. That is we have a finite  collection of objects 
$$
\{ F_s \} = \{ E_{i,j,k, \ldots} \}
$$ 
in 
$\bB_{\beta_0}$ which are tilt stable for $\alpha \ge R$ for some finite  $R >0$. 
Moreover, by repeatedly applying Proposition \ref{prop:discrimi-ses-decrease},
we have 
$$
\DelH(E) \ge \sum_s \DelH(F_s),
$$
where the equality holds only when all $\DelH(F_s) =0$. 

If all $F_s$ have  $\ch_i(F_i) \in \ZZ \langle H \rangle$, then  by definition $\DelH(E) = H^3 H(\ch_1(E)^2 - 2 \ch_2(E)) = 2 H^3 H \cdot C$ for some $C \in H_2(X, \ZZ)$. So  $\DelH(E) \ge 2d > 3d/(2r)  \ge d^2 \cdot \kappa(X)$ as required.

Otherwise, there exists at least one $F_s$ with $\ch_1(F_s) \not \in \ZZ \langle H \rangle$, and we have one of the following cases:
\begin{itemize}[leftmargin=0.5cm]
\item 
If $\ch_0(F_s)> 0$ then from Proposition \ref{prop:limittiltstableobjects},
$F_s \cong \hH^{0}(F_s)$ is a slope semistable torsion free sheaf and so
\begin{align*}
\Del_H(E) & \ge \Del_H(F_s)  =  H^3  H\cdot \Delta(F_s) +  (H^2 \ch_1(F_s))^2 - H^3 H\cdot (\ch_1(F_s))^2 \\
& \ge (H^2 \ch_1(F_s))^2 - H^3 H\cdot (\ch_1(F_s))^2 \ge e_1(X) \ge d^2 \cdot \kappa(X),
\end{align*}
as required. 

\item  If $\ch_0(F_s) < 0$ then from Proposition \ref{prop:discrimi-ses-decrease},
$\hH^{-1}(F_s)$ is a slope semistable reflexive sheaf and $\hH^{0}(F_s) \in \Coh_{\le 1}(X)$; so
\begin{align*}
\Del_H(E) & \ge \Del_H(F_s)  =   \Del_H(\hH^{-1}(F_s))   + 2H^3 \ch_0(\hH^{-1}(F_s)) H \ch_2(\hH^{0}(F_s))    \ge   \Del_H(\hH^{-1}(F_s))  \\
& = H^3  H\cdot \Delta(\hH^{-1}(F_s)) + (H^2 \ch_1(\hH^{-1}(F_s)))^2 - H^3 H\cdot (\ch_1(\hH^{-1}(F_s)))^2\\
& \ge (H^2 \ch_1(\hH^{-1}(F_s)))^2 - H^3 H\cdot (\ch_1(\hH^{-1}(F_s)))^2 \ge e_1(X) \ge d^2 \cdot \kappa(X),
\end{align*}
as required. 

\item  If $\ch_0(F_s)= 0$, then from Proposition \ref{prop:limittiltstableobjects}, $F_s$ is a tilt stable sheaf in $ \Coh_{\le 2}(X) \setminus \Coh_{\le 1}(X)$. So from Proposition \ref{prop:discrimi-ses-decrease},
\begin{align*}
\Del_H(E) & \ge \Del_H(F_s) +1  =    (H^2 \ch_1(F_s))^2 + 1 \ge e_2(X)  \ge d^2 \cdot \kappa(X),
\end{align*}
as required. 

\end{itemize}
This completes the proof.
\end{proof}

\subsection{Proof of the strong form of Bogomolov-Gieseker inequality}

Assume  there is a counter example to Theorem \ref{thm:strong-BG-ineq}. From Proposition \ref{prop:strong-BG-rank-1-case}, it has $|\ch_0| \ge 2$. Also from Proposition \ref{prop:stability-B-dual}, we can assume $\ch_0 \ge 2$. 
Let $E \in \bB_{ \beta_0}$ be such a $\nu_{\beta_0, \alpha_0}$ tilt stable object with minimum $\Del_{H}\ge 0$. 
From Lemma \ref{prop:Delta0tiltstableFano3}, $\DelH(E) > 0$. 
Also from Proposition \ref{prop:stability-B-dual}, we can assume 
$$
E \cong E^{11} = H^1_{B_{\beta_0}}  \left(H^1_{B_{-\beta_0}}(E^\vee)\right)^\vee.
$$
So we have
$$
0 < \frac{\overline{\Delta}_{H}(E)}{(H^3\ch_0(E))^2} < \kappa(X).
$$
Recall, Notation \ref{def:Psi}:
$$
\Psi_{ \beta_0, \alpha_0, \lambda}^{+}(E) := \Psi_{H,\beta_0 H, \alpha_0, \lambda}^{+}(E) = \nu_{\beta_0, \alpha_0}(E) + \sqrt{\alpha_0^2 + (\nu_{\beta_0, \alpha_0}(E))^2+ \lambda}.
$$

 
 \begin{prop}
We have $E \cong \hH^0(E)$ and it is $\nu_{\beta_0, \alpha}$ tilt stable for all $\alpha \ge \alpha_0$. In particular, from Proposition \ref{prop:limittiltstableobjects},  $E$ is a $\mu_H$-slope semistable reflexive sheaf.
\end{prop}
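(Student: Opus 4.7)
The plan is to argue by contradiction, exploiting the minimality of $\Del_H(E)$ among counterexamples. Suppose $E$ fails to be $\nu_{\beta_0,\alpha}$-tilt stable at some $\alpha > \alpha_0$. By openness of tilt stability and local finiteness of walls on the ray $\{\beta=\beta_0\}$, I would choose the smallest such $\alpha_1 > \alpha_0$, at which $E$ becomes strictly tilt semistable. A Jordan--Hölder filtration of $E$ in $\bB_{\beta_0}$ at $(\beta_0,\alpha_1)$ yields tilt stable factors $E_1, \dots, E_n$ with $n \ge 2$ and common finite tilt slope $\nu_{\beta_0,\alpha_1}(E_i) = \nu_{\beta_0,\alpha_1}(E)$. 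Iterating Proposition~\ref{prop:discrimi-ses-decrease} gives
$$
\Del_H(E) \;\ge\; \sum_{i=1}^n \Del_H(E_i), \qquad 0 \le \Del_H(E_i) < \Del_H(E).
$$

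I would then classify each factor. If $\Del_H(E_i) = 0$, Lemma~\ref{prop:Delta0tiltstableFano3} forces $E_i \cong \oO_X(m_i H)[1]$ or $\iI_{Z_i}(m_i H)$ with $\mu_H(E_i) = m_i \in \ZZ$ and $|\ch_0(E_i)| = 1$. If $\Del_H(E_i) > 0$, then by minimality of $\Del_H(E)$ the factor $E_i$ must satisfy the conclusion of Theorem~\ref{thm:strong-BG-ineq}, i.e.\ $\Del_H(E_i)/(H^3\ch_0(E_i))^2 \ge \kappa(X)$, once I verify that hypotheses \eqref{eqn:strong-BG-ineq-condition}--\eqref{eq:stron-BG-condition3} transfer from $E$ to $E_i$; this follows by applying Proposition~\ref{prop:muslopeboundfortiltstability} at $(\beta_0,\alpha_1)$ to bound $\mu_H(E_i)$ and $\nu_{\beta_0,\alpha_1}(E_i)$. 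The line-bundle-type factors would then be excluded by observing that Proposition~\ref{prop:muslopeboundfortiltstability} pins $\mu_H(E_i) = m_i$ inside the interval of \eqref{eqn:strong-BG-ineq-condition} (when $\ch_0(E_i) > 0$) or \eqref{eqn:strong-BG-ineq-condition-2} (when $\ch_0(E_i) < 0$), which by hypothesis contains no integer. With all factors now of positive-discriminant type, a Cauchy--Schwarz-style assembly, using $\ch_0(E) = \sum \ch_0(E_i)$ together with a uniform-sign argument on the $\ch_0(E_i)$'s enforced by Proposition~\ref{prop:muslopeboundfortiltstability}, produces $\Del_H(E) \ge \kappa(X)(H^3\ch_0(E))^2$, contradicting the standing assumption $\Del_H(E)/(H^3\ch_0(E))^2 < \kappa(X)$.

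Once tilt stability of $E$ is established for every $\alpha \ge \alpha_0$, Proposition~\ref{prop:limittiltstableobjects}(i) gives $E \cong \hH^0(E)$ as a $\mu_H$-slope semistable torsion-free sheaf, and reflexivity is then read off the identification $E \cong E^{11}$, which on a torsion-free sheaf collapses the double-dualization procedure to $E \cong E^{**}$. \textbf{The main obstacle} lies in the case analysis ruling out the $\Del_H = 0$ Jordan--Hölder factors: simultaneously locating every possible $m_i$ inside the forbidden integer-free window of \eqref{eqn:strong-BG-ineq-condition}/\eqref{eqn:strong-BG-ineq-condition-2} demands a coordinated use of the slope bounds from Proposition~\ref{prop:muslopeboundfortiltstability} for all factors at once, with the auxiliary hypothesis \eqref{eq:stron-BG-condition3} being essential to keep the relevant $\mu_H$ window inside the tight regime controlled by the theorem's hypotheses, and preventing edge cases (e.g.\ factors with $\mu_H(E_i)$ exceeding the index $r$) from escaping the estimate.
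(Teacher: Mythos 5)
Your overall frame --- contradiction, wall--crossing along the ray $\beta=\beta_0$ in the $\alpha$-increasing direction, Jordan--H\"older factors at the first wall, appeal to minimality of $\Del_H(E)$, and the final step (stability for all $\alpha\ge\alpha_0$ plus Proposition \ref{prop:limittiltstableobjects}, with reflexivity read off from $E\cong E^{11}$) --- matches the paper. The gap is in the middle: you try to push \emph{every} Jordan--H\"older factor $E_i$ through Theorem \ref{thm:strong-BG-ineq} and then reassemble, and this cannot be carried out. Minimality of $\Del_H(E)$ only says that no factor with smaller discriminant is a \emph{counterexample}; it says nothing about a factor that fails the \emph{hypotheses} of the theorem, and the hypotheses do not transfer to general factors. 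A factor can have $\ch_0(E_i)=0$ (the theorem does not apply at all), or $\ch_0(E_i)<0$, or $\mu_H(E_i)$ far above $\mu_H(E)$: Proposition \ref{prop:muslopeboundtiltstability}\ref{prop:muslopeboundfortiltstability} gives only a one-sided bound $\mu_{H,\beta_0}(E_i)\ge \nu_{\beta_0,\alpha_1}(E_i)+\sqrt{\nu_{\beta_0,\alpha_1}(E_i)^2+\alpha_1^2}$ for positive-rank factors, so the integer-free window of \eqref{eqn:strong-BG-ineq-condition} for $E_i$, centred at $\mu_H(E_i)$, need not sit inside the window assumed for $E$ and may contain integers. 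Your exclusion of the $\Del_H=0$ factors also fails on the negative-rank side: since $\ch_0(E)\ge 2>0$, only \eqref{eqn:strong-BG-ineq-condition} is a standing hypothesis --- \eqref{eqn:strong-BG-ineq-condition-2} is a hypothesis on objects of negative rank and is not available here --- and a factor $\oO_X(mH)[1]$ has $\mu_H=m=\beta_0+\nu_{\beta_0,\alpha_1}(E)-\sqrt{\nu_{\beta_0,\alpha_1}(E)^2+\alpha_1^2}$, which lies to the \emph{left} of that window, so nothing forbids it from being an integer (it always is one). Such factors also break the assembly: in the coordinates $A_i=H^2\ch_1$, $B_i\propto\ch_0$ at the wall they satisfy $A_i=|B_i|$, outside the cone $A\ge\sqrt{1+\kappa'}\,|B|$ on which any convexity argument would have to run; and note that the naive summation $\sum_i\Del_H(E_i)\ge\kappa(X)\sum_i(H^3\ch_0(E_i))^2$ does not imply $\Del_H(E)\ge\kappa(X)(H^3\ch_0(E))^2$, since $\sum_i x_i^2$ can be much smaller than $(\sum_i x_i)^2$. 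Finally, Proposition \ref{prop:muslopeboundfortiltstability} does not enforce any uniform sign on the $\ch_0(E_i)$; mixed ranks among factors are perfectly possible.

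The paper avoids all of this by extracting a \emph{single} good factor instead of controlling them all. Writing $H^3\ch_0(E_i)+\sqrt{-1}\,H^2\ch_1^{\beta_0 H}(E_i)\in\RR_{>0}e^{\sqrt{-1}\theta_i}$ with $\theta_i\in(0,\pi)$, and $\theta\in(0,\pi/2)$ for $E$, additivity of these vectors forces the existence of a factor $E_k$ with $0<\theta_k\le\theta$, i.e. $\ch_0(E_k)>0$ and $\mu_{H,\beta_0}(E_k)\le\mu_{H,\beta_0}(E)$. For \emph{this} factor, using $\nu_{\beta_0,\alpha_1}(E_k)=\nu_{\beta_0,\alpha_1}(E)$ and the monotonicity in Proposition \ref{prop:PsiDeltarelation}, the relevant quantities are trapped inside $E$'s integer-free window: this rules out $\Del_H(E_k)=0$ (else $\mu_H(E_k)$ would be an integer in the window, by Lemma \ref{prop:Delta0tiltstableFano3}), yields $\Del_H(E_k)/(H^3\ch_0(E_k))^2<\kappa(X)$, and Proposition \ref{prop:discrimi-ses-decrease} gives $\Del_H(E_k)<\Del_H(E)$. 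So $E_k$ is a strictly smaller counterexample, contradicting minimality directly --- no reassembly of an inequality for $E$ is ever needed. To repair your write-up, replace the all-factors analysis by this selection argument.
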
 
 \begin{proof}
Consider the stability of $E$ along the line $\beta = \beta_0$ from $\alpha = \alpha_0$ in the $\alpha$ increasing direction on $R_{\beta, \alpha}^2$ plane. 
Then there might be a point $P_1 = (\beta_0, \alpha_1)$ where $E$ becomes strictly semistable. 

Let 
$$
E_{i}, \ i =1, \cdots, N,
$$
  be the Jordan-H\"older tilt stable factors of $E$ at $P$. Since $E$ has a finite tilt 
  slope, all $E_i$'s have  finite tilt slopes.
Therefore, $H^2 \ch_1^{\beta_0 H}(E_i) >0$ for all $i$.

There exist $\theta$  for $E$, and $\theta_i$ for each $E_i$  such that 
$$
H^3 \ch_0(E_i) + \sqrt{-1} H^2 \ch_1^{\beta_0 H}(E_i) \in \RR_{>0} e^{\sqrt{-1} \theta_i},
$$
and 
$$
H^3 \ch_0(E) + \sqrt{-1} H^2 \ch_1^{\beta_0 H}(E) \in \RR_{>0} e^{\sqrt{-1} \theta},
$$
satisfying $\theta \in (0, \pi/2)$ and $\theta_i \in (0, \pi)$. 
So we have
\begin{equation}
\label{eqn:sum-of-slope-like-stab-functions}
H^3 \ch_0(E) + \sqrt{-1} H^2 \ch_1^{\beta_0 H}(E) 
 = \sum_i \left( H^3 \ch_0(E_i) + \sqrt{-1} H^2 \ch_1^{\beta_0 H}(E_i)\right).
\end{equation}
There exists an object $E_k$ such that 
$$
0 < \theta_k \le \theta;
$$
because, otherwise  all $\theta_k \in (\theta, \pi)$, and  so from \eqref{eqn:sum-of-slope-like-stab-functions}, $\theta \in  (\theta, \pi)$; but this is not possible. 

That is $\ch_0(E_k) >0$ and 
$0 < \mu_{H, \beta_0}(E_k) \le \mu_{H, \beta_0}(E)$. 
From Proposition \ref{prop:PsiDeltarelation}, we have 
\begin{equation*}
\label{eqn:mu-bound-Psi-Ek}
 \Psi_{\beta_0,\alpha_1,0}^{+}(E_k) \le \mu_{H, \beta_0}(E_k).
\end{equation*}
Here the equality holds when $\Del_H(E_k) =0$, and from Lemma \ref{prop:Delta0tiltstableFano3}
 in this case we have  
$\ch_0(E_k)=1$, $\ch_1(E_k) \in \ZZ\langle H \rangle$, and so  
\begin{equation*}
\mu_H(E_k) = \beta_0 + \Psi_{\beta_0,\alpha_1,0}^{+}(E_k) \in \ZZ.
\end{equation*} 
From Proposition \ref{prop:PsiDeltarelation},
\begin{equation*}
\label{eqn:Psi-change-with-a-increases}
\Psi_{\beta_0,\alpha_0,0}^{+}(E)  
< \Psi_{\beta_0,\alpha_1,0}^{+}(E)
<\mu_{H,\beta_0}(E)
< \Psi_{\beta_0,\alpha_1,\kappa(X)}^{+}(E)  < \Psi_{\beta_0,\alpha_0, \kappa(X)}^{+}(E).
\end{equation*}
Since $\nu_{\beta_0, \alpha_1}(E_k)  = \nu_{\beta_0, \alpha_1}(E)$,  
we have 
\begin{align*}
\beta_0 +  & \Psi_{\beta_0,\alpha_0,0}^{+}(E)  
 < \beta_0 +   \Psi_{\beta_0,\alpha_1,0}^{+}(E)
= \beta_0+ \Psi_{\beta_0,\alpha_1,0}^{+}(E_k) 
\le \mu_{H}(E_k) \\
& \qquad \le \mu_{H}(E)  
<  \beta_0 + \Psi_{\beta_0,\alpha_1,\kappa(X)}^{+}(E) 
 = \beta_0 + \Psi_{\beta_0,\alpha_1,\kappa(X)}^{+}(E_k)
 < \beta_0 + \Psi_{\beta_0,\alpha_0,\kappa(X)}^{+}(E).
\end{align*}
From assumption \eqref{eqn:strong-BG-ineq-condition}, 
 there are no integers in the interval
  $$
  (\beta_0 + \Psi_{\beta_0,\alpha_0,0}^{+}(E), \ \mu_H(E)].
  $$
  So we have $\beta_0 + \Psi_{\beta_0,\alpha_1,0}^{+}(E) \not \in \ZZ$.
That is, $\mu_H(E_k) \not \in \ZZ$, and so
$$
\Del_H(E_k) > 0.
$$

On the other hand, from Proposition \ref{prop:discrimi-ses-decrease},
$$
\DelH(E) \ge \sum_{1 \le i \le N} \DelH (E_i) \ge \DelH(E_k),
$$
where all the equalities hold only when, $ \DelH (E) =0$. 
Since $\DelH(E) , \DelH(E_k) >0$ we have 
$$
\DelH(E) > \DelH(E_k) >0.
$$

Since 
$\beta_0 + \Psi_{\beta_0,\alpha_1,0}^{+}(E_k) 
<  \mu_{H}(E_k)  
< \beta_0 + \Psi_{\beta_0,\alpha_1,\kappa(X)}^{+}(E_k)$,
from Proposition \ref{prop:PsiDeltarelation}, 
$E_k$ is also a tilt stable object which contradicts Theorem \ref{thm:strong-BG-ineq}. 
Since $E$ is chosen with minimal $\DelH$ this is not possible; so $E$ stays
tilt  stable  for all $(\beta_0, \alpha)$, $\alpha \ge \alpha_0$. 
Since $\ch_0(E)>0$, from Proposition \ref{prop:limittiltstableobjects}, 
$$
E \cong \hH^0(E)
$$ 
is a slope semistable torsion free sheaf. Since $E \cong E^{11}$, we have $E \cong E^{**}$; that is $E$ is a reflexive sheaf.
\end{proof}

\begin{defi} 
\label{def:ImZ(E)=0}
For any object $F$ in $D^b(X)$
$$
Z(F)  = \{ (\beta , \alpha) \in \RR^2: H \ch_2^{\beta H}(F) - (\alpha^2/2) H^3 \ch_0(F) = 0 \}.
$$
So $Z(F[1]) = Z(F)$.
\end{defi}
Let  us define
\begin{align*}
\tbe_0 & = \beta_0 + \nu_{\beta_0, \alpha_0}(E), \\
\tal_0 & =\sqrt{ \alpha_0^2 + \left(\nu_{\beta_0, \alpha_0}(E)\right)^2}.
\end{align*}
Then by direct computation one can verify that $(\tbe_0, \tal_0) \in Z(E)$ (also see \eqref{eqn:nu-to-0-stab-para} in Section \ref{sec:tiltproperties}). 

From Proposition \ref{prop:wall-tiltstable}, $E$ is tilt stable, with zero tilt slope,  with respect to  
$(\beta, \alpha) \in Z(E) $ such that 
$\beta \le \tbe_0$ and $\alpha \ge \tal_0$.

Moreover, by solving $H \ch_2^{\beta H}(E) - (\alpha^2 /2)H^3 \ch_0(E) = 0$, we have 
$$
(-\tbe_0 + 2\mu_H (E), \tal_0) \in Z(E). 
$$
Since $\mu_{H, \beta_0}(E) > \Psi_{\beta_0, \alpha_0, 0}^{+}(E) =  (\tbe_0 - \beta_0) + \tal_0 > \tbe_0 - \beta_0$, we have 
$$
\mu_H(E) < -\tbe_0 + 2 \mu_H (E).
$$
As $E$ is a slope semistable reflexive sheaf $E[1] \in \bB_{-\tbe_0 + 2 \mu_H (E)}$. 
\begin{prop}
\label{prop:stability-E[1]}
The object $E[1] \in \bB_{-\tbe_0 + 2 \mu_H (E)}$ is tilt stable with respect to 
$$
(-\tbe_0 + 2 \mu_H (E), \alpha),  \  \text{ for any } \alpha \ge \tal_0.
$$
\end{prop}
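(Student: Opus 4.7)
The plan is to prove the proposition in three stages. First, I would verify that $E[1] \in \bB_{\tbe_1}$ with $\tbe_1 := -\tbe_0 + 2\mu_H(E)$, and that $\nu_{\tbe_1, \tal_0}(E[1]) = 0$. Membership follows since $E$ is a $\mu_H$-slope semistable reflexive sheaf and $\mu_H(E) < \tbe_1$, so $E \in \fF_{H, \tbe_1 H}$ and hence $E[1] \in \bB_{\tbe_1}$. The vanishing of the tilt slope is immediate from the observation that the quadratic in $\beta$ defining $Z(E)$ is symmetric about the vertical axis $\beta = \mu_H(E)$, so $(\tbe_1, \tal_0)$ lies on $Z(E) = Z(E[1])$ as the reflection of $(\tbe_0, \tal_0)$; the denominator $H^2 \ch_1^{\tbe_1 H}(E[1]) = (\tbe_1 - \mu_H(E)) H^3 \ch_0(E)$ is strictly positive, so the slope is indeed finite and equal to $0$.

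Next I would establish tilt stability at the single point $(\tbe_1, \tal_0)$ using the duality results from Section \ref{sec:tiltproperties}. Since $E \cong E^{11}$ is a reflexive sheaf, Proposition \ref{prop:stability-B-dual} yields tilt stability of $E^1 \in \bB_{-\beta_0}$ at $(-\beta_0, \alpha_0)$, and a direct spectral-sequence analysis identifies $E^1$ with $E^{*}[1]$ up to $\Coh_0$-corrections that do not affect the tilt slope. Since $E^*$ is itself a $\mu_H$-slope semistable reflexive sheaf (with $\mu_H(E^*) = -\mu_H(E)$), the analysis already carried out in this section for $E$ applies to $E^*$. Transporting the resulting tilt stability of $E^*[1]$ along the appropriate branch of $Z(E^*)$ back through Proposition \ref{prop:stability-B-dual}, using the isomorphism $(E^*)^\vee \cong E$ modulo $\Coh_0$, yields tilt stability of $E[1]$ at $(\tbe_1, \tal_0)$.

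Finally, I would propagate stability from $(\tbe_1, \tal_0)$ to all $(\tbe_1, \alpha)$ with $\alpha \ge \tal_0$ by the wall-crossing/minimality argument used earlier in this section: if $E[1]$ were to become strictly $\nu$-semistable at some $(\tbe_1, \alpha_*)$ with $\alpha_* > \tal_0$, its Jordan-H\"older factors $F_i$ would all have finite tilt slope, and Proposition \ref{prop:discrimi-ses-decrease} combined with Lemma \ref{prop:Delta0tiltstableFano3} and the excluded forms $\oO_X(mH)[1]$ and $\iI_Z(mH)$ would produce a factor with $0 < \Del_H(F_i) < \Del_H(E)$, contradicting the minimality of $E$ as a counterexample to Theorem \ref{thm:strong-BG-ineq}. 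The main obstacle will be the duality identification in step two: carefully tracking the $\Coh_0$-discrepancy between $E^1$ and $E^*[1]$ and re-verifying the hypotheses \eqref{eqn:strong-BG-ineq-condition}, \eqref{eqn:strong-BG-ineq-condition-2}, \eqref{eq:stron-BG-condition3} for $E^*$ under the reflection $\beta \mapsto -\beta$, so that the mirrored stability of $E^*[1]$ can legitimately be invoked.
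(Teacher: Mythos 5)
Your stage 1 is fine, and your identification of $E^1$ with $E^*[1]$ up to $\Coh_0$-corrections is correct (that discrepancy is indeed harmless for tilt stability). The genuine gap is in stage 2, and it is not the $\Coh_0$-bookkeeping you flag as the main obstacle; it is that all the stability you can obtain by duality lives on the wrong side of a vertical wall. After dualizing, what you know is that $E^1 \sim E^*[1]$ is tilt stable at $(-\beta_0,\alpha)$ for $\alpha \ge \alpha_0$; since $\ch_0(E^1)<0$, Proposition \ref{prop:muslopeboundfortiltstability}(ii) forces every semicircle of Proposition \ref{prop:wall-tiltstable} through such a point to stay in the region $\beta \ge \mu_H(E^*) = -\mu_H(E)$, i.e.\ on the right branch of $Z(E^*)$. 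The point you must reach, $(\hbe_0,\hal_0)=(\tbe_0 - 2\mu_H(E), \tal_0)$, lies strictly to the left of that vertical wall (because $\tbe_0 < \mu_H(E)$ by Proposition \ref{prop:muslopeboundfortiltstability}(i)), on the other branch. Proposition \ref{prop:stability-B-dual} only reflects $\beta \mapsto -\beta$, so applying it again returns you to the original claim; and ``the analysis already carried out for $E$'' cannot be run for $E^*$, because that analysis propagates stability upward from a point where stability is already known (for $E$ this was the hypothesis on the minimal counterexample), while for the sheaf $E^*$ on the left of its wall there is no point where stability is known a priori --- that is exactly what is being proved. So stage 2 is circular.

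The paper supplies the two ingredients your plan is missing. First, it crosses the vertical wall only in the large-volume limit: it shows $E^* \in \bB_{\hbe_0}$ is tilt stable at $(\hbe_0,\alpha)$ for all sufficiently large $\alpha$, using that $E$ is tilt stable at $(\beta_0,\alpha)$ for all $\alpha \ge \alpha_0$. Second, it walks \emph{downward} in $\alpha$ from that region to $\hal_0 = \tal_0$, ruling out walls by the minimal-counterexample machinery: at a putative wall the phase argument produces a Jordan--H\"older factor $F_k$ with $\ch_0(F_k)>0$; the $\Psi^{\pm}$-sandwich together with Proposition \ref{prop:PsiDeltarelation} and the no-integer hypothesis \eqref{eqn:strong-BG-ineq-condition} shows $\mu_H(F_k) \notin \ZZ$, hence $\Del_H(F_k)>0$, and moreover that $F_k$ violates the bound $\Del_H/(H^3\ch_0)^2 \ge \kappa(X)$ while satisfying the hypotheses of Theorem \ref{thm:strong-BG-ineq}; Proposition \ref{prop:discrimi-ses-decrease} then gives $\Del_H(F_k) < \Del_H(E)$, contradicting minimality. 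Your stage 3 inverts this (upward from the bottom point), so it presupposes the missing stage 2; and even as a wall-crossing step it is underspecified, since a factor with $0<\Del_H(F_i)<\Del_H(E)$ contradicts nothing by itself --- one must show the factor is itself a counterexample to Theorem \ref{thm:strong-BG-ineq}, which requires precisely the sandwich and no-integer argument you omit, not just Proposition \ref{prop:discrimi-ses-decrease} and Lemma \ref{prop:Delta0tiltstableFano3}.
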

\begin{proof}
Since $E$ is a reflexive sheaf, from Proposition \ref{prop:stability-B-dual}, the claim is equivalent to 
$E^* \in \bB_{\tbe_0 - 2 \mu_H (E)}$ is tilt stable with respect to 
$$
(\tbe_0 - 2 \mu_H (E), \alpha), \  \text{ for any } \alpha \ge \tal_0.
$$
Let us denote 
$$
\hbe_0 = \tbe_0 - 2 \mu_H (E), \  \ \hal_0 = \tal_0.
$$

Since 
$E \in \bB_{\beta_0}$ is $\nu_{\beta_0, \alpha}$ tilt stable for all $\alpha \ge \alpha_0$, 
there exists large enough $a>0$ such that $E^* \in \bB_{\hbe_0} $ is 
$\nu_{\hbe_0, \alpha}$ tilt stable 
 for all $\alpha \ge a$. 

Consider the tilt stability of $E^*$ along the line $\beta = \hbe_0$ from $\alpha = a$ in the $\alpha$ decreasing direction on $R_{\beta, \alpha}^2$ plane. 
Assume for a contradiction there is  a point $ (\hbe_0, \hal_1)$ where $E^*$ becomes strictly semistable
for some $\hal_1 > \hal_0$. 
Let 
$$
F_{i}, \ i =1, \cdots, N',
$$
  be the Jordan-H\"older tilt stable factors of $E^*$ with respect to $ (\hbe_0, \hal_1)$. 
  Since $E^*$ has a finite tilt slope, all $F_i$'s have  finite tilt slopes.
Therefore, $H^2 \ch_1^{\hbe_0 H}(F_i) >0$ for all $i$.

There exist $\varphi$ for $E^*$ and $\varphi_i$ for each $F_i$,  such that 
$$
H^3 \ch_0(F_i) + \sqrt{-1} H^2 \ch_1^{\hbe_0 H}(F_i) \in \RR_{>0} e^{\sqrt{-1} \varphi_i},
$$
and 
$$
H^3 \ch_0(E^*) + \sqrt{-1} H^2 \ch_1^{\hbe_0 H}(E^*) \in \RR_{>0} e^{\sqrt{-1} \varphi},
$$
satisfying $\varphi \in (0, \pi/2)$ and $\varphi_i \in (0, \pi)$. 
We have
\begin{equation}
\label{eqn:sum-stab-func-E*}
H^3 \ch_0(E^*) + \sqrt{-1} H^2 \ch_1^{\hbe_0 H}(E^*)
  = \sum_i \left( H^3 \ch_0(F_i) + \sqrt{-1} H^2 \ch_1^{\hbe_0 H}(F_i) \right).
\end{equation}
There exists an object $F_k$ such that 
$$
0 < \varphi_k \le \varphi,
$$
because, otherwise  all $\varphi_k \in (\varphi, \pi)$, and  so from \eqref{eqn:sum-stab-func-E*}, $\varphi \in  (\varphi, \pi)$; but this is not possible. 
That is $\ch_0(F_k) >0$ and 
$0 < \mu_{H, \beta_0}(F_k) \le \mu_{H, \beta_0}(E^*)$.

From Proposition \ref{prop:PsiDeltarelation}, we have 
\begin{equation*}
\Psi_{\hbe_0, \hal_1,0}^{+}(F_k) \le \mu_{H, \hbe_0}(F_k).
\end{equation*}
Here the equality holds when $\Del_H(F_k) =0$, and in this case from Lemma \ref{prop:Delta0tiltstableFano3} we have  
$\ch_0(F_k) =1$, $\ch_1(F_k) \in \ZZ \langle H \rangle$, and so 
\begin{equation*}
\mu_H (F_k) = \hbe_0 +  \Psi_{\hbe_0, \hal_1,0}^{+}(F_k)  \in \ZZ.
\end{equation*} 
From Proposition \ref{prop:PsiDeltarelation},
\begin{align*}
\Psi_{\hbe_0, \hal_0,0}^{+}(E^{*}) 
<  \Psi_{\hbe_0, \hal_1,0}^{+}(E^{*})
 < \mu_{H, \hbe_0}(E^*)
 < \Psi_{\hbe_0, \hal_1,\kappa(X)}^{+}(E^{*})
  < \Psi_{\hbe_0, \hal_0,\kappa(X)}^{+}(E^{*}).
\end{align*}
Here $\hal_0 = \Psi_{\hbe_0, \hal_0,0}^{+}(E^{*}) $ and $\Psi_{\hbe_0, \hal_0,\kappa(X)}^{+}(E^{*})
   = \sqrt{\hal_0^2 + \kappa(X)}$.
   
Since $\nu_{\hbe_0, \hal_1}(F_k)  = \nu_{\hbe_0, \hal_1}(E^*)$ we have 
\begin{align*}
\hbe_0+ \hal_0 
& <  \hbe_0+ \Psi_{\hbe_0, \hal_1,0}^{+}(E^{*})
= \hbe_0+ \Psi_{\hbe_0, \hal_1,0}^{+}(F_k) 
\le \mu_{H}(F_k) \\
&  \le \mu_{H}(E^*)
< \hbe_0 + \Psi_{\hbe_0, \hal_1,\kappa(X)}^{+}(E^{*})
=  \hbe_0 + \Psi_{\hbe_0, \hal_1,\kappa(X)}^{+}(F_k) 
 < \hbe_0 + \sqrt{\hal_0^2 + \kappa(X)}.
\end{align*}
We have 
$ \hbe_0+ \hal_0  = \beta_0 + \nu_{\beta_0,\alpha_0}(E) - 2 \mu_H(E) + \sqrt{(\nu_{\beta_0,\alpha_0}(E))^2 + \alpha_0^2} $, 
and 
$ \mu_H(E^*) = - \mu_H(E)$.
Therefore, from the assumption \eqref{eqn:strong-BG-ineq-condition} in Theorem \ref{thm:strong-BG-ineq},
there are no integers in the interval
$$
(\hbe_0+ \hal_0, \ \mu_H(E^*) ].
$$
So $\mu_H(F_k) \not \in \ZZ$, and 
 hence, $\Del_H(F_k) >0$.

On the other hand, from Proposition \ref{prop:discrimi-ses-decrease},
$$
\DelH(E^*) \ge \sum_{1 \le i \le N'} \DelH (F_i) \ge \DelH(F_k),
$$
where all the equalities hold only when, $ \DelH (E^*) =0$. 
Since $\DelH(E) = \Del(E^*), \DelH(F_k) >0$ we have 
$$
\DelH(E) > \DelH(F_k) >0.
$$

Since 
$\hbe_0 + \Psi_{\hbe_0,\hal_1,0}^{+}(F_k) 
<  \mu_{H}(F_k)  
< \hbe_0 + \Psi_{\hbe_0, \hal_1,\kappa(X)}^{+}(F_k)$,
 from Proposition \ref{prop:PsiDeltarelation}, $F_k$ is also a tilt stable object which contradicts Theorem \ref{thm:strong-BG-ineq}. Since $E$ is chosen with minimal $\DelH$ this is not possible; so $E^*$ stays tilt stable  for all $(\hbe_0, \alpha)$, $\alpha \ge \hal_0 = \tal_0$ as required. 
\end{proof}

\begin{proof}[Proof of Theorem \ref{thm:strong-BG-ineq}]

Let us define
\begin{align*}
&C'  = Z(E) \cap \{ \beta \le \tbe_0 \}, \\
&C'' = Z(E(-rH)[1]) \cap \{\beta \ge 2 \mu_H(E) - \tbe_0 -r \}.
\end{align*}
The objects $E$ and $E(-rH)[1]$ are tilt stable along the paths $C'$ and $C''$ respectively, with zero tilt slopes. 

Assumption   \eqref{eq:stron-BG-condition3} gives
$2 \mu_H(E) - \tbe_0 -r< \tbe_0$.
So $C'$ and $C''$ intersect each other at $Q = (\beta^o , \alpha^o)$. See Figure \ref{fig:C1intesectC2}. 

\begin{center}
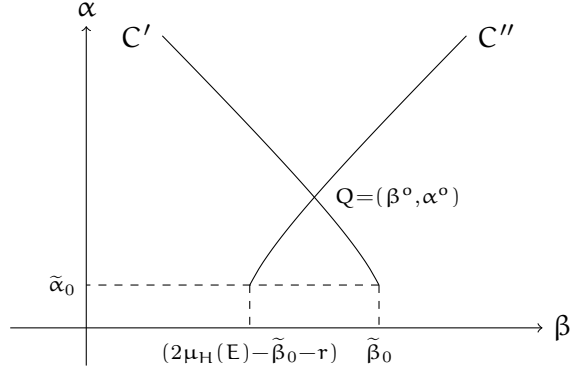

\begin{tikzpicture}
      \draw[->] (-2,0) -- (5,0) node[right] {$\beta$};
      \draw[->] (-1,-0.5) -- (-1,4) node[above] {$\alpha$};
      \draw[scale=1,domain=0:2.85,smooth,variable=\x,black] plot ({\x},{sqrt((\x-5)*(\x-3))});
       \draw (0, 3.87298 ) node[left] {$C'$};
       \draw[scale=1,domain=1.15 :4,smooth,variable=\x,black] plot ({\x},{sqrt((\x-1)*(\x+1))});
        \draw (4, 3.87298 ) node[right] {$C''$};
         \draw  (2,1.732050)   node [right] {\ $\scriptstyle Q = (\beta^o, \alpha^o)$};
     \draw [dashed]  (2.85,0.56789)  -- (-1,0.56789) node [left] {$\scriptstyle \tal_0$};
             \draw [dashed]  (2.85,0.56789)  -- (2.85,0) node [below] {$\scriptstyle \tbe_0$};
              \draw [dashed]  (1.15,0.56789)  -- (1.15,0) node [below] {$\scriptstyle (2 \mu_H(E) -\tbe_0-r)$};
    \end{tikzpicture}
        \captionof{figure}{Intersection of $C'$ and $C''$}
        \label{fig:C1intesectC2}
\end{center}

There exists a point 
$Q_{\varepsilon}=  (\beta^o , \alpha^o- \varepsilon)$ close to $Q$ such that $E$ and $E(-rH)[1]$ are both tilt stable at $Q_{\varepsilon}$ with 
$$
\nu_{Q_{\varepsilon}}(E(-rH)[1]) < 0 < \nu_{Q_{\varepsilon}}(E).
$$
So we have 
\begin{equation*}
\Hom_{\SX}(E, E(-rH)[1]) = 0.
\end{equation*}
From the Serre duality, $\Hom_{\SX}(E, E[2]) = 0$.
Since $E$ is tilt stable, $\Hom_{\SX}(E, E) \cong \CC$, Therefore,
\begin{align*}
\chi(E, E) & = \sum_{i \in \ZZ} \hom_{\SX}(E, E[i]) \le \hom_{\SX}(E, E) +  \hom_{\SX}(E, E[2]) = \hom_{\SX}(E, E)  = 1.
\end{align*}
On the other hand from the Riemann-Roch formula \eqref{eqn:RiemannRoch}
\begin{align*}
\chi(E, E) = \int_X \ch(E) \ch(E^{\vee}) \td_X = -(rH/2)(\ch_1(E)^2 - 2\ch_0(E) \ch_2(E)) + \ch_0(E)^2.
\end{align*}
Therefore,
\begin{align*}
H \cdot \Delta(E) \ge\frac{2}{r}(\ch_0(E)^2 -1).
\end{align*}
Since $\Del_H (E) \ge H^3 H \cdot \Delta(E)$ and $\ch_0(E) \ge 2$, we have 
$$
\frac{\Del_H (E) }{(H^3\ch_0(E))^2} \ge \frac{2(\ch_0(E))^2 -1)}{r H^3 \ch_0(E)^2} = \frac{2}{rd} \left(1 - \frac{1}{\ch_0(E)^2}\right) \ge \frac{2}{rd} \left(1 - \frac{1}{2^2}\right) = \frac{3}{2rd}.
$$
But this is not possible as we have chosen $\frac{\Del_H (E) }{(H^3\ch_0(E))^2} < \kappa(X) \le \frac{3}{2rd}$. 
This is the required contradiction to complete the proof of Theorem \ref{thm:strong-BG-ineq}.
\end{proof}

\section{Bogomolov-Gieseker  Type Inequality for Fano 3-folds}
\label{sec:Fano3-A=0}
Let $X$ be a Fano 3-fold of index $r$. So 
$$
-K_X = rH
$$
 for some ample divisor class $H$. 
Let the degree of $X$ be 
$$
d = H^3.
$$
 We carry the same notation in Section \ref{sec:BGconj} for our Fano 3-fold $X$.
We only consider our modified Bogomolov-Gieseker type conjecture on $X$ when  $B$ is proportional to $H$.

From Proposition \ref{prop:Fano3-prop}, the Todd class of $X$ is
$$
\td(X) = \left(1, \frac{rH}{2}, \frac{r^2 H^2}{12} + \frac{c_2(X)}{12},1\right).
$$
\begin{prop}
\label{prop:Euler-char-Fano3}
Let $E \in D^b(X)$. Then for any $\beta \in \mathbb{R}$, we have 
$$
\chi(E(-H)) = \ch_3^{\beta H}(E) + {f_2(\beta) H} \ch_2^{\beta H}(E) + \Lambda \ch_1^{\beta H}(E) + f_1(\beta) H^2 \ch_1^{\beta H}(E) + f_0(\beta) H^3 \ch_0(E),
$$
where 
\begin{align*}
\Lambda & = \frac{c_2(X)}{12} - \frac{2}{rd}H^2, \\
f_2(\beta) & = \beta + \left( \frac{r}{2} -1\right),\\
f_1(\beta) & =\frac{\beta^2}{2}  + \left( \frac{r}{2} -1 \right) \beta  + \left( \frac{1}{2} - \frac{r}{2} + \frac{r^2}{12} + \frac{2}{rd} \right),\\
f_0(\beta) & =\frac{\beta^3}{6} + \left( \frac{r}{2} -1 \right)\frac{\beta^2}{2}  + \left( \frac{1}{2} - \frac{r}{2} + \frac{r^2}{12} + \frac{2}{rd} \right)\beta + \left(-\frac{1}{6} + \frac{r}{4} - \frac{r^2}{12} - \frac{2}{rd} + \frac{1}{d} \right).
\end{align*} 
\end{prop}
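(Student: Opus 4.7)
The plan is a direct computation via Hirzebruch--Riemann--Roch combined with the change-of-variables formula for twisted Chern characters. First I would apply \eqref{eqn:RiemannRoch} to obtain
\[
\chi(E(-H)) \;=\; \int_X \ch(E) \cdot e^{-H} \cdot \td(X) \;=\; \int_X \ch^{H}(E) \cdot \td(X),
\]
and then substitute the explicit Todd class from Proposition \ref{prop:Fano3-prop}, namely $\td(X) = \bigl(1,\; \tfrac{r}{2}H,\; \tfrac{r^2}{12}H^2 + \tfrac{1}{12}c_2(X),\; 1\bigr)$, to get
\[
\chi(E(-H)) = \ch_3^{H}(E) + \tfrac{r}{2}H\ch_2^{H}(E) + \bigl(\tfrac{r^2}{12}H^2 + \tfrac{1}{12}c_2(X)\bigr)\ch_1^{H}(E) + \ch_0(E).
\]

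Next I would rewrite each $\ch_i^{H}$ in terms of $\ch_i^{\beta H}$. Since $\ch^{H}(E) = e^{(\beta - 1)H}\ch^{\beta H}(E)$, setting $\gamma := \beta - 1$ one obtains
\[
\ch_k^{H}(E) \;=\; \sum_{j=0}^{k} \frac{\gamma^{j}}{j!}\,H^{j}\ch_{k-j}^{\beta H}(E).
\]
Plugging these into the previous display produces a polynomial in $\gamma$ (equivalently $\beta$) whose coefficients multiply $\ch_3^{\beta H}$, $H\ch_2^{\beta H}$, $H^2\ch_1^{\beta H}$, $c_2(X)\ch_1^{\beta H}$ and $\ch_0$.

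The final step is to isolate the $c_2(X)$-contributions. The term $\tfrac{1}{12}c_2(X)\ch_1^{H}(E)$ splits as
\[
\tfrac{1}{12}c_2(X)\ch_1^{\beta H}(E) \;+\; \gamma\cdot\tfrac{H\cdot c_2(X)}{12}\,\ch_0(E),
\]
and by Proposition \ref{prop:Fano3-prop} we have $H\cdot c_2(X) = 24/r$, so the second summand equals $\tfrac{2\gamma}{rd} H^3 \ch_0(E)$. For the first summand I would use that $\Lambda := \tfrac{c_2(X)}{12} - \tfrac{2}{rd}H^2$ satisfies $\Lambda \cdot H = \tfrac{H\cdot c_2(X)}{12} - \tfrac{2}{rd}d = 0$, so that
\[
\tfrac{1}{12}c_2(X)\,\ch_1^{\beta H}(E) \;=\; \Lambda\,\ch_1^{\beta H}(E) \;+\; \tfrac{2}{rd}H^2\,\ch_1^{\beta H}(E).
\]
This absorbs the $c_2$-contribution into the claimed $\Lambda\ch_1^{\beta H}(E)$ term plus corrections to the coefficient of $H^2\ch_1^{\beta H}$ and (via the $\gamma$-piece) to the coefficient of $H^3\ch_0$.

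The remainder is bookkeeping: expand $\gamma = \beta - 1$ in the three resulting polynomials in $\gamma$ of degrees $1$, $2$, and $3$, and match coefficients to verify the stated formulas for $f_2(\beta)$, $f_1(\beta)$, and $f_0(\beta)$. There is no genuine obstacle; the only point requiring care is ensuring the $c_2(X)$-term is correctly redistributed via the identity $\Lambda \cdot H = 0$ so that the final expression has exactly one $\Lambda$-term and the remaining coefficients are scalars.
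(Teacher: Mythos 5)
Your proposal is correct and follows essentially the same route as the paper: Hirzebruch--Riemann--Roch with the Todd class from Proposition \ref{prop:Fano3-prop}, the change of variables $\ch(E(-H)) = e^{(\beta-1)H}\ch^{\beta H}(E)$, and the redistribution of the $c_2(X)$-term via $H \cdot c_2(X) = 24/r$ and the identity $\Lambda \cdot H = 0$. The coefficient bookkeeping you defer does indeed reproduce $f_2$, $f_1$, $f_0$ exactly as stated, so nothing is missing.
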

\begin{proof}
From the Hirzebruch-Riemann-Roch theorem \eqref{eqn:RiemannRoch}, we have 
$$
\chi(E(-H)) = \ch_3(E(-H)) +\frac{r H}{2} \ch_2(E(-H)) + \left(\frac{r^2 H^2}{12} + \frac{c_2(X)}{12}\right) \ch_1(E(-H)) + \ch_0(E(-H)).
$$
From Proposition \ref{prop:Fano3-prop},
 we have $(c_2(X) .H)/(12H^3) = 2/(rd)$.
Since $\ch(E(-H)) = \ch(E) \cdot e^{-H}$,  and 
  $\ch(E) = \ch^{\beta H}(E) \cdot e^{\beta H}$ one can get the required expression 
in terms of $ \ch^{\beta H}_i$'s.  
\end{proof}

\begin{note}
\label{prop:f1-f0-inequlaties}
\rm
Let us use the  Iskovskikh-Mori-Mukai classification 
of smooth Fano 3-folds, to find certain inequalities involving $f_0(\beta)$ and $f_1(\beta)$ in Proposition \ref{prop:Euler-char-Fano3}, when $\beta \in [0,1)$ according to the index $r$ of the Fano 3-fold  $X$. 
\begin{enumerate}
\item When  $r=4$, $X$ is $\PP^3$; so $d=H^3=1$.  Hence, $f_0(\beta) = (\beta^3/6) + (\beta^2/2) + (\beta/3) \ge 0$, and $f_1(\beta) = (\beta^2/2) + \beta + (1/3)>0$, for $\beta \in [0,1)$.

\item When  $r=3$, $X$ is a quadric; so $d=H^3=1$.  Hence, $f_0(\beta) = (\beta^3/6) + (\beta^2/4) + (5\beta/12) + (1/6) > 0$, and $f_1(\beta) = (\beta^2/2) + (\beta/2) + (5/12)>0$, for $\beta \in [0,1)$.

\item  When  $r=2$, $1 \le d \le 7$. 
By simplifying 
$f_0(\beta) = (\beta^3 + ((6-d)\beta /(6d))$, and $f_1= (\beta^2/2) + ((6-d)/(6d))$. 
Hence, when $1 \le d \le 6$, we have $f_0(\beta),  f_1(\beta) \ge 0$ for $\beta \in [0,1)$.

\item When  $r=1$, $1 \le d \le 62$. 
By simplifying 
\begin{align*}
f_0(\beta) & = \frac{1}{6}\left( \beta - \frac{1}{2} \right)^3 + \left(\frac{48-d}{24d}\right)\left( \beta -\frac{1}{2}\right), \\
f_1(\beta) & = \frac{1}{2}\left( \beta - \frac{1}{2} \right)^2 + \left(\frac{48-d}{24d}\right).
\end{align*}
\end{enumerate}
\end{note}

We have the following:
\begin{prop}
\label{prop:def-xi-A=0}
There exists a minimum $\xi \ge 0$ satisfying
\begin{enumerate}
\item $f_1(\beta) + \xi \ge 0$, 
\item $ (1-\beta)\left(f_1(\beta) + \xi \right) + 2 f_0(\beta) \ge 0 $, and 
\item $ \sqrt{\kappa(X)} \left(f_1(\beta) + \xi \right) + f_0(\beta) \ge 0$,
\end{enumerate}
for all $\beta \in [0,1]$.
Here $\kappa(X)$ is the constant as in Definition \ref{def:kappa(X)}.
\end{prop}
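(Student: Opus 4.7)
The plan is to prove existence of a minimum $\xi\ge 0$ by separately analyzing each of the three inequalities, showing that each defines a closed half-line $[\xi_i,+\infty)$ of admissible $\xi$, and then setting $\xi:=\max(\xi_1,\xi_2,\xi_3)$. The key analytic facts are the continuity of $f_0$ and $f_1$ on $[0,1]$ together with a short direct computation of $f_0(1)$ from the explicit formula in Proposition~\ref{prop:Euler-char-Fano3}.

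For inequalities (i) and (iii), I would simply rewrite each as $\xi \ge g_i(\beta)$ for a continuous function on the compact interval $[0,1]$: $g_1(\beta)=-f_1(\beta)$ and $g_3(\beta)=-f_1(\beta)-f_0(\beta)/\sqrt{\kappa(X)}$. Each $g_i$ attains its supremum $M_i$ on $[0,1]$ by compactness, and the minimum admissible $\xi$ for each condition is $\xi_i:=\max(0,M_i)$.

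Inequality (ii) is the delicate one, because the factor $(1-\beta)$ degenerates at $\beta=1$. On $[0,1)$ it is equivalent to $\xi \ge h(\beta):=-f_1(\beta)-2f_0(\beta)/(1-\beta)$, whereas at $\beta=1$ it reduces to the $\xi$-independent condition $2f_0(1)\ge 0$. Here the plan is to evaluate $f_0(1)$ directly from the expression in Proposition~\ref{prop:Euler-char-Fano3}: summing the constant coefficients,
\begin{equation*}
f_0(1)=\tfrac{1}{6}+\tfrac{r/2-1}{2}+\Bigl(\tfrac{1}{2}-\tfrac{r}{2}+\tfrac{r^2}{12}+\tfrac{2}{rd}\Bigr)+\Bigl(-\tfrac{1}{6}+\tfrac{r}{4}-\tfrac{r^2}{12}-\tfrac{2}{rd}+\tfrac{1}{d}\Bigr)=\tfrac{1}{d}>0,
\end{equation*}
so all the $r$- and $r/(rd)$-dependent terms telescope. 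Consequently $f_0$ is strictly positive in a left neighbourhood of $1$, and $h(\beta)\to -\infty$ as $\beta\to 1^-$. Together with continuity of $h$ on $[0,1)$ this forces $h$ to be bounded above on $[0,1)$ with its supremum $M_2$ attained on some compact sub-interval $[0,1-\delta]$. Setting $\xi_2:=\max(0,M_2)$ yields the minimum admissible $\xi$ for (ii).

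Setting $\xi:=\max(\xi_1,\xi_2,\xi_3)\ge 0$ then gives the claimed minimum, since for $\xi$ in this value each of the three inequalities holds pointwise in $\beta$ by construction, and strictly smaller $\xi$ would violate at least one of them at the point realising the corresponding $M_i$. The only mild obstacle in the argument is controlling (ii) at the endpoint $\beta=1$; the short computation $f_0(1)=1/d>0$ (valid uniformly across the Iskovskikh--Mori--Mukai list, since $r$ and $d$ disappear) removes this obstacle without any case-by-case work.
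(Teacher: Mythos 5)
Your argument is correct and follows essentially the same route as the paper, whose entire proof is a two-line appeal to Note \ref{prop:f1-f0-inequlaties}: ``for large enough $\xi$ the three inequalities hold, therefore a minimum exists.'' In fact your write-up is more careful at the one genuinely delicate point, the endpoint $\beta=1$ of condition (ii), where the coefficient $(1-\beta)$ of $\xi$ vanishes and no choice of $\xi$ can help; the telescoping computation $f_0(1)=1/d>0$, uniform in $r$ and $d$, is exactly what is needed there and is not made explicit in the paper (Note \ref{prop:f1-f0-inequlaties} only records the behaviour of $f_0,f_1$ on $[0,1)$). The remaining steps --- rewriting (i) and (iii) as $\xi\ge g_i(\beta)$ with $g_i$ continuous on the compact interval (legitimate since $\kappa(X)>0$), observing that $h(\beta)\to-\infty$ as $\beta\to 1^-$ so the supremum for (ii) is attained on a compact sub-interval, and taking $\xi=\max(\xi_1,\xi_2,\xi_3)$ with minimality checked at the points realising the suprema --- constitute the standard compactness argument the paper intends, so there is nothing to object to.
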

\begin{proof}
From Note \ref{prop:f1-f0-inequlaties}, for large enough $\xi$,  we have 
$f_1(\beta) + \xi \ge 0$, $ (1-\beta)\left(f_1(\beta) + \xi \right) + 2 f_0(\beta) \ge 0 $, and 
$ \sqrt{\kappa(X)} \left(f_1(\beta) + \xi \right) + f_0(\beta) \ge 0$, for all $\beta \in [0,1]$.
Therefore, there is a minimum $\xi$ satisfying those inequalities.
\end{proof}

We prove that Conjecture \ref{conj:limitBG} holds for $X$ with respect to the zero function $A=0$ and for some $0 \le \xi(A) \le \xi$. 

\begin{thm}
\label{thm:fano3-A=0}
Let $A$ be the zero function:
$A : \mathbb{R} \langle H \rangle \to \mathbb{R}_{\ge 0}$, $\beta H \mapsto 0$.
Let $E \in D^b(X)$ be a $\obe_{A}$-stable object.
 Then we have $D^{0,\xi}_{0,  \obe} (E) \le 0$ for $\obe \in \obe_{A}(E)$. 
\end{thm}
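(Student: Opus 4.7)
The plan is to combine the Hirzebruch--Riemann--Roch formula (Proposition \ref{prop:Euler-char-Fano3}), the $\Hom$ vanishings (Proposition \ref{prop:betabar-hom-vanishing}), and the strong Bogomolov--Gieseker inequality (Theorem \ref{thm:strong-BG-ineq}), assembled around the defining inequalities for $\xi$ in Proposition \ref{prop:def-xi-A=0}. First I reduce: since $\obe_A$-stability is preserved under tensoring by line bundles (and $\obe_A(E\otimes \oO_X(mH))=\obe_A(E)+m$), I may shift so that $\obe \in [0,1)$; by Corollary \ref{prop:reduce-BGineq-class} I may further assume $\ch_0(E)\ge 0$. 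If $E$ is isomorphic to $\iI_Z(mH)$ or $\oO_X(mH)[1]$, a direct computation using $\Lambda \cdot H = 0$ yields $D^{0,\xi}_{0,\obe}(E) = -\length(Z) \le 0$ or $0$, respectively, so I may assume $E$ is none of these; by Lemma \ref{prop:Delta0tiltstableFano3} this forces $\Del_H(E) > 0$, and in particular $H^2 \ch_1^{\obe H}(E) > 0$.

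Next I extract the central inequality from $\Hom$ vanishings. For interior $\obe \in (0,1)$, Proposition \ref{prop:betabar-hom-vanishing}(iii) applied with $k = 1 > \obe$ gives $\Hom_{\SX}(\oO_X(H), E[j]) = 0$ for $j \le 0$, while part (i) applied with $k = 1-r \le 0 < \obe$, combined with Serre duality
\begin{equation*}
\hom_{\SX}(\oO_X(H), E[i]) \;\cong\; \hom_{\SX}(E, \oO_X((1-r)H)[3-i]),
\end{equation*}
gives the vanishing also for $i \ge 2$. Hence only $i=1$ can contribute, and
\begin{equation*}
\chi(E(-H)) \;=\; \chi(\oO_X(H), E) \;=\; -\hom_{\SX}(\oO_X(H), E[1]) \;\le\; 0.
\end{equation*}
Substituting $H \ch_2^{\obe H}(E) = 0$ into Proposition \ref{prop:Euler-char-Fano3} rewrites this as
\begin{equation*}
D^{0,\xi}_{0,\obe}(E) \;+\; \bigl(\xi + f_1(\obe)\bigr) H^2 \ch_1^{\obe H}(E) \;+\; f_0(\obe) H^3 \ch_0(E) \;\le\; 0,
\end{equation*}
so it suffices to show that the last two terms are non-negative.

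This is where the three inequalities of Proposition \ref{prop:def-xi-A=0} enter. If $\ch_0(E) = 0$, the third summand vanishes and condition (i) together with $H^2 \ch_1^{\obe H}(E) \ge 0$ finishes the argument. If $\ch_0(E) > 0$ I verify the hypotheses of Theorem \ref{thm:strong-BG-ineq} and combine $\Del_H(E)\ge \kappa(X)(H^3\ch_0(E))^2$ with $H \ch_2^{\obe H}(E)=0$ to deduce $H^2 \ch_1^{\obe H}(E) \ge \sqrt{\kappa(X)}\, H^3 \ch_0(E)$; condition (iii) then closes the gap. The principal obstacle is precisely this verification of the hypotheses of Theorem \ref{thm:strong-BG-ineq}---in particular the ``no integers in the interval'' conditions \eqref{eqn:strong-BG-ineq-condition}--\eqref{eqn:strong-BG-ineq-condition-2} and the bound \eqref{eq:stron-BG-condition3} on $2\mu_H(E) - \obe$---together with the boundary cases $\obe = 0$ (equivalently $\obe \to 1^-$), where the strict inequality $k < \obe$ in Proposition \ref{prop:betabar-hom-vanishing}(i) degenerates for $k = 1-r$ when $r = 1$. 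These degenerate cases are where condition (ii) of Proposition \ref{prop:def-xi-A=0} should intervene, and they will be handled either by a wall-crossing argument along $Z(E)$ (in the spirit of the proof of Theorem \ref{thm:conj-equivalence}) or by passing to the dual $E^1$ via Proposition \ref{prop:betabar-stab-dual} and rerunning the argument on the symmetric side.
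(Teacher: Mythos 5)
Your treatment of the generic case agrees with the paper's own argument: the reductions to $\ch_0(E)\ge 0$, $\obe\in[0,1)$, $\Del_H(E)>0$, the $\Hom$ vanishings from Proposition \ref{prop:betabar-hom-vanishing}, Serre duality giving $\chi(E(-H))=-\hom_{\SX}(\oO_X(H),E[1])\le 0$, and the combination of Hirzebruch--Riemann--Roch with Theorem \ref{thm:strong-BG-ineq} and Proposition \ref{prop:def-xi-A=0} is exactly the paper's Proposition \ref{prop:support-prop-for-A=0-case}. (One misattribution: condition (ii) of Proposition \ref{prop:def-xi-A=0} is not reserved for the boundary case --- it is what verifies the ``no integers'' hypothesis \eqref{eqn:strong-BG-ineq-condition} in the generic case: assuming $D^{0,\xi}_{0,\obe}(E)>0$ forces $(f_1(\obe)+\xi)H^2\ch_1^{\obe H}(E)+f_0(\obe)H^3\ch_0(E)<0$, whence $2\mu_H(E)-\obe<1$ and the interval $(\obe,\,2\mu_H(E)-\obe]$ contains no integers.) The genuine gap is the degenerate case $r=1$, $\obe=0$, which you defer, and neither of your proposed remedies can close it. Dualizing does not help: with $A=0$ one has $\widehat{A}=0$ and $\obe_{\widehat{A}}(E^1)=-\obe_A(E)\ni 0$, so $E^1$ sits at exactly the same degenerate point; the vanishing missing for $E$ is $\Hom_{\SX}(E,\oO_X[1])\cong\Hom_{\SX}(\oO_X(H),E[2])^*$, and for $E^1$ the correspondingly missing one is $\Hom_{\SX}(E^1,\oO_X[1])\cong\Hom_{\SX}(\oO_X,E^{11})$ --- an obstruction of precisely the same kind, since the configuration is self-dual. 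Wall-crossing along $Z(E)$ also cannot work: by Lemma \ref{prop:D-derivative-wrt-alpha}, $D^{0,\xi}_{\alpha,\beta}(E)$ is non-increasing in $\alpha$ along $C(E)$ (as $\Del_H(E)\ge 0$ and $\xi\ge 0$), so the inequality at the limit point $(\obe,0)$ is the strongest statement along the whole curve; knowing it at interior points implies nothing at the endpoint, and the argument of Theorem \ref{thm:conj-equivalence} runs in the opposite direction.

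What the paper actually does in this case is the real content of the proof and is absent from your proposal. One may assume $\chi(E(-H))>0$ (otherwise Proposition \ref{prop:support-prop-for-A=0-case} applies), and one takes a counterexample $E$ with minimal $H^2\ch_1$. Then $\hom_{\SX}(E,\oO_X[1])\ne 0$, and since $\oO_X[1]$ is a minimal object of $\bB_0$ (Proposition \ref{prop:minimalobj}), any nonzero map $E\to\oO_X[1]$ yields a short exact sequence $0\to E_1\to E\to\oO_X[1]\to 0$ in $\bB_0$ with $\chi(E_1(-H))=\chi(E(-H))-1$ and unchanged $\ch_i$ for $i\ge 1$. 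Iterating produces $0\to F\to E\to\oO_X^{\oplus m}[1]\to 0$ with $\chi(F(-H))\le 0$. The delicate step is then to show that $F$ is still $\obe_A$-stable with $\obe_A(F)=\{0\}$; the paper proves this using the Harder--Narasimhan and Jordan--H\"older factors $G_i$ of $F$ at small $\alpha>0$, the identity $D^{0,\xi}_{0,0}(F)=\sum_i D^{0,\xi}_{0,0}(G_i)$, duality via Proposition \ref{prop:stability-B-dual}, and the minimality of $H^2\ch_1(E)$ among counterexamples. Only then does Proposition \ref{prop:support-prop-for-A=0-case}, applied to $F$, deliver the contradiction. This inductive construction killing the maps to $\oO_X[1]$, together with the minimal-counterexample bookkeeping, is what your proposal lacks, and it cannot be replaced by the devices you suggest.
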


\begin{rmk}
\label{rem:xi=0-cases}
\rm
For the following cases, the constant $\xi (A)=0$:
\begin{itemize}
\item $r=4$
\item $r=3$
\item $r=2$ with $1 \le d \le 6$
\item From Proposition \ref{prop:function-bound-r=1-dle48}, Fano 3-folds $X$ with index $r=1$ having degree $1\le d \le 48$ and $\kappa(X) = 3/(2rd)$; in particular, 
 Fano 3-folds of index one with Picard rank one. 
\end{itemize}
\end{rmk}

\begin{note}
\label{prop:f1-strict-ineq}
By using the functions $f_0$ and $f_1$ in Note \ref{prop:f1-f0-inequlaties}, one can show that for $\xi$
 defined in Proposition \ref{prop:def-xi-A=0} 
$$
f_1(\beta) + \xi >0, \ \text{for } \beta \in [0,1].
$$
\end{note}

We adapt some of the techniques from \cite{Li, BMSZ} to prove  Theorem \ref{thm:fano3-A=0}.
First we need the following:
\begin{prop}
\label{prop:support-prop-for-A=0-case}
Let $E \in D^b(X)$ be a $\obe_{A}$-stable object, with
$\Del_{H}(E) >0$, 
$\ch_0(E) \ge 0$,  
$ \obe_A(E) \subset [0,1)$ and $\chi(E(-H)) \le 0$.
 Then we have $D^{0,\xi}_{0,  \obe} (E) \le 0$ for each $\obe \in \obe_A(E)$. 
\end{prop}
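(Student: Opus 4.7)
The plan is to combine Hirzebruch--Riemann--Roch with the strong Bogomolov--Gieseker inequality of Theorem \ref{thm:strong-BG-ineq}. First, I would apply Proposition \ref{prop:Euler-char-Fano3} with $\beta = \obe$, using that $H\ch_2^{\obe H}(E) = 0$ (the defining equation \eqref{eqn:betabar-relation} for $\obe_A(E)$ in the case $A = 0$) and that $\Lambda \cdot H = 0$ forces $\Lambda\ch_1^{\obe H}(E) = \Lambda\ch_1(E)$, in order to rearrange the Riemann--Roch identity into
\begin{equation*}
D^{0,\xi}_{0,\obe}(E) \;=\; \chi(E(-H)) \;-\; \bigl(f_1(\obe) + \xi\bigr)\,H^2\ch_1^{\obe H}(E) \;-\; f_0(\obe)\,H^3\ch_0(E).
\end{equation*}
Since $\chi(E(-H)) \le 0$ by hypothesis, the problem reduces to proving
\begin{equation*}
\bigl(f_1(\obe)+\xi\bigr)\,H^2\ch_1^{\obe H}(E) \;+\; f_0(\obe)\,H^3\ch_0(E) \;\ge\; 0.
\end{equation*}

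Next, I would use $H\ch_2^{\obe H}(E) = 0$ to simplify the discriminant to $\overline{\Delta}_H(E) = (H^2\ch_1^{\obe H}(E))^2$; and since $E$ is $\obe_A$-stable with $\ch_0(E) \ge 0$, the slope bound of Proposition \ref{prop:muslopeboundfortiltstability}, taken in the limit $(\beta,\alpha) \to (\obe,0)$ along a neighbourhood on which $E$ is tilt stable, forces $H^2\ch_1^{\obe H}(E) \ge 0$, hence $H^2\ch_1^{\obe H}(E) = \sqrt{\overline{\Delta}_H(E)}$. When $\ch_0(E) = 0$, the desired inequality then reduces to condition (i) of Proposition \ref{prop:def-xi-A=0}.

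For the main case $\ch_0(E) > 0$, I would invoke Theorem \ref{thm:strong-BG-ineq} (respectively Proposition \ref{prop:strong-BG-rank-1-case} when $\ch_0(E) = 1$) to obtain $\sqrt{\overline{\Delta}_H(E)} \ge \sqrt{\kappa(X)}\,H^3\ch_0(E)$. Combined with $f_1(\obe)+\xi \ge 0$, this gives
\begin{equation*}
\bigl(f_1(\obe)+\xi\bigr)\sqrt{\overline{\Delta}_H(E)} + f_0(\obe)\,H^3\ch_0(E) \;\ge\; \Bigl[\sqrt{\kappa(X)}\bigl(f_1(\obe)+\xi\bigr) + f_0(\obe)\Bigr] H^3\ch_0(E) \;\ge\; 0,
\end{equation*}
where the final inequality is condition (iii) of Proposition \ref{prop:def-xi-A=0}, completing the proof.

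The principal obstacle is the verification of the hypotheses of Theorem \ref{thm:strong-BG-ineq} when $\ch_0(E) \ge 2$: the integer-free interval condition \eqref{eqn:strong-BG-ineq-condition} and the index bound \eqref{eq:stron-BG-condition3}. The plan here is to exploit the constraint $\obe_A(E) \subset [0,1)$ together with the Hom-vanishing results of Proposition \ref{prop:betabar-hom-vanishing} (and, if necessary, the hypothesis $\chi(E(-H)) \le 0$ itself applied to twists $E(mH)$) to bound $\mu_H(E) - \obe$ sufficiently tightly that no integer can lie in the interval and so that $2\mu_H(E) - \obe < r$; the boundary case $\obe = 0$ will require particular care since the integer $1$ lies only just outside the admissible range.
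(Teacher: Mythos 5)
Your outline reproduces the paper's skeleton (Riemann--Roch via Proposition \ref{prop:Euler-char-Fano3}, the reduction to $(f_1(\obe)+\xi)H^2\ch_1^{\obe H}(E)+f_0(\obe)H^3\ch_0(E)\ge 0$, the $\ch_0=0$ case via condition (i) of Proposition \ref{prop:def-xi-A=0}, and the final estimate via condition (iii) together with $\Del_H(E)/(H^3\ch_0(E))^2\ge\kappa(X)$), but the step you defer as ``the principal obstacle'' is exactly the step where the proof lives, and the route you sketch for it is not the one that works. There is no need for the Hom-vanishings of Proposition \ref{prop:betabar-hom-vanishing} or for Euler characteristics of twists $E(mH)$ here, and it is not clear those would yield the required bound on $\mu_H(E)$. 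The actual mechanism is a contradiction argument in which the \emph{assumed failure} of the conclusion supplies the integer-free interval: if $D^{0,\xi}_{0,\obe}(E)>0$, then $\chi(E(-H))\le 0$ forces $(f_1(\obe)+\xi)H^2\ch_1^{\obe H}(E)+f_0(\obe)H^3\ch_0(E)<0$, hence
\begin{equation*}
\mu_H(E)-\obe=\frac{H^2\ch_1^{\obe H}(E)}{H^3\ch_0(E)}<\frac{-f_0(\obe)}{f_1(\obe)+\xi},
\end{equation*}
and therefore
\begin{equation*}
\bigl(2\mu_H(E)-\obe\bigr)-1<\frac{-\bigl((1-\obe)(f_1(\obe)+\xi)+2f_0(\obe)\bigr)}{f_1(\obe)+\xi}\le 0,
\end{equation*}
where the last inequality is precisely condition (ii) of Proposition \ref{prop:def-xi-A=0} (together with Note \ref{prop:f1-strict-ineq}) --- a condition your proposal never invokes, although it was built into the definition of $\xi$ for exactly this purpose. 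Since $H^2\ch_1^{\obe H}(E)>0$ gives $2\mu_H(E)-\obe>\obe\ge 0$, one concludes $(\obe,\,2\mu_H(E)-\obe]\subset(0,1)$, so \eqref{eqn:strong-BG-ineq-condition} holds and \eqref{eq:stron-BG-condition3} follows from $2\mu_H(E)-\obe<1\le r$. In particular your worry about the boundary case $\obe=0$ evaporates: the upper endpoint is strictly less than $1$, so the integer $1$ is excluded outright.

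Two smaller remarks. First, once the contradiction hypothesis is in place, Theorem \ref{thm:strong-BG-ineq} applies (the exclusion of $\oO_X(mH)[1]$ and $\iI_Z(mH)$ is automatic from $\Del_H(E)>0$ and Lemma \ref{prop:Delta0tiltstableFano3}), and conditions (i) and (iii) of Proposition \ref{prop:def-xi-A=0} finish the argument as you describe; your separate appeal to Proposition \ref{prop:strong-BG-rank-1-case} for $\ch_0(E)=1$ is harmless but unnecessary. Second, your claim that the problem ``reduces to proving'' the displayed inequality is fine, but note that without restructuring the argument as a contradiction you have no handle on $\mu_H(E)$ at all, which is why the direct approach you propose stalls. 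As written, the proposal is incomplete at its decisive step.
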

\begin{proof}
Let $\obe \in \obe_{A}(E)$.  
We have $H\ch_2^{\obe H}(E) = 0$, and so $\Del_H(E) = (H^2 \ch_1^{\obe H}(E))^2$. 
Since $\Del_H(E) >0$, $H^2 \ch_1^{\obe H}(E)>0$.

From Proposition \ref{prop:Euler-char-Fano3}, we have
$$
0 \ge \chi(E(-H)) = D^{0,\xi}_{0,  \obe}(E) + \left(f_1(\obe) + \xi\right) H^2\ch_1^{\obe H}(E) + f_0(\obe) H^3 \ch_0(E).
$$
Since $ H^2\ch_1^{\obe H}(E) \left(f_1(\obe) + \xi\right) \ge 0$, when $\ch_0(E) = 0$, we have $0 \ge \chi(E(-H)) \ge D^{0,\xi}_{0,  \obe}(E)$ as required. 
So let us assume $\ch_0(E)>0$. 

Now suppose 
$D^{0,\xi}_{0,  \obe}(E) >0$
for a contradiction. Therefore, we have $\left(f_1(\obe) + \xi\right) H^2\ch_1^{\obe H}(E) + f_0(\obe) H^3 \ch_0(E) <0$.
Hence 
\begin{equation*}
\left( 2 \mu_H(E) - \obe \right) -1  < \frac{-\left((1- \obe)(f_1(\obe) + \xi) +2 f_0(\obe)\right)}{(f_1(\obe) + \xi)} \le 0.
\end{equation*}
Here the last inequality follows from the definition of $\xi$ in Proposition \ref{prop:def-xi-A=0} together with 
Note \ref{prop:f1-strict-ineq}.
Since $\obe \in [0,1)$, there are no integers in the interval 
$$
(\obe , 2\mu_H - \obe] \subset (0,1). 
$$
Therefore, from Theorem \ref{thm:strong-BG-ineq}, we get 
$$
\frac{(H^2 \ch_1^{\obe H}(E))^2}{(H^3 \ch_0(E))^2} \ge \kappa(X).
$$
So we have 
\begin{align*}
\left(f_1(\obe) + \xi\right) H^2\ch_1^{\obe H}(E) & + f_0(\obe) H^3 \ch_0(E) \ge 
\left( \sqrt{ \kappa(X) } \left(f_1(\obe) + \xi\right)  + f_0(\obe) \right) H^3 \ch_0(E) \ge 0.
\end{align*}
Here the last inequality follows from the definition of $\xi$ in Proposition \ref{prop:def-xi-A=0}.
Hence, we have $0 \ge \chi(E(-H)) \ge D^{0,\xi}_{0,  \obe}(E)$. This is the required contradiction. 
\end{proof}

\begin{proof}[Proof of Theorem \ref{thm:fano3-A=0}]
 Let  $\obe \in \obe_{A}(E)$. 
 
If $\Del_H(E) =0$ then from 
Lemma  \ref{prop:Delta0tiltstableFano3} such objects are isomorphic to 
$\oO_X(mH)[1]$ or $\iI_Z(mH)$ for some  $m \in \ZZ$ and $0$-subscheme $Z \subset X$.
By direct computation one can check the Bogomolov-Gieseker type inequalities hold for such  objects. 
So we can assume 
$$
\Del_H(E) > 0.
$$ 
 
 By using Proposition \ref{prop:stability-B-dual}, and since tilt stability is preserved under the tensoring by a line bundle, 
further  we can assume 
 $$
 \ch_0(E) \ge 0 \ \text { and } \ \obe \in [0,1). 
 $$

  \subsection*{(i) When we do not have the case $r=1$ with $\obe =0$}
 From  Proposition \ref{prop:betabar-hom-vanishing}, for any $j \le 0$ we have
\begin{align*}
  \Hom_{\SX}(\oO_X(H), E[j]) =0, \  \text { and } \
   \Hom_{\SX}(E, \oO_X((1-r)H)[1+j]) =0.
\end{align*}
Since $\omega_X = \oO_X(-rH)$, from the Serre duality, $\Hom_{\SX}(E, \oO_X((1-r)H)[1+j]) \cong \Hom_{\SX}(\oO_X(H), E[2-j])^*$.
Therefore,  from the Hirzebruch-Riemann-Roch theorem
$$
\chi(\oO_X(H), E) = \sum_{i \in \mathbb{Z}} (-1)^i \hom_{\SX}(\oO_X(H), E[i]) = - \hom_{\SX}(\oO_X(H), E[1]) \le 0.
$$
That is $\chi(E(-H)) \le 0$. So from Proposition \ref{prop:support-prop-for-A=0-case}, we have the required inequality.

 \subsection*{(ii) When we have the case $r=1$ with $\obe =0$}
 Suppose for a contradiction there exists a counterexample $E$;  so $D^{0, \xi}_{0,0}(E) > 0$.
 Since $H \ch_2(E) =0$, 
 $\Del_H(E) = (H^2 \ch_1(E))^2 > 0$.
 Let $E$ be one such example having minimum
 $H^2\ch_1$.

 If $\chi(E(-H)) \le 0$ then from Proposition \ref{prop:support-prop-for-A=0-case} we have 
 $D^{0, \xi}_{0,0}(E) \le 0$; however, since we already assumed $D^{0, \xi}_{0,0}(E) > 0$,   we have 
$\chi(E(-H)) >0$. 

From  Proposition \ref{prop:betabar-hom-vanishing}, for any $j \le 0$ we have
\begin{align*}
  \Hom_{\SX}(\oO_X(H), E[j]) =0, \
   \Hom_{\SX}(E, \oO_X[j]) =0.
\end{align*}
By using    the Serre duality we get
$$
\chi(\oO_X(H), E) = \sum_{i \in \mathbb{Z}} (-1)^i \hom_{\SX}(\oO_X(H), E[i]) = - \hom_{\SX}(\oO_X(H), E[1]) +  \hom_{\SX}(\oO_X(H), E[2]).
$$

Since $\chi(E(-H)) > 0$, from the Serre duality we have $\hom_{\SX}(\oO_X(H), E[2]) \cong \hom_{\SX}(E, \oO_X[1]) \ne 0$. So there is a non-trivial map $E \to \oO_X[1]$ in $D^b(X)$ and hence, we have the distinguished triangle
$$
\oO_X \to E_1 \to E \to \oO_X[1]
$$
for some $E_1 \in D^b(X)$. 
Here $E, \oO_X[1] \in \bB_{0}$, and also from Proposition \ref{prop:minimalobj}, $ \oO_X[1]$ is a minimal object. Therefore by considering the long exact sequence of $ \bB_{0}$-cohomologies we get 
$E_1 \in  {\bB_{0}}$ and 
the following non-splitting short exact sequence in $ \bB_{0}$:
$$
0 \to E_1 \to E \to \oO_X[1] \to 0.
$$
By applying the functor $\Hom_{\SX}(-,\oO_X[1])$ to the above short exact sequence, we get
$$
\hom_{\SX}(E_1, \oO_X[1]) = \hom_{\SX}(E , \oO_X[1]) - \hom_{\SX}(\oO_X[1], \oO_X[1]) < \hom_{\SX}(E, \oO_X[1]).
$$
We have $\ch_0(E_1) =\ch_0(E) +1 >0$, and
$$
\chi(E_1(-H)) = \chi(E(-H)) - \chi(\oO_X(-H)[1]) = \chi(E(-H)) - 1.
$$
If $\chi(E_1(-H)) > 0$ then we can repeat the above process for $E_1$.
In this way we get a sequence of non-splitting short exact sequences 
$$
0 \to E_{i+1} \to E_{i} \to \oO_X[1] \to 0
$$
in $\bB_0$ for some $E_i \in \bB_0$, with $E_0 = E$, and 
$$
\cdots< \chi(E_i(-H)) < \cdots   < \chi(E_1(-H)) < \chi(E(-H)) .
$$
Therefore, for some $m \ge 1$ we have the short exact sequence 
$$
0 \to F:=E_m \to E \to \oO_X^{\oplus m}[1] \to 0
$$
in $\bB_0$ with 
$\chi(F(-H))\le 0$. Also
$\ch_0(F) =\ch_0(E) +m$, and for $i \ge 1$, $\ch_i(F) =\ch_i(E)$.

Let us prove $F \in \bB_0$ is $\obe_A$ stable with $\obe_A(F) =\{0\}$.
Consider $\nu_{0, \alpha}$ tilt stability of $F$ for sufficiently small $\alpha>0$.
 Since $\Del_H(E) >0$, $H^2 \ch_1(E) = H^2 \ch_1(F) >0$.
 Hence, from the properties of Harder-Narasimhan filtrations and Jordan-H\"older filtrations, 
 there is a filtration:
 $$
 0 = F_0 \subseteq F_1 \subseteq \cdots \subseteq F_k = F
 $$
with $G_i : = F_i/F_{i-1} \in \bB_0$ are $\nu_{0,\alpha}$ stable  with
$$
\nu_{0,\alpha}(G_i) \ge  \nu_{0,\alpha}(G_{i+1})
$$
for sufficiently small enough $\alpha >0$. 
Since $H^2 \ch_1(F) >0$,  $\nu_{0,\alpha}$ slope of each $G_i$'s are finite. 
So $H^2\ch_1(G_i) \ge 1$. 
Moreover, $G_1 = F_1$ is a subobject of $E$. So 
$$
\nu_{0,\alpha}(E) >  \nu_{0,\alpha}(F_1) \ge \cdots \ge \nu_{0,\alpha}(F_{k-1})   \ge \nu_{0,\alpha}(F_k)  = \nu_{0,\alpha}(F)
$$
for sufficiently small enough $\alpha >0$. 
Therefore, by considering the limit  $\alpha \to 0^+$, we get 
$H\ch_2(F_i) =0$ for all $i$. So 
$H \ch_2(G_i) = 0$ for all $i$. 
Since 
$0<D^{0,\xi}_{0,0}(E) = D^{0,\xi}_{0,0}(F) = \sum_i D^{0,\xi}_{0,0}(G_i)$,
there is at least one $G_i$ such that $D^{0,\xi}_{0,0}(G_i) >0$. 
Therefore $\obe_A$ stable object 
$G_i^1 : = H^{1}_{\bB_0}(G_i^\vee)$ also satisfies $D^{0,\xi}_{0,0} >0$.
Since $\ch_0(G_i) = - \ch_0(G_i^1)$, one of $G_i, G_i^1$ have $\ch_0 \ge 0$.
That is one of  $G_i, G_i^1$ is also a counterexample like $E$.
Since  we assumed $E$ to be a counterexample with minimal $H^2\ch_1$, we 
have $H^2\ch_1(G_i) = H^2 \ch_1(E)$. 
So $F$ is also $\obe_A$ stable with $\obe_A (F) = \{0\}$.

Since $\chi(F(-H)) <  0$, from Proposition \ref{prop:support-prop-for-A=0-case} we get $D^{0,\xi}_{0,0}(F)\le 0$; this is the required contradiction.
This completes the proof.
\end{proof}

\section{Optimal Bogomolov-Gieseker Type Inequality for Blow-up of $\mathbb{P}^3$ at a Point}
\label{sec:blowup-P3}
Let $X$ be the blow-up of $\mathbb{P}^3$ at a point. 
From   Iskovskikh-Mori-Mukai classification 
of smooth Fano 3-folds $X$ is the only Fano 3-fold of index $r =2$ having degree $d >6$. 
In particular, the degree of $X$ is $d=7$.
In this section we optimize the Bogomolov-Gieseker type inequality for $X$. More precisely, we show that the modified inequality in  Conjecture \ref{conj:BG-ineq}  holds with $\xi(A) =0$ for some $A \ne 0$.

\begin{defi}
\label{def:blowup-P3-A-for-x=0}
The  continuous function  
$A: \RR \langle H \rangle \to \RR_{\ge 0}$ 
is defined by
\begin{equation*}
\begin{cases}
\text{for } \beta \in [-1/2, 0), \  A(\beta H) = 1+ \beta;\\
\text{for } \beta \in [0, 1/2), \  A(\beta H) = 1 -  \beta;\\
\text{together with the relation } A((\beta +1)H) = A(\beta H).\\
\end{cases}
\end{equation*}
See Figure \ref{fig: graph-A-for-xi=0}.
\end{defi}

\begin{center}
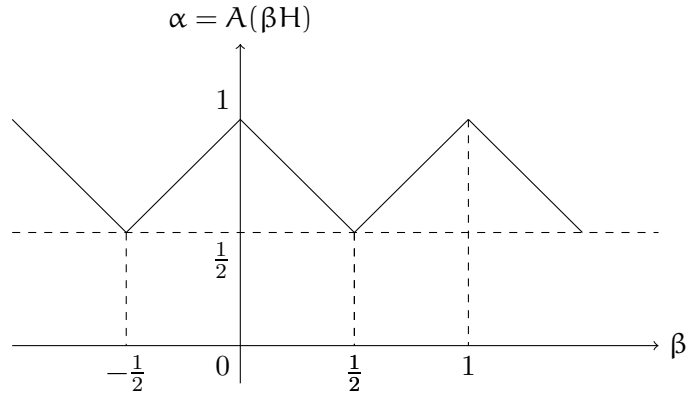

\begin{tikzpicture}
      \draw[->] (-3,0) -- (5.5,0) node[right] {$\beta$};
      \draw[->] (0,-0.5) -- (0,4) node[above] {$\alpha = A(\beta H)$};
            \draw[scale=3,domain=-1:-0.5,smooth,variable=\x,black] plot ({\x},{(-\x)});
            \draw[scale=3,domain=-0.5:0,smooth,variable=\x,black] plot ({\x},{(1+\x)});
      \draw[scale=3,domain=0:0.5,smooth,variable=\x,black] plot ({\x},{(1-\x)});
                  \draw[scale=3,domain=0.5:1.0,smooth,variable=\x,black] plot ({\x},{(\x)});    
                  \draw[scale=3,domain=1:1.5,smooth,variable=\x,black] plot ({\x},{(2- \x)});   
    \draw [dashed]  (-3,1.5)  -- (0,1.5) node [below left] {$\frac{1}{2}$}
       -- (5.5,1.5);
    \draw [dashed]  (1.5,1.5)  -- (1.5,0) node [below] {$\frac{1}{2}$};
        \draw [dashed]  (3,3)  -- (3,0) node [below] {$1$};
            \draw [dashed]  (1.5,1.5)  -- (1.5,0) node [below] {$\frac{1}{2}$};
                    \draw [dashed]  (-1.5,1.5)  -- (-1.5,0) node [below] {$-\frac{1}{2}$};
            \draw  (0,0)  node [below left] {$0$};
              \draw  (0,3)  node [above left] {$1$};
    \end{tikzpicture}
        \captionof{figure}{Graph of $\alpha = A(\beta H)$ in $(\beta, \alpha)$-plane}
        \label{fig: graph-A-for-xi=0}
\end{center}

We prove Conjecture \ref{conj:BG-ineq} or equivalently Conjecture \ref{conj:limitBG} holds for $X$ with respect to the function $A$ and constant 
$$
\xi(A)=0.
$$

\begin{thm}
\label{thm:fano3-d-7}
Let $E \in D^b(X)$ be a $\obe_{A}$-stable object. Then 
$D^{0, 0}_{A(\obe H),  \obe} (E)\le 0$ for each $\obe \in \obe_A(E)$. 
In particular, Conjecture \ref{conj:compareAforBGineq} holds for $X$. 
\end{thm}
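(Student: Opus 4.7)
The plan is to adapt the proof strategy of Theorem \ref{thm:fano3-A=0}, exploiting the fact that the function $A$ has been engineered so that $\obe + A(\obe H) \in \ZZ$ on each interval, which will strengthen the Hom-vanishings of Proposition \ref{prop:betabar-hom-vanishing} enough to push $\xi$ down to $0$. First I would perform the standard reductions: if $\Del_H(E)=0$ then Lemma \ref{prop:Delta0tiltstableFano3} identifies $E$ with $\oO_X(mH)[1]$ or $\iI_Z(mH)$, where the inequality is checked by direct Riemann--Roch computation; otherwise assume $\Del_H(E)>0$. Using Proposition \ref{prop:betabar-stab-dual} (dualizing), tensoring by line bundles, and the symmetry $A(-\beta H)=A(\beta H)$, reduce to $\ch_0(E)\ge 0$ and $\obe\in[0,1/2]$, so that $A(\obe H)=1-\obe$, $\obe+A(\obe H)=1$, and $\obe-A(\obe H)=2\obe-1\in[-1,0]$.

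Next I would extract $\Hom$-vanishings via Proposition \ref{prop:betabar-hom-vanishing}. For $\obe\in(0,1/2]$, part (iv) applied at the boundary $k=\obe+A=1$ yields $\Hom_{\SX}(\oO_X(H),E[j])=0$ for $j\le 0$ (using $\Del_H(E)>0$ and $A(\obe H)>0$), while part (i) applied with $k=-1<\obe-A$ gives $\Hom_{\SX}(E,\oO_X(-H)[1+j])=0$ for $j\le 0$; at the boundary case $\obe=0$ one uses part (ii) instead of part (i). Since $\omega_X=\oO_X(-2H)$, Serre duality converts the second family into $\Hom_{\SX}(\oO_X(H),E[i])=0$ for $i\ge 2$, so
\begin{equation*}
\chi(E(-H))=\chi(\oO_X(H),E)=-\hom_{\SX}(\oO_X(H),E[1])\le 0.
\end{equation*}
Substituting $r=2$, $d=7$ into Proposition \ref{prop:Euler-char-Fano3} and using $H\ch_2^{\obe H}(E)=(A(\obe H)^2/2)H^3\ch_0(E)$, rewrite the Hirzebruch--Riemann--Roch identity as
\begin{equation*}
\chi(E(-H))=D^{0,0}_{\alpha,\obe}(E)+\Bigl(f_1(\obe)+\tfrac{\alpha^2}{6}\Bigr)H^2\ch_1^{\obe H}(E)+\Bigl(f_0(\obe)+\tfrac{\alpha^2}{2}f_2(\obe)\Bigr)H^3\ch_0(E),
\end{equation*}
with $\alpha=A(\obe H)=1-\obe$.

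The goal then becomes to verify that the last two terms on the right are non-negative. When $\ch_0(E)=0$ this reduces to checking $f_1(\obe)+\alpha^2/6\ge 0$ for $\obe\in[0,1/2]$, which is elementary from the explicit formula $f_1(\beta)+\tfrac{(1-\beta)^2}{6}=(14\obe^2-7\obe+3)/21$. When $\ch_0(E)>0$, I would invoke Theorem \ref{thm:strong-BG-ineq} to get $\Del_H(E)/(H^3\ch_0(E))^2\ge\kappa(X)=2/49$, which translates into $H^2\ch_1^{\obe H}(E)\ge\sqrt{\kappa(X)+\alpha^2}\cdot H^3\ch_0(E)$, reducing the desired bound to the numerical inequality
\begin{equation*}
\Bigl(f_1(\obe)+\tfrac{\alpha^2}{6}\Bigr)\sqrt{\kappa(X)+\alpha^2}+f_0(\obe)+\tfrac{\alpha^2}{2}f_2(\obe)\ge 0
\end{equation*}
for all $\obe\in[0,1/2]$, $\alpha=1-\obe$. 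The principal obstacle is twofold: on the analytic side, verifying this numerical inequality uniformly on $[0,1/2]$ (which requires the precise choice of $A$ and the improved constant $\kappa(X)=2/49$ from Theorem \ref{thm:strong-BG-ineq} rather than the weaker $3/(2rd)=3/28$); and on the structural side, confirming that the hypotheses \eqref{eqn:strong-BG-ineq-condition}--\eqref{eq:stron-BG-condition3} of the strong BG inequality hold for the relevant range of parameters. Finally, exactly as in part (ii) of the proof of Theorem \ref{thm:fano3-A=0}, the boundary case $\obe=0$ may require an extra step: if $\chi(E(-H))>0$ one constructs via Serre duality a non-trivial map $E\to\oO_X[1]$, iterates a sequence of short exact sequences $0\to E_{i+1}\to E_i\to\oO_X[1]\to 0$ in $\bB_0$, and checks that the resulting subobject $F$ remains $\obe_A$-stable (with $\obe_A(F)=\{0\}$ and unchanged $H^2\ch_1$) while having $\chi(F(-H))\le 0$, reducing to the previous case.
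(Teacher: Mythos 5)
There is a genuine gap, and it sits exactly where the theorem's content lies. Your opening reduction claims that, by tensoring, dualizing, and the symmetry $A(-\beta H)=A(\beta H)$, one may assume \emph{both} $\ch_0(E)\ge 0$ \emph{and} $\obe\in[0,1/2]$. That is not achievable: tensoring by $\oO_X(mH)$ shifts $\obe$ by integers without changing $\ch_0$, while dualizing sends $\obe\mapsto-\obe$ but simultaneously flips the sign of $\ch_0$ (Propositions \ref{prop:stability-B-dual} and \ref{prop:betabar-stab-dual}). So the orbit of $(\obe,\operatorname{sign}\ch_0)$ under these moves only lets you normalize to $\ch_0\ge 0$ with $\obe$ in a \emph{full} half-open unit interval, e.g.\ $[-1/2,1/2)$ as the paper does — not to the half-interval $[0,1/2]$. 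The case you discard, $\obe\in[-1/2,0)$ with $\ch_0(E)>0$ (equivalently $\ch_0\le 0$ with $\obe\in(0,1/2]$), is precisely the hard one: there $\oA=1+\obe$, the coefficient $q=f_0(\obe)+\tfrac{\oA^2}{2}f_2(\obe)$ is \emph{negative}, and one genuinely needs Theorem \ref{thm:strong-BG-ineq} with the improved constant $\kappa(X)=2/49$ together with the delicate positivity statements of Proposition \ref{prop:fun-bound-blowup-P3-xi=0-case} (a quartic positivity check whose minimum is of order $3\cdot 10^{-5}$). On the range $[0,1/2)$ that you restrict to, one computes $q=\tfrac{2}{3}\obe\bigl((\obe-\tfrac34)^2+\tfrac{17}{112}\bigr)\ge 0$, so the inequality follows from $p>0$ and $\chi(E(-H))\le 0$ alone; the "numerical inequality" you flag as the principal obstacle is vacuous there and Theorem \ref{thm:strong-BG-ineq} is not even needed. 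In short, your plan proves only the easy half of the statement.

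Two smaller points. First, the $\Hom$-vanishings do go through on all of $[-1/2,1/2)$ because $A$ is engineered so that $\obe-\oA=-1$ on $[-1/2,0)$ and $\obe+\oA=1$ on $[0,1/2)$, with $\oA>0$ and $\Del_H(E)>0$ making the boundary cases (ii) and (iv) of Proposition \ref{prop:betabar-hom-vanishing} applicable; so $\chi(E(-H))\le 0$ holds throughout, as in the paper. Second, the extra dévissage step you append (iterating extensions by $\oO_X[1]$, as in part (ii) of the proof of Theorem \ref{thm:fano3-A=0}) is unnecessary here: that step was forced in the index-one case because $\oO_X((1-r)H)=\oO_X$ sits at the bad slope when $\obe=0$ and $\oA=0$; for $X$ of index $r=2$ with the nonzero $A$ of Definition \ref{def:blowup-P3-A-for-x=0}, Serre duality with $\omega_X=\oO_X(-2H)$ and the vanishing $\Hom_{\SX}(E,\oO_X(-H)[1+j])=0$ already give $\chi(E(-H))=-\hom_{\SX}(\oO_X(H),E[1])\le 0$ directly.
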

We need the following:
\begin{prop}
\label{prop:support-prop-blowup-P3-main-thm-xi=0}
Let $E \in D^b(X)$ be a $\obe_{A}$-stable object with $\Del_H(E) >0$, $\ch_0(E)\ge 0$,  
$\obe_{A}(E) \subset [-1/2, 1/2)$, and $\chi(E(-H)) \le 0$. Then we have
$D^{0, 0}_{A(\obe H),  \obe} (E)\le 0$ for each $\obe \in \obe_A(E)$.
\end{prop}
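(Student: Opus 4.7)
The plan is to adapt the strategy of Proposition \ref{prop:support-prop-for-A=0-case} in Section \ref{sec:Fano3-A=0}, now specialized to $\xi = 0$ and to the piecewise-linear function $A$ of Definition \ref{def:blowup-P3-A-for-x=0}, using $r = 2$, $d = 7$, and $\kappa(X) = 2/49$ from Example \ref{ex:blow-up-P3}. Specializing Proposition \ref{prop:Euler-char-Fano3} at $\beta = \obe$ and substituting the defining relation $H\ch_2^{\obe H}(E) = \tfrac{1}{2}A(\obe H)^2 H^3\ch_0(E)$ of $\obe \in \obe_A(E)$ yields
\begin{equation*}
\chi(E(-H)) - D^{0,0}_{A(\obe H), \obe}(E) \;=\; P(\obe)\, H^3 \ch_0(E) \,+\, Q(\obe)\, H^2 \ch_1^{\obe H}(E),
\end{equation*}
where $P(\obe) = \tfrac{1}{2}\obe A(\obe H)^2 + f_0(\obe)$ and $Q(\obe) = f_1(\obe) + \tfrac{1}{6}A(\obe H)^2$ are piecewise polynomial in $\obe$. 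Since $\chi(E(-H)) \le 0$ by hypothesis, it suffices to show that the right-hand side is nonnegative.

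If $\ch_0(E) = 0$, then $H\ch_2^{\obe H}(E) = 0$, so $H^2 \ch_1^{\obe H}(E) > 0$ follows from $\Del_H(E) > 0$; a direct quadratic check on each piece of $A$ shows $Q(\obe) > 0$ throughout $[-1/2, 1/2)$ (the relevant quadratics have negative discriminant), and the conclusion is immediate. Assume now $\ch_0(E) > 0$. A direct factorization gives $P(\obe) = \obe \bigl(\tfrac{2}{3}\obe^2 \mp \obe + \tfrac{10}{21}\bigr)$, with the inner quadratic positive definite on each half, so $P(\obe) \ge 0$ on $[0, 1/2)$ and the right-hand side is manifestly nonnegative there. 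The substantive case is therefore $\obe \in [-1/2, 0)$, where $P(\obe) < 0$.

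On this remaining range I would argue by contradiction. Assuming $D^{0,0}_{A(\obe H), \obe}(E) > 0$, the identity above forces $\mu_{H, \obe}(E) < -P(\obe)/Q(\obe)$. Verifying the polynomial inequality $P(\obe) + Q(\obe) \ge 0$ on $[-1/2, 0)$ then gives the strict bound $\mu_H(E) < 1 + \obe$, which in turn implies hypothesis \eqref{eq:stron-BG-condition3} together with the no-integer hypothesis \eqref{eqn:strong-BG-ineq-condition} required by Theorem \ref{thm:strong-BG-ineq}. That theorem yields $\mu_{H, \obe}(E) \ge \sqrt{A(\obe H)^2 + 2/49}$, which combined with the upper bound produces $P(\obe) + \sqrt{A(\obe H)^2 + 2/49}\, Q(\obe) < 0$. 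The final and most delicate step is to verify the reverse inequality
\begin{equation*}
P(\obe) + \sqrt{A(\obe H)^2 + 2/49}\, Q(\obe) \,\ge\, 0 \quad \text{for all } \obe \in [-1/2, 0),
\end{equation*}
which contradicts the strict one. Because $\xi(A) = 0$ is the optimal constant, this estimate is essentially tight at $\obe = -1/2$ (one checks $-P/Q = 1/2$ while $\sqrt{A^2 + 2/49} = \sqrt{57}/14$ only barely exceeds $1/2$), so the exact value $\kappa(X) = 2/49$ and the specific piecewise form of $A$ are both indispensable; this tight numerical verification is the main obstacle of the proof.
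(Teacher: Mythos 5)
Your proposal is correct and follows essentially the same route as the paper: the same specialization of Proposition \ref{prop:Euler-char-Fano3}, the same split into $\ch_0=0$, $\obe\in[0,1/2)$ and $\obe\in[-1/2,0)$, and the same two polynomial estimates ($P+Q\ge 0$ and $P+\sqrt{\oA^2+2/49}\,Q\ge 0$) feeding into Theorem \ref{thm:strong-BG-ineq}. The one step you defer --- the tight inequality $P(\obe)+\sqrt{A(\obe H)^2+2/49}\,Q(\obe)\ge 0$ on $[-1/2,0)$ --- is exactly what the paper verifies in Appendix Proposition \ref{prop:fun-bound-blowup-P3-xi=0-case}, where the minimum of the associated quartic is shown to be positive (roughly $2.8\times 10^{-5}$), confirming your observation that the constant $\kappa(X)=2/49$ and the specific $A$ are used at full strength.
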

\begin{proof}
Let  $\obe  \in  \obe_{A }(E)$ and $\oA =  A(\obe H)$.
So $H \ch_2^{\obe H}(E) = (\oA^2/2)H^3 \ch_0(E)$, and 
from  Proposition \ref{prop:Euler-char-Fano3},
\begin{align*}
\chi(E(-H)) =   D^{0,0}_{A, \obe }(E) + p  H^2 \ch_1^{ \obe H} (E) + q H^3 \ch_0(E), 
\end{align*}
 where 
 \begin{align*}
p  & = \frac{1}{6}\oA^2+  \frac{1}{2} \obe^2  - \frac{1}{42}, \\
q &= \obe \left(    \frac{1}{2}\oA^2+  \frac{1}{6} \obe^2  - \frac{1}{42} \right).
 \end{align*}
By using the definition of $A$ and simplifying $p$ we obtain, 
for $\obe \in [-1/2, 0)$, $p = (1/24)(4 \obe +1)^2 + (17/168)$, and for 
$\obe \in [0, 1/2) $, $p= (1/24)(4 \obe - 1)^2 + (17/168)$. Therefore, 
$$
p>0.
$$
Hence, since  $H^2 \ch_1^{ \obe H} (E) \ge 0$, when $\ch_0(E) =0$ we have $0 \ge \chi(E(-H)) \ge D^{0,0}_{\oA, \obe}(E)$ as required.

So we assume $\ch_0(E)>0$.

\subsection*{(i) Case  $\obe \in [0, 1/2)$:}
We have $\oA = 1-\obe$ and so 
\begin{align*}
q = \obe \left(\frac{(1-\obe)^2}{2}  + \frac{\obe^2}{6} - \frac{1}{42}  \right) 
= \frac{2}{3}\obe \left( \left(\obe - \frac{3}{4}\right)^2 + \frac{17}{112} \right) \ge 0.
\end{align*}
Since $H^2 \ch_1^{ \obe H} (E) , H^3\ch_0(E) \ge 0$, we have $0 \ge \chi(E(-H)) \ge D^{0,0}_{\oA, \obe}(E)$ as required.

\subsection*{(ii) Case  $\obe \in [-1/2, 0)$:}
We have $\oA = 1+\obe$.

Suppose for a contradiction $E$  is a counterexample; that is
$D^{0, 0}_{\oA,  \obe} (E)> 0$.
Since  $\chi(E(-H)) \le 0$ and $D^{0, 0}_{\oA,  \obe} (E)> 0$, we have 
$ p  H^2 \ch_1^{ \obe H} (E) + q H^3 \ch_0(E) <0$. Therefore,
\begin{align*}
\left(2 \mu_H(E) - \obe -\oA \right)  - 1& < \frac{-2p -2q}{p} <0.
\end{align*}
Here the last inequality follows from (i) of Proposition \ref{prop:fun-bound-blowup-P3-xi=0-case}.
On the other hand $\obe + \oA = 1+2 \obe \in [0,1)$. Therefore, there are no integers in the interval
$$
(\obe + \oA , \ 2 \mu_H(E) - \obe -\oA] \subset (0,1).
$$
Hence, from Theorem \ref{thm:strong-BG-ineq}, we have 
$$
\frac{\Del_H(E)}{(H^3 \ch_0(E))^2} \ge \kappa(X) = \frac{2}{49}.
$$
Therefore,
$$
\frac{H^2 \ch_1^{\obe H}(E)}{H^3 \ch_0(E)} \ge \sqrt{\oA^2 + \kappa(X)}.
$$
So
$$
p  H^2 \ch_1^{ \obe H} (E) + q H^3 \ch_0(E) \ge \left( p \sqrt{\oA^2 + \kappa(X)} + q \right) H^3 \ch_0(E) \ge 0.
$$
Here the last inequality follows from (ii) of Proposition \ref{prop:fun-bound-blowup-P3-xi=0-case}.
Therefore, $0 \ge \chi(E(-H)) \ge D^{0,0}_{\oA, \obe}(E)$; this is the required contradiction.

This completes the proof.
\end{proof}

\begin{proof}[Proof of Theorem \ref{thm:fano3-d-7}]
 Let  $\obe \in  \obe_{A}(E)$ and $\oA =  A(\obe H)$. 

If $\Del_H(E) =0$ then from 
Lemma \ref{prop:Delta0tiltstableFano3} such objects are isomorphic to 
$\oO_X(mH)[1]$ or $\iI_Z(mH)$ for some  $m \in \ZZ$ and $0$-subscheme $Z \subset X$.
By direct computation one can check the Bogomolov-Gieseker  type inequalities hold for such  objects. 
So we can assume 
$$
\Del_H(E) > 0.
$$ 
 
Since tilt stability is preserved under tensoring by a line bundle, and 
also from Proposition \ref{prop:stability-B-dual} we further can assume
 $$
 \ch_0(E) \ge 0, \ \text{ and } \  \obe \in [-1/2, 1/2). 
 $$

From  Proposition  \ref{prop:betabar-hom-vanishing},  for any $j \le 0$ we have 
$$
\Hom_{\SX}(\oO_X(H), E[j]) =0, \ \ \text{and} \ \ \Hom_{\SX}(E, \oO_X(-H)[1+ j]) =0.
$$
By the Serre duality, $\Hom_{\SX}(E, \oO_X(-H)[1+j]) \cong \Hom_{\SX}(\oO_X(H), E[2-j])^*$.
Hence, 
$$
\chi(\oO_X(H), E) = \sum_{i \in \mathbb{Z}} (-1)^i \hom_{\SX}(\oO_X(H), E[i]) = - \hom_{\SX}(\oO_X(H), E[1]) \le 0.
$$
From Proposition \ref{prop:support-prop-blowup-P3-main-thm-xi=0} we have the required inequality.
\end{proof}

\appendix
\counterwithin{thm}{section}
\section{}

\begin{prop}
\label{prop:function-bound-r=1-dle48}
Let $d$ be a positive integer such that $1 \le d \le 48$.
Let $f: \RR \to \RR$ be the   function defined by 
$$
f(x) = \frac{1}{6}\left(x-\frac{1}{2}\right)^3 + \frac{(48-d)}{24d} \left(x -\frac{1}{2}\right).
$$
Let $f'(x)$ be the derivative of $f(x)$ with respect to $x$.
Then for $x \in [0,1)$ we have 
\begin{enumerate}
\item $\sqrt{{3}/(2d)} \, f'(x) + f(x) \ge 0$.
\item $(1-x)f'(x) + 2f(x)\ge 0$.
\end{enumerate}
\end{prop}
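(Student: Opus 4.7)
The plan is to substitute $u = x - \tfrac{1}{2}$, so that $u \in [-\tfrac{1}{2},\tfrac{1}{2})$, and set $c = \tfrac{48-d}{24d} \ge 0$ (with $c=0$ only when $d=48$). Then $f = u^{3}/6 + cu$ and $f' = u^{2}/2 + c$. This turns both inequalities into statements about an elementary polynomial in $u$ that can be checked by standard calculus. The identities $\lambda^{2} - 2c = (d-30)/(12d)$ and various other clean simplifications suggest the correct case splits.

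For (ii), define $g(u) = (1-x)f'(x) + 2f(x) = -u^{3}/6 + u^{2}/4 + cu + c/2$. Since $g''(u) = \tfrac{1}{2}-u \ge 0$ on $[-\tfrac{1}{2},\tfrac{1}{2}]$, $g'$ is nondecreasing; combined with $g'(1/2) = 1/8 + c > 0$, either $g'\ge 0$ throughout (when $c \ge 3/8$, in which case the minimum is $g(-1/2) = 1/12 > 0$), or the minimum is attained at the unique critical point $u_{0} = \bigl(1 - \sqrt{1+8c}\bigr)/2$. Using the relation $u_{0}^{2} = u_{0} + 2c$, I will reduce $g(u_{0})$ to $\bigl(1+24c - (1+8c)^{3/2}\bigr)/24$, and then verify $(1+24c)^{2} \ge (1+8c)^{3}$, which factors as $8c\bigl(3 + 48c - 64c^{2}\bigr) \ge 0$. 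Since the quadratic $64c^{2} - 48c - 3$ has roots $(3\pm 2\sqrt{3})/8$, it is nonpositive on $[0,3/8)$, closing this case.

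For (i), set $\lambda = \sqrt{3/(2d)}$ and $h(u) = \lambda f'(x) + f(x) = u^{3}/6 + \lambda u^{2}/2 + cu + c\lambda$, so $h'(u) = u^{2}/2 + \lambda u + c$ has discriminant $\lambda^{2} - 2c = (d-30)/(12d)$. When $d \le 30$, $h'\ge 0$ and the minimum is at $u=-\tfrac{1}{2}$; a direct simplification gives
\[
  h(-\tfrac{1}{2}) \;=\; \frac{-12 + \lambda(d+24)}{12d},
\]
and squaring reduces $\lambda(d+24) \ge 12$ to the identity $(d-24)^{2} \ge 0$. When $d>30$, the two roots $u_{\pm} = -\lambda \pm \sqrt{\lambda^{2}-2c}$ both lie in $(-\tfrac{1}{2},0)$ (one checks $\lambda < 1/2$ for $d>30$), and $u_{+}$ is the unique local minimum, so it suffices to show $h(u_{+}) \ge 0$. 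Using $u_{+}^{2} = -2\lambda u_{+} - 2c$ twice in $h(u_{+})$, everything collapses to
\[
  h(u_{+}) \;=\; \tfrac{1}{3}\bigl(\lambda^{3} - (\lambda^{2} - 2c)^{3/2}\bigr),
\]
which is nonnegative precisely because $c \ge 0$.

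The proof is essentially a bookkeeping exercise in elementary calculus; there is no deep obstacle. The only delicate step is performing the algebraic reductions $g(u_{0}) \leadsto 1+24c-(1+8c)^{3/2}$ and $h(u_{+}) \leadsto \lambda^{3} - (\lambda^{2}-2c)^{3/2}$ cleanly enough that the final sign is manifest; keeping track of the case boundary $d=30$ (where the minimum of $h$ moves from the endpoint to an interior critical point) is the place where an error is most likely, so I would double-check by evaluating both sides at $d=30$.
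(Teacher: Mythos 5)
Your proposal is correct: I checked each of the claimed reductions and they all hold, namely $h(-\tfrac12)=\tfrac{\lambda(d+24)-12}{12d}$ with $\lambda(d+24)\ge 12$ equivalent to $(d-24)^2\ge 0$; $h(u_+)=\tfrac13\bigl(\lambda^3-(\lambda^2-2c)^{3/2}\bigr)\ge 0$ since $c\ge 0$; $g(u_0)=\tfrac{1}{24}\bigl(1+24c-(1+8c)^{3/2}\bigr)$; and $(1+24c)^2-(1+8c)^3=8c(3+48c-64c^2)\ge 0$ for $c\in[0,3/8)$. For part (i) your route is essentially the paper's after the substitution $u=x-\tfrac12$: the paper also splits at $d=30$, checks the endpoint value $g(0)$ (which equals your $h(-\tfrac12)$), and evaluates at the interior local minimum, obtaining exactly your $\tfrac13\bigl(\lambda^3-(\lambda^2-2c)^{3/2}\bigr)$ in the form $\tfrac{18^{3/2}-(d-30)^{3/2}}{72\sqrt3\,d^{3/2}}$. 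Two small slips in your write-up of that case: for $d>30$ the cubic increases, then decreases, then increases on $[-\tfrac12,\tfrac12)$, so you must check \emph{both} candidates $h(-\tfrac12)$ and $h(u_+)$, not only $h(u_+)$ --- harmless, since your endpoint computation via $(d-24)^2\ge 0$ is valid for all $1\le d\le 48$; and locating $u_-$ in $(-\tfrac12,0)$ needs $\lambda<\tfrac14$ rather than $\lambda<\tfrac12$ (true for $d>30$, and in any case unnecessary once both candidates are checked). For part (ii) you take a genuinely different and heavier route: the paper avoids all calculus by rewriting $(1-x)f'(x)+2f(x)$ as $\tfrac{1}{24}(2-x)(1-2x)^2$ plus a nonnegative multiple of $x$, a sum of two terms visibly nonnegative on $[0,1)$ for $d\le 48$, whereas you minimize the cubic with a case split at $c=3/8$ and reduce to $(1+24c)^2\ge(1+8c)^3$. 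Your computation $-u^3/6+u^2/4+cu+c/2$ in fact confirms the identity $h(x)=\tfrac{1}{24}(2-x)(1-2x)^2+\tfrac{48-d}{24d}\,x$ and shows the constant term $\tfrac{2(48-d)}{48d}$ printed in the paper should carry a factor of $x$ (the conclusion is unaffected since $x\ge 0$); so your longer route is sound, but the one-line algebraic decomposition is the cleaner argument for (ii).
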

\begin{proof}
(i) \
Let $g$ be the  function defined by,  
$g(x) = \sqrt{{3}/(2d)} \, f'(x) + f(x)$.
Here  $f'(x) = (1/2)(x-(1/2))^2  + (48-d)/(24d)$.

By differentiating $g(x)$ we get
\begin{align*}
g'(x) = & \frac{1}{2}\left(x-\frac{1}{2}\right)^2 +  \sqrt{\frac{3}{2d}}\left(x-\frac{1}{2}\right) + \frac{(48-d)}{24d} \\
= & \frac{1}{2} \left(x-\frac{1}{2} + \sqrt{\frac{3}{2d}} \right)^2 - \frac{(d-30)}{12d}.
\end{align*}
By evaluating $g(x)$ at $x=0$:
$$
g(0) = \sqrt{\frac{6}{d}}\left(\frac{1}{\sqrt{d}} - \frac{1}{\sqrt{24}}\right)^2 \ge 0.
$$
When $d \le 30$ we have $g'(x) \ge 0$ for all $x \in \mathbb{R}$ with $g(0)>0$. Hence for $x \in [0,1)$, $g(x) \ge 0$.

Let us consider the  case $d> 30$. The derivative $g'(x)$ is vanishing at $x = \lambda_1, \lambda_2$ with $\lambda_1 < \lambda_2$. Here 
\begin{align*}
\lambda_2 =  \frac{1}{2} - \sqrt{\frac{3}{2d}} + \sqrt{\frac{(d-30)}{12d}} = \frac{1}{2} - \frac{(48-d)}{\sqrt{12d}(\sqrt{18}+\sqrt{d-30})}.
\end{align*}
One can rearrange $g$ as 
$$
g(x) = \frac{1}{6}\left(x-\frac{1}{2} +\sqrt{\frac{3}{2d}} \right)^3 +  \frac{(30-d)}{24d} \left(x -\frac{1}{2}\right) + \sqrt{\frac{3}{2d}}\frac{(42-d)}{24d}.
$$
Let us consider the remaining case $30 < d \le 48$. The local minimum value of $g(x)$ at $x = \lambda_2$ is
\begin{align*}
g(\lambda_2) = - \frac{1}{3} \left(\frac{d-30}{12d} \right)^{3/2} + \frac{1}{2d} \sqrt{\frac{3}{2d}} =
\frac{18^{3/2} - (d-30)^{3/2} }{72\sqrt{3}d^{3/2}} \ge 0, 
\end{align*}
with equality when $d=48$. Moreover, $\lambda_2  = 1/2$ when $d=48$.  
Since $g(0) \ge 0$, and so for $x \in [0,1)$ we have $g(x) \ge 0$. \\

\noindent (ii) \ Let $h(x) =  (1-x)f'(x) + 2f(x)$. By simplifying we get
$$
h(x) = \frac{1}{24}(2-x)(1-2x)^2 + \frac{2(48-d)}{48d}.
$$
Since $1 \le d \le 48$, for all $x \in [0,1)$ we have $h(x) \ge 0$ as required.
\end{proof}

\begin{prop}
\label{prop:fun-bound-blowup-P3-xi=0-case}
Define the functions $p(x), q(x)$  by 
\begin{align*}
 p(x)  & = \frac{1}{6}(1+x)^2+  \frac{1}{2} x^2  - \frac{1}{42}, \\
q(x) &= x \left(\frac{1}{2}(1+x)^2 +  \frac{1}{6} x^2  - \frac{1}{42} \right).
 \end{align*}
For $x \in [-1/2,0]$,  we have the following:
\begin{enumerate}
\item $p(x) + q(x) > 0$.
\item $p(x) \, \sqrt{\left(1+x \right)^2 + \frac{2}{49}} + q(x) > 0$.
\end{enumerate}
\end{prop}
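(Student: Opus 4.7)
The plan is to handle (i) directly by collapsing $p+q$ to a cubic and analyzing it on $[-1/2,0]$. Indeed, elementary algebra gives
\[
p(x)+q(x)=\frac{14x^{3}+35x^{2}+17x+3}{21},
\]
whose numerator equals $3$ at $x=0$ and $3/2$ at $x=-1/2$. Its derivative $42x^{2}+70x+17$ has a unique root in $[-1/2,0]$, approximately $x_{\ast}=(-70+\sqrt{2044})/84\approx -0.295$, and a direct evaluation shows the numerator remains strictly positive there; this yields $p+q>0$ on the interval.

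For (ii), the first step is to reduce to a polynomial inequality. One checks easily that $p(x)>0$ on $[-1/2,0]$ (the unique critical point is $x=-1/4$, where $p(-1/4)=17/168$) and that $q(x)\le0$ (since $x\le0$ and the bracket inside $q$ is a convex quadratic with positive values at both endpoints of the interval). The claim is then equivalent to
\[
\Phi(x)\;:=\;p(x)^{2}\!\left((1+x)^{2}+\tfrac{2}{49}\right)-q(x)^{2}>0.
\]
The key step -- and the place where real simplification happens -- is to compute the conjugate factors $p(x)(1+x)\mp q(x)$. Writing $a=1+x$, $b=x$ so $a-b=1$, a short calculation gives the striking identity
\[
p(x)(1+x)-q(x)\;=\;\tfrac{1}{6}(a^{3}-b^{3})-\tfrac{1}{2}ab(a-b)-\tfrac{1}{42}(a-b)\;=\;\tfrac{1}{7}
\]
(a constant!), while the symmetric calculation factors as
\[
p(x)(1+x)+q(x)\;=\;(1+2x)\!\left(\tfrac{(1+2x)^{2}}{6}-\tfrac{1}{42}\right).
\]
Therefore
\[
\Phi(x)\;=\;\frac{(1+2x)\bigl[7(1+2x)^{2}-1\bigr]}{294}+\frac{2}{49}p(x)^{2}.
\]

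The final step is to verify $\Phi>0$. Substituting $u=1+2x\in[0,1]$, and using the neat expression $p(x)=\tfrac{1}{42}(7u^{2}-7u+6)$, the inequality $\Phi>0$ becomes
\[
49u^{4}+931u^{3}+133u^{2}-231u+36>0 \qquad (u\in[0,1]).
\]
This quartic is $36$ at $u=0$ and $918$ at $u=1$; the plan is to exhibit it as a manifestly positive expression on $[0,1]$, for instance by separating the contribution of $\tfrac{2}{49}p^{2}\ge\tfrac{2}{49}(17/168)^{2}$ (since $p\ge 17/168$ on the interval) and bounding the other term $\tfrac{1}{294}(1+2x)[7(1+2x)^{2}-1]$ from below by its minimum $-2/(3\sqrt{21})\cdot\tfrac{1}{294}$, attained at $u=1/\sqrt{21}$. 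The main obstacle is entirely computational: verifying that this lower bound on $\tfrac{2}{49}p^{2}$ strictly exceeds the (small) deficit of the first term at its worst point. Since all constants are explicit rationals (and the worst point is explicit), this is a finite computation that I expect to close cleanly; the conceptual content has already been captured by the unexpected identity $p(x)(1+x)-q(x)\equiv 1/7$.
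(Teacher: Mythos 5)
Your part (i) is correct and is essentially the paper's own argument: the paper locates the same critical point $-\tfrac56+\sqrt{73/252}=(-70+\sqrt{2044})/84\approx-0.295$ and checks that the critical value of $p+q$ is positive. Your reduction in part (ii) is also correct and is genuinely nicer than the paper's: the identities $p(x)(1+x)-q(x)=\tfrac16\bigl((1+x)-x\bigr)^2-\tfrac1{42}=\tfrac17$ and $p(x)(1+x)+q(x)=(1+2x)\bigl(\tfrac{(1+2x)^2}{6}-\tfrac1{42}\bigr)$ both check out, as does the resulting form $\Phi(x)=\tfrac{1}{294}u(7u^2-1)+\tfrac{2}{49}p(x)^2$ with $u=1+2x$ and $p=\tfrac1{42}(7u^2-7u+6)$, and the equivalent quartic inequality $49u^4+931u^3+133u^2-231u+36>0$ on $[0,1]$. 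The paper instead differentiates $F=\Phi$ directly, solves the resulting cubic by Cardano, and evaluates $F$ at the interior critical point numerically ($0.000028<F(\lambda_1)<0.000029$); your quartic reproduces this, since $\Phi=Q(u)/43218$ and $\min_{[0,1]}Q\approx1.25$.

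However, the final step as you propose it fails. The minimum of $\tfrac{1}{294}u(7u^2-1)$ on $[0,1]$ is $-\tfrac{2}{882\sqrt{21}}\approx-4.95\times10^{-4}$ (at $u=1/\sqrt{21}$), whereas $\tfrac{2}{49}(17/168)^2=\tfrac{578}{1382976}\approx4.18\times10^{-4}$, so the decoupled bound gives approximately $-7.7\times10^{-5}<0$ and does not close. This is to be expected: the true minimum of $\Phi$ is only about $2.9\times10^{-5}$, so the two terms nearly cancel, and replacing each by its own extremum (attained at different points, $u=1/\sqrt{21}$ versus $u=1/2$) loses far too much. Restricting the bound on $p$ to the subinterval $u\in[0,1/\sqrt7]$ where the first term is negative does not rescue it either: there $p\ge\tfrac{7-\sqrt7}{42}\approx0.1037$, giving only $\approx4.39\times10^{-4}$, still short of $4.95\times10^{-4}$. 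To finish along your lines one must actually analyze the quartic $Q(u)=49u^4+931u^3+133u^2-231u+36$ itself, e.g.\ locate its unique critical point in $[0,1]$ (near $u\approx0.242$, where $Q\approx1.25>0$) and check the endpoint values $Q(0)=36$, $Q(1)=918$ --- which is, in effect, exactly the computation the paper carries out on $F$.
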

\begin{proof}
(i) \ By simplifying, we have 
$$
p(x) + q(x) = \frac{2}{3}x^3 + \frac{5}{3}x^2 + \frac{17}{21}x + \frac{1}{7}.
$$
Let us find the critical points of $p(x) + q(x)$.
The derivative of $p(x) + q(x)$ with respect to $x$ is $2(x+(5/6))^2- (73/126)$, and so its roots
are $-( 5/6) \pm \sqrt{73/252}$.
Therefore, the critical values of $p(x) +q(x)$ are 
$(1904 \mp 73 \sqrt{511})/7938 > 0$.
Hence, since 
$-( 5/6) - \sqrt{73/252} <-1/2< -( 5/6) + \sqrt{73/252} <0$, we get the required inequality for all $x \in[ -1/2,0]$. \\

\noindent (ii) \ For $x \in  [-1/2,0]$,
\begin{align*}
p(x) &= \frac{1}{24}(4 x +1)^2 + \frac{17}{168} >0, \text{ and}\\
q(x) & =  x \left( \frac{1}{24}(4 x +3)^2 + \frac{17}{168} \right) <0.
\end{align*}
So it is enough to show that for  $x \in  [-1/2, 0]$,
$$
F(x) :=\left( (1+x)^2 + \frac{2}{49}\right) p(x)^2 - q(x)^2 > 0.
$$
Let us find the critical points of $F$. By differentiating $F$ we get 
\begin{equation}
F'(x): = \frac{dF}{dx} = \frac{4}{3087} \left( 56x^3 + 483 x^2 + 460 x + 108 \right).
\end{equation}
The equation $F'(x)=0$ has three different real roots; so $F$ has three critical points. 
They are 
\begin{align*}
\lambda_1 & =- \frac{23}{8}+  \frac{1}{8}\left( \frac{\zeta}{\sqrt[3]{441}} + \frac{7429}{\sqrt[3]{21}\zeta} \right), \\
\lambda_2 & =- \frac{23}{8} -   \frac{(1+ i \sqrt{3})\zeta}{16 \sqrt[3]{441}} - \frac{7429(1- i \sqrt{3})}{16 \zeta}, \\
\lambda_3 & =- \frac{23}{8} -  \frac{(1-i \sqrt{3}) \zeta }{16 \sqrt[3]{441}} - \frac{7429(1+ i \sqrt{3})}{16 \zeta}.
\end{align*}
Here 
$$
\zeta = \left(-2917215 + 32 i \sqrt{97656006} \right)^{1/3} \in \RR_{>0} e^{(0, \ \pi/3)}.
$$
The critical points satisfy
$$
\lambda_3 < \lambda_2 <-1/2 <\lambda_1 <    0.
$$
Since $F$ is a degree $4$ polynomial of $x$ with a positive leading coefficient, we have for all $x \in  [-1/2, 0]$,
$$
F(x) \ge F(\lambda_1).
$$
On the other hand,  by direct computation we have
\begin{align*}
F(\lambda_1) 
& = -\frac{164676683}{33191424} 
+ \frac{3045940625581681}{4741632 \sqrt[3]{21} \, \zeta^4} 
- \frac{410006814589}{2370816 \sqrt[3]{441} \, \zeta^2}
+ \frac{i \sqrt{2325143} \, \zeta}{1555848\sqrt{2} \sqrt[6]{21}} \\
& \qquad \qquad - \frac{7429 \, \zeta^2}{49787136 \sqrt[3]{21}} 
+ \frac{37145 \sqrt[3]{9}}{64 \sqrt[3]{7}\, \zeta}
+\frac{5\sqrt[3]{21} \,\zeta}{512}
 >0.
\end{align*}
In fact, by direct computation one can check that
$$
0.000028<F(\lambda_1)<0.000029.
$$
This completes the proof.
\end{proof}

\providecommand{\bysame}{\leavevmode\hbox to3em{\hrulefill}\thinspace}
\providecommand{\MR}{\relax\ifhmode\unskip\space\fi MR }
\providecommand{\MRhref}[2]{%
  \href{http://www.ams.org/mathscinet-getitem?mr=#1}{#2}
}
\providecommand{\href}[2]{#2}

\end{document}